\newtheorem{theorem}{Theorem}
\newtheorem{lemma}{Lemma}
\newtheorem{remark}{Remark}
\newtheorem{proposition}{Proposition}
\newtheorem{corollary}{Corollary}
\newcommand \p {\partial}
\newcommand \dist {\mathrm{dist}}
\newcommand \com {\mathrm{cof}}
\newcommand \w {\mathrm{w}}
\newcommand \R {\mathbb{R}}
\renewcommand \L {\mathrm{L}}
\newcommand \LL {\mathbf{L}}
\newcommand \MM {\mathbf{M}}
\newcommand \NN {\mathbf{N}}
\newcommand \GG {\mathbf{G}}
\renewcommand \H {\mathrm{H}}
\newcommand \I {\mathrm{I}}
\newcommand \Id {\mathrm{Id}}
\renewcommand \d {\mathrm{d}}
\renewcommand \div {\mathrm{div}}
\renewcommand \det {\mathrm{det}}
\newcommand \trace {\mathrm{trace}}
\title{Existence of 3D strong solutions for a system modeling a deformable solid inside a viscous incompressible fluid\thanks{This work was partially supported by the ANR-project CISIFS, 09-BLAN-0213-03.}}
\author{S\'ebastien Court\thanks{Laboratoire de Math\'ematiques UMR CNRS 6620, Universit\'e Blaise Pascal, Campus des C\'ezeaux, F-63177 Aubi\`ere Cedex, France, email: {\tt sebastien.court@math.univ-bpclermont.fr}.}}
\begin{document}

\maketitle

\begin{abstract}
In this paper we study a coupled system modeling the movement of a deformable solid inside a viscous incompressible fluid. For the solid we consider a given deformation that has to obey several physical constraints. The motion of the fluid is modeled by the incompressible Navier-Stokes equations in a time-dependent bounded domain of $\R^3$, and the solid satisfies the Newton's laws. Our contribution consists in adapting and completing some results of \cite{SMSTT} in dimension 3, in a framework where the regularity of the deformation of the solid is limited. We rewrite the main system in domains which do not depend on time, by using a new means of defining a change of variables, and a suitable change of unknowns. We study the corresponding linearized system before setting a local-in-time existence result. Global existence is obtained for small data, and in particular for deformations of the solid which are close to the identity.
\end{abstract}

\noindent{\bf Keywords:} Navier-Stokes equations, incompressible fluid, fluid-structure interactions, deformable solid, strong solutions.\\
\hfill \\
\noindent{\bf AMS subject classifications:} 35Q30, 76D03, 76D05, 74F10.

\tableofcontents




\section{Introduction} 
In this paper we are interested in a deformable solid immersed in a viscous incompressible fluid in dimension 3. The domain occupied by the solid at time $t$ is denoted by $\mathcal{S}(t)$. We assume that \textcolor{black}{$\mathcal{S}(t) \subset \subset \mathcal{O}$}, where $\mathcal{O}$ is a bounded regular domain. The fluid surrounding the solid occupies the domain $\mathcal{O} \setminus \overline{\mathcal{S}(t)} = \mathcal{F}(t)$.

\subsection{Presentation of the model} 
The movement of the solid in the inertial frame of reference is described through the time by a Lagrangian mapping $X_{\mathcal{S}}$, so we define
\begin{eqnarray*}
\mathcal{S}(t) & = & X_{\mathcal{S}}(\mathcal{S}(0),t), \quad t \geq 0.
\end{eqnarray*}
The mapping $X_{\mathcal{S}}(\cdot,t)$ can be decomposed as follows
\begin{eqnarray*}
X_{\mathcal{S}}(y,t) & = & h(t) + \mathbf{R}(t)X^{\ast}(y,t), \quad y \in \mathcal{S}(0),
\end{eqnarray*}
where the vector $h(t)$ describes the position of the center of mass, and $\mathbf{R}(t)$ is the rotation associated with \textcolor{black}{a vector $\omega$(t) denoting the angular velocity of the solid}. More precisely, $\omega$ and $\mathbf{R}$ are related to each other through the following Cauchy problem
\begin{eqnarray*}
\left\{\begin{array} {ccccc}
\displaystyle \frac{\d \mathbf{R}}{\d t} & = & \mathbb{S}\left( \omega\right) \mathbf{R} \\
\mathbf{R}(0) & = & \I_{\R^3}
\end{array} \right.,
\quad \text{with }
\mathbb{S}(\omega) = \left(
\begin{matrix}
0 & -\omega_3 & \omega_2 \\
\omega_3 & 0 & -\omega_1 \\
-\omega_2 & \omega_1 & 0
\end{matrix} \right). \label{rotation}
\end{eqnarray*}
\footnote{\textcolor{black}{The identity matrix of $\R^{3\times 3}$ is denoted by $\I_{\R^3}$.}}The couple $(h(t),\mathbf{R}(t))$ describes the position of the solid and is unknown, whereas the mapping $X^{\ast}(\cdot,t)$ can be imposed. This latter represents the deformation of the solid in its own frame of reference and will constitute the main datum of the problem. When this Lagrangian mapping $X^{\ast}(\cdot,t)$ is invertible, we can link to it an Eulerian velocity $w^{\ast}$ through the following Cauchy problem
\begin{eqnarray*}
\frac{\p X^{\ast}}{\p t}(y,t) = w^{\ast}(X^{\ast}(y,t),t), \quad X^{\ast}(y,0) = y-h(0), \quad y\in \mathcal{S}(0). \label{cauchystar}
\end{eqnarray*}
If $Y^{\ast}(\cdot,t)$ denotes the inverse of $X^{\ast}(\cdot,t)$, we have
\begin{eqnarray*}
 w^{\ast}(x^{\ast},t) & = & \frac{\p X^{\ast}}{\p
t}(Y^{\ast}(x^{\ast},t),t), \quad x^{\ast} \in \mathcal{S}^{\ast}(t)=X^{\ast}(\mathcal{S}(0),t). \label{wsuperstar}
\end{eqnarray*}
This Eulerian velocity $w^{\ast}$ can also be considered as a datum defining the way the solid is deforming itself. Considering $X^{\ast}$ - or $w^{\ast}$ - as a datum is equivalent to assuming that the solid is strong enough to impose its own shape.\\
The fluid flow is described by its velocity $u$ and its pressure $p$. For $w^{\ast}$ satisfying a set of hypotheses given further, we aim at proving the existence of strong solutions for the following coupled system
\begin{eqnarray}
\frac{\p u}{\p t} - \nu \Delta u + (u\cdot \nabla )u + \nabla p  =  0 , & \quad &
x \in \mathcal{F} (t),\quad t\in (0,T), \label{prems} \\
\div \ u   =  0 ,  & \quad &  x\in \mathcal{F} (t), \quad t\in (0,T), \label{deus}
\end{eqnarray}
\begin{eqnarray}
u  =  0 , & \quad & x \in \p \mathcal{O} ,\quad t\in (0,T),  \label{trois} \\
u  =  h'(t) +  \omega (t)\wedge(x-h(t)) + w(x,t) , & \quad &  x\in \ \p \mathcal{S}(t),\quad t\in
(0,T), \label{quatre}
\end{eqnarray}
\begin{eqnarray}
M h''(t)  =  - \int_{\p \mathcal{S}(t)} \sigma(u,p) n \d \Gamma  , & \quad & t\in (0,T), \label{cinq} \\
\left(I\omega\right)' (t)  = - \int_{\p \mathcal{S}(t)} (x-h(t))\wedge  \sigma(u,p) n \d \Gamma, & \quad & t\in(0,T),
\label{six}
\end{eqnarray}
\begin{eqnarray}
u(y,0)  =  u_0 (y), \  y\in \mathcal{F}(0) , \quad h(0) = h_0 \in \R^3 ,\quad h'(0)=h_1 \in \R^3 ,\quad \omega(0) = \omega_0 \in \R^3 ,
\end{eqnarray}
where
\begin{eqnarray}
\mathcal{S}(t) = h(t) + \mathbf{R}(t)X^{\ast}(\mathcal{S}(0),t), & \quad & \mathcal{F}(t) = \mathcal{O} \setminus \overline{\mathcal{S}(t)},
\end{eqnarray}
and where the velocity $w$ is defined by the following change of frame
\begin{eqnarray}
w(x,t) & = & \mathbf{R}(t)\ w^{\ast}\left(\mathbf{R}(t)^{T}(x-h(t)), t\right), \quad x\in \mathcal{S}(t). \label{wwwstar} \label{ders}
\end{eqnarray}
Without loss of generality, we can assume that $h_0 = 0$, for a sake of simplicity. The symbol $\wedge$ denotes the cross product in $\R^3$. The linear map $\omega \wedge \cdot$ can be represented by the matrix $\mathbb{S}(\omega)$. In equations \eqref{cinq} and \eqref{six}, the mass of the solid $M$ is constant, whereas the moment of inertia tensor depends on time, as
\begin{eqnarray*}
I(t) & = & \int_{\mathcal{S}(t)} \rho_{\mathcal{S}}(x,t) \left( |x-h(t)|^2 \I_{\R^3} -
(x-h(t))\otimes (x-h(t)) \right) \d x.
\end{eqnarray*}
The quantity $\rho_{\mathcal{S}}$ denotes the density of the solid, and obeys the principle of mass conservation
\begin{eqnarray*}
\rho_{\mathcal{S}}(X_{\mathcal{S}}(y,t),t) & = &
\frac{\rho_{\mathcal{S}}(y,0)}{\det \left(\nabla X_{\mathcal{S}}(y,t)\right)}, \quad y \in \mathcal{S}(0),
\end{eqnarray*}
where $\nabla X_{\mathcal{S}}(\cdot,t)$ is the Jacobian matrix of mapping $X_{\mathcal{S}}(\cdot,t)$. We can define
\begin{eqnarray*}
\rho^{\ast}(x^{\ast},t) & = & \frac{\rho_{\mathcal{S}}(Y^{\ast}(x^{\ast},t),0)}{\det \left(\nabla X^{\ast}(Y^{\ast}(x^{\ast},t),t)\right)}, \quad x^{\ast} \in \mathcal{S}^{\ast}(t).
\end{eqnarray*}
For a sake of simplicity we assume that the solid is homogeneous at time $t=0$:
\begin{eqnarray*}
\rho_{\mathcal{S}}(y,0) & = & \rho_{\mathcal{S}} > 0.
\end{eqnarray*}
In system \eqref{prems}--\eqref{ders}, $\nu$ is the kinematic viscosity of the fluid and the normalized vector $n$ is the normal at $\p \mathcal{S}(t)$ exterior to $\mathcal{F}(t)$. It is a coupled system between the incompressible Navier-Stokes equations and the Newton's laws. The coupling is in particular made in the fluid-structure interface, through the equality of velocities \eqref{quatre} and through the Cauchy stress tensor given by
\begin{eqnarray*}
\sigma(u,p)  =  2\nu D(u) - p \ \Id
  =  \nu\left( \nabla u + \left(\nabla u\right)^{T} \right) - p \ \Id.
\end{eqnarray*}
We assume that the deformation $X^{\ast}$ satisfies a set of hypotheses:
\begin{description}
\item[H1] For all $t\in [0,T]$, $X^{\ast}(\cdot,t)$ is a \textcolor{black}{$C^{1}$-diffeomorphism} from $\overline{\mathcal{S}(0)}$ onto $\overline{\mathcal{S}^{\ast}(t)}$.
\item[H2] In order to respect the incompressibility condition given by \eqref{deus}, the volume of the whole solid is preserved through the time. That is equivalent to say that
\begin{eqnarray}
\int_{\p \mathcal{S}^{\ast}(t)} w^{\ast}\cdot n \d \Gamma = \int_{\p \mathcal{S}(0)} \frac{\p X^{\ast}}{\p t}\cdot \left(\com \nabla X^{\ast}\right)n\d \Gamma =  0, \label{const1}
\end{eqnarray}
where $\com \nabla X^{\ast}$ denotes the cofactor matrix of $\nabla X^{\ast}$.
\item[H3] \textcolor{black}{The deformation of the solid does not modify its linear momentum}, that means
\begin{eqnarray}
\int_{\mathcal{S}^{\ast}(t)} \rho^{\ast}(x^{\ast},t) w^{\ast}(x^{\ast},t)\d x^{\ast} =
\rho_{\mathcal{S}} \int_{\mathcal{S}(0)}  \frac{\p X^{\ast}}{\p t}(y,t) \d y  =  0. \label{const2}
\end{eqnarray}
\item[H4] \textcolor{black}{The deformation of the solid does not modify its angular momentum}, that means
\begin{eqnarray}
\int_{\mathcal{S}^{\ast}(t)} \rho^{\ast}(x^{\ast},t) x^{\ast} \wedge w^{\ast}(x^{\ast},t)\d x^{\ast} = \rho_{\mathcal{S}} \int_{\mathcal{S}(0)} X^{\ast}(y,t)\wedge \frac{\p X^{\ast}}{\p t}(y,t) \d y =  0. \label{const3}
\end{eqnarray}
\end{description}
Imposing constraints \eqref{const2} and \eqref{const3} enables us to ensure that the two following constraints on the velocity $w$ are satisfied:
\begin{eqnarray}
\int_{\mathcal{S}(t)} \rho_{\mathcal{S}}(x,t)w(x,t)  \d y & = & 0, \label{const12} \\
\int_{\mathcal{S}(t)} \rho_{\mathcal{S}}(x,t)(x-h(t))\wedge w(x,t)  \d y & = & 0. \label{const13}
\end{eqnarray}
As equations \eqref{cinq} and \eqref{six} are written, the two equalities above are already assumed in system \eqref{prems}--\eqref{ders}. \textcolor{black}{Without the hypotheses {\bf H3} and {\bf H4}, the balance of momenta would lead to expressions of \eqref{cinq} and \eqref{six} with additional terms involving the quantities of \eqref{const12} and \eqref{const13}}. These hypotheses are made to guarantee the {\it self-propelled} nature of the motion of the solid, that means no other help than its own deformation enables it to move inside the fluid. By the undulatory motion induced by its own internal deformation, the solid imposes partially, through $w$, the nonhomogeneous Dirichlet condition \eqref{quatre}. The latter induces the behavior of the environing fluid through \eqref{prems}--\eqref{trois}, and thus the response of the fluid - given by $\sigma(u,p)n$ on the interface - enables the whole solid to be carried, regarding to the ordinary differential equations \eqref{cinq} and \eqref{six}. The other part of the interaction consists in the fact that domains occupied by the fluid and the solid change through the time, as follows
\begin{eqnarray*}
\mathcal{S}(t) = h(t) + \mathbf{R}(t)\mathcal{S}^{\ast}(t), & \quad & \mathcal{F}(t) = \mathcal{O} \setminus \overline{\mathcal{S}(t)}.
\end{eqnarray*}

\subsection{Main result and contributions}
The main result we state in this paper is Theorem \ref{ch3_existglobale}, that we give as follows:

\begin{theorem} \label{mainTH}
\textcolor{black}{Assume that $0<  \dist(\mathcal{S},\p\mathcal{O})$, that $X^{\ast}$ satisfies the hypotheses $\mathbf{H1}-\mathbf{H4}$ with the initial conditions\footnote{\textcolor{black}{The first condition is natural, but the second one can be replaced by a non-null velocity. However, in order to avoid additional complexities, we choose it homogeneous, without loss of generality.}}
\begin{eqnarray*}
X^{\ast}(\cdot,0) = \Id_{\mathcal{S}}, & \quad & \frac{\p X^{\ast}}{\p t}(\cdot, 0) = 0,
\end{eqnarray*}
and that the deformation velocity $\frac{\p X^{\ast}}{\p t}$ is small enough in 
\begin{eqnarray*}
\L^2(0,\infty;\mathbf{H}^3(\mathcal{S}(0))) \cap \H^1(0,\infty;\mathbf{H}^1(\mathcal{S}(0))).
\end{eqnarray*}
Assume that $u_0 \in \mathbf{H}^1(\mathcal{F}(0))$ satisfies
\begin{eqnarray*}
\div \  u_0  =  0   \textrm{ in $\mathcal{F}(0)$}, \quad
u_0  =  0  \textrm{ on $\p \mathcal{O}$}, \quad
u_0(y)  =  h_1 + \omega_0 \wedge y  \textrm{ on $\p \mathcal{S}(0)$},
\end{eqnarray*}
and also that $\|u_0\|_{\mathbf{H}^1(\mathcal{F}(0))}$, $|h_1|_{\R^3}$ and $|\omega_0|_{\R^3}$ are small enough. Then problem \eqref{prems}--\eqref{ders} admits a unique global strong solution $(u,p,h',\omega)$ such that
\begin{eqnarray*}
& & \int_0^{\infty} \|u(\cdot,t)\|_{\mathbf{H}^2(\mathcal{F}(t))}^2 \d t
+ \int_0^{\infty} \left\|\frac{\p u}{\p t}(\cdot,t)\right\|_{\mathbf{L}^2(\mathcal{F}(t))}^2 \d t
+ \sup_{t\geq 0} \left\| u(\cdot,t) \right\|_{\mathbf{H}^1(\mathcal{F}(t))}^2 < \infty, \\
& & \int_0^{\infty} \|p(\cdot,t)\|_{\mathbf{H}^1(\mathcal{F}(t))}^2 \d t < \infty,
\quad \|h'\|_{\H^1(0,\infty;\R^3)} < \infty, \quad  \|\omega\|_{\H^1(0,\infty;\R^3)} < \infty
\end{eqnarray*}
if the condition $\dist(\mathcal{S}(t), \p \mathcal{O}) > 0$ is satisfied for all $t \geq 0$.}
\end{theorem}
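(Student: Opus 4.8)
The plan is to combine the local-in-time existence result proved in the preceding section with a global-in-time a priori estimate valid for small data, the whole argument being run on a fixed domain after a change of variables. First I would transport the problem \eqref{prems}--\eqref{ders} onto $\mathcal{F}=\mathcal{F}(0)$ by means of the diffeomorphism $X(\cdot,t)\colon\overline{\mathcal{F}}\to\overline{\mathcal{F}(t)}$ constructed earlier --- it coincides with the rigid-plus-deformation motion $h(t)+\mathbf{R}(t)X^{\ast}(\cdot,t)$ in a neighbourhood of $\p\mathcal{S}(t)$, equals the identity near $\p\mathcal{O}$, and is built from the flow of a divergence-compatible extension of $w^{\ast}$ so that $\det\nabla X$ is controlled --- together with the Piola-type change of unknowns $\tilde u(y,t)=\com\nabla X(y,t)^{T}\,u(X(y,t),t)$ and $\tilde p(y,t)=p(X(y,t),t)$, which preserves the divergence-free condition. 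The transported system then takes the form
\begin{eqnarray*}
\frac{\p\tilde u}{\p t}-\nu\Delta\tilde u+\nabla\tilde p=F(\tilde u,\tilde p,h',\omega), & & \div\,\tilde u=0\ \text{in }\mathcal{F},
\end{eqnarray*}
coupled with the transported Dirichlet condition on $\p\mathcal{S}(0)$ and the transported Newton equations $Mh''=F_M(\tilde u,\tilde p,h',\omega)$, $(I_0\omega)'=F_I(\tilde u,\tilde p,h',\omega)$, where $F$, $F_M$, $F_I$ gather all terms that are either at least quadratic in $(\tilde u,\tilde p,h',\omega)$ or carry one of the small coefficients $\nabla X-\Id$, $\com\nabla X-\Id$, $\p_t(\nabla X)$, etc., plus pure forcing terms depending on $w^{\ast}$ alone and vanishing when $w^{\ast}\equiv 0$.

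Next I would invoke the maximal-regularity and exponential stability of the associated linear system --- the generalized Stokes operator coupled with the finite-dimensional rigid-body dynamics --- established in the previous section: for $(u_0,h_1,\omega_0)$ small and a right-hand side $(F,F_M,F_I)$ in the appropriate product space, the linear problem has a unique solution in
\begin{eqnarray*}
\mathcal{X}_\infty=\L^2(0,\infty;\mathbf{H}^2(\mathcal{F}))\cap\H^1(0,\infty;\mathbf{L}^2(\mathcal{F}))\cap\mathcal{C}_b([0,\infty);\mathbf{H}^1(\mathcal{F})), & & \tilde p\in\L^2(0,\infty;\mathbf{H}^1(\mathcal{F})),\ (h',\omega)\in\H^1(0,\infty;\R^3)^2,
\end{eqnarray*}
with norm bounded linearly by that of the data; here the embedding of the parabolic space $\L^2(\mathbf{H}^2)\cap\H^1(\mathbf{L}^2)$ into $\mathcal{C}_b(\mathbf{H}^1)$ is used, and the exponential decay of the semigroup makes the time integrals over $(0,\infty)$ finite. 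One then defines the map $\mathcal{N}$ sending $z=(\tilde u,\tilde p,h',\omega)\in\mathcal{X}_\infty$ to the solution of the linear problem with right-hand side $(F,F_M,F_I)$ evaluated at $z$, and looks for a fixed point in a small ball $B_R\subset\mathcal{X}_\infty$.

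The crux is the pair of nonlinear estimates $\|(F,F_M,F_I)(z)\|\le C(R^2+\delta^2)$ and $\|(F,F_M,F_I)(z_1)-(F,F_M,F_I)(z_2)\|\le C(R+\delta)\,\|z_1-z_2\|_{\mathcal{X}_\infty}$ for $z,z_1,z_2\in B_R$, where $\delta$ measures the smallness of $\p_t X^{\ast}$ in $\L^2(0,\infty;\mathbf{H}^3(\mathcal{S}(0)))\cap\H^1(0,\infty;\mathbf{H}^1(\mathcal{S}(0)))$. This is where the limited regularity of the deformation bites: the coefficients $\nabla X$, $\com\nabla X$ and their time derivatives inherit only the regularity of $\p_t X^{\ast}$, so every product must be bounded by the sharp three-dimensional embeddings ($\mathbf{H}^2\hookrightarrow\L^\infty$, $\mathbf{H}^1\hookrightarrow\mathbf{L}^6$), by trace theorems for the boundary integrals defining $F_M$ and $F_I$, and by interpolation adapted to the anisotropic parabolic scale; no loss of derivatives can be tolerated. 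One must also check that for $\delta$ small $\det\nabla X$ stays uniformly bounded away from $0$, which keeps $X(\cdot,t)$ a diffeomorphism and, since $X(\cdot,0)=\Id$ and $\p_t X^{\ast}(\cdot,0)=0$, preserves $\dist(\mathcal{S}(t),\p\mathcal{O})>0$ by continuity, consistently with the hypothesis. Granting these estimates, one first fixes $R$ and then the data so small that $\mathcal{N}$ maps $B_R$ into itself and is a contraction; the Banach fixed point theorem yields a unique global solution in $\mathcal{X}_\infty$. Undoing the change of unknowns gives uniqueness for \eqref{prems}--\eqref{ders}, and transporting the norms back to the moving domains $\mathcal{F}(t)$ via the bounded distortion of $X$ produces exactly the regularity stated. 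The main obstacle is precisely this global-in-time nonlinear estimate with coefficients of only $\mathbf{H}^3$-in-space regularity --- the careful bookkeeping of which factor carries which norm, combined with exploiting the decay of the linear part --- rather than the fixed point scheme itself, which is routine once the estimates are in place.
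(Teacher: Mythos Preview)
Your scheme is a genuinely different route from the paper, and it rests on an ingredient the paper neither states nor proves. You propose a single global-in-time contraction on $(0,\infty)$, which requires maximal regularity of the linearized fluid--solid operator with a constant \emph{uniform in $T$} (equivalently, exponential decay of the associated semigroup). The paper's linear result, Proposition~\ref{ch2_thsolfaible}, only gives a constant $K$ that is nondecreasing in $T$; no decay is established, and for this coupled Stokes--rigid-body system in a bounded domain such a decay statement is a separate theorem, not a routine observation. Without it your map $\mathcal{N}$ is not even defined on $\mathcal{X}_\infty$. You should either supply that spectral/decay result or switch strategies.

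The paper instead proceeds by local existence plus continuation: it first obtains a solution on $[0,T_0]$ by a finite-time fixed point (Section~\ref{secfixe}), then argues by contradiction that if $T_0<\infty$ and $\dist(\mathcal{S}(t),\p\mathcal{O})\ge\eta>0$, the solution can be extended. The extension step works directly in the moving domain, not in the reference configuration: one introduces a divergence-free extension $\overline{w}$ of the deformation velocity, sets $v=u-\overline{w}$, and derives two energy inequalities --- the basic one (test by $v$, Proposition~\ref{ch2_zprop1}) controlling $\|v\|_{\L^\infty(\mathbf{L}^2)}+\|v\|_{\L^2(\mathbf{H}^1)}+|h'|+|\omega|$, and the higher one (test by $\p_t v$) controlling $\|\nabla v(t)\|_{\mathbf{L}^2}$. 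The latter is closed by a bootstrap using Stokes regularity and 3D interpolation $\|(v\cdot\nabla)v\|_{\mathbf{L}^2}\lesssim\|v\|_{\mathbf{H}^1}^{1/2}\|v\|_{\mathbf{H}^2}^{3/2}$, together with a barrier argument (smallness of data keeps $\|\nabla v(t)\|_{\mathbf{L}^2}^2\le 1$ for all $t$). No exponential stability is needed.

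A second discrepancy: you use the Piola change of unknowns $\tilde u=\com\nabla X^{T}\,u(X)$ to keep $\div\,\tilde u=0$. The paper deliberately avoids this (see the discussion around \eqref{chv2}) and uses $\tilde u=\mathbf{R}^{T}u(X)$ instead, precisely because $\com\nabla X$ is genuinely space-dependent near $\p\mathcal{S}$ and, with $X^{\ast}$ only in $\mathcal{W}_0(0,\infty;\mathcal{S})$, the Piola route costs an extra derivative that is not available. Your assertion that ``no loss of derivatives can be tolerated'' is exactly the issue: with the Piola transform and $\p_t X^{\ast}\in\L^2(\mathbf{H}^3)\cap\H^1(\mathbf{H}^1)$ only, several of the product estimates you would need for $F$ in $\L^2(0,\infty;\mathbf{L}^2)$ do not close. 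The paper pays the price of a nonhomogeneous divergence $\div\,\tilde u=g$ but gains simpler coefficient regularity.
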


This type of problem has been studied in \cite{SMSTT} in 2 dimensions, in the case where no limitation was supposed on the regularity of the mapping $X^{\ast}$. In particular global existence is obtained without smallness assumption on the data. We extend this result to dimension 3 in a framework where the regularity of the mapping $X^{\ast}$ is limited, which has not been done yet for this system, as far as we know. The strategy for proving the existence of strong solutions is globally the same as the one used in \cite{TT, Cumsille} (for rigid solids), \cite{SMSTT}, or even in  \cite{BoulST} for instance: We first define a change of variables which enables us to set a change of unknowns whose the space domain of definition does not depend on time anymore. Then we write the nonlinear system that have to satisfy the new unknowns, and we study the linearized system associated with. Then a local-in-time existence result is proven by a fixed point method, and the global existence is obtained by writing appropriate energy estimates.\\
\hfill \\
In addition to the technical difficulties induced by the framework of dimension 3, the originality of our approach lies in the fact that we have to develop new means of handling a deformation of the solid which is limited in regularity. First, a new method is developed in order to define a change of variables in the fluid part. Indeed, the method introduced in \cite{TT} cannot be applied anymore, or at least not so straightforwardly anymore; The way we proceed is more direct and more adapted for obtaining the change of variables with the desired properties, moreover when the datum $X^{\ast}$ is limited in regularity. The price to pay is a technical lemma proven in Appendix A. \\

Thus we extend the Lagrangian flow $X_{\mathcal{S}}(\cdot ,t)$ associated with the solid as a mapping $X(\cdot,t)$ defined in the fluid part. We denote by $Y(\cdot,t)$ the inverse of $X(\cdot,t)$. Then we rewrite system \eqref{prems}--\eqref{ders} in a cylindrical domain. For that, another novelty is the use of a well-chosen change of unknowns; We introduce the following unknowns
\begin{eqnarray*}
\tilde{u}(y,t)  =  \mathbf{R}(t)^T u(X(y,t),t), &\quad & \tilde{p}(y,t)  =  p(X(y,t),t) \label{chv1},
\end{eqnarray*}
rather than using the whole Jacobian matrix
\begin{eqnarray}
\overline{u}(y,t)  =  \nabla Y(X(y,t),t)u(X(y,t),t),& \quad & \overline{p}(y,t)  =  p(X(y,t),t) , \label{chv2}
\end{eqnarray}
which is done in \cite{IW} for instance, or in several papers which only consider a rigid solid (see \cite{TT}, \cite{Tucsnak}, \cite{Cumsille} for instance), or simply suggested in \cite{SMSTT}. Let us notice that in our case the Jacobian matrix $\nabla Y(X(\cdot,t),t)$ actually depends on the space variable \textcolor{black}{near of $\p \mathcal{S}(0)$}, and thus the use of this classical change of unknowns \eqref{chv2} would lead to unappropriate complicated calculations and especially it would require more regularity than we actually need for the deformation of the solid.\\
\hfill \\
The corresponding nonlinear system - satisfied by the new unknowns, written in a cylindrical domain - is stated in \eqref{premsfix}--\eqref{dersfix}. The change of variables we have chosen enables us to write this system in the simplest form we have found. In particular, the equation of velocities \eqref{quatre} on $\p \mathcal{S}(t)$ becomes \eqref{quatrefix}
\begin{eqnarray*}
\tilde{u}  =  \tilde{h}' + \tilde{\omega} \wedge X^{\ast} + \frac{\p X^{\ast}}{\p t} \quad  \text{on } \p \mathcal{S}(0),
\end{eqnarray*}
where the datum $X^{\ast}$ and its time derivative appear in a simple way. The price to pay is that we have to study a system in which the divergence of $\tilde{u}$ is not equal to $0$.\\ 
The proof of the existence of local-in-time strong solutions is similar to the one provided in \cite{TT}. For proving that the solution so obtained is actually global in time, we show that our framework enables us to apply the techniques developed in \cite{Cumsille}; In particular, we get regularity on the Eulerian velocity $w^{\ast}$ associated with the Lagrangian mapping $X^{\ast}$, and we consider an extension of $w^{\ast}$ to the fluid part. We also quantify the regularity needed on this Eulerian velocity, and we observe that the regularity assumed on $X^{\ast}$ is sufficient.

\paragraph{The choice of the functional framework for the deformation of the solid}\hfill \\
The mapping $X^{\ast}$ is chosen such that its time derivative (representing a velocity of deformation) lies in $\L^2(0,\infty;\mathbf{H}^3(\mathcal{S}(0))) \cap \H^1(0,\infty;\mathbf{H}^1(\mathcal{S}(0)))$. The regularity $\mathbf{H}^3$ in space is considered in order to make $X^{\ast}(\cdot,t)$ and its extensions of class $C^1$, \textcolor{black}{and thus likely to be used for transforming integrals on $\p \mathcal{S}(t)$. Note that under this regularity the hypothesis {\bf H1} can be relaxed; Indeed, since in the statement of Theorem \ref{mainTH} we assume smallness on the time derivative of $X^{\ast}$, we can consider that $X^{\ast} - \Id_{\mathcal{S}}$ stays close to $0$, and thus that $X^{\ast}(\cdot,t)$ defines a $C^1$-diffeomorphism}. Besides, the way we treat the nonhomogeneous divergence condition (in the proof of local strong solutions) requires such a regularity, in space as well as in time. Moreover, the estimates we obtain in the proof of global existence (see Proposition \ref{ch2_zprop1}) require an Eulerian velocity $w^{\ast}$ whose the regularity - roughly speaking - corresponds with the one chosen for the deformation velocity $\frac{\p X^{\ast}}{\p t}$ (see Lemma \ref{Xwstar}).\\
\hfill \\
Let us quote other works which treat of systems coupling the Navier-Stokes equations with some deformable structure, like the mathematical analysis of the interactions between a Navier-Stokes fluid and an elastic or viscoelastic structure: \cite{Boulakia}, \cite{Desjardins1}, \cite{Chambolle}, \cite{Coutand1}, \cite{Coutand2}. For the fluid-solid system we consider in the present paper, the case of weak solutions (in 3 dimensions) has been recently investigated in \cite{Necasova}. Our approach looks like a recent work of \cite{BoulST} in which the authors consider an elastic structure whose the regularity of its deformation is limited. The interest of considering deformations of the solid restricted in regularity lies especially in the perspective of a work where the deformation of the solid would be considered as a control function.

\subsection{Plan}
In section \ref{secdef} we bring precisions to the functional framework, for the unknowns written in time-depending domains and for the datum $X^{\ast}$ representing the deformation of the solid. In section \ref{secchange} we extend the flow of the solid to the fluid part; It enables us to set a change of unknowns and to write the nonlinear system that has to satisfy the new unknowns. The linearized system associated with is studied in section \ref{seclinear}. In particular Proposition \ref{ch2_thsolfaible} is used in the next section \ref{secfixe} in order to define a mapping whose a fixed point is a strong solution of the nonlinear system. We then prove that for small time this mapping is a contraction in a ball chosen large enough. Section \ref{secglobale} is devoted to prove the main result, that is to say that the local strong solution can be actually global if we assume that the data are small enough. Finally, technical lemmas used before that are proven in Appendixes A and B.

\section{Definitions, notation \textcolor{black}{and basic properties}} \label{secdef} 
We denote by
\begin{eqnarray*}
\mathcal{F} = \mathcal{F}(0) & \quad \text{ and } \quad & \mathcal{S}= \mathcal{S}(0)
\end{eqnarray*}
the domains occupied at time $t=0$ by the fluid and the solid respectively. We assume that $\mathcal{S}$ is simply connected and regular enough. We also assume that $\mathcal{O}$ is regular enough. Note that the boundary of $\mathcal{F}$ is equal to $\p \mathcal{O} \cup \p \mathcal{S}$. We set for all $t \geq 0$
\begin{eqnarray*}
\mathcal{S}^{\ast}(t) = X^{\ast}(\mathcal{S},t), & \quad & \tilde{\mathcal{F}}(t) = \mathcal{O} \setminus \overline{\mathcal{S}^{\ast}(t)}.
\end{eqnarray*}
Let be $T \in [0, + \infty]$. We set
\begin{eqnarray*}
S^0_T  =   \mathcal{S} \times (0,T), & \quad &  Q_T^0  =  \mathcal{F} \times (0,T),
\end{eqnarray*}
and
\begin{eqnarray*}
Q_T & = & \displaystyle \bigcup_{t\in (0,T)} \mathcal{F}(t) \times \{t\}.
\end{eqnarray*}
In order to deal with some Sobolev functional spaces, we use the notation
\begin{eqnarray*}
\mathbf{L}^2(\Omega) = [\L^2(\Omega)]^3, & \quad & \mathbf{H}^s(\Omega) = [\H^s(\Omega)]^3,
\end{eqnarray*}
for all domain $\Omega \subset \mathcal{O}$. Nevertheless this type of notation will be also used for other multidimensional spaces (for tensors) like $[\L^2(\Omega)]^{3\times 3}$, $[\L^2(\Omega)]^{3\times 3\times 3}$, $[\L^2(\Omega)]^{3\times 3\times 3\times 3}$, $[\H^s(\Omega)]^{3\times 3}$, $[\H^s(\Omega)]^{3\times 3\times 3}$ or $[\H^s(\Omega)]^{3\times 3\times 3\times 3}$, without ambiguity. Let us now make precise the functional spaces that we will consider in order to look for strong solutions to Problem \eqref{prems}--\eqref{ders}.\\

\subsection{Functional setting for the unknowns}
\textcolor{black}{Let be $T\in [0,+\infty]$. Let us consider a family of time-depending domains $\left(\mathcal{F}(t)\right)_{t\geq 0}$, for instance the one generated by $h(t)$ and $\mathbf{R}(t)$, as described below
\begin{eqnarray*}
\mathcal{S}(t) = h(t) + \mathbf{R}(t) \mathcal{S}(0), & \quad & \mathcal{F}(t) = \mathcal{O} \setminus \overline{\mathcal{S}(t)}.
\end{eqnarray*}
Let us first define the space
\begin{eqnarray*}
\mathcal{U}(0,T;\mathcal{F}) := \L^2(0,T;\mathbf{H}^2( \mathcal{F})) \cap 
\H^1(0,T;\mathbf{L}^2( \mathcal{F})) \cap C(0,T;\mathbf{H}^1( \mathcal{F}))
\end{eqnarray*}
}\footnote{\textcolor{black}{Note that we have the embedding $\L^2(0,T;\mathbf{H}^2( \mathcal{F})) \cap \H^1(0,T;\mathbf{L}^2( \mathcal{F})) \hookrightarrow  C([0,T];\mathbf{H}^1(\mathcal{F}))$, but we will need to control the quantity $\L^{\infty}(0,T;\mathbf{H}^1(\mathcal{F}))$ independently of $T$, when $T$ will be considered close to $0$.}}\textcolor{black}{that we endow with the norm given by
\begin{eqnarray*}
 \| \tilde{u}\|^2_{\mathcal{U}(0,T; \mathcal{F})}  :=   \int_0^T \|\tilde{u}(\cdot,t)\|_{\mathbf{H}^2(\mathcal{F})}^2 \d t
+ \int_0^T \left\|\frac{\p \tilde{u}}{\p t}(\cdot,t)\right\|_{\mathbf{L}^2(\mathcal{F})}^2 \d t
+ \sup_{t\in [0,T]} \left\| \tilde{u}(\cdot,t) \right\|_{\mathbf{H}^1(\mathcal{F})}^2.
\end{eqnarray*}
} 
The velocity $u$ will be searched \textcolor{black}{in the space $\mathcal{U}(0,T; \mathcal{F}(t))$ that we endow and define with the norm given by}
\begin{eqnarray*}
\|u\|^2_{\mathcal{U}(0,T; \mathcal{F}(t))} & := &
\int_0^T \|u(\cdot,t)\|_{\mathbf{H}^2(\mathcal{F}(t))}^2 \d t
+ \int_0^T \left\|\frac{\p u}{\p t}(\cdot,t)\right\|_{\mathbf{L}^2(\mathcal{F}(t))}^2 \d t
+ \sup_{t\in [0,T]} \left\| u(\cdot,t) \right\|_{\mathbf{H}^1(\mathcal{F}(t))}^2.
\end{eqnarray*}
Analogously we can define spaces of type $\H^{s_1}(0,T;\H^{s_2}(\Omega(t)))$ and $\H^{s_1}(0,T;\mathbf{H}^{s_2}(\Omega(t)))$ for all family of time-depending domains $(\Omega(t))_{t\geq 0}$, where $s_1$ and $s_2$ are non-negative integers, \textcolor{black}{by using the norms of the following type
\begin{eqnarray*}
\|u\|^2_{\H^{s_1}(0,T;\mathbf{H}^{s_2}(\Omega(t)))}
& := & 
\sum_{k=0}^{s_1}\int_0^T \left\|\frac{\p^k u}{\p t^k}(\cdot,t)\right\|_{\mathbf{H}^{s_{2}}(\Omega(t))}^2 \d t.
\end{eqnarray*}
}

\begin{remark}
The definitions we give here for Sobolev spaces dealing with time-depending domains are not the same as the ones given in \cite{TT} or \cite{SMSTT} for instance. If the mapping $X_{\mathcal{S}}$ is smooth enough, as in \cite{SMSTT}, we claim that these definitions are equivalent to ours, in the sense that the spaces they define are the same. But this is not so obvious when $X_{\mathcal{S}}$ is limited in regularity, like in our case.
\end{remark}

The pressure $p$ will be searched in $\L^2(0,T;\H^1(\mathcal{F}(t)))$; At each time $t$ it is determined up to a constant that we fix such that $\int_{\mathcal{F}(t)} p = 0$. Thus in particular from the Poincar\'e-Wirtinger inequality the pressures $P$ defined in $\mathcal{F}$ can be estimated in $\H^1(\mathcal{F})$ as follows\footnote{In the following the symbols $C$ and $\tilde{C}$ will denote some generic positive constants independent of time, data, or the unknowns.}
\begin{eqnarray*}
\| P \|_{\H^1(\mathcal{F})} & \leq & C \| \nabla P \|_{\L^2(\mathcal{F})}.
\end{eqnarray*}
The same estimate will be considered for other functions which play the role of a pressure in $\mathcal{F}(0)$. 

\subsection{Functional setting for changes of variables}
We consider deformations of the solid $X^{\ast}$ whose the displacement $X^{\ast} - \Id_{\mathcal{S}}$ associated with lies in the space $\mathcal{W}_0(0,T;\mathcal{S})$ that we define as follows
\begin{eqnarray*}
X^{\ast} - \Id_{\mathcal{S}} \in \mathcal{W}_0(0,T;\mathcal{S}) \Leftrightarrow \left\{ \begin{array} {lll}
\displaystyle  \frac{\p X^{\ast}}{\p t} \in
\L^2(0,T;\mathbf{H}^3( \mathcal{S})) \cap \H^{1}(0,T;\mathbf{H}^1(\mathcal{S}))
\hfill \\
X^{\ast}(y,0) = y, \ \displaystyle \frac{\p X^{\ast}}{\p t}(y , 0) = 0 \quad \forall y\in \mathcal{S}.
\end{array} \right.
\end{eqnarray*}
We endow it with the norm
\begin{eqnarray*}
\| X^{\ast} - \Id_{\mathcal{S}} \|_{\mathcal{W}_0(0,T;\mathcal{S})} 
& : = &
\left\| \frac{\p X^{\ast}}{\p t} \right\|_{\L^2(0,T;\mathbf{H}^3( \mathcal{S})) \cap \H^{1}(0,T;\mathbf{H}^1(\mathcal{S}))}.
\end{eqnarray*}
Notice that for $T < \infty$ the following embedding holds
\begin{eqnarray*}
\mathcal{W}_0(0,T;\mathcal{S}) & \hookrightarrow & \H^1(0,T;\mathbf{H}^3( \mathcal{S})) \cap \H^{2}(0,T;\mathbf{H}^1( \mathcal{S})).
\end{eqnarray*}
Thus, for more clarity, we set
\begin{eqnarray*}
\mathcal{W}(S_T^0) & = & \H^1(0,T;\mathbf{H}^3( \mathcal{S})) \cap \H^{2}(0,T;\mathbf{H}^1( \mathcal{S})), \\
\mathcal{W}(Q_T^0) & = & \H^1(0,T;\mathbf{H}^3( \mathcal{F})) \cap \H^{2}(0,T;\mathbf{H}^1( \mathcal{F})), \\
\mathcal{H}(S_T^0) & = & \L^2(0,T;\mathbf{H}^3( \mathcal{S})) \cap \H^{1}(0,T;\mathbf{H}^1( \mathcal{S})), \\
\mathcal{H}(Q_T^0) & = & \L^2(0,T;\mathbf{H}^3( \mathcal{F})) \cap \H^{1}(0,T;\mathbf{H}^1( \mathcal{F})).
\end{eqnarray*}

\textcolor{black}{
\subsection{Basic estimates}
Let us remind basic embedding estimates for functions which vanish at time $t=0$. Specifying the way the constants of these estimates depend on the existence time $T$ is important, in view of the methods used for proving the main result. In particular, the fact that they are non-decreasing with respect to $T$ is essential. First, given some Banach space $\mathrm{B}$, we have for all $f\in \H^1_0(0,T;\mathrm{B})$:
\begin{eqnarray*}
\| f \|_{\L^{\infty}(0,T;\mathrm{B})} & \leq & \sqrt{T} \left\| f' \right\|_{\L^{2}(0,T;\mathrm{B})}.
\end{eqnarray*}
On the other hand we have always:
\begin{eqnarray*}
\| f \|_{\L^{2}(0,T;\mathrm{B})} & \leq & \sqrt{T} \| f \|_{\L^{\infty}(0,T;\mathrm{B})}.
\end{eqnarray*}
The combination of these two estimates leads - for $f\in \H^1_0(0,T;B)$ -  to:
\begin{eqnarray*}
\| f \|_{\H^{1}(0,T;\mathrm{B})} & \leq & \sqrt{1+T^2} \left\| f' \right\|_{\L^{2}(0,T;\mathrm{B})}.
\end{eqnarray*}
Thus in particular the embedding mentioned above holds with the following estimate
\begin{eqnarray*}
\| X^{\ast} - \Id_{\mathcal{S}} \|_{\mathcal{W}(S_T^0)} & \leq & 
\sqrt{1+T^2} \| X^{\ast} - \Id_{\mathcal{S}} \|_{\mathcal{W}_0(0,T;\mathcal{S})},
\end{eqnarray*}
for $X^{\ast} - \Id_{\mathcal{S}} \in \mathcal{W}_0(0,T;\mathcal{S})$.}

\section{The change of variables and the change of unknowns} \label{secchange}
In order to transform the main system in domains which do not depend on time, we first extend to the whole domain $\overline{\mathcal{O}}$ the mappings $X_{\mathcal{S}}(\cdot,t)$ and $Y_{\mathcal{S}}(\cdot , t)$, initially defined respectively on $\mathcal{S}$ and $\mathcal{S}(t)$. The respective extensions $X(\cdot,t)$ and $Y(\cdot,t)$ then obtained define a change of variables which will be used to set a change of unknowns for the main system. The aim is to consider new unknowns $(\tilde{u},\tilde{p},\tilde{h}',\tilde{\omega})$ which are defined in cylindrical domains.

\subsection{The change of variables}
Let be $T_0>0$. Let $h \in \H^2(0,T_0;\R^3)$ be a vector and $\mathbf{R} \in \H^2(0,T_0;\R^9)$ a rotation which provides an angular velocity $\omega$ whose components can be read on
\begin{eqnarray*}
\mathbb{S}(\omega) & = & \frac{\d \mathbf{R}}{\d t} \mathbf{R}^T, \quad \text{with }
\mathbb{S}(\omega) = \left(
\begin{matrix}
0 & -\omega_3 & \omega_2 \\
\omega_3 & 0 & -\omega_1 \\
-\omega_2 & \omega_1 & 0
\end{matrix} \right).
\end{eqnarray*}
Since $\H^1(0,T_0;\R^3)$ is an algebra, we have $\omega \in \H^1(0,T_0;\R^3)$. \textcolor{black}{In what follows we will use the notation
\begin{eqnarray*}
\tilde{h}' = \mathbf{R}^T h', & \quad & \tilde{\omega} = \mathbf{R}^T \omega.
\end{eqnarray*}
}
For a given mapping $X^{\ast} \in \mathcal{W}_0(0,\infty;\mathcal{S})$ which satisfies the constraint
\begin{eqnarray*}
\int_{\p \mathcal{\mathcal{S}}} \frac{\p X^{\ast}}{\p t}\cdot (\com \nabla X^{\ast})n \d \Gamma & = & 0,
\end{eqnarray*}
the purpose of this subsection is to construct a mapping $X$ which satisfies
\begin{eqnarray*}
\left\{ \begin{array} {lll}
\det \nabla X = 1, & \quad & \text{in } \mathcal{F} \times (0,T), \\
X = h+\mathbf{R} X^{\ast} , & \quad & \text{on } \p \mathcal{S} \times (0,T), \\
X = \Id_{\p \mathcal{O}}, & \quad & \text{on } \p \mathcal{O} \times (0,T),
\end{array} \right.
\end{eqnarray*}
for some $T>0$, and which is such that for all $t\in [0,T)$ the function $X(\cdot,t)$ maps $\mathcal{F}$ onto $\mathcal{F}(t)$, $\p \mathcal{S}$ onto $\p \mathcal{S}(t)$, and leaves invariant the boundary $\p \mathcal{O}$. For that, let us first construct an intermediate mapping.

\begin{lemma} \label{ch3_lemmaxtension}
Let $X^{\ast}$ be a mapping such that $X^{\ast} - \Id_{\mathcal{S}} \in \mathcal{W}_0(0,\infty;\mathcal{S})$ and which satisfies for all $t\geq 0$ the equality
\begin{eqnarray}
\int_{\p \mathcal{S}} \frac{\p X^{\ast}}{\p t} \cdot \left(\com \nabla X^{\ast} \right)n \d \Gamma &  = & 0. \label{ch3_eqreccomp}
\end{eqnarray}
Then for $T >0$ small enough, there exists a mapping $\tilde{X} \in \mathcal{W}(Q_T^0) $ satisfying
\begin{eqnarray}
\left\{ \begin{array} {lcl}
\det \nabla \tilde{X}  = 1 & \quad & \text{in } \mathcal{F} \times (0,T), \label{ch3_ext1}\\
\tilde{X} = X^{\ast} & \quad & \text{on } \p \mathcal{S} \times (0,T), \\
\tilde{X} = \mathbf{R}^T(\Id-h) & \quad & \text{on } \p \mathcal{O} \times (0,T), \label{ch3_ext3} \\
\tilde{X}(\cdot,0) = \Id_{\mathcal{F}} & &
\end{array} \right.
\end{eqnarray}
and the estimate
\begin{eqnarray*}
\| \tilde{X} - \Id_{\mathcal{F}} \|_{\mathcal{W}(Q_T^0)} & \leq & C \left(
\left\| X^{\ast} - \Id_{\mathcal{S}} \right\|_{\mathcal{W}(S_{T_0}^0)}
+ \|\tilde{h}'\|_{\H^1(0,T_0;\R^3)} + \|\tilde{\omega}\|_{\H^1(0,T_0;\R^3)} \right), \label{ch3_contSF}
\end{eqnarray*}
for some independent positive constant $C$ - which in particular does not depend on $T$. Besides, if $\tilde{X}_1$ and $\tilde{X}_2$ are the solutions of problem \eqref{ch3_ext1} corresponding to the data $(X^{\ast},h_1,\mathbf{R}_1)$ and $(X^{\ast},h_2,\mathbf{R}_2)$ respectively, with
\begin{eqnarray*}
h_1(0) = h_2(0) = 0, \quad \mathbf{R}_1(0) = \mathbf{R}_2(0) = \I_{\R^3}, \quad h'_1(0) = h'_2(0), \quad \omega_1(0) = \omega_2(0),
\end{eqnarray*}
then the difference $\tilde{X}_2 - \tilde{X}_1$ satisfies
\begin{eqnarray*}
\| \tilde{X}_2 - \tilde{X}_1 \|_{\mathcal{W}_m(Q_T^0)} & \leq & \tilde{C} \left(
\|\tilde{h}'_2 -\tilde{h}'_1 \|_{\H^1(0,T_0;\R^3)} + \|\tilde{\omega}_2 - \tilde{\omega}_1 \|_{\H^1(0,T_0;\R^3)} \right),  \label{ch3_contSF21}
\end{eqnarray*}
where the constant $\tilde{C}$ does not depend on $T$.
\end{lemma}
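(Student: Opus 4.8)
The plan is to build $\tilde X$ in two steps: first an explicit extension $\Lambda$ of the prescribed boundary data whose Jacobian is close to $1$ but not equal to it, and then a correction that restores $\det\nabla\tilde X\equiv1$ without altering the boundary and initial values.

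\emph{Step 1: a first extension.} Fix a bounded linear extension operator $E\colon\mathbf H^3(\mathcal S)\to\mathbf H^3(\mathcal F)$ that preserves the trace on $\p\mathcal S$ and vanishes near $\p\mathcal O$, and fix $\chi\in C^\infty(\overline{\mathcal F})$ equal to $1$ near $\p\mathcal O$ and to $0$ on $\supp E$. Since $X^{*}$ is given on all of $\overline{\mathcal S}$ and the rigid part $x\mapsto\mathbf R^T(x-h)$ is affine in space, set
\begin{equation*}
\Lambda:=\Id_{\mathcal F}+E\bigl(X^{*}-\Id_{\mathcal S}\bigr)+\chi\bigl(\mathbf R^T(\Id-h)-\Id\bigr)\quad\text{on }\overline{\mathcal F}\times(0,T).
\end{equation*}
By construction $\Lambda=X^{*}$ on $\p\mathcal S$, $\Lambda=\mathbf R^T(\Id-h)$ on $\p\mathcal O$, and $\Lambda(\cdot,0)=\Id_{\mathcal F}$ because $X^{*}(\cdot,0)=\Id$, $\mathbf R(0)=\I_{\R^3}$, $h(0)=0$. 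As $E$ and multiplication by $\chi$ are bounded and commute with $\p_t$, one gets $\Lambda-\Id_{\mathcal F}\in\mathcal W(Q_T^0)$ with
\begin{equation*}
\|\Lambda-\Id_{\mathcal F}\|_{\mathcal W(Q_T^0)}\le C\bigl(\|X^{*}-\Id_{\mathcal S}\|_{\mathcal W(S_{T_0}^0)}+\|\tilde h'\|_{\H^1(0,T_0;\R^3)}+\|\tilde\omega\|_{\H^1(0,T_0;\R^3)}\bigr),
\end{equation*}
$C$ depending on $\mathcal F,\mathcal O,T_0$ only (to pass from $(h,\mathbf R)$ to $(\tilde h',\tilde\omega)$ one uses that $\mathbf R$ is orthogonal, that $\H^1(0,T_0)$ is an algebra, and $\mathbf R(0)=\I_{\R^3}$, $h(0)=0$). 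Moreover, since $X^{*}-\Id_{\mathcal S}$, $h$ and $\mathbf R-\I_{\R^3}$ vanish at $t=0$, the basic estimates of Section~\ref{secdef} show $\|\Lambda-\Id_{\mathcal F}\|_{\mathcal W(Q_T^0)}\to0$ as $T\to0$; since $\mathcal W(Q_T^0)\hookrightarrow C([0,T];\mathbf H^3(\mathcal F))\hookrightarrow C([0,T];C^1(\overline{\mathcal F}))$, $\Lambda(\cdot,t)$ is $C^1$-close to the identity and $\det\nabla\Lambda(\cdot,t)>0$ once $T$ is small. Finally, hypothesis \eqref{ch3_eqreccomp} is precisely the compatibility we need: differentiating $|\mathcal S^{*}(t)|=\int_{\mathcal S}\det\nabla X^{*}$ in time and using the Piola identity $\div(\com\nabla X^{*})=0$ gives $\tfrac{\d}{\d t}|\mathcal S^{*}(t)|=\int_{\p\mathcal S}\tfrac{\p X^{*}}{\p t}\cdot(\com\nabla X^{*})n\,\d\Gamma=0$, so $|\mathcal S^{*}(t)|=|\mathcal S|$; since $\Lambda(\cdot,t)$ is (for $T$ small) a $C^1$-diffeomorphism onto the region enclosed by $\p\mathcal S^{*}(t)$ and $\mathbf R(t)^T(\p\mathcal O-h(t))$, the change-of-variables formula yields $\int_{\mathcal F}\det\nabla\Lambda(\cdot,t)\,\d y=|\mathcal O|-|\mathcal S^{*}(t)|=|\mathcal F|$, i.e. $1-\det\nabla\Lambda(\cdot,t)$ has zero mean on $\mathcal F$ for every $t$.

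\emph{Step 2: correcting the Jacobian.} We look for $\tilde X=\Lambda+w$ with $w(\cdot,t)=0$ on $\p\mathcal F$ and $w(\cdot,0)=0$. Expanding $\det\nabla(\Lambda+w)$ as a polynomial in the entries of $\nabla\Lambda+\nabla w$ and using $\div(\com\nabla\Lambda)=0$, one has $\det\nabla(\Lambda+w)=\det\nabla\Lambda+(\com\nabla\Lambda):\nabla w+\mathcal N(\Lambda,w)$, where $\mathcal N$ gathers the terms at least quadratic in $\nabla w$; writing $(\com\nabla\Lambda):\nabla w=\div w+(\com\nabla\Lambda-\Id):\nabla w$, the equation $\det\nabla\tilde X=1$ becomes the fixed-point problem
\begin{equation*}
\div w=(1-\det\nabla\Lambda)-(\com\nabla\Lambda-\Id):\nabla w-\mathcal N(\Lambda,w)=:g(w)\ \text{ in }\mathcal F,\qquad w=0\ \text{ on }\p\mathcal F.
\end{equation*}
For $w$ vanishing on $\p\mathcal F$ one checks, exactly as at the end of Step~1, that $g(w)=1-\det\nabla(\Lambda+w)$ has zero spatial mean for every $t$, so one may apply a right inverse $B$ of the divergence operator --- a Bogovski\u\i-type operator, or rather the refined time-dependent version of it established in Appendix~A, which maps data in $\L^2(0,T;\mathbf H^2(\mathcal F))\cap\H^1(0,T;\mathbf L^2(\mathcal F))$ with vanishing spatial mean into $\mathcal W(Q_T^0)$ with zero trace on $\p\mathcal F$, preserves vanishing at $t=0$, and has $T$-independent norm. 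Set $\mathcal T(w):=B\,g(w)$. Because $1-\det\nabla\Lambda$ is small, $\com\nabla\Lambda-\Id$ is small in $C([0,T];\mathbf H^2(\mathcal F))$, $\mathcal N$ is at least quadratic, and $\mathbf H^2(\mathcal F)$ is a Banach algebra in dimension $3$ --- which lets every product in $g$ be estimated without ever using more than three spatial and one time derivative of $\Lambda$ --- the map $\mathcal T$ sends a small ball of $\mathcal W(Q_T^0)$ into itself and is a contraction there for $T$ small. Its unique fixed point $w$ yields $\tilde X=\Lambda+w\in\mathcal W(Q_T^0)$ with $\det\nabla\tilde X=1$, the required boundary values ($w$ vanishes on $\p\mathcal F$), and $\tilde X(\cdot,0)=\Id_{\mathcal F}$ (since $g(w)(\cdot,0)=0$).

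\emph{Estimates, stability and the main obstacle.} The asserted bound follows from $\|\tilde X-\Id_{\mathcal F}\|_{\mathcal W(Q_T^0)}\le\|w\|_{\mathcal W(Q_T^0)}+\|\Lambda-\Id_{\mathcal F}\|_{\mathcal W(Q_T^0)}$, the Step~1 estimate, and the fixed-point bound $\|w\|_{\mathcal W(Q_T^0)}\le C\|g(0)\|\le C\|\Lambda-\Id_{\mathcal F}\|_{\mathcal W(Q_T^0)}$; all constants come from $E$, $\chi$, $B$ and Sobolev embeddings on the fixed domain $\mathcal F$, and the only $T$-dependence (through the basic embeddings) is non-decreasing and absorbed into the $(0,T_0)$-norms, so $C$ is independent of $T$. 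For stability, if $\tilde X_i=\Lambda_i+w_i$ correspond to $(X^{*},h_i,\mathbf R_i)$ with $h_i(0)=0$, $\mathbf R_i(0)=\I_{\R^3}$, $h'_1(0)=h'_2(0)$, $\omega_1(0)=\omega_2(0)$, then the $X^{*}$-part cancels in $\Lambda_2-\Lambda_1=\chi\bigl(\mathbf R_2^T(\Id-h_2)-\mathbf R_1^T(\Id-h_1)\bigr)$, which is controlled by $\|\tilde h'_2-\tilde h'_1\|_{\H^1(0,T_0;\R^3)}+\|\tilde\omega_2-\tilde\omega_1\|_{\H^1(0,T_0;\R^3)}$; subtracting the two fixed-point equations and using the contraction together with the Lipschitz dependence of $B$ and $g$ on $\Lambda$ transfers this bound to $w_2-w_1$, hence to $\tilde X_2-\tilde X_1$, with a $T$-independent constant. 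The delicate point is Step~2: proving that the right inverse of the divergence gains one full spatial derivative while keeping the time regularity and with norm uniform in $T$ (the technical lemma of Appendix~A), and checking that the quadratic remainder $\mathcal N$ and the variable coefficient $\com\nabla\Lambda$ are absorbed by the algebra property of $\mathbf H^2(\mathcal F)$, so that the contraction closes within the regularity ($\mathbf H^3$ in space, $\mathbf H^1$ in time) allowed for $\p X^{*}/\p t$. Hypothesis \eqref{ch3_eqreccomp}, i.e. \textbf{H2}, is used exactly once, to make the source of the divergence equation have zero spatial mean.
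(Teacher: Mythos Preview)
Your argument is correct but follows a genuinely different route from the paper's. You use a Dacorogna--Moser decomposition $\tilde X=\Lambda+w$: an explicit lift $\Lambda$ carrying the boundary and initial data, then a correction $w$ with \emph{homogeneous} Dirichlet data obtained as the fixed point of a Bogovski\u\i-based map. The paper instead differentiates $\det\nabla\tilde X=1$ in time, rewrites $(\com\nabla\tilde X):\p_t\nabla\tilde X=0$ as a divergence problem for the \emph{velocity},
\[
\div\,\frac{\p\tilde X}{\p t}=\bigl(\I_{\R^3}-\com\nabla\tilde X\bigr):\frac{\p\nabla\tilde X}{\p t},
\]
keeps the inhomogeneous Dirichlet data ($\p_t X^*$ on $\p\mathcal S$, $-\tilde h'-\tilde\omega\wedge\mathbf R^T(\Id-h)$ on $\p\mathcal O$), and runs the fixed point directly on $\tilde X$ in a set $\mathfrak W_T$ of maps with those boundary values; contraction comes from an explicit $\sqrt T$ factor in the estimate of the right-hand side (via $\|\I_{\R^3}-\com\nabla\tilde X\|_{\L^\infty(\mathbf H^2)}\le\sqrt T\|\I_{\R^3}-\com\nabla\tilde X\|_{\H^1(\mathbf H^2)}$). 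Your route buys clean homogeneous data on $w$, so a standard right inverse of $\div$ applies; the paper's route buys that one solves for $\p_t\tilde X\in\mathcal H(Q_T^0)$ and then simply integrates once, which makes the $\mathcal W(Q_T^0)$ bookkeeping immediate. Two slips in your write-up: the right inverse $B$ must map $\H^1(0,T;\H^2(\mathcal F))\cap\H^2(0,T;\L^2(\mathcal F))$ (not $\L^2(\H^2)\cap\H^1(\L^2)$) into $\mathcal W(Q_T^0)$, since a Bogovski\u\i\ operator is purely spatial and only commutes with $\p_t$ --- your source $g(w)$ does lie in the correct space, precisely by the algebra property of $\H^1(\H^2)\cap\H^2(\L^2)$ which is Lemma~\ref{ch3_lemmecomatrice}; and Appendix~A is not a standalone Bogovski\u\i\ lemma but the paper's own proof of the present statement via the time-differentiated route, citing Galdi for the divergence problem.
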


The proof of this lemma is given in Appendix A. For all $t \in [0, T)$ we denote by $\tilde{Y}(\cdot,t)$ the inverse of $\tilde{X}(\cdot,t)$. We now directly set
\begin{eqnarray*}
X(y,t) & = & h(t) + \mathbf{R}(t) \tilde{X}(y,t),  \quad  (y,t) \in \overline{\mathcal{F}} \times (0,T), \\
Y(x,t) & = & \tilde{Y}(\mathbf{R}(t)^T(x-h(t)),t), \quad x\in \overline{\mathcal{F}(t)}, \ t\in (0,T).
\end{eqnarray*}

\subsection{Rewriting the main system in cylindrical domains} \label{sectionequiv}
Let us transform system \eqref{prems}--\eqref{ders} into a system which deals with non-depending time domains. For that we make the change of unknowns
\begin{eqnarray}
\begin{array} {lll}
\tilde{u} (y,t) =  \mathbf{R}(t)^T u(X(y,t),t), & \quad & u(x,t)  =  \mathbf{R}(t) \tilde{u} (Y(x,t),t), \label{tildeu} \\
\tilde{p} (y,t)  =  p(X(y,t),t), & \quad & p(x,t)  =  \tilde{p}(Y(x,t),t), \label{tildep}
\end{array}
\end{eqnarray}
for $x\in \overline{\mathcal{F}(t)}$ and $y\in \overline{\mathcal{F}}$. The change of variables $X$ used to define this change of unknowns has been constructed in the previous subsection. We also set
\begin{eqnarray}
\tilde{h}'(t)  =  \mathbf{R}(t)^T h'(t), & \quad & \tilde{\omega}(t)  =  \mathbf{R}(t)^T \omega(t). \label{tildeh}
\end{eqnarray}

\begin{remark} \label{remarkc}
Let us notice that if $\tilde{h}'$ and $\tilde{\omega}$ are given, then by using the second equality of \eqref{tildeh} we see that $\mathbf{R}$ satisfies the Cauchy problem
\begin{eqnarray*}
\begin{array} {ccccc}
\displaystyle \frac{\d}{\d t}(\mathbf{R}) & = & \mathbb{S}\left(
\mathbf{R}\tilde{\omega}\right) \mathbf{R} & = & \mathbf{R} \mathbb{S}\left(\tilde{\omega}\right) \\
\mathbf{R}(t=0) & = & \I_{\R^3}, & &
\end{array}
\quad \text{with }
\mathbb{S}(\tilde{\omega}) = \left(
\begin{matrix}
0 & -\tilde{\omega}_3 & \tilde{\omega}_2 \\
\tilde{\omega}_3 & 0 & -\tilde{\omega}_1 \\
-\tilde{\omega}_2 & \tilde{\omega}_1 & 0
\end{matrix} \right). \label{rotationpb}
\end{eqnarray*}
So $\mathbf{R}$ is determined in a unique way. Thus it is obvious to see that in \eqref{tildeh} the vectors $h'$ and $\omega$ are also
determined in a unique way. Moreover, since we have
\begin{eqnarray*}
u(x,t)  =  \mathbf{R}(t) \tilde{u}(Y(x,t) ,t), & \quad & p(x,t)  =  \tilde{p}(Y(x,t) ,t),
\end{eqnarray*}
and since the mapping $Y$ depends only on $h$, $\omega$ and the datum $X^{\ast}$, we finally see that if $(\tilde{u}, \tilde{p}, \tilde{h}', \tilde{\omega})$ is given, then $(u,p,h',\omega)$ is determined in a unique way.
\end{remark}

Using the change of unknowns given above by \eqref{tildeu} and \eqref{tildeh}, system \eqref{prems}--\eqref{ders} is rewritten in the cylindrical domain $\mathcal{F} \times (0,T)$ as follows
\begin{eqnarray}
\frac{\p \tilde{u}}{\p t} - \nu \LL \tilde{u} + \MM
(\tilde{u}, \tilde{h}', \tilde{\omega})+ \NN \tilde{u} + \tilde{\omega}(t)\wedge\tilde{u}+ \GG \tilde{p}
 =  0, & \quad & y \in \mathcal{F} ,\quad t\in (0,T), \label{premsfix} \\
\div \ \tilde{u}  =  g, & \quad & y \in \mathcal{F} ,\quad t\in (0,T),
\label{deusfix}
\end{eqnarray}
\begin{eqnarray}
\tilde{u} = 0 , & \quad & y\in \p \mathcal{O} ,\quad t\in (0,T), \label{troisfix} \\
\tilde{u}  =  \tilde{h}'(t) + \tilde{\omega} (t) \wedge X^{\ast}(y,t)+ \frac{\p X^{\ast}}{\p t}(y,t) ,
& \quad & y\in \p \mathcal{S},\quad t\in (0,T), \label{quatrefix}
\end{eqnarray}
\begin{eqnarray}
M \tilde{h}''(t) & = & - \int_{\p \mathcal{S}} \tilde{\sigma}(\tilde{u},
\tilde{p}) \nabla \tilde{Y}(\tilde{X})^T n \d \Gamma - M\tilde{\omega}(t)\wedge \tilde{h}'(t) , \quad t\in (0,T),   \label{cinqfix} \\
I^{\ast}(t)\tilde{\omega}' (t) & = &  -  \int_{\p \mathcal{S}} X^{\ast} \wedge \left(
\tilde{\sigma}(\tilde{u},\tilde{p})\nabla \tilde{Y}(\tilde{X})^T n\right) \d \Gamma \nonumber \\
& & - {I^{\ast}}'(t)\tilde{\omega}(t)+ I^{\ast}(t)\tilde{\omega}(t)\wedge\tilde{\omega}(t), \quad t\in (0,T), \label{sixfix}
\end{eqnarray}
\begin{eqnarray}
\tilde{u}(y,0)  =  u_0 (y), \  y\in \mathcal{F} , \quad \tilde{h}'(0)=h_1 \in \R^3 ,\quad \tilde{\omega}(0) = \omega_0 \in \R^3 , \label{dersfix}
\end{eqnarray}
\noindent where $[ \cdot ]_i$ specifies the i-th component of a vector
\begin{eqnarray}
& & [ \LL  \tilde{u} ]_i(y,t)  =  [ \nabla \tilde{u}(y,t) \Delta \tilde{Y}(\tilde{X}(y,t),t)]_i  +  \nabla^2 \tilde{u}_i(y,t) : \left(\nabla \tilde{Y} \nabla \tilde{Y}^T \right)(\tilde{X}(y,t),t),
 \label{LL} \\
& & \MM (\tilde{u}, \tilde{h}', \tilde{\omega})(y,t) = -\nabla \tilde{u} (y,t) \nabla \tilde{Y}(\tilde{X}(y,t),t)\left(\tilde{h}'(t) + \tilde{\omega} \wedge \tilde{X}(y,t) + \frac{\p \tilde{X}}{\p t}(y,t)\right) ,\nonumber \\  \label{MM} \\
& & \NN  \tilde{u}(y,t) = \nabla \tilde{u}(y,t) \nabla \tilde{Y}(\tilde{X}(y,t),t) \tilde{u}(y,t),  \label{NN} \\
& & \GG  \tilde{p} (y,t)  =  \nabla \tilde{Y}(\tilde{X}(y,t),t)^T \nabla \tilde{p}(y,t), \label{GG} \\
& & \tilde{\sigma}(\tilde{u},\tilde{p})(y,t) =
\nu\left(\nabla \tilde{u}(y,t) \nabla \tilde{Y}(\tilde{X}(y,t),t) + \nabla \tilde{Y}(\tilde{X}(y,t),t)^T \nabla \tilde{u}(y,t)^T \right) - \tilde{p}(y,t) \I_{\R^3}, \nonumber \\
& & \text{\textcolor{black}{$I^{\ast}(t) \displaystyle  = \rho_{\mathcal{S}} \displaystyle \int_{\mathcal{S}}\left(|X^{\ast}(y,t)|^2\I_{\R^3}-X^{\ast}(y,t)\otimes X^{\ast}(y,t) \right)\d y$}} \nonumber
\end{eqnarray}
and
\begin{eqnarray*}
g(y,t)  & = &  \trace\left( \nabla \tilde{u}(y,t) \left(\I_{\R^3} -
\nabla \tilde{Y}\left(\tilde{X}(y,t),t\right) \right) \right) \nonumber
\\
& = & \nabla \tilde{u}(y,t) : \left(\I_{\R^3} - \nabla \tilde{Y}\left(\tilde{X}(y,t),t\right)^T\right)
. \label{expg}
\end{eqnarray*}
\textcolor{black}{This additional divergence term can be actually expressed in the form $g = \div \ G$, where
\begin{eqnarray*}
G(y,t) & = &  \left(\I_{\R^3} - \nabla \tilde{Y}(\tilde{X}(y,t),t)\right)\tilde{u}(y,t) .
\end{eqnarray*}
Indeed, if we calculate
\begin{eqnarray*}
\div \ G & = &  \left(\I_{\R^3} - \nabla \tilde{Y}(\tilde{X})\right)^T : \nabla \tilde{u} - \tilde{u} \cdot \div \left( \nabla \tilde{Y}(\tilde{X})^T\right),
\end{eqnarray*}
the second term of this expression vanishes, because we have by construction
\begin{eqnarray*}
 \nabla \tilde{Y}(\tilde{X})^T  = \det(\nabla \tilde{X}) \nabla \tilde{Y}(\tilde{X})^T = \com(\nabla \tilde{X}) ,
\end{eqnarray*}
and the Piola identity (see \cite{Ciarlet} for instance, the first part of the proof of Theorem 1.7-1 page 39) can be written as
\begin{eqnarray*}
\div \left(\com(\nabla \tilde{X})\right) & = & 0.
\end{eqnarray*}
}

\textcolor{black}{
Searching for solutions $(u,p,h',\omega)$ to system \eqref{prems}--\eqref{ders} in the space
\begin{eqnarray*}
\mathcal{U}(0,T;\mathcal{F}(t)) \times \L^2(0,T;\mathbf{H}^1(\mathcal{F}(t))) \times \H^1(0,T;\R^3) \times \H^1(0,T;\R^3).
\end{eqnarray*}
is equivalent to searching for solutions $(\tilde{u},\tilde{p},\tilde{h}',\tilde{\omega})$ to system \eqref{premsfix}--\eqref{dersfix} in
\begin{eqnarray*}
\mathcal{U}(0,T;\mathcal{F}) \times \L^2(0,T;\mathbf{H}^1(\mathcal{F})) \times \H^1(0,T;\R^3) \times \H^1(0,T;\R^3).
\end{eqnarray*}
The main tools for transforming system \eqref{prems}--\eqref{ders} into \eqref{premsfix}--\eqref{dersfix} lie in the {\it chain rule} and in change of variables formulas in integrals, given in \cite{Gurtin} (page 51) for instance. For example, equation \eqref{cinq} is transformed into \eqref{cinqfix} by writing
\begin{eqnarray*}
\int_{\p \mathcal{S}(t)} \sigma(u,p)(x,t)n(x,t)\d \Gamma & = & 
\int_{\mathcal{S}}  \sigma(u,p)(X(y,t),t) \nabla Y(X(y,t),t)^Tn(y,0)\d \Gamma
\end{eqnarray*}
with
\begin{eqnarray*}
\nabla Y(X(y,t),t) & = & \nabla \tilde{Y}(\tilde{X}(y,t),t)\mathbf{R}^T, \\
\nabla u(X(y,t),t) & = & \mathbf{R}(t) \nabla \tilde{u}(y,t) \nabla Y(X(y,t),t) \\
& = & \mathbf{R}(t) \nabla \tilde{u}(y,t) \nabla \tilde{Y}(\tilde{X}(y,t),t)\mathbf{R}(t)^T, \\
 \sigma(u,p)(X(y,t),t) & = & \nu \left(\nabla u(X(y,t),t) + \nabla u(X(y,t),t)^T \right)
 - p(X(y,t),t)\I_{\R^3} \\
 & = &  \nu \mathbf{R}(t) \tilde{\sigma}(\tilde{u}, \tilde{p})\mathbf{R}(t)^T,
\end{eqnarray*}
and so we have
\begin{eqnarray*}
\int_{\p \mathcal{S}(t)} \sigma(u,p)n\d \Gamma & = & 
\mathbf{R}(t)\int_{\mathcal{S}} \tilde{\sigma}(\tilde{u}, \tilde{p})\nabla \tilde{Y}(\tilde{X})^Tn \d \Gamma.
\end{eqnarray*}
The same type of calculations holds for transforming \eqref{six} into \eqref{sixfix}.\\
}
\hfill \\
In order to consider a linearized system, we rewrite the nonlinear system \eqref{premsfix}--\eqref{dersfix} as follows
\begin{eqnarray}
\frac{\p \tilde{u}}{\p t} -\nu \Delta \tilde{u} + \nabla \tilde{p}
 =  F(\tilde{u},\tilde{p},\tilde{h}',\tilde{\omega}), & \quad &  \text{in } \mathcal{F}\times (0,T), \label{hpremsfix} \\
\div \ \tilde{u}  =  \div \ G(\tilde{u},\tilde{h}',\tilde{\omega}), & \quad & \text{in } \mathcal{F}\times (0,T), \label{hdeusfix}
\end{eqnarray}
\begin{eqnarray}
\tilde{u} = 0 , & \quad & \text{in } \p \mathcal{O}\times (0,T),  \label{htroisfix} \\
\tilde{u}  =  \tilde{h}'(t) + \tilde{\omega} (t) \wedge y + W(\tilde{\omega}) , & \quad & (y,t)\in \p \mathcal{S}\times (0,T), \label{hquatrefix}
\end{eqnarray}
\begin{eqnarray}
M \tilde{h}''  =  - \int_{\p \mathcal{S}} \sigma(\tilde{u},\tilde{p}) n  \d \Gamma + F_M(\tilde{u},\tilde{p},\tilde{h}',\tilde{\omega}),
& \quad & \text{in } (0,T),  \label{hcinqfix} \\
I_0\tilde{\omega}' (t)  =   -  \int_{\p \mathcal{S}} y \wedge \sigma(\tilde{u},\tilde{p}) n  \d \Gamma + F_I(\tilde{u},\tilde{p},\tilde{h}',\tilde{\omega}), & \quad  & \text{in } (0,T),  \label{hsixfix}
\end{eqnarray}
\begin{eqnarray}
\tilde{u}(y,0)  =  u_0 (y), \  y\in \mathcal{F} , \quad \tilde{h}'(0)=h_1 \in \R^3 ,\quad \tilde{\omega}(0) =\omega_0 \in \R^3 , \label{hdersfix}
\end{eqnarray}
with
\begin{eqnarray*}
F(\tilde{u},\tilde{p},\tilde{h}',\tilde{\omega}) & = & \nu (\LL - \Delta) \tilde{u} - \MM (\tilde{u}, \tilde{h}', \tilde{\omega}) - \NN \tilde{u} - (\GG-\nabla) \tilde{p} - \tilde{\omega} \wedge \tilde{u}, \label{rhsF} \\
G(\tilde{u},\tilde{h}',\tilde{\omega}) & = & \left(\I_{\R^3}- \nabla \tilde{Y}(\tilde{X}(y,t),t)\right)\tilde{u}, \label{0rhsG} \\
W(\tilde{\omega}) & = & \tilde{\omega} \wedge \left(X^{\ast} - \Id\right) + \frac{\p X^{\ast}}{\p t}, \label{0rhsW} \\
F_M(\tilde{u},\tilde{p},\tilde{h}',\tilde{\omega}) & = & -M \tilde{\omega} \wedge \tilde{h}'(t) \nonumber \\
 & & - \nu\int_{\p \mathcal{S}}\left(\nabla \tilde{u} \left(\nabla \tilde{Y}(\tilde{X}) - \I_{\R^3}\right) + \left({\nabla \tilde{Y}(\tilde{X})} - \I_{\R^3}\right)^T\nabla \tilde{u}^T \right)\nabla \tilde{Y}(\tilde{X})^T n \d \Gamma \nonumber \\
 & & - \int_{\p \mathcal{S}}\sigma(\tilde{u},\tilde{p})\left(\nabla \tilde{Y}(\tilde{X})-\I_{\R^3}\right)^Tn\d \Gamma, \label{0rhsFM} \\
F_I(\tilde{u},\tilde{p},\tilde{h}', \tilde{\omega}) & = & -\left(I^{\ast} - I_0\right) \tilde{\omega}' - {I^{\ast}}'\tilde{\omega} +I^{\ast}\tilde{\omega} \wedge \tilde{\omega} \nonumber \\
& &  - \nu\int_{\p \mathcal{S}}y\wedge \left(\nabla \tilde{u} \left(\nabla \tilde{Y}(\tilde{X}) - \I_{\R^3}\right)
+ ({\nabla \tilde{Y}(\tilde{X})} - \I_{\R^3})^T\nabla \tilde{u}^T\right)\nabla \tilde{Y}(\tilde{X})^T n \d \Gamma \nonumber \\
& & - \int_{\p \mathcal{S}}y\wedge \left(\sigma(\tilde{u},\tilde{p})(\nabla \tilde{Y}(\tilde{X})-\I_{\R^3})^Tn\right)\d \Gamma \nonumber \\
& & + \int_{\p \mathcal{S}}\left(X^{\ast}-\Id\right)\wedge \left(\tilde{\sigma}(\tilde{u},\tilde{p})
\nabla \tilde{Y}(\tilde{X})^T n\right)\d \Gamma, \label{0rhsFI} \\
\text{\textcolor{black}{$\displaystyle I^{\ast}(t)$}} 
& \text{\textcolor{black}{=}} & 
\text{\textcolor{black}{$\rho_{\mathcal{S}} \int_{\mathcal{S}}\left(|X^{\ast}(y,t)|^2\I_{\R^3}-X^{\ast}(y,t)\otimes X^{\ast}(y,t) \right)\d y.$}}
\end{eqnarray*}

\begin{remark} \label{remarkcc}
\textcolor{black}{First, we can verify that $G \in \mathcal{U}(0,T;\mathcal{F})$, and from the homogeneous condition \eqref{troisfix} on $\tilde{u}$ we have $G = 0$ on $\p \mathcal{O}$.} An important remark is the following: Since systems \eqref{prems}--\eqref{ders} and \eqref{hpremsfix}--\eqref{hdersfix} are equivalent, and since under Hypothesis {\bf H2} the compatibility condition is satisfied for system \eqref{prems}--\eqref{ders}, in system \eqref{hpremsfix}--\eqref{hdersfix} the underlying compatibility condition enables us to have automatically the following equality
\begin{eqnarray*}
\int_{\p \mathcal{S}} G(\tilde{u},\tilde{h}',\tilde{\omega})\cdot n \d \Gamma & = & \int_{\p \mathcal{S}}W(\tilde{\omega}) \cdot n \d \Gamma
\end{eqnarray*}
as soon as $\tilde{u} = 0$ on $\p \mathcal{O}$.\\
\textcolor{black}{Moreover, given the expression \eqref{hquatrefix} of $\tilde{u}$ on $\p \mathcal{S}$, we can prove that if $\tilde{h}', \tilde{\omega} \in \H^1(0,T;\R^3)$ and if $X^{\ast} - \Id_{\mathcal{S}} \in \mathcal{W}_0(0,T;\mathcal{S})$, then we can consider
\begin{eqnarray*}
G(\tilde{u},\tilde{h}',\tilde{\omega})_{| \p \mathcal{S}} & \in & \L^2(0,T;\mathbf{H}^{3/2}(\p \mathcal{S}))\cap \H^1(0,T;\mathbf{H}^{\varepsilon}(\p \mathcal{S}))
\end{eqnarray*}
as soon as $\tilde{u}_{| \p \mathcal{S}} \in \L^2(0,T;\mathbf{H}^{3/2}(\p \mathcal{S}))\cap \H^1(0,T;\mathbf{H}^{\varepsilon}(\p \mathcal{S}))$, with $0 < \varepsilon  < 1/2$. See Lemma \ref{ch3_lemmaH3} for the proof of these regularities.}
\end{remark}

\section{The nonhomogeneous linear system} \label{seclinear}
Let $\mathbb{F}$, $\mathbb{G}$, $\mathbb{W}$, $\mathbb{F}_M$ and $\mathbb{F}_I$ be some data. We assume that $\mathbb{G}$ satisfies the homogeneous condition
\begin{eqnarray*}
\mathbb{G} = 0 & & \text{ on } \p \mathcal{O}
\end{eqnarray*}
and also the compatibility condition
\begin{eqnarray*}
\int_{\p \mathcal{S}} \mathbb{G}\cdot n \d \Gamma & = & \int_{\p \mathcal{S}} \mathbb{W} \cdot n \d \Gamma.  \label{ch2_conddiv}
\end{eqnarray*}
\textcolor{black}{For $0 < \varepsilon < 1/2$,} we assume that
\begin{eqnarray*}
\begin{array} {l}
\mathbb{F} \in \L^2(0,T;\mathbf{L}^2(\mathcal{F})),
 \quad  \mathbb{G} \in \mathcal{U}(0,T;\mathcal{F}), \\
\text{\textcolor{black}{$\mathbb{G}_{|\p \mathcal{S}} \in \L^2(0,T;\mathbf{H}^{3/2}(\p \mathcal{S}))\cap \H^1(0,T;\mathbf{H}^{\varepsilon}(\p \mathcal{S}))$}}, \\
\mathbb{W} \in \L^2(0,T;\mathbf{H}^{3/2}(\p \mathcal{S}))\cap \H^1(0,T;\mathbf{H}^{1/2}(\p \mathcal{S})), \\
\mathbb{F}_M \in \L^2(0,T;\R^3),  \mathbb{F}_I \in \L^2(0,T;\R^3).
\end{array}
\end{eqnarray*}
In this section we consider for $0<  \dist(\mathcal{S},\p\mathcal{O})$ the following linear system
\begin{eqnarray}
\frac{\p \tilde{U}}{\p t} - \nu \Delta \tilde{U} + \nabla \tilde{P}  = \mathbb{F}, & \quad & \textrm{in $\mathcal{F} \times (0,T)$}, \label{premslin}\\
\div \  \tilde{U}  =  \div \ \mathbb{G}, & \quad & \textrm{in $\mathcal{F} \times (0,T)$},
\end{eqnarray}
\begin{eqnarray}
\tilde{U}  =  0 , &\quad & \textrm{on $\p \mathcal{O} \times (0,T)$}, \\
\tilde{U}  =  \tilde{h}'(t) + \tilde{\omega} (t) \wedge y + \mathbb{W} , & \quad & y \in \ \p \mathcal{S} ,\quad t\in (0,T),
\end{eqnarray}
\begin{eqnarray}
M \tilde{h}''(t) = - \int_{\p \mathcal{S}} \sigma(\tilde{U},\tilde{P}) n \d \Gamma + \mathbb{F}_M , \quad t\in (0,T),\\
I_0\tilde{\omega}' (t) = -  \int_{\p \mathcal{S}} y\wedge \sigma(\tilde{U},\tilde{P}) n \d \Gamma  + \mathbb{F}_I  , \quad  t\in (0,T),
\end{eqnarray}
\begin{eqnarray}
\tilde{U}(y,0)  =  u_0 (y), \ y \in \mathcal{F}, \quad \tilde{h}'(0)=h_1 \in \R^3 ,  \quad  \tilde{\omega}(0) = \omega_0 \in \R^3. \label{derslin}
\end{eqnarray}

\subsection{A lifting method}
Let us first eliminate the nonhomogeneous divergence condition: By setting
\begin{eqnarray*}
U=\tilde{U}-G, \quad P=\tilde{P}, \quad H'=\tilde{h}', \quad  \Omega = \tilde{\omega}
\end{eqnarray*}
we rewrite system \eqref{premslin}--\eqref{derslin} as
\begin{eqnarray*}
\frac{\p U}{\p t} - \nu \Delta U + \nabla P  =  \hat{F}, & \quad & \textrm{in $\mathcal{F} \times (0,T)$}, \label{ch2_premsss}\\
\div \  U  =  0, & \quad & \textrm{in $\mathcal{F} \times (0,T)$},
\end{eqnarray*}
\begin{eqnarray*}
U  =  0, & \quad & \textrm{on } \p \mathcal{O} \times (0,T), \\
U  =  H'(t) + \Omega (t) \wedge y + \hat{W}, & \quad &  y \in \ \p \mathcal{S}
,\ t\in (0,T),
\end{eqnarray*}
\begin{eqnarray*}
M H''(t) = - \int_{\p \mathcal{S}} \sigma(U,P) n \d \Gamma + \hat{F}_M , \quad  t\in
(0,T),\\
I_0\Omega' (t) = -  \int_{\p \mathcal{S}} y\wedge  \sigma(U,P)
n \d \Gamma + \hat{F}_I  , \quad  t\in (0,T),
\end{eqnarray*}
\begin{eqnarray*}
U(y,0)  = u_0 (y), \ y \in \mathcal{F}, \quad H'(0)=h_1 \in \R^3 ,  \quad  \Omega(0) = \omega_0 \in \R^3, \label{ch2_dersss}
\end{eqnarray*}
with
\begin{eqnarray*}
\begin{array} {ll}
\hat{F}  =  \mathbb{F} - \displaystyle \frac{\p \mathbb{G}}{\p t}+ \text{\textcolor{black}{$\nu \Delta \mathbb{G}$}} ,  & \hat{W}  =  \mathbb{W}-\mathbb{G} , \\
\hat{F}_M  =  \mathbb{F}_M - \displaystyle 2\nu \int_{\p \mathcal{S}} D(\mathbb{G})n\d \Gamma ,  & \hat{F}_I  =  \mathbb{F}_I - \displaystyle 2\nu \int_{\p \mathcal{S}} y\wedge D(\mathbb{G})n\d \Gamma.
\end{array}
\end{eqnarray*}
We now use a {\it lifting method} in order to tackle the non-homogeneous Dirichlet condition $\hat{W}$ on $\p \mathcal{S}$ and then establish an existence result for the linear system \eqref{premslin}--\eqref{derslin}. We split this problem into two more simple problems, by setting
\begin{eqnarray*}
U= V + \w , \label{ch2_decompou} & \quad &
P= Q + \pi , \label{ch2_decompop}
\end{eqnarray*}
where, for all $t\in (0,T)$, the couple $(\w,\pi)$ satisfies
\begin{eqnarray}
-  \nu \Delta \w(t) + \nabla \pi(t) = 0, & \quad & \textrm{in
$\mathcal{F}$}, \label{ch2_jpr1}  \\
\div \  \w(t) = 0 , & \quad & \textrm{in $\mathcal{F}$},  \label{ch2_jpr2}  \\
\w(t)  =  W(\cdot,t) , & \quad & \textrm{on $\p \mathcal{S}$}, \label{ch2_jpr25} \\
\w(t) = 0 , & \quad & \textrm{on $\p \mathcal{O}$} , \label{ch2_jpr3}
\end{eqnarray}
and where the couple $(V,Q)$ satisfies
\begin{eqnarray}
\frac{\p V}{\p t} - \nu \Delta V + \nabla Q = F , & \quad & \textrm{in $\mathcal{F} \times (0,T)$}, \label{ch2_premlinp}\\
\div \  V = 0 ,  & \quad & \textrm{in $\mathcal{F} \times (0,T)$},\\
V = 0 , & \quad & \textrm{on $\p \mathcal{O} \times (0,T)$}, \\
V = H'(t) + \Omega (t)\wedge y  , & \quad &  y\in \ \p \mathcal{S} ,\quad t\in (0,T),
\label{ch2superS}\\
M H''(t) = - \int_{\p \mathcal{S}} \sigma(V,Q) n \d \Gamma  + F_M, & \quad &  t\in (0,T),\\
I_0\Omega' (t) = -  \int_{\p \mathcal{S}} y\wedge \sigma(V,Q) n \d \Gamma + F_I  , & \quad &  t\in(0,T),
\end{eqnarray}
\begin{eqnarray}
V(y,0) = u_0 (y) - \w(y,0), \  y\in \mathcal{F}, \quad H'(0)=h_1 \in \R^3 ,  \quad \Omega(0) = \omega_0 \in \R^3, \label{ch2_derlinp}
\end{eqnarray}
with
\begin{eqnarray*}
\begin{array} {ll}
F  =  \hat{F} - \displaystyle \frac{\p \w}{\p t}  , & W  =  \hat{W}, \\
F_M  =  \hat{F}_M + \displaystyle \int_{\p \mathcal{S}}\sigma(\w, \pi)n\d \Gamma , & F_I  =  \hat{F}_I + \displaystyle \int_{\p \mathcal{S}} y\wedge \sigma(\w,\pi)n\d \Gamma.
\end{array}
\end{eqnarray*}

To sum up, we have as right-hand-sides:
\begin{eqnarray}
& & F  =  \mathbb{F} - \frac{\p \mathbb{G}}{\p t} + \nu \Delta \mathbb{G} - \frac{\p \w}{\p t} ,
\label{ch2_secm1} \\
& & W  =  \mathbb{W}-\mathbb{G} , \\
& & F_M  =  \mathbb{F}_M - 2\nu \int_{\p \mathcal{S}} D(\mathbb{G})n\d \Gamma + \displaystyle \int_{\p \mathcal{S}}\sigma(\w, \pi)n\d \Gamma,\\
& & F_I  =  \mathbb{F}_I - 2\nu \int_{\p \mathcal{S}} y\wedge D(\mathbb{G})n\d \Gamma
+ \displaystyle \int_{\p \mathcal{S}} y\wedge \sigma(\w,\pi)n\d \Gamma . \label{ch2_secm4}
\end{eqnarray}

\subsubsection{Stokes problem} \label{ch2_transpart}
We now look at the problem \eqref{ch2_jpr1}--\eqref{ch2_jpr3}. Let us keep in mind that we have the compatibility condition
\begin{eqnarray*}
 \int_{\p \mathcal{S}}\left(\mathbb{W}(y) -\mathbb{G}(y)\right)\cdot n \d \Gamma & = & 0 . \label{ch2_condadm}
\end{eqnarray*}
Let us set a result of existence and uniqueness in $\mathcal{U}(0,T;\mathcal{F}) \times \L^2(0,T;\H^1(\mathcal{F} ))$ for this nonhomogeneous boundary problem, which is a consequence of a result stated in \cite{Galdi1}, Theorem 6.1, Chapter IV.\\

\begin{proposition} \label{ch2_propw}
There exists a unique couple $(\w,\nabla\pi)\in \mathcal{U}(0,T;\mathcal{F}) \times
\L^2(0,T;\L^2(\mathcal{F} ))$ solution of system \eqref{ch2_jpr1}--\eqref{ch2_jpr3}
for almost all $t\in (0,T)$. Moreover, there exists a positive constant $C$ such that
\begin{eqnarray*}
& & \| \w \|_{\mathcal{U}(0,T;\mathcal{F})}
\text{\textcolor{black}{$+  \| \w_{|\p \mathcal{S}} \|_{\H^1(0,T;\H^{\varepsilon}(\p \mathcal{S} ))}$}}
+ \| \nabla \pi \|_{\L^2(0,T;\L^2(\mathcal{F} ))} \leq \\
& & C \left( \|\mathbb{W}\|_{\L^2(0,T;\mathbf{H}^{3/2}(\p \mathcal{S}))\cap \H^1(0,T;\mathbf{H}^{1/2}(\p \mathcal{S}))} \right.   \text{\textcolor{black}{$ \displaystyle
\left. + \| \mathbb{G} \|_{\mathcal{U}(0,T;\mathcal{F})} + 
\|\mathbb{G}_{|\p \mathcal{S}}\|_{\H^1(0,T;\mathbf{H}^{\varepsilon}(\p \mathcal{S}))} \right)$}}.
\end{eqnarray*}
\end{proposition}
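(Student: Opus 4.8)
The plan is to solve \eqref{ch2_jpr1}--\eqref{ch2_jpr3} as a family, indexed by $t\in(0,T)$, of stationary Stokes problems with nonhomogeneous Dirichlet data, and then to recover the time regularity from the linearity of the problem together with the fact that the domain $\mathcal{F}$ and the Stokes operator do not depend on $t$. Recall that on $\p\mathcal{S}$ the boundary value is $W(\cdot,t)=\bigl(\mathbb{W}(\cdot,t)-\mathbb{G}(\cdot,t)\bigr)_{|\p\mathcal{S}}$ and on $\p\mathcal{O}$ it is $0$, so the full Dirichlet datum belongs to $\mathbf{H}^{3/2}(\p\mathcal{F})$ and satisfies the flux condition $\int_{\p\mathcal{F}}W\cdot n\,\d\Gamma=\int_{\p\mathcal{S}}(\mathbb{W}-\mathbb{G})\cdot n\,\d\Gamma=0$, by the compatibility assumption; moreover $\dist(\mathcal{S},\p\mathcal{O})>0$ ensures that $\p\mathcal{F}$ is the disjoint union of the two smooth surfaces $\p\mathcal{O}$ and $\p\mathcal{S}$.

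First I would apply Theorem~6.1, Chapter~IV of \cite{Galdi1} pointwise in $t$: it yields a unique $(\w(t),\nabla\pi(t))\in\mathbf{H}^2(\mathcal{F})\times\mathbf{L}^2(\mathcal{F})$ with $\|\w(t)\|_{\mathbf{H}^2(\mathcal{F})}+\|\nabla\pi(t)\|_{\mathbf{L}^2(\mathcal{F})}\le C\,\|W(\cdot,t)\|_{\mathbf{H}^{3/2}(\p\mathcal{S})}$, the constant $C$ depending only on $\mathcal{F}$ and $\nu$. Squaring this inequality and integrating over $(0,T)$ gives the $\L^2(0,T;\mathbf{H}^2(\mathcal{F}))$ bound on $\w$ and the $\L^2(0,T;\mathbf{L}^2(\mathcal{F}))$ bound on $\nabla\pi$, both controlled by $\|\mathbb{W}\|_{\L^2(0,T;\mathbf{H}^{3/2}(\p\mathcal{S}))}+\|\mathbb{G}\|_{\mathcal{U}(0,T;\mathcal{F})}+\|\mathbb{G}_{|\p\mathcal{S}}\|_{\L^2(0,T;\mathbf{H}^{3/2}(\p\mathcal{S}))}$; uniqueness in the time-dependent class follows from the pointwise uniqueness.

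For the time-derivative part I would write $\w(t)=\mathcal{T}\bigl(W(\cdot,t)\bigr)$, where $\mathcal{T}$ denotes the (time-independent) stationary-Stokes solution operator, which is bounded from $\mathbf{H}^{3/2}(\p\mathcal{F})$-data into $\mathbf{H}^2(\mathcal{F})$ and also, by the transposition/very-weak solvability theory for the stationary Stokes system (again \cite{Galdi1}), from $\mathbf{L}^2(\p\mathcal{F})$-data into $\mathbf{H}^{1/2}(\mathcal{F})$. Since $\p_tW=\p_t\mathbb{W}_{|\p\mathcal{S}}-\p_t\mathbb{G}_{|\p\mathcal{S}}$ lies in $\L^2(0,T;\mathbf{H}^{\varepsilon}(\p\mathcal{S}))$ — the two terms being respectively in $\mathbf{H}^{1/2}\hookrightarrow\mathbf{H}^{\varepsilon}$ and in $\mathbf{H}^{\varepsilon}$ in space, as $\varepsilon<1/2$ — hence a fortiori in $\L^2(0,T;\mathbf{L}^2(\p\mathcal{S}))$, and since the differentiated flux condition $\int_{\p\mathcal{S}}\p_tW\cdot n\,\d\Gamma=\p_t\!\int_{\p\mathcal{S}}W\cdot n\,\d\Gamma=0$ holds automatically, composition with the linear, $t$-independent operator $\mathcal{T}$ gives $\w\in\H^1(0,T;\mathbf{H}^{1/2}(\mathcal{F}))$ with $\p_t\w=\mathcal{T}(\p_tW)$, hence in particular $\p_t\w\in\L^2(0,T;\mathbf{L}^2(\mathcal{F}))$. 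Combined with the $\L^2(0,T;\mathbf{H}^2(\mathcal{F}))$ bound and the embedding $\L^2(0,T;\mathbf{H}^2(\mathcal{F}))\cap\H^1(0,T;\mathbf{L}^2(\mathcal{F}))\hookrightarrow C([0,T];\mathbf{H}^1(\mathcal{F}))$, this shows $\w\in\mathcal{U}(0,T;\mathcal{F})$ with the full norm estimate. The remaining trace term is immediate from the boundary condition \eqref{ch2_jpr25}: one has $\w_{|\p\mathcal{S}}=W=\mathbb{W}_{|\p\mathcal{S}}-\mathbb{G}_{|\p\mathcal{S}}$, so $\|\w_{|\p\mathcal{S}}\|_{\H^1(0,T;\mathbf{H}^{\varepsilon}(\p\mathcal{S}))}\le\|\mathbb{W}\|_{\H^1(0,T;\mathbf{H}^{1/2}(\p\mathcal{S}))}+\|\mathbb{G}_{|\p\mathcal{S}}\|_{\H^1(0,T;\mathbf{H}^{\varepsilon}(\p\mathcal{S}))}$.

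I expect the only genuinely delicate point to be the time-derivative step: one has to make sure that the reduced smoothness available for $\p_t\mathbb{G}_{|\p\mathcal{S}}$ (merely $\mathbf{H}^{\varepsilon}$ in space) is still compatible with membership in $\mathcal{U}(0,T;\mathcal{F})$. This goes through precisely because $\mathcal{U}$ only requires $\L^2$-in-space control of $\p_t\w$ — not $\mathbf{H}^1$-in-space — and because the stationary Stokes operator gains half a derivative in space, so that $\mathbf{L}^2$-data on $\p\mathcal{F}$ already produces $\p_t\w\in\mathbf{L}^2(\mathcal{F})$. Everything else is a routine application of stationary Stokes theory combined with linearity in the data and Fubini's theorem in $t$.
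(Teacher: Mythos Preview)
Your proposal is correct and follows the same approach as the paper, which simply states the proposition as a consequence of Theorem~6.1, Chapter~IV of \cite{Galdi1} without further elaboration. You supply considerably more detail than the paper does---in particular the time-differentiation argument via the linear, $t$-independent solution operator and the explicit handling of the trace term through the identity $\w_{|\p\mathcal{S}}=\mathbb{W}-\mathbb{G}_{|\p\mathcal{S}}$---but the underlying idea is identical.
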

The estimate we give in this proposition is not sharp, but it is sufficient for what will follow.

\subsubsection{Semigroup approach} \label{ch2_paraoui}
We solve \eqref{ch2_premlinp}--\eqref{ch2_derlinp} by proceeding as in \cite{Tucsnak}. We
project the unknown $V$ on the space
\begin{displaymath}
\mathcal{H} = \displaystyle \left\{ \phi \in  \mathbf{L}^2(\mathcal{O})
\mid \ \div \ \phi = 0 \text{ in } \mathcal{O}, \ D(\phi) = 0 \text{ in } \mathcal{S},
\ \phi \cdot n = 0 \text{ on $\p \mathcal{O}$}\right\},
\end{displaymath}
and we consider
\begin{displaymath}
\mathcal{V} = \displaystyle \left\{ \phi \in \mathbf{H}^1(\mathcal{O})
\mid \ \div \ \phi = 0 \text{ in } \mathcal{O}, \ D(\phi) = 0 \text{ in } \mathcal{S},
\ \phi \cdot n = 0 \text{ on $\p \mathcal{O}$}\right\}.
\end{displaymath}

Let us remind a lemma stated in \cite[page 18]{Temam}.

\begin{lemma} \label{ch2_lemmeVW}
For all $\phi \in \mathcal{H}$, there exists $l_{\phi} \in \R^3$ and
$\omega_{\phi} \in \R$ such that
\begin{eqnarray*}
\phi (y) = l_{\phi} + \omega_{\phi}\wedge y \text{ for all } y\in \mathcal{S}.
\end{eqnarray*}
\end{lemma}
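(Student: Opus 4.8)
The plan is to recognize Lemma~\ref{ch2_lemmeVW} as the classical rigidity statement for the kernel of the linearized strain operator: a vector field whose symmetric gradient vanishes on a connected open set is an infinitesimal rigid displacement. Only the hypothesis $D(\phi) = 0$ in $\mathcal{S}$ together with the connectedness of $\mathcal{S}$ (which we have, since $\mathcal{S}$ is assumed simply connected and regular) will be used; the constraints $\div\,\phi = 0$ in $\mathcal{O}$ and $\phi\cdot n = 0$ on $\p\mathcal{O}$ play no role here.

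First I would show that $\phi$ is affine on $\mathcal{S}$. Writing $e_{ij} := \p_i\phi_j + \p_j\phi_i$, the hypothesis reads $e_{ij} = 0$ in $\mathcal{S}$, understood in the sense of distributions since a priori $\phi \in \mathbf{L}^2(\mathcal{O})$. The elementary identity
\begin{eqnarray*}
2\,\p_j\p_k\phi_i & = & \p_j e_{ik} + \p_k e_{ij} - \p_i e_{jk},
\end{eqnarray*}
which is valid in $\mathcal{D}'(\mathcal{S})$, then gives $\p_j\p_k\phi_i = 0$ in $\mathcal{S}$ for all $i,j,k$. Since $\mathcal{S}$ is connected, a distribution with vanishing second-order derivatives is a polynomial of degree at most one; hence there exist a constant vector $l_\phi \in \R^3$ and a constant matrix $A \in \R^{3\times 3}$ such that $\phi(y) = l_\phi + A\,y$ for a.e. $y \in \mathcal{S}$. (If one prefers to avoid distributional manipulations, the same conclusion follows by mollifying $\phi$ inside $\mathcal{S}$, applying the computation to the smooth approximations, and passing to the limit.)

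Next I would identify the algebraic structure of $A$. From $\phi(y) = l_\phi + Ay$ one gets $D(\phi) = \tfrac12(A + A^{T})$, so $D(\phi) = 0$ in $\mathcal{S}$ forces $A$ to be skew-symmetric. Every skew-symmetric matrix of $\R^{3\times 3}$ is of the form $\mathbb{S}(\omega_\phi)$ for a unique $\omega_\phi \in \R^3$, and $\mathbb{S}(\omega_\phi)y = \omega_\phi \wedge y$ for all $y$; therefore $\phi(y) = l_\phi + \omega_\phi \wedge y$ on $\mathcal{S}$, and $(l_\phi,\omega_\phi)$ is uniquely determined. The only genuinely delicate point in this argument is the regularity bookkeeping — carrying out the computation of $\p_j\p_k\phi_i$ and the "vanishing second derivatives $\Rightarrow$ affine" step for a merely $\mathbf{L}^2$ field, either directly in $\mathcal{D}'(\mathcal{S})$ or by mollification; everything else is purely algebraic. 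Alternatively, the statement may simply be quoted from \cite[page~18]{Temam}.
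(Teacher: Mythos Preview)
Your proof is correct; it is the classical rigidity argument for the kernel of the symmetrized gradient, and it is exactly what underlies the reference the paper invokes. The paper itself gives no proof of Lemma~\ref{ch2_lemmeVW}: it simply states the result as a reminder and points to \cite[page~18]{Temam}. Your write-up therefore does more than the paper does at this spot, and your closing remark that one may alternatively just quote \cite{Temam} matches the paper's actual treatment.
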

This result allows us to extend $V$ in $\mathcal{S}$ and then consider the system in the
whole domain $\mathcal{O}$. Indeed, for $V \in \mathcal{H}$, this lemma gives us two vectors $H'$ and $\Omega$ such that
\begin{eqnarray*}
V  =  H_V'(t) + \Omega_V(t) \wedge y  =  H'(t) + \Omega(t) \wedge y.
\end{eqnarray*}
Let us now define a new inner product on $ \mathbf{L}^2(\mathcal{O})$ by setting
\begin{eqnarray*}
(\psi , \phi)_{\mathbf{L}^2(\mathcal{O})}  = \int_{\mathcal{F}} (\psi \cdot  \phi)\d y
+ \rho_{\mathcal{S}} \int_{\mathcal{S}} \psi(y) \cdot
\phi(y) \d y . \label{ch2_ps}
\end{eqnarray*}
Let us remind that $\rho_{\mathcal{S}} >0$ is the constant density of the rigid body $\mathcal{S}$.
The corresponding Euclidean norm is equivalent to the usual one in $\mathbf{L}^2(\mathcal{O})$. If two functions $\psi$ et $\phi$
lie in $\mathcal{H}$, then a simple calculation leads us to
\begin{displaymath}
(\psi , \phi)_{\mathbf{L}^2(\mathcal{O})} = \int_{\mathcal{F}} (\psi \cdot \phi) \d y
+ Ml_{\phi} \cdot l_{\psi} + I_0\omega_{\phi} \cdot\omega_{\psi}.
\end{displaymath}
In order to solve \eqref{ch2_premlinp}--\eqref{ch2_derlinp} we use a semigroup approach. We define
\begin{eqnarray*}
D(A) & = & \left\{\phi \in  \mathbf{H}^1(\mathcal{O}) \mid \
\phi_{|\mathcal{F}}
\in \mathbf{H}^2(\mathcal{F}) , \ \div \ \phi = 0 \text{ in } \mathcal{O},
\ D(\phi)=0 \text{ in } \mathcal{S} , \ \phi \cdot n = 0 \text{ on $\p
\mathcal{O}$}\right\}. \label{ch2_defdom}
\end{eqnarray*}
For all $V \in D(A)$ we set
\begin{eqnarray*}
\mathcal{A}V =
\left\{\displaystyle\begin{array} {llll}
- \nu \Delta V \text{  in } \mathcal{F} , \\

\displaystyle\frac{2\nu}{M} \displaystyle\int_{\p \mathcal{S}} D(V)n\d \Gamma + \left( 2\nu {I_0}^{-1} \displaystyle \int_{\p \mathcal{S}}
y\wedge  D(V)n\d \Gamma \right)\wedge y \text{  in } \mathcal{S},
\end{array}
\right.
\end{eqnarray*}
and
\begin{eqnarray*}
AV = \mathbb{P} \mathcal{A} V ,
\end{eqnarray*}
where $\mathbb{P}$ is the orthogonal projection from $\mathbf{L}^2(\mathcal{O})$ onto $\mathcal{H}$. Then we get a unique solution $(V,Q,H',\Omega)$ in $\mathcal{U}(0,T;\mathcal{F}) \times \L^2(0,T;\H^1(\mathcal{F}  )) \times \H^1(0,T;\R^3 ) \times \H^1(0,T;\R^3 )$ by following the steps of \cite{Tucsnak}.

\subsection{The main result for the linearized system}
\begin{proposition} \label{ch2_thsolfaible}
Let $\mathbb{F} \in \L^2(0,T;\mathbf{L}^2(\mathcal{F}))$, $\mathbb{F}_M \in\L^2(0,T;\R^3)$, $\mathbb{F}_I \in \L^2(0,T;\R^3)$, $\mathbb{G} \in \mathcal{U}(0,T;\mathcal{F})$ and $\mathbb{W}\in \L^2(0,T;\mathbf{H}^{3/2}(\p \mathcal{S}))\cap \H^1(0,T;\mathbf{H}^{1/2}(\p \mathcal{S}))$ be given. Let us assume that  $\mathbb{G}$ satisfy $\mathbb{G} = 0$ on $\p \mathcal{O}$, \textcolor{black}{$\mathbb{G}_{| \p \mathcal{S}}\in \H^1(0,T;\mathbf{H}^{\varepsilon}(\p \mathcal{S}))$ for $0 < \varepsilon < 1/2$,} and the compatibility condition
\begin{eqnarray*}
\int_{\p \mathcal{S}} \mathbb{G} \cdot n \d \Gamma & = & \int_{\p \mathcal{S}} \mathbb{W} \cdot n \d \Gamma.
\end{eqnarray*}
Assume that $0<  \dist(\mathcal{S},\p\mathcal{O})$ and that $u_0\in \mathbf{H}^1(\mathcal{F})$ with
\begin{eqnarray*}
\div \  u_0  =  0   \textrm{ in $\mathcal{F}$}, \quad
u_0  =  0  \textrm{ on $\p \mathcal{O}$}, \quad
u_0(y)  =  h_1 + \omega_0 \wedge y  \textrm{ on $\p \mathcal{S}$}.
\end{eqnarray*}
Then system \eqref{premslin}--\eqref{derslin} admits a unique solution $(\tilde{U},\tilde{P},\tilde{h}',\tilde{\omega})$ in 
\begin{eqnarray*}
\mathcal{U}(0,T;\mathcal{F}) \times \L^2(0,T;\H^1(\mathcal{F} )) \times \H^1(0,T;\R^3 ) \times \H^1(0,T;\R^3 ),
\end{eqnarray*}
\textcolor{black}{up to a constant for $\tilde{P}$ that we choose such that $\int_{\mathcal{F}} \tilde{P}  =  0$.} Moreover, \textcolor{black}{$\tilde{U}_{|\p \mathcal{S}} \in \H^1(0,T;\mathbf{H}^{\varepsilon}(\p \mathcal{S}))$} and there exists a positive constant $K$ such that
\begin{eqnarray*}
& & \|\tilde{U}\|_{\mathcal{U}(0,T;\mathcal{F})} 
\text{\textcolor{black}{$+ \|\tilde{U}_{|\p \mathcal{S}}\|_{\H^1(0,T;\mathbf{H}^{\varepsilon}(\p \mathcal{S}))}$}}
+\| \nabla \tilde{P} \|_{\L^2(0,T;\L^2(\mathcal{F}))} + \|
\tilde{h}'\|_{ \H^1(0,T;\R^3)} + \|\tilde{\omega} \|_{ \H^1(0,T;\R^3 )}  \\
& & \leq K\left( \|u_0\|_{\mathbf{H}^1(\mathcal{O})} + |h_1|_{\R^3} + |\omega |_{\R^3}
+  \| \mathbb{F} \|_{\L^2(0,T;\mathbf{L}^2)} 
+ \text{\textcolor{black}{$\| \mathbb{G} \|_{\mathcal{U}(0,T;\mathcal{F})} +\|\mathbb{G}_{|\p \mathcal{S}}\|_{ \H^1(0,T;\mathbf{H}^{\varepsilon}(\p \mathcal{S}))}$}} \right. \\
& & \left. + \|\mathbb{W}\|_{\L^2(0,T;\mathbf{H}^{3/2}(\p \mathcal{S}))\cap \H^1(0,T;\mathbf{H}^{1/2}(\p \mathcal{S}))}+ \| \mathbb{F}_M \|_{\L^2(0,T;\R^3)}+ \| \mathbb{F}_I \|_{ \L^2(0,T;\R^3 )} \right).
\end{eqnarray*}
The constant $K$ depends only on $T$, and is nondecreasing with respect to~$T$.
\end{proposition}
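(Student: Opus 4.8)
\emph{Proof proposal.} The plan is to assemble the reductions carried out above and then collect the corresponding estimates; by linearity it suffices to exhibit one solution with the asserted bound and to argue uniqueness separately. First I set $U=\tilde U-\mathbb G$, which turns \eqref{premslin}--\eqref{derslin} into the divergence\nobreakdash-free problem with right\nobreakdash-hand sides $\hat F=\mathbb F-\p_t\mathbb G+\nu\Delta\mathbb G$, boundary datum $\hat W=\mathbb W-\mathbb G$ on $\p\mathcal S$, and modified force/torque terms $\hat F_M,\hat F_I$. Here $\mathbb G\in\mathcal U(0,T;\mathcal F)\hookrightarrow \L^2(0,T;\mathbf H^2(\mathcal F))\cap\H^1(0,T;\mathbf L^2(\mathcal F))$ ensures $\hat F\in\L^2(0,T;\mathbf L^2(\mathcal F))$, and the hypothesis $\int_{\p\mathcal S}\mathbb G\cdot n\,\d\Gamma=\int_{\p\mathcal S}\mathbb W\cdot n\,\d\Gamma$ gives $\int_{\p\mathcal S}\hat W\cdot n\,\d\Gamma=0$. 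Next I split $U=V+\w$, $P=Q+\pi$, where $(\w,\pi)$ solves the stationary Stokes system \eqref{ch2_jpr1}--\eqref{ch2_jpr3} with boundary datum $\hat W$ on $\p\mathcal S$. Since $\mathbb W\in\L^2(0,T;\mathbf H^{3/2}(\p\mathcal S))\cap\H^1(0,T;\mathbf H^{1/2}(\p\mathcal S))$ and $\mathbb G_{|\p\mathcal S}\in\L^2(0,T;\mathbf H^{3/2}(\p\mathcal S))\cap\H^1(0,T;\mathbf H^{\varepsilon}(\p\mathcal S))$, the datum $\hat W$ has exactly the regularity required by Proposition \ref{ch2_propw}, which then supplies $(\w,\nabla\pi)\in\mathcal U(0,T;\mathcal F)\times\L^2(0,T;\mathbf L^2(\mathcal F))$ and the trace bound on $\w_{|\p\mathcal S}$ in $\H^1(0,T;\mathbf H^{\varepsilon}(\p\mathcal S))$, all controlled by $\mathbb W$ and $\mathbb G$.

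It then remains to solve \eqref{ch2_premlinp}--\eqref{ch2_derlinp} for $(V,Q,H',\Omega)$, in which the only inhomogeneity on $\p\mathcal S$ is the rigid field $H'(t)+\Omega(t)\wedge y$ and the sources are $F=\hat F-\p_t\w$, $F_M=\hat F_M+\int_{\p\mathcal S}\sigma(\w,\pi)n\,\d\Gamma$, $F_I=\hat F_I+\int_{\p\mathcal S}y\wedge\sigma(\w,\pi)n\,\d\Gamma$, all in $\L^2$ in time. Following \cite{Tucsnak}, I would extend $V$ to $\mathcal O$ as a rigid field on $\mathcal S$ by Lemma \ref{ch2_lemmeVW}, work in the spaces $\mathcal H$ and $\mathcal V$ with the weighted inner product $(\cdot,\cdot)_{\mathbf L^2(\mathcal O)}$, and recast the problem as the abstract Cauchy problem $V'+AV=\mathbb P F$ on $\mathcal H$, with $A=\mathbb P\mathcal A$ and domain $D(A)$ as declared. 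The operator $-A$ is self\nobreakdash-adjoint, nonnegative for the weighted inner product, and generates an analytic semigroup; the initial datum $u_0-\w(\cdot,0)$ lies in $\mathcal V$ (it is divergence\nobreakdash-free, vanishes on $\p\mathcal O$, and reduces to a rigid field on $\p\mathcal S$ once one uses the compatibility of the data at $t=0$), so maximal $\L^2$\nobreakdash-regularity for an analytic semigroup yields $V\in\L^2(0,T;\mathbf H^2(\mathcal F))\cap\H^1(0,T;\mathbf L^2(\mathcal F))\cap C([0,T];\mathbf H^1(\mathcal F))$ and $(H',\Omega)\in\H^1(0,T;\R^3)\times\H^1(0,T;\R^3)$. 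The pressure $Q$ is then recovered from $\p_t V-\nu\Delta V-F=-\nabla Q\in\L^2(0,T;\mathbf L^2(\mathcal F))$ by de Rham's theorem applied for a.e.\ $t$, normalised by $\int_{\mathcal F}Q=0$, whence $Q\in\L^2(0,T;\H^1(\mathcal F))$ by Poincar\'e--Wirtinger; the whole quadruple is bounded by $\|u_0\|_{\mathbf H^1(\mathcal F)}+|h_1|_{\R^3}+|\omega_0|_{\R^3}+\|F\|_{\L^2(0,T;\mathbf L^2)}+\|F_M\|_{\L^2(0,T;\R^3)}+\|F_I\|_{\L^2(0,T;\R^3)}$.

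Finally I would set $\tilde U=V+\w+\mathbb G$, $\tilde P=Q+\pi$, $\tilde h'=H'$, $\tilde\omega=\Omega$, and propagate the estimates: expanding $F,F_M,F_I$ via \eqref{ch2_secm1}--\eqref{ch2_secm4} and the triangle inequality, using $\|\p_t\mathbb G\|_{\L^2(\mathbf L^2)}+\|\Delta\mathbb G\|_{\L^2(\mathbf L^2)}\le C\|\mathbb G\|_{\mathcal U(0,T;\mathcal F)}$ together with the trace inequality on $\p\mathcal S$, the bound for $(\w,\nabla\pi)$ from Proposition \ref{ch2_propw}, and the semigroup bound for $(V,Q,H',\Omega)$, produces the claimed inequality for $\|\tilde U\|_{\mathcal U(0,T;\mathcal F)}+\|\nabla\tilde P\|_{\L^2(0,T;\mathbf L^2)}+\|\tilde h'\|_{\H^1}+\|\tilde\omega\|_{\H^1}$. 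The extra boundary term $\|\tilde U_{|\p\mathcal S}\|_{\H^1(0,T;\mathbf H^{\varepsilon}(\p\mathcal S))}$ is handled by writing $\tilde U_{|\p\mathcal S}=(H'+\Omega\wedge y)+\w_{|\p\mathcal S}+\mathbb G_{|\p\mathcal S}$ and noting $\H^1(0,T;\R^3)\hookrightarrow\H^1(0,T;\mathbf H^{\varepsilon}(\p\mathcal S))$ for the rigid part. Uniqueness is immediate since each step of the decomposition (the explicit change $U=\tilde U-\mathbb G$, the unique Stokes lift of Proposition \ref{ch2_propw}, the unique solution of the abstract Cauchy problem) is uniquely determined; alternatively, the difference of two solutions satisfies \eqref{premslin}--\eqref{derslin} with zero data and vanishes by the energy identity on $\mathcal O$ (test with the extended velocity, use the weighted inner product and Korn's inequality). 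Throughout, one must verify that $K=K(T)$ is nondecreasing in $T$: the embedding constants of the ``basic estimates'' are nondecreasing, and Proposition \ref{ch2_propw} and the semigroup estimate may be taken nondecreasing as well. \emph{The main obstacle} I anticipate is precisely keeping this $T$\nobreakdash-tracking honest near $T=0$ --- in particular extracting the $\L^{\infty}(0,T;\mathbf H^1(\mathcal F))$ control of $\tilde U$ with a constant that does not blow up as $T\to0$ (flagged in the footnote after the definition of $\mathcal U$), which has to come from the analytic\nobreakdash-semigroup estimate itself rather than from a crude interpolation embedding, and, relatedly, checking that the lifted data $\hat F,\hat W,\hat F_M,\hat F_I$ and the initial datum $u_0-\w(\cdot,0)$ satisfy the compatibility needed to land in $\mathcal V$ at $t=0$.
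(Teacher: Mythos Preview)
Your proposal is correct and follows essentially the same route as the paper: subtract $\mathbb G$ to remove the nonhomogeneous divergence, lift the remaining boundary datum $\hat W=\mathbb W-\mathbb G$ via the stationary Stokes system (Proposition~\ref{ch2_propw}), solve the residual rigid-coupled problem by the semigroup argument of \cite{Tucsnak}, and then reassemble $\tilde U=V+\w+\mathbb G$ with the boundary trace handled via $\tilde U_{|\p\mathcal S}=H'+\Omega\wedge y+\w_{|\p\mathcal S}+\mathbb G_{|\p\mathcal S}$. Your uniqueness argument (energy identity on $\mathcal O$ with the extended velocity) also matches the paper's, and your anticipated obstacles about $T$-monotonicity and compatibility at $t=0$ are exactly the points the paper leaves to the reader or to \cite{Tucsnak}.
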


\begin{proof}
Proposition \ref{ch2_propw} provides us a solution $(\w,\pi)\in \mathcal{U}(0,T;\mathcal{F}) \times \L^2(0,T;\H^1(\mathcal{F} ))$ for the nonhomogeneous
Stokes problem \eqref{ch2_jpr1}--\eqref{ch2_jpr3}. Let us remind the expressions \eqref{ch2_secm1}--\eqref{ch2_secm4} of the quantities which
appear in some second members of the system \eqref{ch2_premlinp}--\eqref{ch2_derlinp}:
\begin{eqnarray*}
& & F  =  \mathbb{F} - \displaystyle \frac{\p \mathbb{G}}{\p t} + \nu \Delta \mathbb{G} - \frac{\p \w}{\p t} ,\\
& & W  =  \mathbb{W}-\mathbb{G} , \\
& & F_M  =  \mathbb{F}_M - \displaystyle 2\nu \int_{\p \mathcal{S}} D(\mathbb{G})n\d \Gamma + \displaystyle \int_{\p \mathcal{S}}\sigma(\w, \pi)n\d\Gamma, \\
& & F_I  =  \mathbb{F}_I - \displaystyle 2\nu \int_{\p \mathcal{S}} y\wedge  D(\mathbb{G})n\d \Gamma + \displaystyle \int_{\p \mathcal{S}}y\wedge \sigma(\w, \pi)n\d \Gamma.
\end{eqnarray*}
Then the semigroup approach \ref{ch2_paraoui} provides us a solution $(V,Q,H',\Omega)$ for the problem \eqref{ch2_premlinp}--\eqref{ch2_derlinp} \textcolor{black}{(see \cite{Tucsnak} for more details)}, with
\begin{eqnarray*}
& & V\in \L^2\left( 0,T;\mathbf{H}^2(\mathcal{F})  \right) \cap C\left( [0,T];\mathbf{H}^1(\mathcal{F}) \right) \cap \H^1\left( 0,T;\mathbf{L}^2(\mathcal{F})
\right), \\
& & \nabla Q\in \L^2\left( 0,T; \L^2(\mathcal{F}) \right), \quad H' \in \H^1\left( 0,T; \R^3 \right), \quad \Omega \in \H^1\left( 0,T; \R^3 \right).
\end{eqnarray*}
We get then
\begin{displaymath}
(U,P) = (V,Q) + (\w,\pi),
\end{displaymath}
so by setting $(\tilde{U},\tilde{P},\tilde{h}',\tilde{\omega}) = (U + \mathbb{G},P,H',\Omega)$ we get a solution for the problem \eqref{premslin}--\eqref{derslin}. \textcolor{black}{The estimate of $\tilde{U}_{|\p \mathcal{S}}$ in $\H^1(0,T;\mathbf{H}^{\varepsilon}(\p \mathcal{S}))$ is directly deduced from the equality
\begin{eqnarray*}
\tilde{U}_{|\p \mathcal{S}} & = & V_{|\p \mathcal{S}} + \w_{|\p \mathcal{S}} + \mathbb{G}_{|\p \mathcal{S}} \\
& = & H' + \Omega \wedge y + \w_{|\p \mathcal{S}} + \mathbb{G}_{|\p \mathcal{S}},
\end{eqnarray*}
an estimate of $H'$ and $\Omega$ in $\H^1(0,T;\R^3)$ provided by the semigroup theory and the estimate of $\w$ given by Proposition \ref{ch2_propw}.} For the \textcolor{black}{rest of the} announced estimate, we first write
\begin{eqnarray*}
\|\tilde{U}\|_{\mathcal{U}(0,T;\mathcal{F})} & \leq & \|U\|_{\mathcal{U}(0,T;\mathcal{F})} + \| \mathbb{G} \|_{\mathcal{U}(0,T;\mathcal{F})} \\
 & \leq & \|V\|_{\mathcal{U}(0,T;\mathcal{F})} + \| \w \|_{\mathcal{U}(0,T;\mathcal{F})}+ \| \mathbb{G} \|_{\mathcal{U}(0,T;\mathcal{F})}
\end{eqnarray*}
and
\begin{eqnarray*}
\| \nabla \tilde{P}\|_{\L^2(0,T;\L^2(\mathcal{F}))}
& \leq & \| \nabla Q\|_{\L^2(0,T;\L^2(\mathcal{F}))} + \| \nabla \pi \|_{\L^2(0,T;\L^2(\mathcal{F}))}.
\end{eqnarray*}
Then we use the estimate of Proposition \ref{ch2_propw} to get
\begin{eqnarray*}
& & \|\tilde{U}\|_{\mathcal{U}(0,T;\mathcal{F})} + \| \nabla \tilde{P}\|_{\L^2(0,T;\L^2(\mathcal{F}))} \leq
\|V\|_{\mathcal{U}(0,T;\mathcal{F})} + \| \nabla Q\|_{\L^2(0,T;\L^2(\mathcal{F}))} + \| \mathbb{G} \|_{\mathcal{U}(0,T;\mathcal{F})}   \\
& & + C \left( \| \mathbb{G}\|_{\mathcal{U}(0,T;\mathcal{F})} +
\text{\textcolor{black}{
$\|\mathbb{G}\|_{ \H^1(0,T;\mathbf{H}^{\varepsilon}(\p \mathcal{S}))}$
}} \right. 
\left. +
\|\mathbb{W}\|_{\L^2(0,T;\mathbf{H}^{3/2}(\p \mathcal{S}))\cap \H^1(0,T;\mathbf{H}^{1/2}(\p \mathcal{S}))} \right). 
\end{eqnarray*}
It remains us to use the estimate of the semigroup theory for estimating $\|V\|_{\mathcal{U}(0,T;\mathcal{F})} + \| \nabla Q\|_{\L^2(0,T;\L^2(\mathcal{F}))}$, and to use again the estimate of Proposition \ref{ch2_propw} to \textcolor{black}{get the desired estimate. The uniqueness is due to the linearity of system \eqref{premslin}--\eqref{derslin}, and the fact that without right-hand-sides we have for this system the energy estimate
\begin{eqnarray*}
\frac{\d}{\d t}\left(
\| \tilde{U}\|^2_{\mathbf{L}^2(\mathcal{F})} + M|\tilde{h}'(t)|^2
+ I_0\tilde{\omega}\cdot \tilde{\omega}
\right) & = & -4\nu \|D(\tilde{U})\|^2_{\mathbf{L}^2(\mathcal{F})} ;
\end{eqnarray*}
With null initial conditions, the Gr\"{o}nwall's lemma applied to this estimate leads to
\begin{eqnarray*}
\tilde{U} = 0, \quad \tilde{P} = 0, \quad \tilde{h}' = 0, \quad \tilde{\omega} = 0.
\end{eqnarray*}
Thus the proof is complete
}
\end{proof}

\section{Local existence of strong solutions} \label{secfixe}
\subsection{Statement}
\begin{theorem} \label{ch3_thlocex} 
Assume that $X^{\ast}-\Id_{\mathcal{S}} \in \mathcal{W}_0(0,\infty;\mathcal{S})$ satisfies the hypotheses $\mathbf{H1}-\mathbf{H4}$. Assume that $0<  \dist(\mathcal{S},\p\mathcal{O})$, and that $u_0 \in \mathbf{H}^1(\mathcal{F})$ satisfies
\begin{eqnarray*}
\div \  u_0  =  0   \textrm{ in $\mathcal{F}$}, \quad
u_0  =  0  \textrm{ on $\p \mathcal{O}$}, \quad
u_0(y)  =  h_1 + \omega_0 \wedge y  \textrm{ on $\p \mathcal{S}$}.
\end{eqnarray*}
Then there exists $T_0 > 0$ such that problem \eqref{prems}--\eqref{ders} admits a unique strong solution
$(u,p,h,\omega)$ in
\begin{eqnarray*}
\mathcal{U} (0,T_0;\mathcal{F}(t) ) \times \L^2(0,T_0;\H^1(\mathcal{F}(t)  )) \times \H^2(0,T_0;\R^3 ) \times \H^1(0,T_0;\R^3 ).
\end{eqnarray*}
\end{theorem}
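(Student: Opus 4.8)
The plan is to work entirely in the fixed (cylindrical) domains, construct the unique strong solution of the transformed system \eqref{hpremsfix}--\eqref{hdersfix} by a Banach fixed-point argument based on the linear theory of Proposition \ref{ch2_thsolfaible}, and then return to the physical variables via Remark \ref{remarkc}. Set
\[
\mathcal{K}_T = \mathcal{U}(0,T;\mathcal{F}) \times \L^2(0,T;\H^1(\mathcal{F})) \times \H^1(0,T;\R^3) \times \H^1(0,T;\R^3),
\]
with the normalization $\int_{\mathcal{F}}\tilde p = 0$, and for $R>0$ let $\mathcal{B}_{T,R}$ be the closed subset of $\mathcal{K}_T$ consisting of the elements satisfying the initial conditions \eqref{hdersfix}, whose trace on $\p\mathcal{S}$ lies in $\H^1(0,T;\mathbf{H}^{\varepsilon}(\p\mathcal{S}))$, and whose full norm is $\le R$. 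Define $\Phi$ on $\mathcal{B}_{T,R}$ by letting $\Phi(\tilde v,\tilde q,\tilde k',\tilde\varpi)=(\tilde U,\tilde P,\tilde h',\tilde\omega)$ be the solution furnished by Proposition \ref{ch2_thsolfaible} of the linear system \eqref{premslin}--\eqref{derslin} with data $\mathbb{F}=F(\tilde v,\tilde q,\tilde k',\tilde\varpi)$, $\mathbb{G}=G(\tilde v,\tilde k',\tilde\varpi)$, $\mathbb{W}=W(\tilde\varpi)$, $\mathbb{F}_M=F_M(\cdot)$, $\mathbb{F}_I=F_I(\cdot)$, where $F,G,W,F_M,F_I$ are the nonlinear right-hand sides displayed after \eqref{hdersfix}. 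A fixed point of $\Phi$ is exactly a strong solution of \eqref{hpremsfix}--\eqref{hdersfix}.

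First I would check that $\Phi$ is well defined, i.e. that the data built from an element of $\mathcal{B}_{T,R}$ meet the hypotheses of Proposition \ref{ch2_thsolfaible}: $\mathbb{F}\in\L^2(0,T;\mathbf{L}^2(\mathcal{F}))$; $\mathbb{G}\in\mathcal{U}(0,T;\mathcal{F})$ with $\mathbb{G}=0$ on $\p\mathcal{O}$ and $\mathbb{G}_{|\p\mathcal{S}}\in\H^1(0,T;\mathbf{H}^{\varepsilon}(\p\mathcal{S}))$; $\mathbb{W}\in\L^2(0,T;\mathbf{H}^{3/2}(\p\mathcal{S}))\cap\H^1(0,T;\mathbf{H}^{1/2}(\p\mathcal{S}))$; and the compatibility condition $\int_{\p\mathcal{S}}\mathbb{G}\cdot n=\int_{\p\mathcal{S}}\mathbb{W}\cdot n$. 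The last point is Remark \ref{remarkcc}; the regularity of $\mathbb{G}=(\I_{\R^3}-\nabla\tilde Y(\tilde X))\tilde u$ and of its trace follows from the properties of $\tilde X$ supplied by Lemma \ref{ch3_lemmaxtension} (which places $\tilde X-\Id_{\mathcal{F}}$ in $\mathcal{W}(Q_T^0)$, hence controls $\nabla\tilde Y(\tilde X)-\I_{\R^3}$ in the $C([0,T];\mathbf{H}^2)$-type norms obtained in Appendix A) together with Lemma \ref{ch3_lemmaH3} for the trace; $\mathbb{W}=\tilde\omega\wedge(X^{\ast}-\Id)+\p_t X^{\ast}$ is handled by the algebra properties of the relevant Sobolev spaces and $X^{\ast}-\Id_{\mathcal{S}}\in\mathcal{W}_0(0,T;\mathcal{S})$; and the surface integrals defining $\mathbb{F}_M,\mathbb{F}_I$ are $\L^2$ in time since $\nabla\tilde Y(\tilde X)-\I_{\R^3}$ and $X^{\ast}-\Id$ are small and $\tilde u,\tilde p$ carry enough regularity for their traces on $\p\mathcal{S}$.

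The heart of the argument is a pair of estimates: (i) $\Phi(\mathcal{B}_{T,R})\subset\mathcal{B}_{T,R}$ for a suitable $R$ and $T$ small, and (ii) $\Phi$ is a contraction on $\mathcal{B}_{T,R}$ for $T$ small. Both follow from the estimate of Proposition \ref{ch2_thsolfaible} (with constant $K=K(T)$ nondecreasing in $T$) combined with the nonlinear estimates for each of $\nu(\LL-\Delta)\tilde u$, $\MM$, $\NN$, $(\GG-\nabla)\tilde p$, $\tilde\omega\wedge\tilde u$, the boundary terms of $F_M,F_I$, and $G$. Each of these is bounded in the relevant norm by a power $\ge 1$ of the $\mathcal{B}_{T,R}$-norm times a factor $\eta(T,R)$ that is nondecreasing in $T$ and tends to $0$ as $T\to0$; the smallness of $\eta$ comes either from the explicit factors $\sqrt T$ in $\|f\|_{\L^2(0,T;\mathrm B)}\le\sqrt T\,\|f\|_{\L^{\infty}(0,T;\mathrm B)}$ and $\|f\|_{\L^{\infty}(0,T;\mathrm B)}\le\sqrt T\,\|f'\|_{\L^2(0,T;\mathrm B)}$ for functions vanishing at $t=0$, or from the smallness of $\tilde X-\Id$ (Lemma \ref{ch3_lemmaxtension}) obtained likewise because $X^{\ast},\tilde h',\tilde\omega$ vanish at $t=0$. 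Since $u_0$ fixes $\tilde u(\cdot,0)$, $\tilde h'(0)$, $\tilde\omega(0)$, the $T$-independent part of the right-hand side is controlled by $C_0:=\|u_0\|_{\mathbf{H}^1(\mathcal{O})}+|h_1|_{\R^3}+|\omega_0|_{\R^3}$. Hence $\|\Phi(z)\|_{\mathcal{B}_{T,R}}\le K(T)\big(C_0+\eta(T,R)R\big)$ and $\|\Phi(z_1)-\Phi(z_2)\|_{\mathcal{B}_{T,R}}\le K(T)\eta(T,R)\,\|z_1-z_2\|_{\mathcal{B}_{T,R}}$; choosing $R=2K(1)C_0$ and then $T=T_0$ small enough that $K(T_0)\le K(1)$ and $K(1)\eta(T_0,R)\le\tfrac12$ gives stability and contraction, so Banach's theorem yields a unique $(\tilde u,\tilde p,\tilde h',\tilde\omega)\in\mathcal{B}_{T_0,R}$ solving \eqref{hpremsfix}--\eqref{hdersfix}. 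Finally, Remark \ref{remarkc} reconstructs $\mathbf{R}\in\H^2(0,T_0;\R^9)$ (Cauchy problem with $\H^1$ right-hand side in the algebra $\H^1$), then $h'=\mathbf{R}\tilde h'\in\H^1$, $h\in\H^2(0,T_0;\R^3)$ with $h(0)=0$, $\omega=\mathbf{R}\tilde\omega\in\H^1$, and $u(x,t)=\mathbf{R}(t)\tilde u(Y(x,t),t)$, $p(x,t)=\tilde p(Y(x,t),t)$; since $Y(\cdot,t)$ is a $C^1$-diffeomorphism from $\overline{\mathcal{F}(t)}$ onto $\overline{\mathcal{F}}$ of the regularity provided by Lemma \ref{ch3_lemmaxtension}, the pair $(u,p)$ lies in $\mathcal{U}(0,T_0;\mathcal{F}(t))\times\L^2(0,T_0;\H^1(\mathcal{F}(t)))$, and the reversibility of the whole construction gives uniqueness in that class. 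Shrinking $T_0$ if necessary preserves $\dist(\mathcal{S}(t),\p\mathcal{O})>0$ on $[0,T_0]$ by continuity in time.

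The main obstacle is precisely step (i)--(ii): deriving the nonlinear estimates with the correct $T$-dependence under the limited $\mathbf{H}^3$-in-space regularity of $X^{\ast}$. This means controlling $\nabla\tilde Y(\tilde X)-\I_{\R^3}$ and its traces on $\p\mathcal{S}$ (where the technical lemma of Appendix A and Lemma \ref{ch3_lemmaH3} are essential), handling the nonzero-divergence term $G$ together with its compatibility with $W$ on $\p\mathcal{S}$, and estimating the surface integrals in $F_M,F_I$ — all the while keeping every constant nondecreasing in $T$ so that the $T\to0$ smallness can be genuinely exploited.
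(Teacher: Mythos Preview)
Your proposal is correct and follows essentially the same route as the paper: a Banach fixed-point argument on the transformed system \eqref{hpremsfix}--\eqref{hdersfix} in the space $\mathbb{H}_T$ (your $\mathcal{B}_{T,R}$), using Proposition~\ref{ch2_thsolfaible} for the linear problem, Remark~\ref{remarkcc} for the compatibility of $G$ and $W$, and the nonlinear estimates of Corollary~\ref{ch3_lemmaH301} and Lemmas~\ref{ch3_lemmaH302}--\ref{ch3_estFMFI} to obtain stability of $B_R$ and contraction for small $T$. Two small slips to correct: $\tilde h'(0)=h_1$ and $\tilde\omega(0)=\omega_0$ do not vanish (the $T$-smallness of $\tilde X-\Id_{\mathcal{F}}$ comes instead from the initial condition $\tilde X(\cdot,0)=\Id_{\mathcal{F}}$ built into Lemma~\ref{ch3_lemmaxtension}), and your $C_0$ must also include $\|X^{\ast}-\Id_{\mathcal{S}}\|_{\mathcal{W}_0}$, since the $W$-term in Lemma~\ref{ch3_estFMFI} contributes a $T$-independent piece of that size.
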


\begin{remark}
The existence of a local strong solution for system \eqref{premsfix}--\eqref{dersfix} is going to be obtained by a fixed point method for some time $T_0$ small enough. This system is equivalent to system \eqref{prems}--\eqref{ders}, up to the change of variables whose existence is conditioned by an other time $T$ small enough (see Lemma \ref{ch3_lemmaxtension} and the change of unknowns given in \eqref{tildeu}). So, by reducing the existence time $T_0$ to $T$, we can get the desired local strong solution for system \eqref{prems}--\eqref{ders}.
\end{remark}

\subsection{Proof} 
\textcolor{black}{Remind that $0 < \varepsilon < 1/2$.} Let us set
\begin{eqnarray*}
\mathbb{H}_T & = &
\left\{(U,P,H',\Omega) \in \mathcal{U}(0,T;\mathcal{F}) \times \L^2(0,T;\H^1(\mathcal{F})) \times \H^1(0,T;\R^3) \times \H^1(0,T;\R^3) \mid \right. \\
& & \left. \quad U_{| \p \mathcal{O}} = 0 \text{ in } \p \mathcal{O} \times (0,T), \ \text{\textcolor{black}{$ \displaystyle
U_{| \p \mathcal{S}} \in \H^1(0,T;\mathbf{H}^{\varepsilon}(\p \mathcal{S}))$}}
\right\}.
\end{eqnarray*}
\textcolor{black}{We endow this space with the natural norm on the Cartesian product to which we add the norm of $U_{| \p \mathcal{S}}$ in $\H^1(0,T;\mathbf{H}^{\varepsilon}(\p \mathcal{S}))$:
\begin{eqnarray*}
 \| (U,P,H',\Omega) \|_{\mathbb{H}_T} & := &
 \| U \|_{\mathcal{U}(0,T;\mathcal{F})} + \| U_{| \p \mathcal{S}} \|_{\H^1(0,T;\mathbf{H}^{\varepsilon}(\p \mathcal{S}))} \\
& &  +\|P \|_{\L^2(0,T;\H^1(\mathcal{F}))} +
\| H' \|_{\H^1(0,T;\R^3)} + \| \Omega \|_{\H^1(0,T;\R^3)}.
\end{eqnarray*}
}
The equivalence of the solutions of systems \eqref{prems}--\eqref{ders} and \eqref{premsfix}--\eqref{dersfix} has been explained in section \ref{sectionequiv}. A solution of system \eqref{premsfix}--\eqref{dersfix} is seen as a fixed point of the mapping
\begin{eqnarray*}
\begin{array} {cccc}
\mathcal{N} : & \mathbb{H}_T & \rightarrow & \mathbb{H}_T \\
& (V,Q,K',\varpi) & \mapsto & (U, P, H', \Omega)
\end{array}
\end{eqnarray*}
where $(U, P, H', \Omega)$ satisfies
\begin{eqnarray*}
\frac{\p U}{\p t} -\nu \Delta U + \nabla P =  F(V,Q,K',\varpi), & \quad & \text{in } \mathcal{F}\times (0,T), \\
\div \ U  =  \div \ G(V,K',\varpi), & \quad & \text{in } \mathcal{F}\times (0,T),
\end{eqnarray*}
\begin{eqnarray*}
U = 0 , & \quad & \text{in } \p \mathcal{O}\times (0,T),  \\
U =  H'(t) + \Omega (t) \wedge y+ W(\varpi) , & \quad & (y,t)\in \p \mathcal{S}\times (0,T),
\end{eqnarray*}
\begin{eqnarray*}
M H''   =  - \int_{\p \mathcal{S}} \sigma(U,P) n  \d \Gamma + F_M(V,Q,K',\varpi), & \quad & \text{in } (0,T)   \\
I_0\Omega' (t)   =   -  \int_{\p \mathcal{S}} y \wedge \sigma(U,P) n  \d \Gamma + F_I(V,Q,K',\varpi), & \quad & \text{in } (0,T)
\end{eqnarray*}
\begin{eqnarray*}
U(y,0)  =  u_0 (y), \  y\in \mathcal{F} , \quad H'(0)=h_1 \in \R^3 ,\quad \Omega(0) =\omega_0 \in \R^3.
\end{eqnarray*}
The expressions of the right-hand-side are given by
\begin{eqnarray}
F(V,Q,K',\varpi) & = & \nu (\LL_{(K',\varpi)} - \Delta) V - \MM_{(K',\varpi)} (V,K',\varpi) - \NN_{(K',\varpi)} V \nonumber \\
& & - (\GG_{(K',\varpi)}-\nabla) Q - \varpi \wedge V, \label{ch3_rhsF} \\
G(V,K',\varpi) & = & \left(\I_{\R^3}- \nabla \tilde{Y}(\tilde{X}(y,t),t)\right)V, \label{ch3_rhsG} \\
W(\varpi) & = & \varpi \wedge \left(X^{\ast} - \Id\right) + \frac{\p X^{\ast}}{\p t}, \label{ch3_rhsW} \\
F_M(V,Q,K',\varpi) & = & -M \varpi \wedge K'(t) \nonumber \\
 & & - \nu\int_{\p \mathcal{S}}\left(\nabla V \left(\nabla \tilde{Y}(\tilde{X}) - \I_{\R^3}\right) + \left(\nabla \tilde{Y}(\tilde{X}) - \I_{\R^3}\right)^T\nabla V^T \right) \nabla \tilde{Y}(\tilde{X})^T n \d \Gamma \nonumber \\
 & & - \int_{\p \mathcal{S}}\sigma(V,Q)\left(\nabla \tilde{Y}(\tilde{X})-\I_{\R^3}\right)^T n\d \Gamma, \label{ch3_rhsFM} \\
F_I(V,Q,K',\varpi) & = & -\left(I^{\ast} - I_0\right) \Omega'  - {I^{\ast}}'\Omega + I^{\ast}\Omega \wedge \Omega \nonumber \\
& &  - \nu\int_{\p \mathcal{S}}y\wedge \left(\nabla V \left(\nabla \tilde{Y}(\tilde{X}) - \I_{\R^3}\right) + ({\nabla \tilde{Y}(\tilde{X})} - \I_{\R^3})^T\nabla V^T\right)\nabla \tilde{Y}(\tilde{X})^Tn \d \Gamma \nonumber \\
& & - \int_{\p \mathcal{S}}y\wedge \sigma(V,Q)\left(\nabla \tilde{Y}(\tilde{X})-\I_{\R^3}\right)n\d \Gamma \nonumber \\
& & + \int_{\p \mathcal{S}}\left(X^{\ast}-\Id\right)\wedge \left(\tilde{\sigma}(V,Q)\nabla \tilde{Y}(\tilde{X})^T n\right)\d \Gamma. \label{ch3_rhsFI}
\end{eqnarray}
The mapping $\tilde{X}$ is given by Lemma \ref{ch3_lemmaxtension}, with $(K',\varpi,X^{\ast})$ as data.
For the expression of $F(V,Q,K',\varpi)$, let us remind that
\begin{eqnarray*}
[\LL_{(K',\varpi)} (V)]_i(y,t) & = & [ \nabla V(y,t) \Delta \tilde{Y}(\tilde{X}(y,t),t)]_i  +  \nabla^2 V_i(y,t) : \left(\nabla \tilde{Y} \nabla \tilde{Y}^T \right)(\tilde{X}(y,t),t), \\
\MM_{(K',\varpi)} (V,K',\varpi)(y,t) & = & -\nabla V (y,t) \nabla \tilde{Y}(\tilde{X}(y,t),t)\left(K'(t) + \varpi \wedge \tilde{X}(y,t) + \frac{\p \tilde{X}}{\p t}(y,t)\right), \\
\NN_{(K',\varpi)} V(y,t) & = & \nabla V(y,t) \nabla \tilde{Y}(\tilde{X}(y,t),t) V(y,t), \\
\GG_{(K',\varpi)} Q(y,t) & = & \nabla \tilde{Y}(\tilde{X}(y,t),t)^T \nabla Q(y,t).
\end{eqnarray*}

\subsubsection{Preliminary estimates}

The estimates given in the lemmas below are not necessarily sharp, but they are sufficient to prove the desired result.

\begin{lemma}
There exists a positive constant $C$ such that for all $(V,Q,K',\varpi)$ in $\mathbb{H}_T$ we have
\begin{eqnarray*}
& &  \left\| \left(\Delta - \LL\right)V \right\|_{\L^2(0,T;\mathbf{L}^2(\mathcal{F}))}  \leq   C
\left\| V \right\|_{\L^2(0,T;\mathbf{H}^2(\mathcal{F}))} \times \\
& &  \left( \| \nabla \tilde{Y}(\tilde{X})\nabla \tilde{Y}(\tilde{X})^T - \I_{\R^3} \|_{\L^{\infty}(0,T;\mathbf{H}^{2}(\mathcal{F}))}
+ \| \Delta \tilde{Y}(\tilde{X}(\cdot,t),t) \|_{\L^{\infty}(0,T;\mathbf{H}^{1}(\mathcal{F}))} \right),  \\
& & \| \Delta \tilde{Y}(\tilde{X}(\cdot,t),t) \|_{\L^{\infty}(0,T;\mathbf{H}^{1}(\mathcal{F}))} \leq
C \| \nabla \tilde{Y}(\tilde{X}) -\I_{\R^3} \|_{\L^{\infty}(0,T;\mathbf{H}^{2}(\mathcal{F}))} \| \nabla \tilde{Y}(\tilde{X}) \|_{\L^{\infty}(0,T;\mathbf{H}^{2}(\mathcal{F}))},     \nonumber \\
& &  \left\| (\nabla - \GG)Q \right\|_{\L^2(0,T;\mathbf{L}^2(\mathcal{F}))} \leq C\| \nabla \tilde{Y}(\tilde{X}) - \I_{\R^3} \|_{\L^{\infty}(0,T;\H^{2}(\mathcal{F}))} \left\| \nabla Q \right\|_{\L^2(0,T;\mathbf{L}^2(\mathcal{F}))}.
\end{eqnarray*}
\end{lemma}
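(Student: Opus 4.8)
The plan is to expand each operator against its Euclidean counterpart and reduce everything to pointwise-in-time products of Sobolev functions on the bounded three-dimensional domain $\mathcal{F}$, exploiting the embeddings $\mathbf{H}^2(\mathcal{F}) \hookrightarrow \mathbf{L}^\infty(\mathcal{F})$ and $\mathbf{H}^1(\mathcal{F}) \hookrightarrow \mathbf{L}^6(\mathcal{F}) \hookrightarrow \mathbf{L}^3(\mathcal{F})$, the fact that $\H^2(\mathcal{F})$ is a Banach algebra, and that $\H^1(\mathcal{F})$ is a module over $\H^2(\mathcal{F})$. Throughout I will abbreviate $M(y,t) = \nabla\tilde{Y}(\tilde{X}(y,t),t)$; differentiating $\tilde{Y}(\tilde{X}(y,t),t) = y$ once gives $M\,\nabla\tilde{X} = \I_{\R^3}$, hence $M = (\nabla\tilde{X})^{-1}$ and, since $\det\nabla\tilde{X} = 1$, $M^T = \com(\nabla\tilde{X})$. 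Using $\tilde{X} - \Id_{\mathcal{F}} \in \mathcal{W}(Q_T^0) \hookrightarrow C([0,T];\mathbf{H}^3(\mathcal{F}))$ together with a Neumann-series argument, $M(\cdot,t) - \I_{\R^3}$ will be controlled in $[\mathbf{H}^2(\mathcal{F})]^{3\times 3}$ by $\|\nabla\tilde{X}(\cdot,t) - \I_{\R^3}\|_{\mathbf{H}^2(\mathcal{F})}$ for every $t$.

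I would dispatch the third estimate first: since $\GG Q = M^T\nabla Q$ while $\nabla Q = \I_{\R^3}\nabla Q$, one has pointwise $(\nabla - \GG)Q = (\I_{\R^3} - M^T)\nabla Q$, whence $\|(\nabla - \GG)Q(\cdot,t)\|_{\mathbf{L}^2(\mathcal{F})} \leq \|\I_{\R^3} - M^T(\cdot,t)\|_{\mathbf{L}^\infty(\mathcal{F})}\|\nabla Q(\cdot,t)\|_{\mathbf{L}^2(\mathcal{F})} \leq C\|M(\cdot,t) - \I_{\R^3}\|_{\mathbf{H}^2(\mathcal{F})}\|\nabla Q(\cdot,t)\|_{\mathbf{L}^2(\mathcal{F})}$, and integrating in $t$ after extracting the $\mathbf{H}^2$-factor in $\L^\infty$-norm gives the claim. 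For the first estimate, the definitions of $\LL$ and $\Delta$ give $[(\Delta - \LL)V]_i = \nabla^2 V_i : (\I_{\R^3} - MM^T) - [\nabla V\,(\Delta\tilde{Y})(\tilde{X})]_i$; the leading term will be bounded by contracting $\nabla^2 V_i \in \mathbf{L}^2(\mathcal{F})$ with $(\I_{\R^3} - MM^T) \in \mathbf{H}^2(\mathcal{F}) \hookrightarrow \mathbf{L}^\infty(\mathcal{F})$, and the first-order term by pairing $\nabla V \in \mathbf{H}^1(\mathcal{F}) \hookrightarrow \mathbf{L}^6(\mathcal{F})$ with $(\Delta\tilde{Y})(\tilde{X}) \in \mathbf{H}^1(\mathcal{F}) \hookrightarrow \mathbf{L}^3(\mathcal{F})$, each time leaving a factor $\|V(\cdot,t)\|_{\mathbf{H}^2(\mathcal{F})}$; squaring, integrating over $(0,T)$ and pulling out the suprema of the two coefficient norms in $\L^\infty(0,T;\cdot)$ will produce the stated inequality.

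The real work is the second estimate, which also feeds the factor $\|(\Delta\tilde{Y})(\tilde{X})\|_{\L^\infty(0,T;\mathbf{H}^1)}$ used above. Here I would differentiate the chain-rule relation $M_{ij}(y) = (\p_j\tilde{Y}_i)(\tilde{X}(y))$ with respect to $y_k$, obtaining $\p_k M_{ij} = \sum_\ell (\p_\ell\p_j\tilde{Y}_i)(\tilde{X})\,\p_k\tilde{X}_\ell$, and invert this linear system using $(\nabla\tilde{X})^{-1} = M$ to get $(\p_\ell\p_j\tilde{Y}_i)(\tilde{X}) = \sum_k M_{k\ell}\,\p_k M_{ij}$; taking the trace over $\ell = j$ then yields the key representation
\begin{eqnarray*}
[(\Delta\tilde{Y})(\tilde{X})]_i & = & \sum_{j,k} M_{kj}\,\p_k M_{ij} = \sum_{j,k} M_{kj}\,\p_k(M - \I_{\R^3})_{ij},
\end{eqnarray*}
which exhibits $(\Delta\tilde{Y})(\tilde{X})$ as a finite sum of products of an $\mathbf{H}^2(\mathcal{F})$-factor $M$ with an $\mathbf{H}^1(\mathcal{F})$-factor $\nabla(M - \I_{\R^3})$. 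Using that $\H^1(\mathcal{F})$ is a module over $\H^2(\mathcal{F})$ in dimension three, this gives $\|(\Delta\tilde{Y})(\tilde{X})(\cdot,t)\|_{\mathbf{H}^1(\mathcal{F})} \leq C\|M(\cdot,t)\|_{\mathbf{H}^2(\mathcal{F})}\|M(\cdot,t) - \I_{\R^3}\|_{\mathbf{H}^2(\mathcal{F})}$, and a supremum over $t \in (0,T)$ closes the argument. The main obstacle I anticipate is precisely arriving at this clean algebraic representation of $(\Delta\tilde{Y})(\tilde{X})$ purely in terms of $M = \nabla\tilde{Y}(\tilde{X})$ and $\nabla M$ — rather than the raw composition of the second derivatives of $\tilde{Y}$ with the flow $\tilde{X}$, which would demand more regularity on the deformation — and then checking that the product estimates really close in the borderline three-dimensional case; everything else is routine H\"older and Sobolev bookkeeping.
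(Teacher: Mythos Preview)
Your proposal is correct and follows essentially the same route as the paper. In particular, your key identity $[(\Delta\tilde{Y})(\tilde{X})]_i = \sum_{j,k} M_{kj}\,\p_k(M-\I_{\R^3})_{ij}$ is exactly the paper's chain-rule expression $\nabla^2 \tilde{Y}_i(\tilde{X}) = \big(\nabla(\nabla \tilde{Y}_i(\tilde{X}) - \I_{\R^3})\big)\,\nabla \tilde{Y}(\tilde{X})$ written in indices, and the product bound you invoke (``$\H^1(\mathcal{F})$ is a module over $\H^2(\mathcal{F})$'') is precisely the instance $s=1$, $\mu=0$, $\kappa=1$ of the Grubb--Solonnikov multiplication lemma that the paper applies.
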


\begin{proof}
Given the regularities stated in Lemma \ref{ch3_lemmeinverse} and the continuous embedding $\mathbf{H}^{2}(\mathcal{F}) \hookrightarrow \mathbf{L}^{\infty}(\mathcal{F})$, the only delicate point that has to be verified is $\Delta \tilde{Y}(\tilde{X}) \in \L^{\infty}(0,T;\mathbf{H}^{1}(\mathcal{F}))$. For that, let us consider the i-th component of $\Delta Y(X)$; We write
\begin{eqnarray*}
\Delta \tilde{Y}_i(X(\cdot,t),t) & = & \trace \left( \nabla^2 \tilde{Y}_i(X(\cdot,t),t) \right)
\end{eqnarray*}
with
\begin{eqnarray*}
\nabla^2 \tilde{Y}_i(\tilde{X}(\cdot,t),t) & = & \left( \nabla \left( \nabla \tilde{Y}_i(\tilde{X}(\cdot,t),t) \right)\right) \nabla \tilde{Y}(\tilde{X}(\cdot,t),t)\\
& = & \left( \nabla \left( \nabla \tilde{Y}_i(\tilde{X}(\cdot,t),t) - \I_{\R^3} \right)\right) \nabla \tilde{Y}(\tilde{X}(\cdot,t),t),
\end{eqnarray*}
and we apply Lemma \ref{ch3_lemmaGrubb} with $s=1$, $\mu = 0$ and $\kappa = 1$ to obtain
\begin{eqnarray*}
\| \Delta \tilde{Y}_i(\tilde{X}(\cdot,t),t) \|_{\mathbf{H}^{1}(\mathcal{F})} & \leq & C \| \nabla \tilde{Y}(\tilde{X}) -\I_{\R^3} \|_{\mathbf{H}^{2}(\mathcal{F})} \| \nabla \tilde{Y}(\tilde{X}) \|_{\mathbf{H}^{2}(\mathcal{F})}.
\end{eqnarray*}
\end{proof}

\begin{corollary} \label{ch3_lemmaH301}
There exists a positive constant $C$ such that for all $(V,Q,K',\varpi)$ in $\mathbb{H}_T$ we have
\begin{eqnarray*}
& &  \left\| \left(\Delta - \LL\right)V \right\|_{\L^2(0,T;\mathbf{L}^2(\mathcal{F}))}  \leq   C \sqrt{T}
\left\| V \right\|_{\L^2(0,T;\mathbf{H}^2(\mathcal{F}))} \times \nonumber  \\
& &  \left( \| \nabla \tilde{Y}(\tilde{X}) - \I_{\R^3} \|_{\H^{1}(0,T;\mathbf{H}^{2}(\mathcal{F}))}
\left(1 + \|  \nabla \tilde{Y}(\tilde{X})  \|_{\L^{\infty}(0,T;\mathbf{H}^{2}(\mathcal{F}))} \right) \right), \label{ch3_estLL} \\
& & \| \Delta \tilde{Y}(\tilde{X}(\cdot,t),t) \|_{\L^{\infty}(0,T;\mathbf{H}^{1}(\mathcal{F}))} \leq
C \sqrt{T}\| \nabla \tilde{Y}(\tilde{X}) -\I_{\R^3} \|_{\H^{1}(0,T;\mathbf{H}^{2}(\mathcal{F}))} \| \nabla \tilde{Y}(\tilde{X}) \|_{\L^{\infty}(0,T;\mathbf{H}^{2}(\mathcal{F}))},     \nonumber \\
& &  \left\| (\nabla - \GG)Q \right\|_{\L^2(0,T;\mathbf{L}^2(\mathcal{F}))} \leq C \sqrt{T}
\| \nabla \tilde{Y}(\tilde{X}) - \I_{\R^3} \|_{\H^{1}(0,T;\H^{2}(\mathcal{F}))}
\left\| \nabla Q \right\|_{\L^2(0,T;\mathbf{L}^2(\mathcal{F}))}. \nonumber  \label{ch3_estGG}
\end{eqnarray*}
\end{corollary}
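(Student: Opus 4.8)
The plan is to obtain Corollary~\ref{ch3_lemmaH301} directly from the previous lemma, by trading each $\L^{\infty}(0,T;\mathbf{H}^2(\mathcal{F}))$-norm of $\nabla\tilde{Y}(\tilde{X})-\I_{\R^3}$ for $\sqrt{T}$ times its $\H^1(0,T;\mathbf{H}^2(\mathcal{F}))$-norm, using the elementary embedding estimate recalled in the subsection on basic estimates.

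The first step is to notice that $\nabla\tilde{Y}(\tilde{X})-\I_{\R^3}$ vanishes at $t=0$. Indeed, by Lemma~\ref{ch3_lemmaxtension} we have $\tilde{X}(\cdot,0)=\Id_{\mathcal{F}}$, hence $\tilde{Y}(\cdot,0)=\Id_{\mathcal{F}}$ and $\nabla\tilde{Y}(\tilde{X}(\cdot,0),0)=\I_{\R^3}$. Together with the regularity of the inverse (Lemma~\ref{ch3_lemmeinverse}), which gives $t\mapsto\nabla\tilde{Y}(\tilde{X}(\cdot,t),t)-\I_{\R^3}\in\H^1(0,T;\mathbf{H}^2(\mathcal{F}))$, this shows that this map belongs to $\H^1_0(0,T;\mathbf{H}^2(\mathcal{F}))$. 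Applying the estimate $\|f\|_{\L^{\infty}(0,T;\mathrm{B})}\le\sqrt{T}\,\|f'\|_{\L^2(0,T;\mathrm{B})}$ with $\mathrm{B}=\mathbf{H}^2(\mathcal{F})$ and bounding $\|f'\|_{\L^2}$ by $\|f\|_{\H^1}$, we get
\begin{eqnarray*}
\|\nabla\tilde{Y}(\tilde{X})-\I_{\R^3}\|_{\L^{\infty}(0,T;\mathbf{H}^2(\mathcal{F}))}
\;\le\;\sqrt{T}\,\|\nabla\tilde{Y}(\tilde{X})-\I_{\R^3}\|_{\H^1(0,T;\mathbf{H}^2(\mathcal{F}))}.
\end{eqnarray*}

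The second step is to substitute this into the three inequalities of the previous lemma. For the second and third ones the substitution is immediate. For the first one, I would first simplify its right-hand bracket: writing $\nabla\tilde{Y}(\tilde{X})\nabla\tilde{Y}(\tilde{X})^T-\I_{\R^3}=(\nabla\tilde{Y}(\tilde{X})-\I_{\R^3})\nabla\tilde{Y}(\tilde{X})^T+(\nabla\tilde{Y}(\tilde{X})-\I_{\R^3})^T$ and using that $\mathbf{H}^2(\mathcal{F})$ is a Banach algebra, together with the second inequality of the lemma to absorb the $\|\Delta\tilde{Y}(\tilde{X})\|_{\L^{\infty}(0,T;\mathbf{H}^1(\mathcal{F}))}$ term, one bounds the bracket by $C\,\|\nabla\tilde{Y}(\tilde{X})-\I_{\R^3}\|_{\L^{\infty}(0,T;\mathbf{H}^2(\mathcal{F}))}\bigl(1+\|\nabla\tilde{Y}(\tilde{X})\|_{\L^{\infty}(0,T;\mathbf{H}^2(\mathcal{F}))}\bigr)$; then the displayed estimate above produces the announced $C\sqrt{T}$.

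There is no genuine obstacle here: the only point requiring care is the justification that $\nabla\tilde{Y}(\tilde{X})-\I_{\R^3}$ really has a vanishing trace at $t=0$ \emph{and} lies in $\H^1(0,T;\mathbf{H}^2(\mathcal{F}))$, so that the embedding estimate (which needs membership in $\H^1_0$ in time) applies. This is exactly where the factor $\sqrt{T}$ — and hence the smallness in $T$ that will later make the fixed-point map of Section~\ref{secfixe} a contraction — comes from.
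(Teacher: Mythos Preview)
Your proposal is correct and follows essentially the same approach as the paper: you use the vanishing of $\nabla\tilde{Y}(\tilde{X})-\I_{\R^3}$ at $t=0$ together with the basic embedding $\|f\|_{\L^{\infty}}\le\sqrt{T}\|f\|_{\H^1}$ to extract the factor $\sqrt{T}$, and you handle the quadratic term via the same decomposition $\nabla\tilde{Y}(\tilde{X})\nabla\tilde{Y}(\tilde{X})^T-\I_{\R^3}=(\nabla\tilde{Y}(\tilde{X})-\I_{\R^3})\nabla\tilde{Y}(\tilde{X})^T+(\nabla\tilde{Y}(\tilde{X})^T-\I_{\R^3})$. Your write-up is in fact slightly more explicit than the paper's about why the $\H^1_0$-in-time hypothesis is met.
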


\begin{proof}
Since $\nabla \tilde{Y}(\tilde{X}(\cdot,0),0) - \I_{\R^3} = 0$, we have
\begin{eqnarray*}
\| \nabla \tilde{Y}(\tilde{X}) - \I_{\R^3} \|_{\L^{\infty}(0,T;\mathbf{H}^{2}(\mathcal{F}))} & \leq &
\sqrt{T} \| \nabla \tilde{Y}(\tilde{X}) - \I_{\R^3} \|_{\H^{1}(0,T;\mathbf{H}^{2}(\mathcal{F}))}.
\end{eqnarray*}
The following quadratic term is treated as follows
\begin{eqnarray*}
\nabla \tilde{Y}(\tilde{X})\nabla \tilde{Y}(\tilde{X})^T - \I_{\R^3} & = & \left(\nabla \tilde{Y}(\tilde{X}) - \I_{\R^3}\right)\nabla \tilde{Y}(\tilde{X})^T + \left(\nabla \tilde{Y}(\tilde{X})^T - \I_{\R^3} \right).
\end{eqnarray*}
\end{proof}

\begin{lemma} \label{ch3_lemmaH302}
There exists a positive constant $C$ such that for all $(V,K',\varpi)$ in $\mathcal{U}(0,T;\mathcal{F}) \times \H^1(0,T;\R^3) \times \H^1(0,T;\R^3)$ we have
\begin{eqnarray}
\left\| \MM (V,K',\varpi) \right\|_{\L^2(0,T;\mathbf{L}^2(\mathcal{F}))} & \leq & CT^{1/10} \| \nabla  \tilde{Y}(\tilde{X}) \|_{\L^{\infty}(0,T;\mathbf{H}^{2}(\mathcal{F}))}
 \left\| K' + \varpi \wedge \tilde{X} + \frac{\p \tilde{X}}{\p t} \right\|_{\L^{\infty}(0,T;\mathbf{H}^1(\mathcal{F}))} \times \nonumber \\
 & &  \left\| V \right\|^{1/5}_{\L^{\infty}(0,T;\mathbf{H}^1(\mathcal{F}))}  \left\| V \right\|^{4/5}_{\L^{2}(0,T;\mathbf{H}^2(\mathcal{F}))},  \label{ch3_estiMM} \\
 \left\| \NN V \right\|_{\L^2(0,T;\mathbf{L}^2(\mathcal{F}))} & \leq & C T^{1/10} \| \nabla  \tilde{Y}(\tilde{X})\|_{\L^{\infty}(0,T;\mathbf{H}^{2}(\mathcal{F}))}
 \left\| V \right\|^{6/5}_{\L^{\infty}(0,T;\mathbf{H}^1(\mathcal{F}))}  \left\| V \right\|^{4/5}_{\L^{2}(0,T;\mathbf{H}^2(\mathcal{F}))}, \nonumber \\ \label{ch3_estiNN}  \\
 \| \varpi \wedge V \|_{\L^2(0,T;\mathbf{L}^2(\mathcal{F}))} & \leq & C\sqrt{T} \| \varpi \|_{\L^{\infty}(0,T;\R^3)}
 \| V \|_{\L^{\infty}(0,T;\mathbf{L}^2(\mathcal{F}))}. \label{ch3_estiOU}
\end{eqnarray}
\end{lemma}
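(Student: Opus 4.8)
The plan is to estimate each of the three nonlinear terms $\MM$, $\NN$ and $\varpi\wedge V$ in $\L^2(0,T;\mathbf{L}^2(\mathcal{F}))$ by combining a pointwise-in-time H\"older-type bound (using the algebra structure of $\mathbf{H}^2(\mathcal{F})$ in dimension $3$ and suitable Gagliardo--Nirenberg interpolation) with an integration in time that produces the small positive power of $T$. The extra power of $T$ is the crucial output: it is what will later let the fixed-point map be a contraction for $T$ small.

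\textbf{Estimate of $\varpi\wedge V$.} This is the easiest: pointwise in $t$ one has $\|\varpi(t)\wedge V(\cdot,t)\|_{\mathbf{L}^2(\mathcal{F})}\le |\varpi(t)|_{\R^3}\,\|V(\cdot,t)\|_{\mathbf{L}^2(\mathcal{F})}\le \|\varpi\|_{\L^\infty(0,T;\R^3)}\|V\|_{\L^\infty(0,T;\mathbf{L}^2(\mathcal{F}))}$, and then $\|\cdot\|_{\L^2(0,T)}\le\sqrt T\,\|\cdot\|_{\L^\infty(0,T)}$ gives \eqref{ch3_estiOU}.

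\textbf{Estimate of $\NN V=\nabla V\,\nabla\tilde Y(\tilde X)\,V$.} Pointwise in $t$, I would write $\|\nabla V\,\nabla\tilde Y(\tilde X)\,V\|_{\mathbf{L}^2(\mathcal{F})}\le \|\nabla\tilde Y(\tilde X)\|_{\mathbf{L}^\infty(\mathcal{F})}\,\|V\|_{\mathbf{L}^\infty(\mathcal{F})}\,\|\nabla V\|_{\mathbf{L}^2(\mathcal{F})}$, use $\mathbf{H}^2(\mathcal{F})\hookrightarrow\mathbf{L}^\infty(\mathcal{F})$ for the $\nabla\tilde Y(\tilde X)$ factor, and then interpolate the two norms of $V$: with $\|V(\cdot,t)\|_{\mathbf{L}^\infty(\mathcal{F})}\lesssim\|V\|_{\mathbf{H}^1}^{1/2}\|V\|_{\mathbf{H}^2}^{1/2}$ and $\|\nabla V\|_{\mathbf{L}^2}\le\|V\|_{\mathbf{H}^2}^{1/2}\|V\|_{\mathbf{H}^1}^{1/2}$ (or a similar split), one gets a pointwise bound of the form $C\|\nabla\tilde Y(\tilde X)\|_{\L^\infty(0,T;\mathbf{H}^2)}\|V\|_{\mathbf{H}^1}^{\alpha}\|V\|_{\mathbf{H}^2}^{\beta}$ with $\beta<2$. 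Raising to the square, integrating in $t$, pulling out $\|V\|_{\L^\infty(0,T;\mathbf{H}^1)}$ and applying H\"older in time with the exponents matching $\beta=4/5$ (so that $\|V\|_{\mathbf{H}^2}^{4/5}$ fits inside an $\L^2_t$ norm with room to spare, leaving $T^{1/10}$) yields \eqref{ch3_estiNN}. The term $\MM(V,K',\varpi)=-\nabla V\,\nabla\tilde Y(\tilde X)\,(K'+\varpi\wedge\tilde X+\p_t\tilde X)$ is handled in exactly the same way, with the factor $V$ replaced by $K'+\varpi\wedge\tilde X+\p_t\tilde X$ estimated in $\L^\infty(0,T;\mathbf{H}^1(\mathcal{F}))$ (which is finite by the regularity of $\tilde X$ from Lemma \ref{ch3_lemmaxtension} together with $K',\varpi\in\H^1(0,T;\R^3)$), while $V$ appears only through $\nabla V$, interpolated as above to produce $\|V\|^{1/5}_{\L^\infty(0,T;\mathbf{H}^1)}\|V\|^{4/5}_{\L^2(0,T;\mathbf{H}^2)}$; this gives \eqref{ch3_estiMM}.

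The main obstacle, if any, is bookkeeping: choosing the interpolation exponents and the H\"older exponents in time so that they are mutually consistent and actually leave a strictly positive power of $T$ (the value $1/10$ is one convenient choice, not a forced one), and making sure the $\mathbf{H}^2$-algebra and embedding constants in dimension $3$ are being used correctly. There is no conceptual difficulty beyond the dimensional Sobolev embeddings; the estimates are, as the authors note, deliberately non-sharp.
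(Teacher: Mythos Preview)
Your strategy is sound in spirit, but the paper takes a cleaner route and your specific interpolation choices do not produce the stated exponents.

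The paper does not start from the triple bound $\|\nabla\tilde Y(\tilde X)\|_{\mathbf{L}^\infty}\|V\|_{\mathbf{L}^\infty}\|\nabla V\|_{\mathbf{L}^2}$. It instead recalls (from Lemma~5.2 of \cite{Tucsnak}) the convection estimate
\[
\|(w\cdot\nabla)v\|_{\L^{5/2}(0,T;\mathbf{L}^2(\mathcal F))}\le C\,\|w\|_{\L^\infty(0,T;\mathbf{H}^1)}\|v\|_{\L^\infty(0,T;\mathbf{H}^1)}^{1/5}\|v\|_{\L^2(0,T;\mathbf{H}^2)}^{4/5},
\]
valid in dimension $3$ because $\mathbf{H}^1\hookrightarrow\mathbf{L}^q$ for all $q\le 6$; this is where the $1/5$--$4/5$ split originates. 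A single H\"older step $\|\cdot\|_{\L^2(0,T)}\le T^{1/10}\|\cdot\|_{\L^{5/2}(0,T)}$ then yields the $T^{1/10}$. For $\NN$ one applies this with $w=\nabla\tilde Y(\tilde X)\,V$, for $\MM$ with $w=\nabla\tilde Y(\tilde X)\,(K'+\varpi\wedge\tilde X+\p_t\tilde X)$; the $\mathbf{H}^1$ norm of these products is controlled via Lemma~\ref{ch3_lemmaGrubb} ($s=1$, $\mu=1$, $\kappa=0$), which is exactly why the factor $\|\nabla\tilde Y(\tilde X)\|_{\L^\infty(\mathbf{H}^2)}$ appears and why only the $\mathbf{H}^1$ norm of $K'+\varpi\wedge\tilde X+\p_t\tilde X$ is needed. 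Your $\mathbf{L}^\infty$-based bound would instead require that velocity in $\L^\infty(\mathbf{L}^\infty(\mathcal F))$, which is not what the lemma asserts.

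One concrete slip in your sketch: writing $\|\nabla V\|_{\mathbf{L}^2}\le\|V\|_{\mathbf{H}^1}^{1/2}\|V\|_{\mathbf{H}^2}^{1/2}$ is vacuous (it is just $\le\|V\|_{\mathbf{H}^1}$), and if you combine it literally with Agmon's inequality $\|V\|_{\mathbf{L}^\infty}\lesssim\|V\|_{\mathbf{H}^1}^{1/2}\|V\|_{\mathbf{H}^2}^{1/2}$ you end up with the pointwise exponent $\beta=1$ on $\|V\|_{\mathbf{H}^2}$ and no positive power of $T$ after integration. Using the plain bound $\|\nabla V\|_{\mathbf{L}^2}\le\|V\|_{\mathbf{H}^1}$ instead gives $\beta=1/2$ and hence $T^{1/4}$ with exponents $3/2$, $1/2$---a perfectly usable estimate, but not the one stated in the lemma. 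Your treatment of $\varpi\wedge V$ is exactly the paper's.
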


\begin{proof}
Let us remind an estimate proved in \cite{Tucsnak} (Lemma 5.2) which is still true in dimension 3, \textcolor{black}{because we have the continuous embeddings:
\begin{eqnarray*}
\H^1(\mathcal{F}) & \hookrightarrow & \mathbf{L}^q(\mathcal{F}), \quad \forall \ 2 \leq q \leq 6.
\end{eqnarray*}
}
So there exists a positive constant $C$ such that for all $v$, $w$ in $\mathcal{U}(0,T;\mathcal{F})$ we have
\begin{eqnarray}
\|(w\cdot \nabla)v \|_{\L^{5/2}(0,T,\mathbf{L}^2(\mathcal{F}))} & \leq & C \|w \|_{\L^{\infty}(0,T,\mathbf{H}^1(\mathcal{F}))}
\|v \|^{1/5}_{\L^{\infty}(0,T,\mathbf{H}^1(\mathcal{F}))} \| v \|^{4/5}_{\L^{2}(0,T,\mathbf{H}^2(\mathcal{F}))}.\nonumber \\ \label{ch3_estTT}
\end{eqnarray}
By applying the estimate \eqref{ch3_estTT} with $v = V$ and $w = -\nabla \tilde{Y}(\tilde{X}) \left( H' + \Omega \wedge \tilde{X} + \displaystyle \frac{\p \tilde{X}}{\p t} \right)$, combined with the H\"{o}lder inequality which gives
\begin{eqnarray*}
 \|(w\cdot \nabla)v \|_{\L^{2}(0,T,\mathbf{L}^2(\mathcal{F}))} & \leq & T^{1/10}\|(w\cdot \nabla)v \|_{\L^{5/2}(0,T,\mathbf{L}^2(\mathcal{F}))},
\end{eqnarray*}
we get
\begin{eqnarray*}
& & \left\| \MM V \right\|_{\L^2(0,T;\mathbf{L}^2(\mathcal{F}))}  \leq \\
& & CT^{1/10}
\left\| \nabla  \tilde{Y}(\tilde{X}) \left( K' +\varpi \wedge \tilde{X} + \frac{\p \tilde{X}}{\p t} \right) \right\|_{\L^{\infty}(0,T;\mathbf{H}^1(\mathcal{F}))}
\left\| V \right\|^{1/5}_{\L^{\infty}(0,T;\mathbf{H}^1(\mathcal{F}))}  \left\| V \right\|^{4/5}_{\L^{2}(0,T;\mathbf{H}^2(\mathcal{F}))}.
\end{eqnarray*}
We apply Lemma \ref{ch3_lemmaGrubb} on $w$ with $s = 1$, $\mu = 1$ and $\kappa = 0$, and then we obtain \eqref{ch3_estiMM}. For the estimate \eqref{ch3_estiNN}, we proceed similarly; We use the inequality \eqref{ch3_estTT} with $v = V$ and $w = \nabla \tilde{Y}(\tilde{X})V$, and we apply Lemma \ref{ch3_lemmaGrubb} on $w$ with $s = 1$, $\mu = 1$ and $\kappa = 0$. For the estimate \eqref{ch3_estiOU}, we simply write
\begin{eqnarray*}
\| \varpi \wedge V \|_{\L^2(0,T;\mathbf{L}^2(\mathcal{F}))}
& \leq & C \| \varpi \|_{\L^{2}(0,T;\R^3)} \| V \|_{\L^{\infty}(0,T;\mathbf{L}^2(\mathcal{F}))} \\
& \leq & C\sqrt{T} \| \varpi \|_{\L^{\infty}(0,T;\R^3)} \| V \|_{\L^{\infty}(0,T;\mathbf{L}^2(\mathcal{F}))}.
\end{eqnarray*}
\end{proof}

\begin{lemma} \label{ch3_lemmaH3}
There exists a positive constant $C$ such that for all $(V,K',\varpi)$ in $\mathcal{U}(0,T;\mathcal{F}) \times \H^1(0,T;\R^3) \times \H^1(0,T;\R^3)$ we have
\begin{eqnarray*}
\left\| G(V,K',\varpi) \right\|_{\L^2(0,T;\mathbf{H}^2(\mathcal{F}))} & \leq & C\sqrt{T}
\|V \|_{\L^2(0,T;\mathbf{H}^2(\mathcal{F}))} \| \nabla \tilde{Y}(\tilde{X}) - \I_{\R^3} \|_{\H^{1}(0,T;\mathbf{H}^2(\mathcal{F}))}, \\
 \left\| G(V,K',\varpi) \right\|_{\H^1(0,T;\mathbf{L}^2(\mathcal{F}))} & \leq & C\sqrt{T}
 \left( \| V \|_{\H^1(0,T;\mathbf{L}^{2}(\mathcal{F}))} \|  \nabla \tilde{Y}(\tilde{X}) - \I_{\R^3} \|_{\H^{1}(0,T;\mathbf{H}^2(\mathcal{F}))} \right. \nonumber \\
  & &   + \left. \| V \|_{\L^2(0,T;\mathbf{H}^2(\mathcal{F}))}
  \| \nabla \tilde{Y}(\tilde{X}) - \I_{\R^3} \|_{\H^{2}(0,T;\mathbf{L}^2(\mathcal{F}))} \right),\\
  \text{\textcolor{black}{$\displaystyle 
\left\| G(V,K',\varpi) \right\|_{\L^{\infty}(0,T;\mathbf{H}^1(\mathcal{F}))} $}} 
& \text{\textcolor{black}{$\leq$}} & 
\text{\textcolor{black}{$C\sqrt{T} \|  \nabla \tilde{Y}(\tilde{X}) - \I_{\R^3} \|_{\H^{1}(0,T;\mathbf{H}^2(\mathcal{F}))}
\| V \|_{\L^{\infty}(0,T;\mathbf{H}^1(\mathcal{F}))}.
  $}}
\end{eqnarray*}
\textcolor{black}{If furthermore $V_{|\p \mathcal{S}} \in \H^1(0,T;\mathbf{H}^{\varepsilon}(\p \mathcal{S}))$, then $G(V,K',\varpi)_{| \p \mathcal{S}}$ lies also in this space and we have
\begin{eqnarray*}
& &  \| G(V,K',\varpi)_{| \p \mathcal{S}}\|_{ \H^1(0,T;\mathbf{H}^{\varepsilon}(\p \mathcal{S}))}  \leq   C T^{1/2-\varepsilon}
 \| \nabla \tilde{Y}(\tilde{X}) - \I_{\R^3} \|_{\H^{2}(0,T;\mathbf{L}^2(\mathcal{F}))\cap \H^1(0,T;\mathbf{H}^{2}(\mathcal{F}))}  \\
& & \hspace*{180pt} \times \left( \| V_{|\p \mathcal{S}} \|_{\H^1(0,T;\mathbf{H}^{\varepsilon}(\p \mathcal{S}))}
+ \| V \|_{\mathcal{U}(0,T;\mathcal{F})}
\right).
\end{eqnarray*}
}
\end{lemma}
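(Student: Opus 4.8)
The plan is to exploit the explicit form \eqref{ch3_rhsG} of the datum, $G(V,K',\varpi) = \left(\I_{\R^3} - \nabla\tilde{Y}(\tilde{X})\right)V$, and to set $A := \I_{\R^3} - \nabla\tilde{Y}(\tilde{X})$, so that $G = AV$ is simply a product. The decisive structural fact I would use is that $\tilde{X}(\cdot,0) = \Id_{\mathcal{F}}$ by Lemma \ref{ch3_lemmaxtension}, hence $\nabla\tilde{Y}(\tilde{X}(\cdot,0),0) = \I_{\R^3}$, i.e. $A(\cdot,0) = 0$; therefore, by the vanishing-at-zero estimate $\|f\|_{\L^\infty(0,T;\mathrm{B})} \le \sqrt{T}\,\|f'\|_{\L^2(0,T;\mathrm{B})}$ recalled among the basic estimates, every factor of $A$ (as opposed to $\partial_t A$) generates a factor $\sqrt{T}$. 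The remaining ingredients are all standard in dimension $3$: $\mathbf{H}^2(\mathcal{F})$ is a Banach algebra with $\mathbf{H}^2(\mathcal{F}) \hookrightarrow \mathbf{L}^\infty(\mathcal{F})$ and $\mathbf{H}^1(\mathcal{F}) \hookrightarrow \mathbf{L}^q(\mathcal{F})$ for $2 \le q \le 6$; the embedding $\mathcal{U}(0,T;\mathcal{F}) \hookrightarrow C([0,T];\mathbf{H}^1(\mathcal{F}))$; the Leibniz rule $\partial_t(AV) = (\partial_t A)\,V + A\,\partial_t V$; and Hölder's inequality in time. The regularity $A \in \H^1(0,T;\mathbf{H}^2(\mathcal{F})) \cap \H^2(0,T;\mathbf{L}^2(\mathcal{F}))$, which makes the right-hand sides finite, is the content of Lemma \ref{ch3_lemmeinverse}.

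For the $\L^2(0,T;\mathbf{H}^2(\mathcal{F}))$ bound I would apply the algebra property pointwise in time, $\|G(\cdot,t)\|_{\mathbf{H}^2(\mathcal{F})} \le C\,\|A(\cdot,t)\|_{\mathbf{H}^2(\mathcal{F})}\,\|V(\cdot,t)\|_{\mathbf{H}^2(\mathcal{F})}$, then integrate and place $A$ in $\L^\infty$ in time, so that $A(\cdot,0)=0$ supplies the $\sqrt{T}$; the $\L^\infty(0,T;\mathbf{H}^1(\mathcal{F}))$ bound follows identically, using that $\mathbf{H}^1(\mathcal{F})$ is stable under multiplication by $\mathbf{H}^2(\mathcal{F})$. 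For the $\H^1(0,T;\mathbf{L}^2(\mathcal{F}))$ bound, the $\L^2(0,T;\mathbf{L}^2(\mathcal{F}))$ part is a by-product of the first bound; in $\partial_t G = (\partial_t A)V + A\,\partial_t V$, the term $A\,\partial_t V$ is handled with $\mathbf{H}^2(\mathcal{F}) \hookrightarrow \mathbf{L}^\infty(\mathcal{F})$ and $A(\cdot,0)=0$, giving $\sqrt{T}$ against $\|V\|_{\H^1(0,T;\mathbf{L}^2(\mathcal{F}))}$, while the term $(\partial_t A)V$ — in which $\partial_t A$ does \emph{not} vanish at $t=0$ — is bounded using $\mathbf{H}^2(\mathcal{F})\hookrightarrow\mathbf{L}^\infty(\mathcal{F})$ on $V$ together with the full $\H^2(0,T;\mathbf{L}^2(\mathcal{F}))$ regularity of $A$ and Hölder in time; this is precisely where the second time derivative $\|\nabla\tilde{Y}(\tilde{X}) - \I_{\R^3}\|_{\H^2(0,T;\mathbf{L}^2(\mathcal{F}))}$ enters.

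For the boundary estimate I would restrict to $\p\mathcal{S}$: $G_{|\p\mathcal{S}} = A_{|\p\mathcal{S}}\,V_{|\p\mathcal{S}}$ with $A_{|\p\mathcal{S}}(\cdot,0)=0$, using the trace inequality $\|A_{|\p\mathcal{S}}(\cdot,t)\|_{\mathbf{H}^{3/2}(\p\mathcal{S})} \le C\,\|A(\cdot,t)\|_{\mathbf{H}^2(\mathcal{F})}$ and the fact that $\mathbf{H}^{3/2}(\p\mathcal{S})$ is a multiplier on $\mathbf{H}^{\varepsilon}(\p\mathcal{S})$ (since $\p\mathcal{S}$ is a surface and $3/2 > 1$). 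By the Leibniz rule, the term $A_{|\p\mathcal{S}}(\partial_t V)_{|\p\mathcal{S}}$ is controlled by $\|A_{|\p\mathcal{S}}\|_{\L^\infty(0,T;\mathbf{H}^{3/2}(\p\mathcal{S}))} \le C\sqrt{T}\,\|A\|_{\H^1(0,T;\mathbf{H}^2(\mathcal{F}))}$ times $\|V_{|\p\mathcal{S}}\|_{\H^1(0,T;\mathbf{H}^{\varepsilon}(\p\mathcal{S}))}$, which is of the required form since $\sqrt{T} \le T^{1/2-\varepsilon}$ for small $T$. The term $(\partial_t A)_{|\p\mathcal{S}}\,V_{|\p\mathcal{S}}$ is the crux: since $\partial_t A$ does not vanish at $t=0$, I would interpolate $\partial_t A \in \L^2(0,T;\mathbf{H}^2(\mathcal{F})) \cap \H^1(0,T;\mathbf{L}^2(\mathcal{F}))$ to gain time-integrability, $\partial_t A \in \L^q(0,T;\mathbf{H}^s(\mathcal{F}))$ for a suitable $s > 1/2$ and some $q > 2$, take the trace to $\p\mathcal{S}$, multiply against $V_{|\p\mathcal{S}} \in C([0,T];\mathbf{H}^{1/2}(\p\mathcal{S})) \hookrightarrow \L^\infty(0,T;\mathbf{H}^{1/2}(\p\mathcal{S}))$ (bounded by $\|V\|_{\mathcal{U}(0,T;\mathcal{F})}$ through the trace and the embedding of $\mathcal{U}$), and close by Hölder in time, which yields the factor $T^{1/2-\varepsilon}$ after adjusting $\varepsilon$.

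I expect the only genuinely delicate point to be the handling of the terms carrying $\partial_t[\nabla\tilde{Y}(\tilde{X})]$: unlike $\nabla\tilde{Y}(\tilde{X}) - \I_{\R^3}$ itself, this quantity does not vanish at $t=0$, so the quick "$\sqrt{T}$ from vanishing" argument is unavailable and must be replaced by the interpolation-plus-Hölder argument above — which is also why the second-order-in-time norm $\|\nabla\tilde{Y}(\tilde{X}) - \I_{\R^3}\|_{\H^2(0,T;\mathbf{L}^2(\mathcal{F}))}$ appears and why the boundary exponent is $T^{1/2-\varepsilon}$ rather than $T^{1/2}$. All the other estimates are routine consequences of the algebra and trace properties of Sobolev spaces in dimension $3$ combined with the vanishing of $A$ at the initial time.
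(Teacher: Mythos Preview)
Your treatment of the three volume estimates is essentially the paper's: the same product form $G=AV$ with $A=\I_{\R^3}-\nabla\tilde Y(\tilde X)$, the same use of Lemma~\ref{ch3_lemmaGrubb} (your ``algebra'' and ``multiplier'' language), and the same $\sqrt T$ extracted from $A(\cdot,0)=0$ via $\|A\|_{\L^\infty(B)}\le\sqrt T\,\|A_t\|_{\L^2(B)}$.

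There are two genuine differences worth flagging. First, for the boundary estimate the paper does \emph{not} work on $\p\mathcal S$: it takes a bounded extension $\overline V$ of $V_{|\p\mathcal S}$ into $\mathcal F$, forms $\overline G_t=A\,\p_t\overline V+(\p_t A)\,\overline V$ in the bulk, estimates it in $\L^2(0,T;\mathbf H^{1/2+\varepsilon}(\mathcal F))$ via Lemma~\ref{ch3_lemmaGrubb} (once with $s=\tfrac12+\varepsilon$, $\mu=\tfrac32-\varepsilon$, $\kappa=0$; once with $s=\mu=\tfrac12+\varepsilon$, $\kappa=\tfrac12-\varepsilon$), and only then traces back. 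Your direct-on-$\p\mathcal S$ plan is viable but the paper's detour through $\mathcal F$ lets all product estimates be read off a single lemma.

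Second---and this is the substantive point---you assert that $\p_t A$ does \emph{not} vanish at $t=0$ and build your $(\p_t A)V$ estimates around that. The paper does the opposite: from $\nabla\tilde Y(\tilde X)\,\nabla\tilde X=\I_{\R^3}$ and $\tilde X(\cdot,0)=\Id_{\mathcal F}$ it claims $\p_t[\nabla\tilde Y(\tilde X)](\cdot,0)=0$, and uses this to bound $\|\p_t A\|_{\L^\infty(0,T;\mathbf L^2)}\le\sqrt T\,\|\p_{tt}A\|_{\L^2(0,T;\mathbf L^2)}$; this is exactly where the factor $\sqrt T$ in the $\H^1(\mathbf L^2)$ estimate and the factor $T^{1/2-\varepsilon}$ in the boundary estimate come from (the latter after interpolating $\|\p_t A\|_{\L^{2p}(\mathbf H^{1+2\varepsilon})}\le\|\p_t A\|_{\L^2(\mathbf H^2)}^{1/2+\varepsilon}\|\p_t A\|_{\L^\infty(\mathbf L^2)}^{1/2-\varepsilon}$). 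Without that vanishing, your ``interpolation $+$ H\"older'' route runs into the fact that the embedding $\L^2(0,T;\mathbf H^2)\cap\H^1(0,T;\mathbf L^2)\hookrightarrow\L^\infty(0,T;\mathbf H^1)$ has a constant that is \emph{not} uniformly bounded as $T\to 0$ for functions not vanishing at $t=0$, so a clean $T^{1/2-\varepsilon}$ does not fall out as you sketch. In short: the paper's mechanism for the small-$T$ gain on the $(\p_t A)V$ terms is the initial vanishing of $\p_t A$, not a pure H\"older-in-time argument.
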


\begin{proof}
Observe that $\nabla \tilde{Y}(\tilde{X})$ lies in $\H^1(0,T;\mathbf{H}^{2}(\mathcal{F}))$. We apply Lemma \ref{ch3_lemmaGrubb} with $s =2$, $\mu = 0$ and $\kappa = 0$, and we get
\begin{eqnarray*}
\left\|G(V,K',\varpi)(\cdot,t) \right\|_{\mathbf{H}^{2}(\mathcal{F})} & \leq & C \left\| V \right\|_{\mathbf{H}^{2}(\mathcal{F})}
\| \nabla \tilde{Y}(\tilde{X}(\cdot,t),t) - I \|_{\mathbf{H}^{2}(\mathcal{F})}, \\
\left\|G(V,K',\varpi) \right\|_{\L^2(0,T;\mathbf{H}^{2}(\mathcal{F}))} & \leq & C \left\| V \right\|_{\L^2(0,T;\mathbf{H}^{2}(\mathcal{F}))}
\| \nabla \tilde{Y}(\tilde{X}) - \I_{\R^3} \|_{\L^{\infty}(0,T;\mathbf{H}^{2}(\mathcal{F}))} \\
& \leq & C\sqrt{T} \left\| V \right\|_{\L^2(0,T;\mathbf{H}^{2}(\mathcal{F}))}
\| \nabla \tilde{Y}(\tilde{X}) - \I_{\R^3} \|_{\H^{1}(0,T;\mathbf{H}^{2}(\mathcal{F}))}.
\end{eqnarray*}
\textcolor{black}{The estimate in $\L^{\infty}(0,T;\mathbf{H}^1(\mathcal{F}))$ is obtained by applying \ref{ch3_lemmaGrubb} with $s =1$, $\mu = 1$ and $\kappa = 0$, and the arguments used above}. For proving the regularity in $\H^1(0,T;\mathbf{L}^2(\mathcal{F}))$, we first write
\begin{eqnarray}
\frac{\p G(V,K',\varpi)}{\p t}  & = & \left( \nabla \tilde{Y}(\tilde{X}) - \I_{\R^3} \right)\frac{\p V}{\p t}
+ \frac{\p }{\p t}\left( \nabla \tilde{Y}(\tilde{X}) \right) V. \label{useful}
\end{eqnarray}
\textcolor{black}{Let us keep in mind that we have the continuous embedding
\begin{eqnarray*}
\mathbf{H}^{2}(\mathcal{F}) \hookrightarrow 
\mathbf{L}^{\infty}(\mathcal{F}),
\end{eqnarray*}
and thus we have the estimate}
\begin{eqnarray*}
\left\| \frac{\p G(V,K',\varpi)}{\p t} \right\|_{\L^2(0,T;\mathbf{L}^2(\mathcal{F}))} & \leq & C \left( 
\| \nabla \tilde{Y}(\tilde{X}) - \I_{\R^3} \|_{\L^{\infty}(0,T;\mathbf{H}^{2}(\mathcal{F}))} \left\| \frac{\p V}{\p t} \right\|_{\L^2(0,T;\mathbf{L}^2(\mathcal{F}))} \right. \\
& & + \left.  \left\| \frac{\p }{\p t}\left( \nabla \tilde{Y}(\tilde{X}) \right) \right\|_{\L^{\infty}(0,T;\mathbf{L}^{2}(\mathcal{F}))}
\| V \|_{\L^{2}(0,T;\mathbf{H}^{2}(\mathcal{F}))}  \right).
\end{eqnarray*}
\textcolor{black}{For the estimate in $\H^1(0,T; \mathbf{H}^{\varepsilon}(\p \mathcal{S}))$, let us consider again the equality \eqref{useful}. We rather consider an extension of $\overline{V}$ of $V_{\p \mathcal{S}}$ in $\mathcal{F}$ such that
\begin{eqnarray*}
\| \overline{V} \|_{\H^1(0,T; \mathbf{H}^{\varepsilon +1/2}(\mathcal{F}))} & \leq & C\| V_{| \p \mathcal{S}} \|_{\H^1(0,T; \mathbf{H}^{\varepsilon}(\p \mathcal{S}))}, \\
\| \overline{V} \|_{\L^{\infty}(0,T; \mathbf{H}^{1}(\mathcal{F}))} & \leq & C\| V \|_{\L^{\infty}(0,T; \mathbf{H}^{1}(\mathcal{F}))}.
\end{eqnarray*}
We then set 
\begin{eqnarray*}
\overline{G}_t & = & \left( \nabla \tilde{Y}(\tilde{X}) - \I_{\R^3} \right)\frac{\p \overline{V}}{\p t}
+\frac{\p }{\p t}\left( \nabla \tilde{Y}(\tilde{X}) \right) \overline{V}
\end{eqnarray*}
which can be seen as an extension in $\mathcal{F}$ of $\frac{\p G(V,K',\varpi)}{\p t}_{| \p \mathcal{S}}$.\\
We then want to estimate $\overline{G}_t$ in $\L^2(0,T; \mathbf{H}^{\varepsilon +1/2}(\mathcal{F}))$. The first term in the definition of $\overline{G}_t$ can be treated by using Lemma \ref{ch3_lemmaGrubb} with $s = 1/2+\varepsilon$, $\mu = 3/2-\varepsilon$ and $\kappa = 0$, as follows
\begin{eqnarray*}
 \left\| \left( \nabla \tilde{Y}(\tilde{X}) - \I_{\R^3} \right)\frac{\p \overline{V}}{\p t} \right\|_{\L^2(0,T; \mathbf{H}^{\varepsilon +1/2}(\mathcal{F}))} & \leq &
 C \| \tilde{Y}(\tilde{X}) - \I_{\R^3} \|_{\L^{\infty}(0,T;\mathbf{H}^{2}(\mathcal{F}))} \| \overline{V} \|_{\H^1(0,T; \mathbf{H}^{\varepsilon +1/2}(\mathcal{F}))} \\
 & \leq & 
 C \sqrt{T}\| \tilde{Y}(\tilde{X}) - \I_{\R^3} \|_{\H^{1}(0,T;\mathbf{H}^{2}(\mathcal{F}))} \| \overline{V} \|_{\H^1(0,T; \mathbf{H}^{\varepsilon +1/2}(\mathcal{F}))}.
\end{eqnarray*}
For the other term, we first use Lemma \ref{ch3_lemmaGrubb} with $s = 1/2+\varepsilon$, $\mu = 1/2+\varepsilon$ and $\kappa = 1/2-\varepsilon$ to get
\begin{eqnarray*}
\left\|\frac{\p }{\p t}\left( \nabla \tilde{Y}(\tilde{X}) \right) \overline{V} \right\|_{\mathbf{H}^{1/2+\varepsilon}(\mathcal{F})} & \leq & C
\left\| \frac{\p }{\p t}\left( \nabla \tilde{Y}(\tilde{X}) \right) \right\|_{\mathbf{H}^{1+2\varepsilon}(\mathcal{F})} \| \overline{V} \|_{\mathbf{H}^{1}(\mathcal{F})}
\end{eqnarray*}
Hence from the H\"older inequality we deduce
\begin{eqnarray*}
\left\|\frac{\p }{\p t}\left( \nabla \tilde{Y}(\tilde{X}) \right) \overline{V} \right\|_{\L^2(0,T;\mathbf{H}^{1/2+\varepsilon}(\mathcal{F}))} & \leq & C
\left\| \frac{\p }{\p t}\left( \nabla \tilde{Y}(\tilde{X}) \right) \right\|_{\L^{2p}(0,T;\mathbf{H}^{1+2\varepsilon}(\mathcal{F}))} \| \overline{V} \|_{\L^{2q}(0,T;\mathbf{H}^{1}(\mathcal{F}))} \\
& \leq & C T^{1/2q}
\left\| \frac{\p }{\p t}\left( \nabla \tilde{Y}(\tilde{X}) \right) \right\|_{\L^{2p}(0,T;\mathbf{H}^{1+2\varepsilon}(\mathcal{F}))} 
\| \overline{V} \|_{\L^{\infty}(0,T;\mathbf{H}^{1}(\mathcal{F}))},
\end{eqnarray*}
with $1/p = 1/2 + \varepsilon$ and $1/q = 1/2 - \varepsilon$. Then by interpolation we estimate
\begin{eqnarray*}
\left\| \frac{\p }{\p t}\left( \nabla \tilde{Y}(\tilde{X}) \right) \right\|_{\L^{2p}(0,T;\mathbf{H}^{1+2\varepsilon}(\mathcal{F}))} & \leq & 
\left\| \frac{\p }{\p t}\left( \nabla \tilde{Y}(\tilde{X}) \right) \right\|_{\L^{2}(0,T;\mathbf{H}^{2}(\mathcal{F}))}^{1/2+\varepsilon}
\left\| \frac{\p }{\p t}\left( \nabla \tilde{Y}(\tilde{X}) \right) \right\|_{\L^{\infty}(0,T;\mathbf{L}^{2}(\mathcal{F}))}^{1/2-\varepsilon}.
\end{eqnarray*}
Because of the equality 
\begin{eqnarray*}
\nabla \tilde{Y}(\tilde{X}(\cdot,t),t) \nabla \tilde{X}(\cdot,t) & = & \I_{\R^3}
\end{eqnarray*}
and the fact that $\tilde{X}(\cdot,0) = \Id_{\mathcal{F}}$, we have $\displaystyle \frac{\p }{\p t}\left( \nabla \tilde{Y}(\tilde{X}) \right)(\cdot,0)  =0$ and so we can deduce
\begin{eqnarray*}
\left\| \frac{\p }{\p t}\left( \nabla \tilde{Y}(\tilde{X}) \right) \right\|_{\L^{\infty}(0,T;\mathbf{L}^{2}(\mathcal{F}))}^{1/2-\varepsilon}
& \leq &  T^{1/4-\varepsilon/2}
\left\| \frac{\p }{\p t}\left( \nabla \tilde{Y}(\tilde{X}) \right) \right\|_{\H^{1}(0,T;\mathbf{L}^{2}(\mathcal{F}))}^{1/2-\varepsilon},
\\
\left\| \frac{\p }{\p t}\left( \nabla \tilde{Y}(\tilde{X}) \right) \right\|_{\L^{2p}(0,T;\mathbf{H}^{1+2\varepsilon}(\mathcal{F}))}
& \leq & T^{1/4-\varepsilon/2}\left\| \frac{\p }{\p t}\left( \nabla \tilde{Y}(\tilde{X}) \right) \right\|_{\L^{2}(0,T;\mathbf{H}^{2}(\mathcal{F}))\cap \H^{1}(0,T;\mathbf{L}^{2}(\mathcal{F})) }.
\end{eqnarray*}
Finally we have
\begin{eqnarray*}
\left\|\frac{\p }{\p t}\left( \nabla \tilde{Y}(\tilde{X}) \right) \overline{V} \right\|_{\L^2(0,T;\mathbf{H}^{1/2+\varepsilon}(\mathcal{F}))} & \leq & 
C T^{1/2-\varepsilon} \| \overline{V} \|_{\L^{\infty}(0,T;\mathbf{H}^{1}(\mathcal{F}))}
\times \\
& & \left\| \frac{\p }{\p t}\left( \nabla \tilde{Y}(\tilde{X}) \right) \right\|_{\L^{2}(0,T;\mathbf{H}^{2}(\mathcal{F}))\cap \H^{1}(0,T;\mathbf{L}^{2}(\mathcal{F})) }.
\end{eqnarray*}
}
\end{proof}

\begin{lemma} \label{ch3_estFMFI}
There exists a positive constant $C$ such that for all $(V,Q,K',\varpi)$ in $\mathbb{H}_T$ we have
\begin{eqnarray*}
& & \left\| W(\varpi) \right\|_{\L^2(0,T;\mathbf{H}^{3/2}(\p \mathcal{S}))\cap \H^1(0,T;\mathbf{H}^{1/2}(\p \mathcal{S}))} \leq C\left(\sqrt{T}\|\varpi\|_{\H^1(0,T;\R^3)} + 1 \right)\| X^{\ast} - \Id_{\mathcal{S}}\|_{\mathcal{W}_0(0,\infty;\mathcal{S})}, \label{ch3_estW} \\
& & \left\| F_M(V,Q,K',\varpi) \right\|_{\L^2(0,T;\R^3)}  \leq \nonumber \\
& & C\left( \sqrt{T} \|K'\|_{\L^{\infty}(0,T;\R^3)} \|\varpi\|_{\L^{\infty}(0,T;\R^3)} +
\left( \|V \|_{\L^2(0,T;\mathbf{H}^2(\mathcal{F}))} + \|Q \|_{\L^2(0,T;\mathbf{H}^1(\mathcal{F}))} \right) \times \right. \nonumber \\
& &   \sqrt{T}\|  \nabla \tilde{Y}(\tilde{X}) - \I_{\R^3} \|_{\H^{1}(0,T;\mathbf{L}^{\infty}(\p \mathcal{S}))}
\left( \| \nabla \tilde{Y}(\tilde{X}) \|_{\L^{\infty}((0,T) \times \p \mathcal{S})} +1 \right)
 , \nonumber \\ \label{ch3_estFM} \\
& & \left\| F_I(V,Q,K',\varpi) \right\|_{\L^2(0,T;\R^3)}  \leq \nonumber \\
& & C \left( T \|{I^{\ast}}' \|_{\L^{\infty}(0,T;\R^9)} \| \varpi \|_{\H^1(0,T;\R^3)} \right. \nonumber \\
& & + \sqrt{T}\|{I^{\ast}}' \|_{\L^{\infty}(0,T;\R^9)} \| \varpi \|_{\L^{\infty}(0,T;\R^3)}
+ \sqrt{T}\| I^{\ast}\|_{\L^{\infty}(0,T;\R^9)} \| \varpi \|^2_{\L^{\infty}(0,T;\R^3)} \nonumber \\
& & +  \sqrt{T}\left( \| V \|_{\L^2(0,T;\mathbf{H}^2(\mathcal{F}))} + \|Q \|_{\L^2(0,T;\mathbf{H}^1(\mathcal{F}))} \right) \times \nonumber \\
& &  \left(1 + \| \nabla \tilde{Y}(\tilde{X}) \|_{\L^{\infty}((0,T) \times \p \mathcal{S})} \right)
\left(\|  \nabla \tilde{Y}(\tilde{X}) - \I_{\R^3} \|_{\H^{1}(0,T;\mathbf{L}^{\infty}(\p \mathcal{S}))} +
\|  \nabla X^{\ast} - \Id_{\p \mathcal{S}} \|_{\H^{1}(0,T;\mathbf{L}^{\infty}(\p \mathcal{S}))} \right)  , \nonumber
\end{eqnarray*}
\begin{eqnarray}
 \|{I^{\ast}}' \|_{\L^{\infty}(0,T;\R^9)} \leq C , & \quad & \| I^{\ast} - I_0 \|_{\L^{\infty}(0,T;\R^{9})} \leq  C T. \label{estmatrix}
\end{eqnarray}
\end{lemma}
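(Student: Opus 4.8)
The plan is to estimate the five quantities $W(\varpi)$, $F_{M}$, $F_{I}$, ${I^{\ast}}'$ and $I^{\ast}-I_{0}$ termwise, using only H\"older's inequality in space and time, the trace theorems $\mathbf{H}^{s}(\mathcal{F})\hookrightarrow\mathbf{H}^{s-1/2}(\p\mathcal{S})$ and $\mathbf{H}^{s}(\mathcal{S})\hookrightarrow\mathbf{H}^{s-1/2}(\p\mathcal{S})$, the Sobolev embeddings $\mathbf{H}^{2}(\mathcal{F})\hookrightarrow\mathbf{L}^{\infty}(\mathcal{F})$ and $\mathbf{H}^{3/2}(\p\mathcal{S})\hookrightarrow\mathbf{L}^{\infty}(\p\mathcal{S})$, the one--dimensional embedding $\H^{1}(0,T)\hookrightarrow\L^{\infty}(0,T)$, the regularity $\nabla\tilde Y(\tilde X)-\I_{\R^3}\in\H^{1}(0,T;\mathbf{H}^{2}(\mathcal{F}))\cap\H^{2}(0,T;\mathbf{L}^{2}(\mathcal{F}))$ (which follows from Lemma~\ref{ch3_lemmaxtension} together with Lemma~\ref{ch3_lemmeinverse} and Lemma~\ref{ch3_lemmaGrubb}), and --- this is what produces the decaying powers of $T$ --- the vanishing at $t=0$ of $X^{\ast}-\Id_{\mathcal{S}}$, of $\tilde X-\Id_{\mathcal{F}}$, of $\nabla\tilde Y(\tilde X)-\I_{\R^3}$ and of $\tfrac{\p}{\p t}(\nabla\tilde Y(\tilde X))$, combined with the elementary bound $\|f\|_{\L^{\infty}(0,T;\mathrm{B})}\leq\sqrt{T}\,\|f'\|_{\L^{2}(0,T;\mathrm{B})}$ for $f(0)=0$ recalled above.

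For \eqref{estmatrix} I would first note that $I_{0}=I^{\ast}(0)$ since $X^{\ast}(\cdot,0)=\Id_{\mathcal{S}}$, then differentiate under the integral sign, obtaining ${I^{\ast}}'(t)=\rho_{\mathcal{S}}\int_{\mathcal{S}}\bigl(2(X^{\ast}\!\cdot\!\tfrac{\p X^{\ast}}{\p t})\I_{\R^3}-\tfrac{\p X^{\ast}}{\p t}\otimes X^{\ast}-X^{\ast}\otimes\tfrac{\p X^{\ast}}{\p t}\bigr)\,\d y$, and bound this in $\L^{\infty}(0,T;\R^{9})$ by Cauchy--Schwarz in $\mathbf{L}^{2}(\mathcal{S})$, using $\|X^{\ast}\|_{\L^{\infty}(0,T;\mathbf{L}^{2}(\mathcal{S}))}\leq C$ and $\bigl\|\tfrac{\p X^{\ast}}{\p t}\bigr\|_{\L^{\infty}(0,T;\mathbf{L}^{2}(\mathcal{S}))}\leq C\|X^{\ast}-\Id_{\mathcal{S}}\|_{\mathcal{W}_{0}(0,\infty;\mathcal{S})}$; then $\|I^{\ast}-I_{0}\|_{\L^{\infty}(0,T;\R^{9})}=\sup_{t}\bigl\|\int_{0}^{t}{I^{\ast}}'\bigr\|\leq T\,\|{I^{\ast}}'\|_{\L^{\infty}(0,T;\R^{9})}\leq CT$. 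For $W(\varpi)=\varpi\wedge(X^{\ast}-\Id)+\tfrac{\p X^{\ast}}{\p t}$ restricted to $\p\mathcal{S}$, the trace of $\tfrac{\p X^{\ast}}{\p t}$ is bounded in $\L^{2}(0,T;\mathbf{H}^{3/2}(\p\mathcal{S}))\cap\H^{1}(0,T;\mathbf{H}^{1/2}(\p\mathcal{S}))$ by $C\|X^{\ast}-\Id_{\mathcal{S}}\|_{\mathcal{W}_{0}(0,\infty;\mathcal{S})}$ with \emph{no} $T$--prefactor (this is the ``$+1$''), while the product $\varpi\wedge(X^{\ast}-\Id)$ is estimated by placing $\varpi$ in $\L^{\infty}(0,T;\R^{3})$ (controlled by its $\H^{1}$ norm) and $X^{\ast}-\Id$ in the relevant spatial Sobolev space, gaining a positive power of $T$ (written $\sqrt{T}$) from $X^{\ast}(\cdot,0)=\Id_{\mathcal{S}}$; the time derivative $\varpi'\wedge(X^{\ast}-\Id)+\varpi\wedge\tfrac{\p X^{\ast}}{\p t}$ is handled identically.

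For $F_{M}$ I would bound the three summands of \eqref{ch3_rhsFM} separately: $\|\varpi\wedge K'\|_{\L^{2}(0,T;\R^{3})}\leq\sqrt{T}\,\|\varpi\|_{\L^{\infty}}\|K'\|_{\L^{\infty}}$ gives the first term, and in each boundary integral one takes the trace of $\nabla V$ (respectively $Q$) into $\L^{2}(0,T;\mathbf{L}^{2}(\p\mathcal{S}))$ --- hence the factors $\|V\|_{\L^{2}(0,T;\mathbf{H}^{2}(\mathcal{F}))}$ and $\|Q\|_{\L^{2}(0,T;\mathbf{H}^{1}(\mathcal{F}))}$ --- puts the coefficients $\nabla\tilde Y(\tilde X)-\I_{\R^3}$ and $\nabla\tilde Y(\tilde X)$ in $\mathbf{L}^{\infty}(\p\mathcal{S})$ via $\mathbf{H}^{2}(\mathcal{F})\hookrightarrow\mathbf{H}^{3/2}(\p\mathcal{S})\hookrightarrow\mathbf{L}^{\infty}(\p\mathcal{S})$, and extracts the $\sqrt{T}$ from $\|\nabla\tilde Y(\tilde X)-\I_{\R^3}\|_{\L^{\infty}(0,T;\mathbf{L}^{\infty}(\p\mathcal{S}))}\leq\sqrt{T}\,\|\nabla\tilde Y(\tilde X)-\I_{\R^3}\|_{\H^{1}(0,T;\mathbf{L}^{\infty}(\p\mathcal{S}))}$; the factor $(1+\|\nabla\tilde Y(\tilde X)\|_{\L^{\infty}((0,T)\times\p\mathcal{S})})$ merely packages the extra coefficient of the first integral and its absence in the second. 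The estimate for $F_{I}$ is entirely parallel: the first three summands of \eqref{ch3_rhsFI} are bounded using \eqref{estmatrix} (with $\|\varpi'\|_{\L^{2}}\leq\|\varpi\|_{\H^{1}}$, so that $\|(I^{\ast}-I_{0})\varpi'\|_{\L^{2}}\leq T\|{I^{\ast}}'\|_{\L^{\infty}}\|\varpi\|_{\H^{1}}$, $\|{I^{\ast}}'\varpi\|_{\L^{2}}\leq\sqrt{T}\|{I^{\ast}}'\|_{\L^{\infty}}\|\varpi\|_{\L^{\infty}}$ and $\|I^{\ast}\,\varpi\wedge\varpi\|_{\L^{2}}\leq\sqrt{T}\|I^{\ast}\|_{\L^{\infty}}\|\varpi\|^{2}_{\L^{\infty}}$), the two ``$y\wedge\dots$'' boundary integrals are treated exactly as the corresponding ones in $F_{M}$ (the extra factor $y$ being bounded on $\p\mathcal{S}$), and the last integral $\int_{\p\mathcal{S}}(X^{\ast}-\Id)\wedge(\tilde\sigma(V,Q)\nabla\tilde Y(\tilde X)^{T}n)\,\d\Gamma$ is estimated in the same way, its small--in--$T$ coefficient being $X^{\ast}-\Id_{\mathcal{S}}$ on $\p\mathcal{S}$, which --- through the boundary condition $\tilde X|_{\p\mathcal{S}}=X^{\ast}$ and the identity $\nabla\tilde Y(\tilde X)^{T}=\com(\nabla\tilde X)$ recalled in Section~\ref{sectionequiv} --- is controlled by $\nabla X^{\ast}-\I_{\R^3}$ and again vanishes at $t=0$.

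The main obstacle is bookkeeping rather than any new idea: in each of the numerous summands one must identify which single factor is forced to carry the decaying power of $T$ and check that it is indeed a quantity vanishing at $t=0$ (so that $\|f\|_{\L^{\infty}(0,T;\mathrm{B})}\leq\sqrt{T}\|f'\|_{\L^{2}(0,T;\mathrm{B})}$ applies to it), rather than to $\nabla\tilde Y(\tilde X)$ itself or to $\tfrac{\p X^{\ast}}{\p t}$ measured in a high--order space on $\p\mathcal{S}$. The two slightly delicate points are (i) that the trace of $\tfrac{\p X^{\ast}}{\p t}$ into $\mathbf{H}^{3/2}(\p\mathcal{S})$ carries no power of $T$ --- this is exactly why the $W$--estimate contains the ``$+1$'' and an unadorned $\|X^{\ast}-\Id_{\mathcal{S}}\|_{\mathcal{W}_{0}(0,\infty;\mathcal{S})}$ --- and (ii) explaining, for the last term of $F_{I}$, why the relevant smallness is measured by $\nabla X^{\ast}-\I_{\R^3}$ on $\p\mathcal{S}$ rather than by $X^{\ast}-\Id_{\mathcal{S}}$, which rests on the precise form near $\p\mathcal{S}$ of the extension $\tilde X$ built in Lemma~\ref{ch3_lemmaxtension}.
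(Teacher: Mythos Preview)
Your approach is essentially identical to the paper's: termwise H\"older/trace/Sobolev estimates, differentiating $I^{\ast}$ under the integral and using Cauchy--Schwarz in $\mathbf{L}^{2}(\mathcal{S})$ for \eqref{estmatrix}, and extracting the $\sqrt{T}$ factors from quantities vanishing at $t=0$. The paper in fact gives even less detail than you do, writing only ``there is no particular difficulty'' for the boundary integrals in $F_{M}$ and $F_{I}$ and spelling out just the $W(\varpi)$ and inertia-matrix computations.

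One minor point: your remark (ii) overcomplicates the last integral in $F_{I}$. The factor $(X^{\ast}-\Id)$ appears there directly and vanishes at $t=0$, so $\|X^{\ast}-\Id\|_{\L^{\infty}(0,T;\mathbf{L}^{\infty}(\p\mathcal{S}))}\leq\sqrt{T}\,\bigl\|\tfrac{\p X^{\ast}}{\p t}\bigr\|_{\L^{2}(0,T;\mathbf{L}^{\infty}(\p\mathcal{S}))}$ is all that is needed; no appeal to $\nabla\tilde Y(\tilde X)^{T}=\com(\nabla\tilde X)$ or to the precise form of the extension near $\p\mathcal{S}$ is required. (The appearance of $\nabla X^{\ast}-\Id_{\p\mathcal{S}}$ in the stated estimate is a typographical quirk of the paper rather than something you need to reproduce.)
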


\begin{proof}
For the first estimate, we write (for $m \geq 3$)
\begin{eqnarray*}
\| W(\varpi) \|_{\L^2(0,T;\mathbf{H}^{3/2}(\mathcal{F}))}
& \leq & C\|\varpi\|_{\L^2(0,T;\R^3)} \| X^{\ast} - \Id \|_{\L^{\infty}(0,T;\mathbf{H}^{3/2}(\p \mathcal{S}))}
+ C \left\| \frac{\p X^{\ast}}{\p t} \right\|_{\L^{2}(0,T;\mathbf{H}^{3/2}(\p \mathcal{S}))} \\
& \leq & C \sqrt{T} \|\varpi\|_{\L^2(0,T;\R^3)} \left\| \frac{\p X^{\ast}}{\p t} \right\|_{\L^{2}(0,T;\mathbf{H}^{3}(\mathcal{S}))}
+ C \left\| \frac{\p X^{\ast}}{\p t} \right\|_{\L^{2}(0,T;\mathbf{H}^{3}(\mathcal{S}))},
\end{eqnarray*}
and
\begin{eqnarray*}
\left\| \frac{\p W(\varpi)}{\p t} \right\|_{\L^2(0,T;\mathbf{H}^{1/2}(\p \mathcal{S}))}
& \leq & C\|\varpi'\|_{\L^2(0,T;\R^3)} \| X^{\ast} - \Id \|_{\L^{\infty}(0,T;\mathbf{H}^{1/2}(\p \mathcal{S}))} \\
&  & + C\|\varpi\|_{\L^{2}(0,T;\R^3)} \left\| \frac{\p X^{\ast}}{\p t} \right\|_{\L^{\infty}(0,T;\mathbf{H}^{1/2}(\p \mathcal{S}))}
+ \left\| \frac{\p^2 X^{\ast}}{\p t^2} \right\|_{\L^{2}(0,T;\mathbf{H}^{1/2}(\p \mathcal{S}))}, \\
& \leq & C\sqrt{T} \|\varpi'\|_{\L^2(0,T;\R^3)} \left\| \frac{\p X^{\ast}}{\p t} \right\|_{\L^{2}(0,T;\mathbf{H}^{1}(\mathcal{S}))} \\
&  & + C\sqrt{T} \|\varpi\|_{\L^{2}(0,T;\R^3)} \left\| \frac{\p^2 X^{\ast}}{\p t^2} \right\|_{\L^{2}(0,T;\mathbf{H}^{1}(\mathcal{S}))}
+ \left\| \frac{\p^2 X^{\ast}}{\p t^2} \right\|_{\L^{2}(0,T;\mathbf{H}^{1}(\mathcal{S}))}.
\end{eqnarray*}
There is no particular difficulty for proving the other two estimates, if we refer to the respective expressions of $F_M$ and $F_I$ given by \eqref{ch3_rhsFM} and \eqref{ch3_rhsFI}. However, let us detail the terms due to the inertia matrices. We have
\begin{eqnarray*}
{I^{\ast}}'(t) & = & \rho_{\mathcal{S}}\int_{\mathcal{S}} \left( 2\left(\frac{\p X^{\ast}}{\p t} \cdot X^{\ast} \right)\I_{\R^3} - \frac{\p X^{\ast}}{\p t} \otimes X^{\ast} - X^{\ast} \otimes \frac{\p X^{\ast}}{\p t}  \right)(y,t) \d y,
\end{eqnarray*}
and thus
\begin{eqnarray}
\left| {I^{\ast}}'(t) \right|_{\R^9}
& \leq & C \left\| \frac{\p X^{\ast}}{\p t}(\cdot,t) \right\|_{\mathbf{L}^2(\mathcal{S})} \| X^{\ast}(\cdot,t) \|_{\mathbf{L}^2(\mathcal{S})}, \label{supstarI} \\
\left\| {I^{\ast}}' \right\|_{\L^{\infty}(0,T;\R^9)} & \leq &
C \left\| \frac{\p X^{\ast}}{\p t} \right\|_{\L^{\infty}(0,T;\mathbf{L}^2(\mathcal{S}))}
\| X^{\ast} \|_{\L^{\infty}(0,T;\mathbf{L}^2(\mathcal{S}))}, \nonumber \\
\left\| {I^{\ast}} - I_0 \right\|_{\L^{\infty}(0,T;\R^9)} & \leq &  T \left\| {I^{\ast}}' \right\|_{\L^{\infty}(0,T;\R^9)}. \nonumber
\end{eqnarray}
\end{proof}

\subsubsection{The mapping $\mathcal{N}$ is well-defined}
From Remark \ref{remarkcc} and from the estimates of the previous subsection (see Corollary \ref{ch3_lemmaH301} and Lemmas \ref{ch3_lemmaH302}, \ref{ch3_lemmaH3}, \ref{ch3_estFMFI}), we can first claim that the assumptions of Proposition \ref{ch2_thsolfaible} are satisfied. Then from Proposition \ref{ch2_thsolfaible} the mapping $\mathcal{N}$ is well-defined. Moreover we have the following estimate
\begin{eqnarray}
\| \mathcal{N}(V,Q,K',\varpi) \|_{ \mathbb{H}_T} & \leq &
C^{(0)}_T\left(1 +\| F(V,Q,K',\varpi) \|_{\L^2(0,T;\mathbf{L}^2(\mathcal{F}))} + \| G(V,K',\varpi) \|_{\mathcal{U}(0,T;\mathcal{F})} \right. \nonumber \\
& & + \text{\textcolor{black}{$ \displaystyle \left\| G(V,K',\varpi)_{|\p \mathcal{S}} \right\|_{\H^1(0,T;\mathbf{H}^{\varepsilon}(\p \mathcal{S}))} $}} + \sqrt{T}\|\varpi\|_{\H^1(0,T;\R^3)} \nonumber \\
& &  \left. + \| F_M(V,Q,K',\varpi) \|_{\L^2(0,T;\R^3)} + \| F_I(V,Q,K',\varpi) \|_{\L^2(0,T;\R^3)} \right), \nonumber \\ \label{ch3_estNB}
\end{eqnarray}
where the constant $C^{(0)}_T$ is nondecreasing with respect to $T$, and depends on the data
\begin{eqnarray*}
\|u_0\|_{\mathbf{H}^1(\mathcal{F})}, \quad |h_1|_{\R^3}, \quad |\omega_0|_{\R^3}, \quad
\| X^{\ast} - \Id_{\mathcal{S}}\|_{\mathcal{W}_0(0,T;\mathcal{S})}.
\end{eqnarray*}
For $R>0$, we set the ball
\begin{eqnarray*}
& & \mathcal{B}_R = \left\{ (U,P,H',\Omega) \in  \mathbb{H}_T \mid 
\| (U,P,H',\Omega) \|_{\mathbb{H}_T}
\leq R  \right\}
\end{eqnarray*}
which is clearly a closed subset of $\mathbb{H}_T$. The rest of this section is devoted to proving that for $R$ large enough and $T$ small enough the ball $B_R$ is stable by $\mathcal{N}$, and $\mathcal{N}$ is a contraction in $B_R$.

\subsubsection{Stability of the set $B_R$ by the mapping $\mathcal{N}$}
We are in position to claim that for $R$ large enough and $T$ small enough the ball $\mathcal{B}_R$ is stable by $\mathcal{N}$.

\begin{lemma} \label{ch3_estFFF}
Let us assume that $T\leq 1$ and $R\geq 1$. There exists a positive constant $C$, which does not depend on $T$ or $R$, such that for $(V,Q,K',\varpi) \in B_R$ we have
\begin{eqnarray*}
\left\| F(V,P,K',\varpi) \right\|_{\L^2(0,T;\mathbf{L}^2(\mathcal{F}))} & \leq & CT^{1/10}R^3 ,\\
\left\| G(V,K',\varpi) \right\|_{\mathcal{U}(0,T;\mathcal{F})} & \leq & C\sqrt{T}R^2, \\
\text{\textcolor{black}{$\displaystyle
\left\| G(V,K',\varpi)_{|\p \mathcal{S}} \right\|_{\H^1(0,T;\mathbf{H}^{\varepsilon}(\p \mathcal{S}))}$}} & \leq & \text{\textcolor{black}{$\displaystyle CT^{1/2-\varepsilon}R^2,$}} \\
\left\| F_M(V,P,K',\varpi) \right\|_{\L^2(0,T;\R^3)} & \leq & C \sqrt{T}R^3, \\
\left\| F_I(V,P,K',\varpi) \right\|_{\L^2(0,T;\R^3)} & \leq &  C \sqrt{T}R^3.
\end{eqnarray*}
\end{lemma}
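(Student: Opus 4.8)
The plan is to obtain each of the five bounds by inserting into Corollary~\ref{ch3_lemmaH301} and Lemmas~\ref{ch3_lemmaH302}, \ref{ch3_lemmaH3}, \ref{ch3_estFMFI} the information $(V,Q,K',\varpi)\in B_R$, which gives
\[
\|V\|_{\mathcal{U}(0,T;\mathcal{F})}\le R,\quad \|Q\|_{\L^2(0,T;\H^1(\mathcal{F}))}\le R,\quad \|K'\|_{\H^1(0,T;\R^3)}\le R,\quad \|\varpi\|_{\H^1(0,T;\R^3)}\le R
\]
together with $\|V_{|\p\mathcal{S}}\|_{\H^1(0,T;\mathbf{H}^{\varepsilon}(\p\mathcal{S}))}\le R$, and then tracking the exponents of $T$ and $R$ in each summand. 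First I would record, applying Lemma~\ref{ch3_lemmaxtension} with data $(K',\varpi,X^{\ast})$, that $\|\tilde X-\Id_{\mathcal{F}}\|_{\mathcal{W}(Q_T^0)}\le C(\|X^{\ast}-\Id_{\mathcal{S}}\|_{\mathcal{W}(S_{T_0}^0)}+\|K'\|_{\H^1(0,T_0;\R^3)}+\|\varpi\|_{\H^1(0,T_0;\R^3)})$, so that for $R\ge1$ the fixed datum $X^{\ast}$ is absorbed into the constant; via the composition and inversion estimates already used above (Lemmas~\ref{ch3_lemmeinverse} and \ref{ch3_lemmaGrubb}) this translates into a control of $\nabla\tilde Y(\tilde X)-\I_{\R^3}$ in $\H^1(0,T;\mathbf{H}^2(\mathcal{F}))\cap\H^2(0,T;\mathbf{L}^2(\mathcal{F}))$ and of its trace on $\p\mathcal{S}$. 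The crucial extra gain is that $\tilde X(\cdot,0)=\Id_{\mathcal{F}}$ forces $\nabla\tilde Y(\tilde X)(\cdot,0)-\I_{\R^3}=0$ and $(\tilde X-\Id_{\mathcal{F}})(\cdot,0)=0$, and likewise $\nabla X^{\ast}(\cdot,0)-\Id_{\mathcal{S}}=0$, so the elementary bound $\|f\|_{\L^{\infty}(0,T;\mathrm{B})}\le\sqrt{T}\|f'\|_{\L^2(0,T;\mathrm{B})}$ for $f$ vanishing at $t=0$ (Section~\ref{secdef}) converts every $\L^{\infty}$-in-time norm of these three quantities into $\sqrt{T}$ times an $\H^1$-in-time norm; in particular $\|\nabla\tilde Y(\tilde X)\|_{\L^{\infty}(0,T;\mathbf{H}^2(\mathcal{F}))}$ and the analogous norm on $\p\mathcal{S}$ stay bounded for $T\le1$, $R\ge1$.

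With these ingredients I would then substitute termwise. For $F=\nu(\LL-\Delta)V-\MM(V,K',\varpi)-\NN V-(\GG-\nabla)Q-\varpi\wedge V$, Corollary~\ref{ch3_lemmaH301} gives the $(\LL-\Delta)$ and $(\GG-\nabla)$ contributions a factor $\sqrt{T}$ and a degree at most $3$ in $R$; Lemma~\ref{ch3_lemmaH302} gives $\MM$ and $\NN$ the factor $T^{1/10}$ with degree $3$ in $R$, and $\varpi\wedge V$ the factor $\sqrt{T}$ with degree $2$; since $\sqrt{T}\le T^{1/10}$ for $T\le1$, summing yields $\|F\|_{\L^2(0,T;\mathbf{L}^2(\mathcal{F}))}\le CT^{1/10}R^3$. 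For $G(V,K',\varpi)=(\I_{\R^3}-\nabla\tilde Y(\tilde X))V$ the three estimates of Lemma~\ref{ch3_lemmaH3} give $\|G\|_{\mathcal{U}(0,T;\mathcal{F})}\le C\sqrt{T}R^2$, the $\sqrt{T}$ coming from the factor $\nabla\tilde Y(\tilde X)-\I_{\R^3}$ and the two powers of $R$ from $V$ and from that same factor; the boundary trace is handled by the last estimate of Lemma~\ref{ch3_lemmaH3}, which already carries the exponent $T^{1/2-\varepsilon}$, so $\|G_{|\p\mathcal{S}}\|_{\H^1(0,T;\mathbf{H}^{\varepsilon}(\p\mathcal{S}))}\le CT^{1/2-\varepsilon}R^2$. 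Finally, for $F_M$ and $F_I$, Lemma~\ref{ch3_estFMFI} together with \eqref{estmatrix} (which bounds $\|{I^{\ast}}'\|_{\L^{\infty}(0,T;\R^9)}$ by a constant and $\|I^{\ast}-I_0\|_{\L^{\infty}(0,T;\R^9)}$ by $CT$) shows that every summand carries at least $\sqrt{T}$ and has degree at most $3$ in $R$, whence $\|F_M\|_{\L^2(0,T;\R^3)}+\|F_I\|_{\L^2(0,T;\R^3)}\le C\sqrt{T}R^3$.

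The essential difficulty is purely the bookkeeping: for each of the dozen or so summands one must check that the total power of $R$ never exceeds $3$ and that a positive power of $T$ survives, with all constants genuinely independent of $T$ and $R$ — in particular the time-embedding constants (e.g. $\H^1(0,T)\hookrightarrow\L^{\infty}(0,T)$) must be taken uniform over $T\le1$. Two spots need care. First, in the $\MM$-term one should not estimate $K'+\varpi\wedge\tilde X+\frac{\p\tilde X}{\p t}$ by bounding the three pieces separately, which would cost one power of $R$ too many; instead one uses the identity relating $\frac{\p\tilde X}{\p t}$ to $K'$ and $\varpi$ that comes from the construction of $\tilde X$ in Lemma~\ref{ch3_lemmaxtension}, so that this velocity-type combination is controlled linearly by $\|\tilde X-\Id_{\mathcal{F}}\|_{\mathcal{W}(Q_T^0)}+\|K'\|_{\H^1(0,T;\R^3)}+\|\varpi\|_{\H^1(0,T;\R^3)}$. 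Second, the bounds on $\nabla\tilde Y(\tilde X)-\I_{\R^3}$ in the various norms must be obtained with $T$-uniform constants, which rests on the composition Lemma~\ref{ch3_lemmaGrubb} and on the vanishing at $t=0$ to extract the $\sqrt{T}$ (and the $T^{1/2-\varepsilon}$ in the trace estimate). Once these are in place, the lemma follows by summing the termwise bounds.
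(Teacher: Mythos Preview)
Your approach is correct and is exactly what the paper does: its proof is the single sentence ``These estimates follow from Corollary~\ref{ch3_lemmaH301} and Lemmas~\ref{ch3_lemmaH302}, \ref{ch3_lemmaH3}, \ref{ch3_estFMFI} combined with the estimates \eqref{estlip0} and \eqref{ch3_eqtrick},'' and you have supplied the bookkeeping behind it. One small correction: there is no pointwise \emph{identity} on $\mathcal{F}$ expressing $K'+\varpi\wedge\tilde X+\frac{\p\tilde X}{\p t}$ in terms of the data (the relation $\frac{\p\tilde X}{\p t}=-K'-\varpi\wedge\tilde X$ holds only on $\p\mathcal{O}$); the linear-in-$R$ control you want comes instead from the fact that $K'(0)=h_1$, $\varpi(0)=\omega_0$, $\tilde X(\cdot,0)=\Id_{\mathcal{F}}$ and $\frac{\p\tilde X}{\p t}(\cdot,0)$ depend only on $(h_1,\omega_0)$, so each piece is bounded by $C(1+\sqrt{T}R)$ via the basic estimate $\|f\|_{\L^{\infty}}\le |f(0)|+\sqrt{T}\|f'\|_{\L^2}$, together with \eqref{estlip0} and \eqref{ch3_eqtrick}.
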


\begin{proof}
These estimates follow from Corollary \ref{ch3_lemmaH301} and Lemmas \ref{ch3_lemmaH302}, \ref{ch3_lemmaH3}, \ref{ch3_estFMFI} combined with the estimates \eqref{estlip0} and \eqref{ch3_eqtrick} (given in Appendix A).
\end{proof}

By combining Lemma \ref{ch3_estFFF} and the estimate \eqref{ch3_estNB}, we have for $R$ large enough ($R > C_T^{(0)}$) and $T$ small enough
\begin{eqnarray*}
\mathcal{N}\left(B_R\right) & \subset & B_R.
\end{eqnarray*}

\subsubsection{Lipschitz stability for the mapping $\mathcal{N}$}
Let $(V_1,P_1,K_1',\varpi_1)$ and $(V_2,P_2,K_2',\varpi_2)$ be in $B_R$. We set
\begin{eqnarray*}
(U_1,P_1,H_1',\Omega_1)  =  \mathcal{N}(V_1,Q_1,K_1',\varpi_1), & \quad &
(U_2,P_2,H_2',\Omega_2)  =  \mathcal{N}(V_2,Q_2,K_2',\varpi_2),
\end{eqnarray*}
and
\begin{eqnarray*}
\begin{array} {ccccccc}
U = U_2 - U_1 ,& \quad & P = P_2 - P_1, & \quad & H' = H_2' - H_1', & \quad & \Omega = \Omega_2 - \Omega_1, \\
V = V_2 - V_1 ,& \quad & Q = Q_2 - Q_1, & \quad & K' = K_2' - K_1', & \quad & \varpi = \varpi_2 - \varpi_1.
\end{array}
\end{eqnarray*}
We also denote by $\tilde{X}_1, \ \nabla \tilde{Y}_1(\tilde{X}_1)$ the mappings provided by Lemma \ref{ch3_lemmaxtension} with $(K'_1,\varpi_1,X^{\ast})$ as data, and similarly $\tilde{X}_2, \ \nabla \tilde{Y}_2(\tilde{X}_2)$ the mappings provided by $(K'_2,\varpi_2,X^{\ast})$. \\
The quadruplet $(U, P, H', \Omega)$ satisfies the system
\begin{eqnarray*}
\frac{\p U}{\p t}  -\nu \Delta U + \nabla P
 =  \overline{F}, & \quad & \text{in } \mathcal{F}\times (0,T), \\
\div \ U  =  \div \ \overline{G}, & \quad & \text{in } \mathcal{F}\times (0,T),
\end{eqnarray*}
\begin{eqnarray*}
U = 0 , & \quad & \text{in } \p \mathcal{O}\times (0,T),  \\
U  =  H'(t) + \Omega (t) \wedge y + \overline{W} , & \quad & (y,t)\in \p \mathcal{S}\times (0,T),
\end{eqnarray*}
\begin{eqnarray*}
M H''  & = & - \int_{\p \mathcal{S}} \sigma(U,P) n  \d \Gamma + \overline{F}_M, \ \text{in } (0,T)   \\
I_0\Omega' (t) & = &  -  \int_{\p \mathcal{S}} y \wedge \sigma(U,P) n  \d \Gamma + \overline{F}_I, \ \text{in } (0,T)
\end{eqnarray*}
\begin{eqnarray*}
U(y,0)  = 0, \  \text{in } \mathcal{F} , \quad H'(0)=0 \in \R^3 ,\quad \Omega(0) = 0 \in \R^3 ,
\end{eqnarray*}
with
\begin{eqnarray*}
\overline{F} & = & F(V_2,Q_2,K'_2,\Omega_2) - F(V_1,Q_1,K'_1,\Omega_1), \\
\overline{G} & = & G(V_2,K'_2,\varpi_2) - G(V_1,K'_1,\varpi_1),\\
\overline{W} & = & W(\varpi_2) - W(\varpi_1) = \varpi \wedge (X^{\ast} - \Id_{\mathcal{S}}),  \\
\overline{F}_M & = & F_M(V_2,Q_2,K'_2,\varpi_2) - F_M(V_1,Q_1,K'_1,\varpi_1), \\
\overline{F}_I & = & F_I(V_2,Q_2,K'_2,\varpi_2) - F_I(V_1,Q_1,K'_1,\varpi_1).
\end{eqnarray*}
In particular, Proposition \ref{ch2_thsolfaible} provides for this nonhomogeneous linear system the following estimate
\begin{eqnarray}
\| (U,P,H',\Omega) \|_{\mathbb{H}_T} & \leq & C_T^{(0)}
\left( \| \overline{F} \|_{\L^2(0,T;\R^3)} + \| \overline{G} \|_{\mathcal{U}(0,T;\mathcal{F})}
+ \text{\textcolor{black}{$\displaystyle
\left\| \overline{G}_{|\p \mathcal{S}} \right\|_{\H^1(0,T;\mathbf{H}^{\varepsilon}(\p \mathcal{S}))}$}}
 \right. \nonumber \\
& & + \| \overline{W} \|_{\H^1(0,T;\mathbf{H}^{3/2}(\p \mathcal{S}))}  \left. +\| \overline{F}_M \|_{\L^2(0,T;\R^3)} + \| \overline{F}_I \|_{\L^2(0,T;\R^3)} \right). \nonumber \\ \label{ch3_estNB21}
\end{eqnarray}
Notice that the right-hand-sides $\overline{F}$, $\overline{G}$, $\overline{F}_M$ and $\overline{F}_I$ can be written as polynomial differential forms, multiplicative of one of the quantities
\begin{eqnarray*}
V, \quad Q, \quad K', \quad \varpi, \quad (\tilde{X}_2 - \tilde{X}_1),
\quad \left(\nabla \tilde{Y}_2(\tilde{X}_2) - \nabla \tilde{Y}_1(\tilde{X}_1)\right).
\end{eqnarray*}
For instance, the nonhomogeneous divergence condition $\overline{G}$ can be written as
\begin{eqnarray*}
\overline{G} & = & \left( \nabla \tilde{Y}_2(\tilde{X}_2) - \nabla \tilde{Y}_1(\tilde{X}_1) \right) V_2 + (\nabla \tilde{Y}_1(\tilde{X}_1)- \I_{\R^3})V.
\end{eqnarray*}
We have in particular
\begin{eqnarray*}
\tilde{X}_2(\cdot, 0) - \tilde{X}_1(\cdot,0)  =  0, & \quad &
\nabla \tilde{Y}_2 ( \tilde{X}_2(\cdot,0),0) - \nabla \tilde{Y}_1 ( \tilde{X}_1(\cdot,0),0)  =  0.
\end{eqnarray*}
The mapping $\nabla \tilde{Y}_2(\tilde{X}_2) - \nabla \tilde{Y}_1(\tilde{X}_1)$ satisfies the estimate \eqref{ch3_eqtrickbis} stated in Lemma \ref{ch3_lemmeinverse}, which is useful in order to make $\mathcal{N}$ a contraction. More specifically, the estimates \eqref{estlip12} and \eqref{ch3_eqtrickbis} are rewritten as
\begin{eqnarray*}
\| \tilde{X}_2 - \tilde{X}_1 \|_{\H^{1}(\mathbf{H}^{3}) \cap \H^{2}(\mathbf{H}^1)} & \leq & \tilde{C} \left(
\|K'\|_{\H^1(0,T_0;\R^3)} + \| \varpi \|_{\H^1(0,T_0;\R^3)} \right), \label{ch3_superX21} \\
\| \nabla \tilde{Y}(\tilde{X})_2 - \nabla \tilde{Y}_1(\tilde{X}_1) \|_{\H^{1}(\mathbf{H}^{2}) \cap \H^{2}(\mathbf{L}^2)} & \leq & \tilde{C} \left(
\| K' \|_{\H^1(0,T_0;\R^3)} + \| \varpi \|_{\H^1(0,T_0;\R^9)} \right).  \label{ch3_superY21}
\end{eqnarray*}
Then we state the following result, which can be proven with the same techniques that have been used for obtaining Lemma \ref{ch3_estFFF}.

\begin{lemma}
For $R$ large enough and $T$ small enough, there exists a positive constant $\tilde{C}$ - which does not depend on $T$ or $R$ - such that
\begin{eqnarray*}
\left\| \overline{F} \right\|_{\L^2(0,T;\mathbf{L}^2(\mathcal{F}))} & \leq & \tilde{C} T^{1/10} R^2 \| (V , Q, K', \varpi)\|_{\mathbb{H}_T}, \\
\left\| \overline{G} \right\|_{\mathcal{U}(0,T;\mathcal{F})} & \leq & \tilde{C} \sqrt{T} R
\left( \| V \|_{\mathcal{U}(0,T;\mathcal{F})} + \| K' \|_{\H^1(0,T;\R^3)} + \| \varpi \|_{\H^1(0,T;\R^3)} \right), \\
\text{\textcolor{black}{$\displaystyle 
\left\| \overline{G}_{|\p \mathcal{S}} \right\|_{\H^1(0,T;\mathbf{H}^{\varepsilon}(\p \mathcal{S}))}$}} & \leq & 
\text{\textcolor{black}{$\displaystyle 
\tilde{C} T^{1/2-\varepsilon} R
\left( \| V \|_{\mathcal{U}(0,T;\mathcal{F})} 
+ \| V_{| \p \mathcal{S}} \|_{\H^1(0,T;\mathbf{H}^{\varepsilon}(\p \mathcal{S}))}
\right.$}} \\
& & \text{\textcolor{black}{$\displaystyle 
\left.+ \| K' \|_{\H^1(0,T;\R^3)} + \| \varpi \|_{\H^1(0,T;\R^3)} \right),
$}} \\
\left\| \overline{W} \right\|_{\H^1(0,T;\mathbf{L}^{3/2}(\p \mathcal{S}))} & \leq &
\tilde{C} \sqrt{T} \| \varpi \|_{\H^1(0,T;\R^3)}, \\
\left\| \overline{F}_M \right\|_{\L^2(0,T;\R^3)} & \leq & \tilde{C} \sqrt{T}R^2 \|( V , Q, K', \varpi)\|_{\mathbb{H}_T}, \\
\left\| \overline{F}_I \right\|_{\L^2(0,T;\R^3)} & \leq & \tilde{C} \sqrt{T}R^2 \|( V , Q, K', \varpi)\|_{\mathbb{H}_T}.
\end{eqnarray*}
\end{lemma}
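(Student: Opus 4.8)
The plan is to follow closely the proof of Lemma~\ref{ch3_estFFF}, using the observation recorded in the text above that $\overline{F}$, $\overline{G}$, $\overline{F}_M$ and $\overline{F}_I$ are polynomial differential expressions in which every monomial carries, as a factor, at least one of the quantities
\begin{eqnarray*}
V,\quad Q,\quad K',\quad \varpi,\quad \tilde{X}_2-\tilde{X}_1,\quad \nabla\tilde{Y}_2(\tilde{X}_2)-\nabla\tilde{Y}_1(\tilde{X}_1),
\end{eqnarray*}
each of which vanishes at $t=0$. First I would expand every difference by a telescoping identity (as displayed above for $\overline{G}$) so that exactly one such ``Lipschitz factor'' is isolated in each monomial. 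The first four are estimated directly by $\|(V,Q,K',\varpi)\|_{\mathbb{H}_T}$; the last two are estimated, through the Lipschitz bounds \eqref{estlip12} and \eqref{ch3_eqtrickbis} of Lemma~\ref{ch3_lemmeinverse}, by $\|K'\|_{\H^1(0,T;\R^3)}+\|\varpi\|_{\H^1(0,T;\R^3)}\leq\|(V,Q,K',\varpi)\|_{\mathbb{H}_T}$. All the remaining factors --- the $V_i$, $Q_i$, $K_i'$, $\varpi_i$, and the jacobian-type matrices $\nabla\tilde{Y}_i(\tilde{X}_i)$ and their derivatives --- are bounded by $C(1+R)\leq CR$ in the norms needed, since we sit in $B_R$ with $R\geq1$ and $T\leq1$; this produces the powers $R$ and $R^2$ in the statement.

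For $\overline{F}$ I would split according to \eqref{ch3_rhsF}: the $(\LL-\Delta)V$ and $(\GG-\nabla)Q$ contributions are handled exactly as in Corollary~\ref{ch3_lemmaH301} and give a factor $\sqrt{T}$; the $\MM$ and $\NN$ contributions are handled via inequality \eqref{ch3_estTT} together with a H\"older splitting --- this is where the exponent $1/10$ originates; and the bilinear term $\varpi\wedge V$ gives $\sqrt{T}$ as well. Keeping the smallest power of $T$ yields $\|\overline{F}\|_{\L^2(0,T;\mathbf{L}^2(\mathcal{F}))}\leq\tilde{C}T^{1/10}R^2\|(V,Q,K',\varpi)\|_{\mathbb{H}_T}$. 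For $\overline{F}_M$ and $\overline{F}_I$ I would use \eqref{ch3_rhsFM}--\eqref{ch3_rhsFI}; the only terms needing care are the inertia-matrix differences, but these involve only $\varpi$ and the common datum $X^{\ast}$, so they are controlled by \eqref{estmatrix} and \eqref{supstarI}, while the boundary integrals are treated by the trace and Sobolev estimates on $\p\mathcal{S}$ already used in Lemma~\ref{ch3_estFMFI}. Each monomial then carries at least $\sqrt{T}$, giving the bound $\tilde{C}\sqrt{T}R^2\|(V,Q,K',\varpi)\|_{\mathbb{H}_T}$.

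For $\overline{G}$ I would use the decomposition $\overline{G}=\bigl(\nabla\tilde{Y}_2(\tilde{X}_2)-\nabla\tilde{Y}_1(\tilde{X}_1)\bigr)V_2+\bigl(\nabla\tilde{Y}_1(\tilde{X}_1)-\I_{\R^3}\bigr)V$ displayed above, and apply the three norm estimates of Lemma~\ref{ch3_lemmaH3} to each product; since $\nabla\tilde{Y}_1(\tilde{X}_1)-\I_{\R^3}$ and $\nabla\tilde{Y}_2(\tilde{X}_2)-\nabla\tilde{Y}_1(\tilde{X}_1)$ both vanish at $t=0$ (the latter being controlled by \eqref{ch3_eqtrickbis}), this gives the $\sqrt{T}R$ bound. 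The trace estimate for $\overline{G}_{|\p\mathcal{S}}$ in $\H^1(0,T;\mathbf{H}^{\varepsilon}(\p\mathcal{S}))$ I would obtain by reproducing verbatim the interpolation argument from the last part of the proof of Lemma~\ref{ch3_lemmaH3} --- extend $V_{|\p\mathcal{S}}$ and $V_{2|\p\mathcal{S}}$ to $\mathcal{F}$, apply Lemma~\ref{ch3_lemmaGrubb} with the exponents used there, and split with H\"older at $1/p=1/2+\varepsilon$, $1/q=1/2-\varepsilon$ --- which produces the power $T^{1/2-\varepsilon}$. Finally $\overline{W}=\varpi\wedge(X^{\ast}-\Id_{\mathcal{S}})$ with $X^{\ast}-\Id_{\mathcal{S}}$ a fixed datum in $\mathcal{W}_0(0,\infty;\mathcal{S})$ vanishing at $t=0$, so its $\H^1(0,T;\mathbf{L}^{3/2}(\p\mathcal{S}))$ norm is bounded by $\tilde{C}\sqrt{T}\|\varpi\|_{\H^1(0,T;\R^3)}$ exactly as in the first estimate of Lemma~\ref{ch3_estFMFI}.

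The hard part is not any single estimate but the bookkeeping: for every monomial in the (long) expansions of $\overline{F}$, $\overline{F}_M$ and $\overline{F}_I$ one must check that the Lipschitz factor can be peeled off into a norm dominated by $\|(V,Q,K',\varpi)\|_{\mathbb{H}_T}$ \emph{while} the other factors simultaneously land in norms for which the $B_R$-bound and the composition estimate (Lemma~\ref{ch3_lemmaGrubb}) are available --- in particular one must not sacrifice the $C([0,T];\mathbf{H}^1(\mathcal{F}))$-component of the $\mathcal{U}(0,T;\mathcal{F})$-norm, which is precisely why $\mathbb{H}_T$ was defined with that component. Once this matching is carried out term by term, each of the six estimates follows mechanically.
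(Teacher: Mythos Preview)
Your proposal is correct and follows exactly the route indicated in the paper: the paper's own proof is a single sentence referring back to the techniques of Lemma~\ref{ch3_estFFF} (i.e.\ Corollary~\ref{ch3_lemmaH301} and Lemmas~\ref{ch3_lemmaH302}, \ref{ch3_lemmaH3}, \ref{ch3_estFMFI} combined with the Appendix~A bounds \eqref{estlip12} and \eqref{ch3_eqtrickbis}), and your write-up is precisely a detailed execution of that strategy via the telescoping decomposition already displayed in the text.
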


With regards to the estimate \eqref{ch3_estNB21}, we deduce from this lemma that for $T$ small enough the mapping $\mathcal{N}$ is a contraction in $B_R$. \textcolor{black}{Then Theorem \ref{ch3_thlocex} is proven.} 

\section{Global existence of strong solutions} \label{secglobale}
\subsection{Statement}
\begin{theorem} \label{ch3_existglobale}
Assume that the hypotheses in Theorem \ref{ch3_thlocex} hold true. Assume that $\|u_0\|_{\mathbf{H}^1(\mathcal{F})}$, $|h_1|_{\R^3}$ and $|\omega_0|_{\R^3}$ are small enough, and that the displacement $X^{\ast}- \Id_{\mathcal{S}}$ is small enough in $\mathcal{W}_0(0,\infty;\mathcal{S})$. \textcolor{black}{Let us still denote by $T_0$ the maximal time of existence of the local-in-time strong solution provided by Theorem \ref{ch3_thlocex}. Then the following alternative holds:
\begin{eqnarray*}
\begin{array} {ll}
\textrm{(a)} & \textrm{Either $T_0 = +\infty$ (that is to say the solution is global in time)} \\
\textrm{(b)} & \textcolor{black}{\textrm{or $\displaystyle \lim_{t \rightarrow T_0} \dist(\mathcal{S}(t), \p \mathcal{O}) = 0$.}}
\end{array}
\end{eqnarray*}
}
\end{theorem}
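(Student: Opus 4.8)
The plan is to combine a \emph{continuation principle} for the strong solutions of Theorem~\ref{ch3_thlocex} with a \emph{global-in-time a priori estimate}, the latter being available precisely because of the smallness hypotheses.

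\textbf{Step 1: continuation principle.} I first claim that if $T_0<+\infty$ then
\[
\limsup_{t\to T_0}\Big(\|u(\cdot,t)\|_{\mathbf{H}^1(\mathcal{F}(t))}+|h'(t)|_{\R^3}+|\omega(t)|_{\R^3}\Big)=+\infty
\quad\text{or}\quad
\liminf_{t\to T_0}\dist(\mathcal{S}(t),\p\mathcal{O})=0 .
\]
Assume both fail: then there are $d_1,R_1,\delta>0$ with $\dist(\mathcal{S}(t),\p\mathcal{O})\ge d_1$ and $\|u(\cdot,t)\|_{\mathbf{H}^1(\mathcal{F}(t))}+|h'(t)|+|\omega(t)|\le R_1$ on $[T_0-\delta,T_0)$. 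On every compact subinterval of $[0,T_0)$ the solution lies in the spaces of Theorem~\ref{ch3_thlocex} and $X^{\ast}-\Id_{\mathcal{S}}\in\mathcal{W}_0(0,T_0;\mathcal{S})\hookrightarrow C([0,T_0];\mathbf{H}^3(\mathcal{S}))$, so for any base point $t_*\in[T_0-\delta,T_0)$ the traces of $(u,h',\omega)$ at $t_*$ are admissible (compatible) initial data, with norms uniformly bounded and with $\dist(\mathcal{S}(t_*),\p\mathcal{O})\ge d_1$. By (a mildly time-shifted version of) Theorem~\ref{ch3_thlocex}, the existence time from such data is bounded below by some $\tau>0$ independent of $t_*$; choosing $t_*>T_0-\tau$ and concatenating extends the solution beyond $T_0$, contradicting maximality.

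\textbf{Step 2: global a priori estimate (the main point).} Working with the fixed-domain reformulation $(\tilde u,\tilde p,\tilde h',\tilde\omega)$ of Section~\ref{sectionequiv}, I would prove that, under the smallness hypotheses and as long as $\dist(\mathcal{S}(t),\p\mathcal{O})\ge d_1$ on $[0,T)$ with $T\le T_0$,
\[
\|\tilde u\|_{\mathcal{U}(0,T;\mathcal{F})}+\|\nabla\tilde p\|_{\L^2(0,T;\mathbf{L}^2(\mathcal{F}))}+\|\tilde h'\|_{\mathbf{H}^1(0,T;\R^3)}+\|\tilde\omega\|_{\mathbf{H}^1(0,T;\R^3)}\le C_0\,\eta,
\]
where $\eta:=\|u_0\|_{\mathbf{H}^1(\mathcal{F})}+|h_1|_{\R^3}+|\omega_0|_{\R^3}+\|X^{\ast}-\Id_{\mathcal{S}}\|_{\mathcal{W}_0(0,\infty;\mathcal{S})}$ and $C_0$ depends on $d_1$ but \emph{not} on $T$ (this is the content announced in Proposition~\ref{ch2_zprop1}). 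The ingredients: (i) a version of the linear estimate of Proposition~\ref{ch2_thsolfaible} with a constant \emph{independent of $T$}, obtained from the dissipation identity $\tfrac{\d}{\d t}\big(\|\tilde U\|_{\mathbf{L}^2(\mathcal{F})}^2+M|\tilde h'|^2+I_0\tilde\omega\cdot\tilde\omega\big)=-4\nu\|D(\tilde U)\|_{\mathbf{L}^2(\mathcal{F})}^2$ appearing in the proof of Proposition~\ref{ch2_thsolfaible}, together with Korn's and Poincaré's inequalities (valid uniformly once $\dist(\mathcal{S},\p\mathcal{O})\ge d_1$), which give exponential decay of the linearized fluid--structure semigroup; (ii) the nonlinear estimates of Corollary~\ref{ch3_lemmaH301} and Lemmas~\ref{ch3_lemmaH302}, \ref{ch3_lemmaH3}, \ref{ch3_estFMFI}, combined with Lemma~\ref{Xwstar} and the technical lemmas of Appendix~A, by which $F$, $G$, $F_M$, $F_I$ are bounded by the squared solution norm times a factor governed by $\|X^{\ast}-\Id_{\mathcal{S}}\|_{\mathcal{W}_0}$ and by $\|\nabla\tilde Y(\tilde X)-\I_{\R^3}\|$, which stays uniformly small in time (following \cite{Cumsille}, this uses the regularity of the Eulerian velocity $w^{\ast}$ associated with $X^{\ast}$); (iii) a bootstrap: writing $R(T)$ for the left-hand side, (i)--(ii) yield $R(T)\le C(d_1)\big(\eta+R(T)^2\big)$, and since $T\mapsto R(T)$ is continuous with $R(0)\le C\eta$, smallness of $\eta$ propagates $R(T)\le C_0\eta$. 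One also checks that the change of variables $\tilde X$ of Lemma~\ref{ch3_lemmaxtension} exists on all of $[0,T)$: since $\|\tilde h'\|_{\mathbf{H}^1}$, $\|\tilde\omega\|_{\mathbf{H}^1}$ and $\|\p X^{\ast}/\p t\|$ are small, $\det\nabla\tilde X$ stays uniformly close to $1$, so $\tilde X(\cdot,t)$ remains a diffeomorphism.

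\textbf{Step 3: conclusion, and the main obstacle.} If $T_0=+\infty$ we are in case~(a). Assume $T_0<+\infty$ and, for contradiction, that $\liminf_{t\to T_0}\dist(\mathcal{S}(t),\p\mathcal{O})=\ell>0$. Pick $\delta>0$ with $\dist(\mathcal{S}(t),\p\mathcal{O})\ge\ell/2$ on $[T_0-\delta,T_0)$; on the compact $[0,T_0-\delta]$ the continuous positive map $t\mapsto\dist(\mathcal{S}(t),\p\mathcal{O})$ is bounded below by some $c_1>0$, so $\dist(\mathcal{S}(t),\p\mathcal{O})\ge d_1:=\min(c_1,\ell/2)>0$ on all of $[0,T_0)$. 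Then Step~2 applies and gives $\sup_{t\in[0,T_0)}\big(\|u(\cdot,t)\|_{\mathbf{H}^1(\mathcal{F}(t))}+|h'(t)|+|\omega(t)|\big)\le C_0\eta<+\infty$; moreover, $h',\omega$ being bounded on $[0,T_0)$ with $T_0<\infty$, the maps $h,\mathbf{R}$ converge as $t\to T_0$, hence so does $\dist(\mathcal{S}(t),\p\mathcal{O})$, to $\ell>0$. Thus neither alternative of Step~1 holds, contradicting the maximality of $T_0$. Therefore $\liminf_{t\to T_0}\dist(\mathcal{S}(t),\p\mathcal{O})=0$, and by the same convergence argument $\lim_{t\to T_0}\dist(\mathcal{S}(t),\p\mathcal{O})=0$, which is case~(b). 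The hard part is Step~2: upgrading Proposition~\ref{ch2_thsolfaible} to a $T$-independent estimate (forcing one to exploit the exponential decay of the linearized operator) while keeping $\tilde X$ nondegenerate and $\nabla\tilde Y(\tilde X)-\I_{\R^3}$ uniformly small for all time, which is where the regularity of $w^{\ast}$ (Lemma~\ref{Xwstar}) and the quadratic structure of the nonlinearities are indispensable for closing the bootstrap.
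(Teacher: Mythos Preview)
Your overall architecture (continuation principle + global a priori bound + contradiction) matches the paper, but the way you propose to obtain the a priori bound in Step~2 is genuinely different from what the paper does, and as written it has real gaps.

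The paper does \emph{not} carry out the global estimate in the fixed-domain variables $(\tilde u,\tilde p,\tilde h',\tilde\omega)$. Instead it returns to the \emph{moving} domain, sets $v=u-\overline{w}$ where $\overline{w}$ is a divergence-free extension of the Eulerian deformation velocity $w$, and proceeds in two stages: first an $\mathbf{L}^2$-level energy identity (this is what Proposition~\ref{ch2_zprop1} actually gives: bounds on $\|u\|_{\L^{\infty}\mathbf{L}^2}$, $\|u\|_{\L^2\mathbf{H}^1}$, $|h'|$, $|\omega|$, not the full $\mathcal{U}$-norm you attribute to it); then a separate $\mathbf{H}^1$-estimate obtained by testing the equation for $v$ with $\partial_t v$ and closing the cubic term $\|(v\cdot\nabla)v\|_{\mathbf{L}^2}^2$ via interpolation and a Stokes regularity bound. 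The smallness is used through a continuity argument on the scalar function $t\mapsto\|\nabla v(t)\|_{\mathbf{L}^2}^2$ (showing it stays $\le 1$), not through an abstract quadratic bootstrap $R(T)\le C(\eta+R(T)^2)$.

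Your fixed-domain route runs into two concrete problems. First, the nonlinear estimates you invoke (Corollary~\ref{ch3_lemmaH301}, Lemmas~\ref{ch3_lemmaH302}, \ref{ch3_lemmaH3}, \ref{ch3_estFMFI}) all carry explicit prefactors $\sqrt{T}$ or $T^{1/10}$; they were engineered for \emph{short} time and are useless as $T\to\infty$. To salvage the idea you would have to redo those estimates replacing the short-time factor by the smallness of $\|\nabla\tilde Y(\tilde X)-\I_{\R^3}\|_{\L^{\infty}\mathbf{H}^2}$ itself, which is a different computation. Second, the change of variables $\tilde X$ of Lemma~\ref{ch3_lemmaxtension} is only constructed for $T$ small; your one-line argument that it persists because $\det\nabla\tilde X$ stays close to $1$ does not address the fixed-point construction in Appendix~A, whose contraction constant also depends on $T$. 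The paper sidesteps both issues by never needing a global-in-time $\tilde X$: the energy estimates are done directly in $\mathcal{F}(t)$, and the only extension used is $\overline{w}$, built locally in space near $\p\mathcal{S}(t)$ from $w^{\ast}$ (this is where Lemma~\ref{Xwstar} actually enters).
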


For proving this theorem, let us \textcolor{black}{proceed by contradiction}. Assume that \textcolor{black}{the existence time $T_0$ of Theorem \ref{ch3_thlocex} is finite, while there exists $\eta$ such that
\begin{eqnarray*}
\dist(\mathcal{S}(t), \p \mathcal{O}) \geq \eta > 0, & \quad & t\in [0, T_0).
\end{eqnarray*}
} Let us show then that the functions
\begin{displaymath}
t \mapsto \|u(t)\|_{\mathbf{H}^1(\mathcal{F}(t))}, \quad t \mapsto |h'(t)|, \quad
t\mapsto |\omega(t)|
\end{displaymath}
are bounded in $[0,T_0)$. For that, let us give some first results.\\

\subsection{Preliminary results}

\begin{lemma} \label{Xwstar}
Let $X^{\ast} - \Id_{\mathcal{S}} \in \mathcal{W}_0(0,T;\mathcal{S})$ such that for all $t\in [0,T)$ the mapping $X^{\ast}(\cdot,t)$ is a $C^1$-diffeomorphism from $\mathcal{S}$ onto $\mathcal{S}^{\ast}(t)$. Then the function defined by
\begin{eqnarray*}
w^{\ast}(x^{\ast},t) & = & \frac{\p X^{\ast}}{\p t}(Y^{\ast}(x^{\ast},t),t),  \quad x^{\ast} \in \mathcal{S}^{\ast}(t), \quad t \in [0,T),
\end{eqnarray*}
satisfies
\begin{eqnarray*}
w^{\ast} & \in & \L^2(0,T;\mathbf{H}^3(\mathcal{S}^{\ast}(t))) \cap \H^1(0,T;\mathbf{H}^1(\mathcal{S}^{\ast}(t))).
\end{eqnarray*}
Moreover, $\| w^{\ast} \|_{\L^2(0,T;\mathbf{H}^3(\mathcal{S}^{\ast}(t))) \cap \H^1(0,T;\mathbf{H}^1(\mathcal{S}^{\ast}(t)))}$ is an increasing function of
\begin{eqnarray*}
\left\| \frac{\p X^{\ast}}{\p t} \right\|_{\L^2(\mathbf{H}^3(\mathcal{S})) \cap \H^1(\mathbf{H}^1(\mathcal{S}))}, \quad
\| \nabla Y^{\ast}(X^{\ast}) \|_{\L^{\infty}(\mathbf{H}^2(\mathcal{S}))}, \quad
\| \det \nabla X^{\ast}(\cdot,t) \|_{\L^{\infty}(\mathbf{L}^{\infty}(\mathcal{S}))},
\end{eqnarray*}
and tends to $0$ when $\displaystyle \left\| \frac{\p X^{\ast}}{\p t} \right\|_{\L^2(\mathbf{H}^3(\mathcal{S})) \cap \H^1(\mathbf{H}^1(\mathcal{S}))}$ goes to $0$.
\end{lemma}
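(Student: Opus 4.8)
The plan is to transport everything back to the fixed reference domain $\mathcal{S}$ through the change of variables $x^{\ast} = X^{\ast}(y,t)$, exploiting the defining identity $w^{\ast}(X^{\ast}(y,t),t) = \frac{\p X^{\ast}}{\p t}(y,t)$, and then to read off how the constants depend on the three announced quantities. Throughout, the two recurrent facts are that $\mathbf{H}^2(\mathcal{S})$ embeds continuously into $\mathbf{L}^{\infty}(\mathcal{S})$ and is a Banach algebra (since $\mathcal{S} \subset \R^3$), and that a composition with a $C^1$-diffeomorphism followed by a change of variables in the integrals costs only a factor depending on $\| \nabla Y^{\ast}(X^{\ast}) \|_{\mathbf{H}^2(\mathcal{S})}$ and $\| \det \nabla X^{\ast} \|_{\mathbf{L}^{\infty}(\mathcal{S})}$, the quantitative form of which is provided by Lemmas \ref{ch3_lemmeinverse} and \ref{ch3_lemmaGrubb}.

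First I would treat the spatial regularity. For a.e. $t$ one writes $w^{\ast}(\cdot,t) = \frac{\p X^{\ast}}{\p t}(\cdot,t)\circ Y^{\ast}(\cdot,t)$; the chain rule expresses the derivatives of $w^{\ast}$ on $\mathcal{S}^{\ast}(t)$ up to order $3$ as finite sums of products of derivatives of $\frac{\p X^{\ast}}{\p t}$ of order $\leq 3$ (composed with $Y^{\ast}$) with derivatives of $\nabla Y^{\ast}$ of order $\leq 2$ (composed with $Y^{\ast}$). Performing the change of variables $x^{\ast} = X^{\ast}(y,t)$ in the corresponding $\mathbf{L}^2(\mathcal{S}^{\ast}(t))$ integrals produces a Jacobian factor $\det \nabla X^{\ast}$ and replaces each $\nabla Y^{\ast}\circ Y^{\ast}$ by $\nabla Y^{\ast}(X^{\ast})$; estimating the resulting products in $\mathbf{H}^2(\mathcal{S})$ with the algebra property yields, for a.e. $t$,
\begin{eqnarray*}
\| w^{\ast}(\cdot,t) \|_{\mathbf{H}^3(\mathcal{S}^{\ast}(t))} & \leq & \Phi\!\left( \| \nabla Y^{\ast}(X^{\ast})(\cdot,t) \|_{\mathbf{H}^2(\mathcal{S})},\, \| \det \nabla X^{\ast}(\cdot,t) \|_{\mathbf{L}^{\infty}(\mathcal{S})} \right)\left\| \frac{\p X^{\ast}}{\p t}(\cdot,t) \right\|_{\mathbf{H}^3(\mathcal{S})},
\end{eqnarray*}
with $\Phi$ a polynomial with nonnegative coefficients. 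Taking the $\L^{\infty}$ norm in $t$ of the prefactor and the $\L^2$ norm in $t$ of the last factor bounds $\| w^{\ast} \|_{\L^2(0,T;\mathbf{H}^3(\mathcal{S}^{\ast}(t)))}$; the same computation with $\mathbf{H}^1$ in place of $\mathbf{H}^3$ bounds the $k=0$ contribution of the $\H^1(0,T;\mathbf{H}^1(\mathcal{S}^{\ast}(t)))$ norm.

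Next I would handle the time derivative, which is the delicate point since $w^{\ast}$ lives on a moving domain. Differentiating $w^{\ast}(X^{\ast}(y,t),t) = \frac{\p X^{\ast}}{\p t}(y,t)$ in $t$, and using $\nabla w^{\ast}(X^{\ast}) = \nabla \frac{\p X^{\ast}}{\p t}\,\nabla Y^{\ast}(X^{\ast})$ (obtained by differentiating the same identity in $y$), one gets the material-derivative relation
\begin{eqnarray*}
\left(\frac{\p w^{\ast}}{\p t}\right)(X^{\ast}(y,t),t) & = & \frac{\p^2 X^{\ast}}{\p t^2}(y,t) - \nabla \frac{\p X^{\ast}}{\p t}(y,t)\,\nabla Y^{\ast}(X^{\ast}(y,t),t)\,\frac{\p X^{\ast}}{\p t}(y,t).
\end{eqnarray*}
Thus $\frac{\p w^{\ast}}{\p t}(\cdot,t)$ is the composition with $Y^{\ast}(\cdot,t)$ of the right-hand side. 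I would bound its $\mathbf{H}^1(\mathcal{S})$ norm: the first term is $\| \frac{\p^2 X^{\ast}}{\p t^2}(\cdot,t)\|_{\mathbf{H}^1(\mathcal{S})}$, whose $\L^2(0,T)$ norm is controlled by $\| X^{\ast}-\Id_{\mathcal{S}}\|_{\mathcal{W}_0(0,T;\mathcal{S})}$; for the quadratic term I would use $\mathbf{H}^1(\mathcal{S})\cdot\mathbf{H}^2(\mathcal{S})\subset \mathbf{H}^1(\mathcal{S})$ together with the fact that $\frac{\p X^{\ast}}{\p t}$ is bounded and continuous in time with values in $\mathbf{H}^1(\mathcal{S})$ and vanishes at $t=0$, obtaining
\begin{eqnarray*}
\left\| \nabla \frac{\p X^{\ast}}{\p t}\,\nabla Y^{\ast}(X^{\ast})\,\frac{\p X^{\ast}}{\p t} \right\|_{\L^2(0,T;\mathbf{H}^1(\mathcal{S}))} & \leq & C \left\| \frac{\p X^{\ast}}{\p t} \right\|_{\L^2(0,T;\mathbf{H}^3(\mathcal{S}))} \| \nabla Y^{\ast}(X^{\ast}) \|_{\L^{\infty}(0,T;\mathbf{H}^2(\mathcal{S}))} \left\| \frac{\p X^{\ast}}{\p t} \right\|_{C([0,T];\mathbf{H}^1(\mathcal{S}))}.
\end{eqnarray*}
Composing back with $Y^{\ast}$ as in the spatial step then bounds $\| \frac{\p w^{\ast}}{\p t} \|_{\L^2(0,T;\mathbf{H}^1(\mathcal{S}^{\ast}(t)))}$, which together with the previous step gives $w^{\ast} \in \L^2(0,T;\mathbf{H}^3(\mathcal{S}^{\ast}(t)))\cap \H^1(0,T;\mathbf{H}^1(\mathcal{S}^{\ast}(t)))$ with a norm bounded by an increasing function of the three displayed quantities.

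Finally, for the vanishing statement I would observe that every term in the bounds above carries at least one factor $\| \frac{\p X^{\ast}}{\p t} \|_{\L^2(\mathbf{H}^3(\mathcal{S}))\cap\H^1(\mathbf{H}^1(\mathcal{S}))}$, either directly, or through $\| \frac{\p X^{\ast}}{\p t}\|_{C([0,T];\mathbf{H}^1(\mathcal{S}))}$, or through $\| \frac{\p^2 X^{\ast}}{\p t^2}\|_{\L^2(0,T;\mathbf{H}^1(\mathcal{S}))}$, each of which is dominated by $\| X^{\ast}-\Id_{\mathcal{S}}\|_{\mathcal{W}_0(0,T;\mathcal{S})}$. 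When $\| \frac{\p X^{\ast}}{\p t} \|_{\L^2(\mathbf{H}^3)\cap\H^1(\mathbf{H}^1)}\to 0$, writing $X^{\ast}-\Id_{\mathcal{S}}=\int_0^{\cdot}\frac{\p X^{\ast}}{\p t}$ and using $\mathcal{W}_0\hookrightarrow C(0,T;\mathbf{H}^3(\mathcal{S}))$ shows $\nabla X^{\ast}\to\I_{\R^3}$ in $\L^{\infty}(0,T;\mathbf{H}^2(\mathcal{S}))$, so by a Neumann series in the algebra $\mathbf{H}^2(\mathcal{S})$ one has $\nabla Y^{\ast}(X^{\ast})=(\nabla X^{\ast})^{-1}\to\I_{\R^3}$ in $\L^{\infty}(0,T;\mathbf{H}^2(\mathcal{S}))$ and $\det\nabla X^{\ast}\to 1$ in $\L^{\infty}(0,T;\mathbf{L}^{\infty}(\mathcal{S}))$; in particular all the prefactors stay bounded and the whole norm tends to $0$. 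The hard part will be the time-derivative step: one must differentiate a field defined on a time-dependent domain, which forces the material-derivative identity and requires the composition and product estimates of Lemmas \ref{ch3_lemmeinverse} and \ref{ch3_lemmaGrubb} to be applied so that every resulting term keeps a factor of $\| \frac{\p X^{\ast}}{\p t}\|$ and the constants involve only $\| \nabla Y^{\ast}(X^{\ast})\|_{\L^{\infty}(\mathbf{H}^2)}$ and $\| \det\nabla X^{\ast}\|_{\L^{\infty}(\mathbf{L}^{\infty})}$.
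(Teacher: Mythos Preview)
Your proposal is correct and follows essentially the same route as the paper's proof in Appendix~B: both pull $w^{\ast}$ back to the fixed domain $\mathcal{S}$ via $x^{\ast}=X^{\ast}(y,t)$, bound the $\mathbf{H}^3$ norm by chain-rule/change-of-variables estimates (the paper invokes the Bourguignon--Brezis composition lemma and then computes $\nabla^k Y^{\ast}(X^{\ast})$ explicitly, whereas you phrase it via the algebra property of $\mathbf{H}^2$ and Lemma~\ref{ch3_lemmaGrubb}, which is the same content), and handle the time derivative through the identical material-derivative identity $\bigl(\tfrac{\p w^{\ast}}{\p t}\bigr)(X^{\ast})=\tfrac{\p^2 X^{\ast}}{\p t^2}-\nabla\tfrac{\p X^{\ast}}{\p t}\,\nabla Y^{\ast}(X^{\ast})\,\tfrac{\p X^{\ast}}{\p t}$. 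Your treatment of the vanishing claim (Neumann series for $(\nabla X^{\ast})^{-1}$ in the algebra $\mathbf{H}^2$) is slightly more explicit than the paper, which leaves it implicit in the structure of the bounds.
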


The proof of this lemma is given in Appendix B. The aim of this lemma is to show that by assuming smallness on $\|X^{\ast} - \Id_{\mathcal{S}} \|_{\mathcal{W}_0(0,\infty;\mathcal{S})}$, we impose automatically smallness on the velocity $w^{\ast}$ in $\L^2(0,T;\mathbf{H}^3(\mathcal{S}^{\ast}(t))) \cap \H^1(0,T;\mathbf{H}^1(\mathcal{S}^{\ast}(t)))$. Thus in the proof of Theorem \ref{ch3_existglobale} it is sufficient to consider that $w^{\ast}$ is small enough in $\L^2(0,T;\mathbf{H}^3(\mathcal{S}^{\ast}(t))) \cap \H^1(0,T;\mathbf{H}^1(\mathcal{S}^{\ast}(t)))$ for all $T>0$.\\

\begin{proposition} \label{ch2_zprop1}
Let $(u,p,h,\omega)$ be a strong solution of the system \eqref{prems}--\eqref{ders} defined on $[0,T_0)$ with $T_0>0$. Furthermore assume that there exists $\eta>0$ such that for all $t\in [0,T_0)$
\begin{eqnarray*}
\dist (\mathcal{S}(t),\p \mathcal{O}) \geq \eta .
\end{eqnarray*}
Then there exists a positive constant $K$ (depending on $T_0$ and $\eta$) such that
\begin{eqnarray*}
\|u\|_{\L^{\infty}(0,T_0;\mathbf{L}^2(\mathcal{F}(t)))}+\|u\|_{\L^2(0,T_0;\mathbf{H}^1(\mathcal{F}(t)))}
+\|h'\|_{\L^{\infty}(0,T_0;\R^3)} + \|\omega\|_{\L^{\infty}(0,T_0;\R^3)} & \leq & K C_0^2,
\end{eqnarray*}
with
\begin{eqnarray*}
\begin{array} {l}
\displaystyle  C_0  :=
\exp \left(K  \left(
\|w^{\ast}\|^2_{\L^2(0,T_0;\mathbf{H}^3(\mathcal{S}^{\ast}(t)))} 
+ \| {I^{\ast}}' \|_{\L^2(0,T_0;\R^9)}^2
\right) \right) \times \\
 \displaystyle \left( \|u_0\|_{\mathbf{L}^2(\mathcal{F})}^2 + |h_1|^2 + |\omega_0|^2 +
\| w^{\ast} \|^2_{\H^1(0,T_0;\mathbf{H}^1(\mathcal{S}^{\ast}(t)))} \left(1+\| w^{\ast} \|^2_{\L^2(0,T_0;\mathbf{H}^3(\mathcal{S}^{\ast}(t)))} \right)
 \right)^{1/2}.
\end{array}
\end{eqnarray*}
\end{proposition}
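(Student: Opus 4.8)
The plan is to produce a single differential inequality for a suitably modified energy of the solution and to close it with Grönwall's lemma. First I would remove the non-homogeneous Dirichlet datum $w$ of \eqref{quatre} by a divergence-free lifting. Since $X^{\ast}(\cdot,t)$ stays a $C^1$-diffeomorphism close to $\Id$, Lemma \ref{Xwstar} gives $w^{\ast}\in\L^2(0,T_0;\mathbf{H}^3(\mathcal{S}^{\ast}(t)))\cap\H^1(0,T_0;\mathbf{H}^1(\mathcal{S}^{\ast}(t)))$ with norm small and controlled by that of $\tfrac{\p X^{\ast}}{\p t}$; the change of frame \eqref{wwwstar}, being for each fixed $t$ a rigid map (an isometry) with coefficients uniformly bounded on the finite interval $[0,T_0]$, transfers this to $w(\cdot,t)\in\mathbf{H}^3(\mathcal{S}(t))$. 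Because of {\bf H2} the trace $w_{|\p\mathcal{S}(t)}$ has zero flux through $\p\mathcal{S}(t)$, so one may build a divergence-free field $\Lambda(\cdot,t)$ on $\mathcal{F}(t)$, equal to $w$ on $\p\mathcal{S}(t)$, vanishing on $\p\mathcal{O}$, and satisfying, uniformly in $t\in[0,T_0)$ (this is where the lower bound $\dist(\mathcal{S}(t),\p\mathcal{O})\geq\eta$ and the resulting uniform control of the geometry over $[0,T_0]$ are used), bounds in $\L^2(\mathbf{H}^{2}(\mathcal{F}(t)))$, $\H^1(\mathbf{L}^2(\mathcal{F}(t)))$ and $\L^{\infty}(\mathbf{H}^1(\mathcal{F}(t)))$ by the corresponding norms of $w^{\ast}$, plus solution-dependent terms produced by $\p_t$ of the change of frame that will be reabsorbed later. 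Note $\Lambda(\cdot,0)=0$ because $\p_t X^{\ast}(\cdot,0)=0$.

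Next I would test the fluid momentum equation \eqref{prems} on $\mathcal{F}(t)$ against $u-\Lambda$ — equivalently, use the weak formulation of \eqref{prems}--\eqref{ders} with the global test field equal to $u-\Lambda$ in $\mathcal{F}(t)$ and to the rigid velocity $h'(t)+\omega(t)\wedge(\cdot-h(t))$ in $\mathcal{S}(t)$. Since $(u-\Lambda)_{|\p\mathcal{S}(t)}=h'(t)+\omega(t)\wedge(x-h(t))$, the interface term $-\int_{\p\mathcal{S}(t)}\sigma(u,p)n\cdot(u-\Lambda)\,\d\Gamma$ recombines, via Newton's laws \eqref{cinq}--\eqref{six} (which have this clean form thanks to {\bf H3}--{\bf H4}), into $\tfrac{M}{2}\tfrac{\d}{\d t}|h'|^2+\tfrac12\tfrac{\d}{\d t}(I(t)\omega\cdot\omega)+\tfrac12 I'(t)\omega\cdot\omega$, with no surviving term of the form $\sigma(u,p)n\cdot\Lambda$. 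Using Reynolds' transport theorem on $\mathcal{F}(t)$ (whose interface moves with normal velocity $u\cdot n$ on $\p\mathcal{S}(t)$), the $\p_t u$ and convective contributions tested against $u$ collapse to $\tfrac12\tfrac{\d}{\d t}\|u\|^2_{\mathbf{L}^2(\mathcal{F}(t))}$, and tested against $-\Lambda$ they give $-\tfrac{\d}{\d t}\int_{\mathcal{F}(t)}u\cdot\Lambda+\int_{\mathcal{F}(t)}u\cdot\p_t\Lambda+\int_{\mathcal{F}(t)}(u\cdot\nabla)\Lambda\cdot u$, the two surface terms $\int_{\p\mathcal{S}(t)}(u\cdot\Lambda)(u\cdot n)$ cancelling. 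With $E(t):=\|u\|^2_{\mathbf{L}^2(\mathcal{F}(t))}+M|h'(t)|^2+I(t)\omega\cdot\omega-2\int_{\mathcal{F}(t)}u\cdot\Lambda$ one obtains
\[ \tfrac12 E'(t)+2\nu\|D(u)\|^2_{\mathbf{L}^2(\mathcal{F}(t))}=2\nu\!\int_{\mathcal{F}(t)}\!D(u):D(\Lambda)-\int_{\mathcal{F}(t)}\!u\cdot\p_t\Lambda-\int_{\mathcal{F}(t)}\!(u\cdot\nabla)\Lambda\cdot u-\tfrac12 I'(t)\omega\cdot\omega. \]

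Then I would estimate the right-hand side. Korn's inequality on $\mathcal{F}(t)$ with the homogeneous condition on $\p\mathcal{O}$ — with a constant uniform in $t$ by $\eta$ — yields $\|u\|_{\mathbf{H}^1(\mathcal{F}(t))}\leq C\|D(u)\|_{\mathbf{L}^2(\mathcal{F}(t))}$; the viscous cross term is $\leq\nu\|D(u)\|^2+\nu\|D(\Lambda)\|^2$; the convective cross term is $\leq C\|u\|_{\mathbf{L}^6}\|\nabla\Lambda\|_{\mathbf{L}^3}\|u\|_{\mathbf{L}^2}\leq C\|D(u)\|_{\mathbf{L}^2}\|\Lambda\|_{\mathbf{H}^{3/2}(\mathcal{F}(t))}\|u\|_{\mathbf{L}^2}$ by the 3D embeddings $\mathbf{H}^1\hookrightarrow\mathbf{L}^6$ and $\mathbf{H}^{1/2}\hookrightarrow\mathbf{L}^3$ — this is precisely where the $\mathbf{H}^3$-regularity of $w^{\ast}$ (hence $\mathbf{H}^{3/2}$ of $\Lambda$) is needed — and Young's inequality absorbs a fraction of $\nu\|D(u)\|^2$ into the left-hand side, leaving $C\|\Lambda\|^2_{\mathbf{H}^{3/2}}\|u\|^2_{\mathbf{L}^2}$. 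The inertia term satisfies $|I'(t)\omega\cdot\omega|\leq C\|{I^{\ast}}'(t)\|\,|\omega|^2$, because $I(t)=\mathbf{R}(t)I^{\ast}(t)\mathbf{R}(t)^T$ and the $\mathbb{S}(\omega)$-parts of $I'$ vanish against $\omega\wedge\omega=0$; and $\int u\cdot\p_t\Lambda$ splits into a data part $\leq C\|u\|_{\mathbf{L}^2}\|\p_t w^{\ast}\|_{\mathbf{H}^1}$ plus remainders $\leq C(|h'|+|\omega|)\|u\|_{\mathbf{L}^2}\|w^{\ast}\|_{\mathbf{H}^1}$ coming from $\p_t$ of the change of frame. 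Since $I(t)\geq c\,\I_{\R^3}$ uniformly ($X^{\ast}$ close to $\Id$), $E(t)$ controls $\|u\|^2_{\mathbf{L}^2}+|h'|^2+|\omega|^2$ up to $\|\Lambda\|^2_{\mathbf{L}^2}$, and one reaches a differential inequality of the form $E'\leq a(t)E+c(t)E^{1/2}+b(t)$ with $\int_0^{T_0}a\leq C(\|w^{\ast}\|^2_{\L^2(\mathbf{H}^3)}+\|{I^{\ast}}'\|^2_{\L^2}+T_0)$, $\int_0^{T_0}c\leq C\sqrt{T_0}\,\|w^{\ast}\|_{\H^1(\mathbf{H}^1)}$ and $\int_0^{T_0}b\leq C\|w^{\ast}\|^2_{\H^1(\mathbf{H}^1)}(1+\|w^{\ast}\|^2_{\L^2(\mathbf{H}^3)})$. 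Grönwall's lemma (applied after the substitution $E\mapsto E^{1/2}$ to handle the $E^{1/2}$ term), together with $E(0)\leq C(\|u_0\|^2_{\mathbf{L}^2(\mathcal{F})}+|h_1|^2+|\omega_0|^2)$ since $\Lambda(\cdot,0)=0$, gives $\sup_{t<T_0}E(t)\leq C\,C_0^2$; re-integrating the identity over $[0,T_0)$ then gives $\int_0^{T_0}\|D(u)\|^2\leq C\,C_0^2$, and Korn plus $\|\Lambda\|_{\L^{\infty}(\mathbf{L}^2)}\leq C\|w^{\ast}\|\leq C\,C_0$ convert these into the announced bounds on $\|u\|_{\L^{\infty}(\mathbf{L}^2)}$, $\|u\|_{\L^2(\mathbf{H}^1)}$, $\|h'\|_{\L^{\infty}}$ and $\|\omega\|_{\L^{\infty}}$.

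The main obstacle is the lifting $\Lambda$: constructing it in the moving (and solution-dependent) fluid domain with estimates uniform in $t\in[0,T_0)$, and tracking how its norms — especially that of $\p_t\Lambda$, whose change-of-frame part has to be reabsorbed using $E(t)$ and the smallness of the data — are dominated by those of $w^{\ast}$ through Lemma \ref{Xwstar} and \eqref{wwwstar}. The second delicate point is arranging the 3D convective cross term $\int(u\cdot\nabla)\Lambda\cdot u$ so that, after absorbing $\nu\|D(u)\|^2$, the Grönwall multiplier is exactly $\|w^{\ast}\|^2_{\L^2(\mathbf{H}^3)}$-integrable and the forcing is controlled by $\|w^{\ast}\|^2_{\H^1(\mathbf{H}^1)}(1+\|w^{\ast}\|^2_{\L^2(\mathbf{H}^3)})$; this is what pins down the precise form of $C_0$.
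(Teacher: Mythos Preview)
Your proposal is correct and follows essentially the same strategy as the paper: lift the inhomogeneous boundary datum $w$ by a divergence-free field, derive an energy identity for the modified velocity, handle the inertia term via $I(t)=\mathbf{R}(t)I^{\ast}(t)\mathbf{R}(t)^T$, and close with Gr\"onwall. The paper sets $v=u-\overline{w}$ and tests the $v$-equation against $v$ (so the energy is directly $\|v\|_{\mathbf{L}^2}^2+M|h'|^2+I\omega\cdot\omega$, with no cross term), whereas you test the $u$-equation against $u-\Lambda$ and carry the cross term $-2\int u\cdot\Lambda$ in $E(t)$; these are equivalent rearrangements. Two further cosmetic differences: the paper builds the extension first in the rotated domain $\tilde{\mathcal{F}}(t)=\mathcal{O}\setminus\overline{\mathcal{S}^{\ast}(t)}$ and then pulls back by the rigid motion (which makes the $\p_t$ estimate of the lifting and the role of $|h'|,|\omega|$ fully explicit), and it bounds the convective cross term via $\|\nabla\overline{w}\|_{\mathbf{L}^{\infty}}\leq C\|w^{\ast}\|_{\mathbf{H}^3}$ rather than your $\|\nabla\Lambda\|_{\mathbf{L}^3}\leq C\|\Lambda\|_{\mathbf{H}^{3/2}}$; both choices put the Gr\"onwall multiplier in $\L^1(0,T_0)$ under the hypothesis $w^{\ast}\in\L^2(\mathbf{H}^3)$.
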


\begin{proof}
We need to define an extension to $\mathcal{F}(t)$ of the velocity $w(\cdot,t)$ which is define on $\mathcal{S}(t)$. For that, let us first define an extension to \textcolor{black}{$\tilde{\mathcal{F}}(t) = \mathcal{O} \setminus \overline{\mathcal{S}^{\ast}(t)}$} of the velocity $w^{\ast}(\cdot,t)$, defined on $\mathcal{S}^{\ast}(t)$ and whose the regularity has been given in Lemma \ref{Xwstar}; This extension is denoted $\overline{w}^{\ast}(\cdot,t)$ and is chosen as solution of the following divergence problem
\begin{eqnarray}
\left\{\begin{array} {lcl}
\div \ \overline{w}^{\ast}  =  0 & \ &  \text{ in } \R^3 \setminus \overline{\mathcal{S}^{\ast}(t)}, \ t\in (0,T_0), \\
\overline{w}^{\ast}(x^{\ast},t)  =  0 & \ & \text{ if } \dist(x^{\ast}, \mathcal{S}^{\ast}(t)) \geq \eta > 0, \ t\in (0,T_0), \\
\overline{w}^{\ast}(x^{\ast},t)  =  w^{\ast}(x^{\ast},t) & \ & \text{ if } x^{\ast} \in \mathcal{S}^{\ast}(t), \ t\in (0,T_0).
\end{array}\right. \label{pbdiridiv}
\end{eqnarray}
A solution of this problem can be obtained by using some results of \cite{Galdi1} for instance: The nonhomogeneous Dirichlet condition can be lifted (see Theorem 3.4, Chapter II) and the resolution made by using Exercise 3.4 and Theorem 3.2 of Chapter III. Then this extension $\overline{w}^{\ast}$ of the datum $w^{\ast}$ obeys the following estimate:
\begin{eqnarray}
\| \overline{w}^{\ast}(\cdot ,t) \|_{\mathbf{H}^3(\tilde{\mathcal{F}}(t))}
 & \leq & C \| w^{\ast}(\cdot,t) \|_{\mathbf{H}^{5/2}(\p \mathcal{S}^{\ast}(t))}
 \leq  \tilde{C} \| w^{\ast}(\cdot,t) \|_{\mathbf{H}^{3}(\mathcal{S}^{\ast}(t))}, \label{estwwast1}
 \end{eqnarray}
 \textcolor{black}{On the boundary $\p \mathcal{S}^{\ast}(t)$ the equality $\overline{w}^{\ast} = w^{\ast}$ derived in time is written with the material derivative as
 \begin{eqnarray*}
 \frac{\p \overline{w}^{\ast}}{\p t} + (w^{\ast}\cdot \nabla)\overline{w}^{\ast}  = 
  \frac{\p w^{\ast}}{\p t} + (w^{\ast}\cdot \nabla)w^{\ast}, & \quad & x^{\ast} \in \p \mathcal{S}^{\ast}(t). 
 \end{eqnarray*}
 Hence from the system \eqref{pbdiridiv} derived in time we deduce the estimate:
 \begin{eqnarray}
 \left\| \frac{\p \overline{w}^{\ast}}{\p t}(\cdot,t) \right\|_{\mathbf{H}^1(\tilde{\mathcal{F}}(t))}
 & \leq & C \left(
 \left\| \frac{\p w^{\ast}}{\p t}(\cdot,t) \right\|_{\mathbf{H}^{1}( \mathcal{S}^{\ast}(t))} \right. \nonumber \\
 & & \left.+ \| w^{\ast}(\cdot,t) \|_{\mathbf{W}^{1,\infty}( \mathcal{S}^{\ast}(t))}\left( \| w^{\ast}(\cdot,t) \|_{\mathbf{H}^{1}( \mathcal{S}^{\ast}(t))}
 + \| \overline{w}^{\ast}(\cdot,t) \|_{\mathbf{H}^{1}( \tilde{\mathcal{F}}(t))} \right)
 \right) \nonumber \\
 & \leq & \tilde{C} \left(  \left\| \frac{\p w^{\ast}}{\p t}(\cdot,t) \right\|_{\mathbf{H}^{1}(\mathcal{S}^{\ast}(t))} +
 \| w^{\ast}(\cdot,t) \|_{\mathbf{H}^{3}( \mathcal{S}^{\ast}(t))}^2
 \right).
 \label{estwwast2}
\end{eqnarray}
}
The constant $\tilde{C}$ does not depend on time\footnote{\textcolor{black}{This constant is nondecreasing with respect to the size of the domain, and so can be considered as the one associated with the whole domain $\mathcal{O}$.}}, since we have assumed that $\dist (\mathcal{S}(t),\p \mathcal{O}) \geq \eta > 0$ for all $t \in [0,T_0)$. Then we set as an extension of $w$ in $\mathcal{F}(t)$:
\begin{eqnarray*}
\overline{w}(x,t) & = & \mathbf{R}(t)\overline{w}^{\ast}(\mathbf{R}(t)^T(x-h(t)),t),  \quad  x\in \mathcal{F}(t).
\end{eqnarray*}
This relation yields the following properties
\begin{eqnarray*}
\left\{\begin{array} {rcl}
\div \ \overline{w}  =  0 & \ &  \text{ in } \mathcal{F}(t), \ t\in (0,T_0), \\
\overline{w}  =  0 & \ & \text{ on } \p \mathcal{O}, \ t\in (0,T_0), \\
\overline{w}  =  w & \ & \text{ on } \p \mathcal{S}(t), \ t\in (0,T_0),
\end{array}\right.
\end{eqnarray*}
and the following estimates, for some positive constant $C$ independent of time
\begin{eqnarray}
\| \overline{w} \|_{\L^2(0,T;\mathbf{L}^2(\mathcal{F}(t)))} & = & \| \overline{w}^{\ast} \|_{\L^2(0,T;\mathbf{L}^2(\tilde{\mathcal{F}}(t)))}, \nonumber \\
\| \nabla \overline{w} \|_{\mathbf{L}^2(\mathcal{F}(t))} & \leq & C\| \nabla \overline{w}^{\ast} \|_{\mathbf{L}^2(\tilde{\mathcal{F}}(t))}, \label{estwwbar1} \\
\| (\overline{w} \cdot \nabla ) \overline{w} \|_{\mathbf{L}^2(\mathcal{F}(t))} & \leq &
C\| \overline{w}^{\ast} \|_{\mathbf{H}^3(\tilde{\mathcal{F}}(t))} \| \overline{w}^{\ast} \|_{\mathbf{H}^1(\tilde{\mathcal{F}}(t))}, \\
\| \overline{w} \|_{\mathbf{W}^{1,\infty}(\mathcal{F}(t))} & \leq & C \| \overline{w}^{\ast} \|_{\mathbf{H}^3(\tilde{\mathcal{F}}(t))}, \\
\left\|\frac{\p \overline{w}}{\p t} \right\|_{\mathbf{L}^2(\mathcal{F}(t))} & \leq &
C \left( \left\|\frac{\p \overline{w}^{\ast}}{\p t} \right\|_{\mathbf{L}^2(\tilde{\mathcal{F}}(t))} +
\| \overline{w}^{\ast} \|_{\mathbf{H}^1(\tilde{\mathcal{F}}(t))} ( |h'|_{\R^3} + |\omega |_{\R^3})
\right). \label{estwwbar4}
\end{eqnarray}
Let us now set $v=u-\overline{w}$. The function $v$ satisfies the following system
\begin{eqnarray}
\frac{\p v}{\p t} + (u\cdot \nabla )v - \nu \Delta u + \nabla p & = &
-(v\cdot \nabla)\overline{w} - (\overline{w}\cdot \nabla)\overline{w} - \frac{\p \overline{w}}{\p t}, \quad x \in \mathcal{F}
(t),\quad t\in (0,T), \nonumber \\ \label{ch2_z0}\\
\div \ u  & = & 0 ,  \quad  x\in \mathcal{F} (t), \quad t\in (0,T),
\end{eqnarray}
\begin{eqnarray}
v = 0 , & \quad & x\in \p \mathcal{O} , \  t\in (0,T),  \\
v = h'(t) + \omega (t)\wedge(x-h(t)) , & \quad &  x\in \ \p \mathcal{S} (t), \ t\in (0,T),
\end{eqnarray}
\begin{eqnarray}
M h''(t)  =  - \int_{\p \mathcal{S}(t)} \sigma(u,p) n \d \Gamma , & \quad & t\in (0,T), \label{ch2_z1} \\
\left(I\omega\right)' (t) = - \int_{\p \mathcal{S}(t)} (x-h(t))\wedge \sigma(u,p) n \d \Gamma , & \quad & t\in (0,T), \label{ch2_z2}
\end{eqnarray}
\begin{eqnarray}
h(0) = h_0 \in \R^3 ,\quad h'(0)=h_1 \in \R^3 ,\quad \omega(0) = \omega_0 \in
\R^3 ,
\end{eqnarray}
\begin{eqnarray}
v(x,0)  =  v_0(x) := u_0 (x)-\overline{w}(x,0), \quad  x\in \mathcal{F}. \label{ch2_z4}
\end{eqnarray}
\textcolor{black}{In the equation \eqref{ch2_z0} we take the inner product with $v$ and we integer on $\mathcal{F}(t)$ to get \\
$\displaystyle \int_{\mathcal{F}(t)} \left( \frac{\p v}{\p t} + (u\cdot
\nabla)v \right)\cdot v \ \d x -
\int_{\mathcal{F}(t)}  \div \left(\sigma(u,p)\right)\cdot  v \ \d x $ \\
\begin{eqnarray}
& = &\int_{\mathcal{F}(t)}f\cdot v\ \d x - \int_{\mathcal{F}(t)} \left( (v\cdot \nabla)\overline{w}
\right)\cdot v \ \d x -
\int_{\mathcal{F}(t)} \left( (\overline{w}\cdot \nabla)\overline{w} \right)\cdot v \ \d x -
\int_{\mathcal{F}(t)} \frac{\p \overline{w}}{\p t}\cdot v \ \d x. \nonumber \\ \label{ch2_z3}
\end{eqnarray}
On one hand, we have by using the Reynolds transport theorem
\begin{eqnarray*}
\int_{\mathcal{F}(t)} \left( \frac{\p v}{\p t} + (u\cdot \nabla)v \right)\cdot v \
\d x & = & \frac{1}{2} \frac{\d}{\d t} \left(\int_{\mathcal{F}(t)}|v|_{\R^3}^2 \d x\right) .
\end{eqnarray*}
On the other hand, since $\div \ v = 0$, we have
\begin{eqnarray*}
\div \left(\sigma(u,p)\right)\cdot  v  & = &  \ \div \left(\sigma(u,p)v\right) - 2
\nu D(u) : D(v),
\end{eqnarray*}
which implies - by using the divergence formula and the fact that $v$ is equal to
$0$ on $\p \mathcal O$ - that
\begin{eqnarray*}
\int_{\mathcal{F}(t)}  \div \left(\sigma(u,p)\right)\cdot  v
& = & \int_{\p \mathcal{S}(t)} \sigma(u,p)v\cdot n\d \Gamma - 2\nu\int_{\mathcal{F}(t)}
D(u):D(v) \\
& = & -\int_{\p \mathcal{S}(t)} \sigma(u,p)v\cdot n\d \Gamma - 2\nu\int_{\mathcal{F}(t)}
D(\overline{w}):D(v)-
2\nu\int_{\mathcal{F}(t)} |D(v)|_{\R^9}^2 ;
\end{eqnarray*}
And yet on $\p \mathcal{S}(t)$ we have $v(t) = h'(t) +\omega(t)\wedge(x-h(t))$. Thus
\begin{eqnarray*}
\int_{\mathcal{F}(t)}  \div \left(\sigma(u,p)\right)\cdot  v
& = & h'(t)\cdot \int_{\p \mathcal{S}(t)} \sigma(u,p)n\d \Gamma +\omega(t) \cdot \int_{\p
\mathcal{S}(t)}
(x-h(t))\wedge \sigma(u,p)n \d \Gamma \\\
& & - 2\nu\int_{\mathcal{F}(t)} D(\overline{w}):D(v) -  
2\nu\int_{\mathcal{F}(t)} |D(v)|_{\R^9}^2.
\end{eqnarray*}
By using the equations \eqref{ch2_z1} and \eqref{ch2_z2} we deduce}\\
$\displaystyle \frac{1}{2}\frac{\d}{\d t} \int_{\mathcal{F}(t)}|v|_{\R^3}^2 \d x
+2\nu \int_{\mathcal{F}(t)}|D(v)|_{\R^9}^2 \d x + \frac{M}{2}\frac{\d}{\d
t}\left(|h'(t)|_{\R^3}^2\right)$ + 
\textcolor{black}{$(I\omega)'(t)\cdot \omega(t) $} \\
\begin{eqnarray*}
& = &
-2\nu \int_{\mathcal{F}(t)}D(\overline{w}):D(v) \d
x \\
&  & -\int_{\mathcal{F}(t)} \left( (v\cdot \nabla)\overline{w} \right)\cdot v \ \d x
 - \int_{\mathcal{F}(t)} \left( (\overline{w}\cdot \nabla)\overline{w} \right)\cdot v \ \d x -
\int_{\mathcal{F}(t)} \frac{\p \overline{w}}{\p t}\cdot v \ \d x.
\end{eqnarray*}
\textcolor{black}{The term involving the moment of inertia tensor\footnote{Remind that this tensor is an invertible symmetric matrix} can be transformed as follows
\begin{eqnarray*}
(I\omega)'\cdot \omega & = & \frac{1}{2}\frac{\d}{\d t}\left( (I\omega)\cdot \omega \right)
+ \frac{1}{2} I'\omega\cdot \omega,\\
I'\omega & = & \left(\mathbf{R}I^{\ast}\mathbf{R}^T \right)'\omega = 
\omega \wedge (I\omega) + \mathbf{R}{I^{\ast}}'\mathbf{R}^T\omega - I(\omega \wedge \omega),\\
I'\omega\cdot \omega & = & \mathbf{R}{I^{\ast}}'\mathbf{R}^T\omega \cdot \omega = 
{I^{\ast}}'\tilde{\omega}\cdot\tilde{\omega},
\end{eqnarray*}
where we use the notation $\tilde{\omega} = \mathbf{R}^T\omega$. So we have\\
$\displaystyle \frac{1}{2}\frac{\d}{\d t} \int_{\mathcal{F}(t)}|v|_{\R^3}^2 \d x
+2\nu \int_{\mathcal{F}(t)}|D(v)|_{\R^9}^2 \d x + \frac{M}{2}\frac{\d}{\d
t}\left(|h'(t)|_{\R^3}^2\right) + \frac{1}{2}\frac{\d}{\d t}\left( \left| \left(\sqrt{I}\omega\right)(t) \right|_{\R^3}^2 \right)$ \\
\begin{eqnarray*}
& = &
-\frac{1}{2}{I^{\ast}}'\tilde{\omega}\cdot\tilde{\omega}-2\nu
\int_{\mathcal{F}(t)}D(\overline{w}):D(v) \d
x \\
&  & -\int_{\mathcal{F}(t)} \left( (v\cdot \nabla)\overline{w} \right)\cdot v \ \d x
 - \int_{\mathcal{F}(t)} \left( (\overline{w}\cdot \nabla)\overline{w} \right)\cdot v \ \d x -
\int_{\mathcal{F}(t)} \frac{\p \overline{w}}{\p t}\cdot v \ \d x.
\end{eqnarray*}
}
It follows that there exists $C>0$ such that
\begin{eqnarray*}
&  & \frac{1}{2}\frac{\d}{\d t} \| v\|^2_{\mathbf{L}^2(\mathcal{F}(t))} +2\nu
\| D(v)\|^2_{\mathbf{L}^2(\mathcal{F}(t))} + \frac{M}{2}\frac{\d}{\d t}\left(|h'(t)|_{\R^3}^2\right)
+ \frac{1}{2}\frac{\d}{\d t}\left( \left| \left(\sqrt{I}\omega\right)(t)\right|_{\R^3}^2 \right) \\
& &  \leq C\left(\left\| \frac{\p \overline{w}}{\p t}\right\|^2_{\mathbf{L}^2(\mathcal{F}(t))}
+ \| (\overline{w}\cdot \nabla) \overline{w}\|^2_{\mathbf{L}^2(\mathcal{F}(t))} + \| D(\overline{w})\|^2_{\mathbf{L}^2(\mathcal{F}(t))}
 \right. \\
& & \quad \left.  + \| v\|^2_{\mathbf{L}^2(\mathcal{F}(t))}
\left(1 + \| \nabla \overline{w} \|^2_{\mathbf{L}^{\infty}(\mathcal{F}(t))}\right) + |\omega |^2(1+ |{I^{\ast}}'|_{\R^9}^2 ) \right).
\end{eqnarray*}
Using the estimates \eqref{estwwbar1}--\eqref{estwwbar4} combined with \eqref{estwwast1}-\eqref{estwwast2}, we get
\begin{eqnarray}
&  & \frac{1}{2}\frac{\d}{\d t} \| v\|^2_{\mathbf{L}^2(\mathcal{F}(t))} +2\nu
\| D(v)\|^2_{\mathbf{L}^2(\mathcal{F}(t))} + \frac{M}{2}\frac{\d}{\d t}\left(|h'(t)|_{\R^3}^2\right)
+ \frac{1}{2}\frac{\d}{\d t}\left( \left| \left(\sqrt{I}\omega\right)(t)\right|_{\R^3}^2 \right) \nonumber \\
& &  \leq \tilde{C} \left(\left\| \frac{\p w^{\ast}}{\p t}\right\|^2_{\mathbf{H}^1(\mathcal{S}^{\ast}(t))}
+ \| w^{\ast} \|^2_{\mathbf{H}^3(\mathcal{S}^{\ast}(t))} 
\left(1+\| w^{\ast} \|^2_{\mathbf{H}^1(\mathcal{S}^{\ast}(t))} \right)
+ \| w^{\ast} \|^2_{\mathbf{H}^1(\mathcal{S}^{\ast}(t))} \right. \nonumber \\
& & \quad \left.  +\left( \| v\|^2_{\mathbf{L}^2(\mathcal{F}(t))} + |\omega |_{\R^3}^2 + |h'|_{\R^3}^2 \right)
\left(1 + \| w^{\ast} \|^2_{\mathbf{H}^3(\mathcal{S}^{\ast}(t))} + |{I^{\ast}}'|_{\R^9}^2 \right) \right). \label{ch2_lastineq}
\end{eqnarray}
\textcolor{black}{Finally, let us control $\|  v \|_{\mathbf{H}^1(\mathcal{F}(t))}$ by $\| D(v) \|_{\mathbf{L}^2(\mathcal{F}(t))}$.} For that, we can extend the velocity field $v$ into $\mathcal{S}(t)$ by setting $v(x,t) = h'(t) + \omega(t) \wedge (x-h(t))$ for $x \in \mathcal{S}(t)$. Thus we have $v \in \mathbf{H}_0^1(\mathcal{O})$ with $\div \ v = 0$, and the following formula
\begin{eqnarray*}
\nabla v : \nabla v - 2D(v):D(v) & = & - \div \left((v\cdot\nabla) v -(\div \ v)v\right) - (\div \ v)^2
\end{eqnarray*}
combined with the Poincar\'e inequality enables us to write
\begin{eqnarray}
\|  v \|_{\mathbf{H}^1(\mathcal{F}(t))} & \leq & \|  v \|_{\mathbf{H}^1(\mathcal{O})} \nonumber \\
& \leq & C\| \nabla v \|_{\mathbf{L}^2(\mathcal{O})}
 =  2 C\| D(v) \|_{\mathbf{L}^2(\mathcal{O})} = 
 2 C\| D(v) \|_{\mathbf{L}^2(\mathcal{F}(t))}. \label{ineqDG}
\end{eqnarray}
Then we can conclude by using inequality \eqref{estmatrix} and the Gr\"{o}nwall's lemma on \eqref{ch2_lastineq}.
\end{proof}

Proposition \ref{ch2_zprop1} is the analogous adaptation of Proposition 1 and Lemma 4.1 of \cite{SMSTT} and \cite{Cumsille} respectively. The difference with \cite{SMSTT} is that in dimension 2 we do not have to assume smallness on the data, whereas in our case we need to quantify the regularity of the deformation for which we need to assume smallness in dimension 3.\\
\textcolor{black}{
\begin{corollary} \label{corodist}
Assume that the hypotheses in Theorem \ref{ch3_thlocex} hold true. Assume that the maximal time of existence $T_0$ of the strong solution provided by this theorem is finite. Then there exists $d \geq 0$ such that
\begin{eqnarray*}
\lim_{t \rightarrow T_0} \dist(\mathcal{S}(t), \p \mathcal{O}) & = & d.
\end{eqnarray*}
\end{corollary}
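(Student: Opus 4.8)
The plan is to show that $\phi:t\mapsto\dist(\mathcal{S}(t),\p\mathcal{O})$ extends to a continuous function on $[0,T_0]$, so that the limit $d\ge 0$ exists. The starting point is that $\phi$ is $1$-Lipschitz for the Hausdorff distance of the domains $\mathcal{S}(t)$, and that since $\mathcal{S}(t)=X_{\mathcal{S}}(\mathcal{S},t)$ with $X_{\mathcal{S}}(y,s)=h(s)+\mathbf{R}(s)X^{\ast}(y,s)$ and $|\mathbf{R}'(s)|\le C|\omega(s)|$, one has for $0\le t\le t'<T_0$
\[
|\phi(t')-\phi(t)| \ \le\ \sup_{y\in\overline{\mathcal{S}}}\bigl|X_{\mathcal{S}}(y,t')-X_{\mathcal{S}}(y,t)\bigr| \ \le\ \int_t^{t'}\Bigl(|h'(s)|+C|\omega(s)|+C\,\bigl\|\tfrac{\p X^{\ast}}{\p s}(\cdot,s)\bigr\|_{\mathbf{H}^3(\mathcal{S})}\Bigr)\,\d s,
\]
where the constants absorb $\sup_{s\in[0,T_0]}\|X^{\ast}(\cdot,s)\|_{C^0(\overline{\mathcal{S}})}$, which is finite because $\mathcal{W}_0(0,T_0;\mathcal{S})\hookrightarrow C([0,T_0];\mathbf{H}^3(\mathcal{S}))$ and $\mathbf{H}^2(\mathcal{S})\hookrightarrow C^0(\overline{\mathcal{S}})$. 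Since $\tfrac{\p X^{\ast}}{\p s}\in\L^2(0,T_0;\mathbf{H}^3(\mathcal{S}))\subset\L^1(0,T_0;\mathbf{H}^3(\mathcal{S}))$ (here $T_0<\infty$), the map $\phi$ is absolutely, hence uniformly, continuous on $[0,T_0)$ — and thus extends continuously to $[0,T_0]$ — as soon as $h'$ and $\omega$ belong to $\L^1(0,T_0;\R^3)$. So everything reduces to proving that integrability.

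\emph{Case $\liminf_{t\to T_0}\phi(t)>0$.} Here $\phi$ is continuous, stays positive on all of $[0,T_0)$ (as it must, as long as the strong solution exists), and is bounded below near $T_0$, so $\eta:=\inf_{[0,T_0)}\phi>0$. Then Proposition~\ref{ch2_zprop1} applies on $[0,T_0)$ and yields $h',\omega\in\L^{\infty}(0,T_0;\R^3)\subset\L^1(0,T_0;\R^3)$, its constant $C_0$ being finite by Lemma~\ref{Xwstar} and \eqref{estmatrix}. By the first step, $\lim_{t\to T_0}\phi(t)$ exists and is positive.

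\emph{Case $\liminf_{t\to T_0}\phi(t)=0$.} I claim the limit is $0$. If not, $\limsup_{t\to T_0}\phi(t)=2\eta>0$; choose interleaved sequences $s_j<\tau_j\to T_0$ with $\phi(s_j)<\eta/2$, $\phi(\tau_j)\ge\eta$, and set $\alpha_j:=\sup\{t\in[s_j,\tau_j]:\phi(t)\le\eta/2\}$, so that $\phi(\alpha_j)=\eta/2$, $\phi\ge\eta/2$ on $[\alpha_j,\tau_j]$, and $\tau_j-\alpha_j\to 0$. On each interval $[\alpha_j,\tau_j]$ the solution is a strong solution with $\dist(\mathcal{S}(t),\p\mathcal{O})\ge\eta/2$, so the differential energy estimate \eqref{ch2_lastineq} from the proof of Proposition~\ref{ch2_zprop1} can be integrated over $[\alpha_j,\tau_j]$; its forcing terms, controlled via \eqref{estwwast1}--\eqref{estwwast2} by the tails $\|w^{\ast}\|^2_{\L^2(\alpha_j,\tau_j;\mathbf{H}^3(\mathcal{S}^{\ast}(t)))}$, $\|\tfrac{\p w^{\ast}}{\p t}\|^2_{\L^2(\alpha_j,\tau_j;\mathbf{H}^1(\mathcal{S}^{\ast}(t)))}$ and $\|{I^{\ast}}'\|^2_{\L^2(\alpha_j,\tau_j)}$ of finite integrals (Lemma~\ref{Xwstar}, \eqref{estmatrix}), and its Gr\"onwall exponent, controlled by $\tau_j-\alpha_j$ plus the same tails, both go to $0$; hence $\|u(t)\|^2_{\mathbf{L}^2(\mathcal{F}(t))}+M|h'(t)|^2+I(t)\omega(t)\cdot\omega(t)\le 2E_j+o(1)$ on $[\alpha_j,\tau_j]$, where $E_j$ denotes that same quantity at $t=\alpha_j$. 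Using $I(t)\ge c\,\Id$ uniformly (by \eqref{estmatrix}) and Cauchy--Schwarz in time, $\int_{\alpha_j}^{\tau_j}(|h'|+|\omega|)\le C(\tau_j-\alpha_j)(E_j+1)^{1/2}$; if $E_j$ stays bounded, the right-hand side tends to $0$, and the Lipschitz estimate of the first step forces $\phi(\tau_j)-\phi(\alpha_j)\to 0$, contradicting $\phi(\tau_j)-\phi(\alpha_j)\ge\eta/2$.

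The main obstacle is thus the \emph{a priori} bound on the kinetic energy $E_j$ of the whole system at the bottom of these shrinking ``rise intervals'', i.e.\ along a sequence of times approaching $T_0$ on which the gap stays equal to $\eta/2$. I expect this to be the place requiring real work: one has to propagate the localized estimate \eqref{ch2_lastineq} backwards between successive level sets $\{\phi=\eta/2\}$ — if $E_j$ blew up, the solid velocity would already blow up on a region where $\phi\ge\eta/2$, so that restarting the local-in-time Theorem~\ref{ch3_thlocex} from such a time (its existence time depending only on $\eta$, on the fixed deformation, and on the data there) would contradict the maximality of $T_0$ once the energy is controlled. Everything else is routine, since all the estimates in the proof of Proposition~\ref{ch2_zprop1} localize verbatim to any time subinterval on which $\dist(\mathcal{S}(\cdot),\p\mathcal{O})$ is bounded below.
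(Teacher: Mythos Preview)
The paper's proof is three lines and takes a more direct route than yours. It invokes Proposition~\ref{ch2_zprop1} to conclude that $h',\omega\in\L^\infty(0,T_0;\R^3)$, whence $|\mathbf{R}'|=|\mathbb{S}(\omega)|$ is bounded, so $h(t)$ and $\mathbf{R}(t)$ converge as $t\to T_0$; since $t\mapsto X^{\ast}(\cdot,t)$ is continuous, $X_{\mathcal{S}}(\cdot,t)=h(t)+\mathbf{R}(t)X^{\ast}(\cdot,t)$ converges in $C^0(\overline{\mathcal{S}})$, and therefore $\dist(\mathcal{S}(t),\p\mathcal{O})$ has a limit. The appeal to Proposition~\ref{ch2_zprop1} is licensed by the standing contradiction hypothesis set up in the paragraph immediately preceding it --- $T_0<\infty$ together with $\dist(\mathcal{S}(t),\p\mathcal{O})\ge\eta>0$ on $[0,T_0)$ --- so the gap condition that proposition requires is already in force when the corollary is proved.

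You instead read the corollary as a freestanding statement (only $T_0<\infty$, no lower bound on the gap) and attempt to prove that stronger claim. Your case $\liminf\phi>0$ reproduces the paper's reasoning, routed through your Lipschitz estimate rather than through convergence of $(h,\mathbf{R})$; the two are equivalent here. Your case $\liminf\phi=0$ goes beyond anything the paper argues, and your treatment of it has a genuine gap, which you yourself flag: you need a uniform bound on the kinetic energy $E_j$ at the base of the rise intervals, and none of the paper's tools provide one when the gap is allowed to shrink to zero --- every energy estimate in Section~\ref{secglobale} passes through the extension $\overline{w}^{\ast}$ solving \eqref{pbdiridiv}, whose constants degenerate as $\eta\to 0$. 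Your proposed fix (restarting Theorem~\ref{ch3_thlocex} from a time where $\phi=\eta/2$) is circular: the local existence time there depends on the size of the data at that time, which is precisely the quantity you are trying to bound.

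In short: your first case matches the paper's argument; your second case is additional and remains open.
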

}
\textcolor{black}{
\begin{proof}
From Proposition \ref{ch2_zprop1} we know that the functions $t\mapsto |h'(t)|_{\R^3}$ and $t\mapsto | \omega(t)|_{\R^3}$ are bounded in $[0,T_0)$. The function
\begin{eqnarray*}
\left| \frac{\p \mathbf{R}}{\p t}\right|_{\R^9} & = & 
|\mathbb{S}(\omega) \mathbf{R} |_{\R^9} = | \mathbb{S}(\omega) |_{\R^9}
\end{eqnarray*}
is then also bounded in $[0,T_0)$. Hence it follows that the limits
\begin{eqnarray*}
\lim_{t \rightarrow T_0} h(t) & \text{ and } & \lim_{t \rightarrow T_0} \mathbf{R}(t)
\end{eqnarray*}
exist. Since $t \mapsto X^{\ast}(\cdot,t)$ is continuous on $[0,+\infty)$, the mapping $t  \mapsto X_{\mathcal{S}}(\cdot,t) = h(t) + \mathbf{R}(t)X^{\ast}(\cdot,t)$ admits a limit when $t$ goes to $T_0$.
\end{proof}
}

\textcolor{black}{From this corollary, it is sufficient to show that if the limit $d$ is not equal to $0$, then necessarily the maximal time of existence $T_0$ cannot be finite. So let us assume that $T_0$ is finite and that $d >0$. Since the function $t \mapsto \dist(\mathcal{S}(t), \p \mathcal{O})$ is continuous, there exists $\eta > 0$ such that for all $t \in [0, T_0]$ we have
\begin{eqnarray*}
\dist(\mathcal{S}(t), \p \mathcal{O}) & \geq & \eta.
\end{eqnarray*}
It remains us to show that under these conditions the norm $\| u \|_{\mathbf{H}^1(\mathcal{F}(t))}$ is bounded in $[0,T_0]$. Then we will be able to extend the strong solution to a time larger than $T_0$, which is a contradiction. 
}

\textcolor{black}{
\begin{remark}
Note that the argument of extension of the solution to a time larger than $T_0$ is submitted to assumptions on the displacement $X^{\ast}-\Id_{\mathcal{S}}$; In particular, in the statement of Theorem \ref{ch3_thlocex}, we assume the initial conditions $X^{\ast}(\cdot,0) = \Id_{\mathcal{S}}$ and $\frac{\p X^{\ast}}{\p t}(\cdot,0) = 0$. So for extending these solution after $T_0$ we would need the {\it a priori} the same conditions at time $T_0$. However, we do not need to assume that. Firstly, the estimates we need for proving this theorem deal only with the time derivative of $X^{\ast}$, so the first condition is not important and can be relaxed. Secondly, the homogeneous condition on the time derivative can be replaced by a non-null velocity; In that case, this initial condition would behave just like the other initial conditions on the state of the system, $\|u_0\|_{\mathbf{H}^1(\mathcal{F})}$, $| h_1|_{\R^3}$ and $| \omega_0 |_{\R^3}$.
\end{remark}
}

\subsection{Rest of the proof}
Likewise for the proof of Proposition \ref{ch2_zprop1}, let us consider the system \eqref{ch2_z0}--\eqref{ch2_z4} satisfied by $v = u - \overline{w}$. By taking the inner product of \eqref{ch2_z0} with $\displaystyle \frac{\p v}{\p t}$ and by integrating on
$\mathcal{F}(t)$ we get\\
$\displaystyle \left\| \frac{\p v}{\p t} \right\|_{\mathbf{L}^2(\mathcal{F}(t))}^2 -
\int_{\mathcal{F}(t)} \div \ \sigma(u,p)\cdot \frac{\p v}{\p t} \ \d x
\quad =$  
\begin{eqnarray*}
& & -\int_{\mathcal{F}(t)} [(u\cdot\nabla)v]\cdot\frac{\p v}{\p t} \ \d x -
\int_{\mathcal{F}(t)} [(v\cdot\nabla)\overline{w}]\cdot\frac{\p v}{\p t} \ \d x \\
& &  -\int_{\mathcal{F}(t)} [(\overline{w}\cdot\nabla)\overline{w}]\cdot\frac{\p v}{\p t} \ \d x -
\int_{\mathcal{F}(t)} \frac{\p \overline{w}}{\p t}\cdot\frac{\p v}{\p t} \ \d x ,
\end{eqnarray*}
for almost all $t\in (0,T_0)$, and furthermore by replacing $u$ by $v+\overline{w}$\\
$\displaystyle \left\| \frac{\p v}{\p t} \right\|_{\mathbf{L}^2(\mathcal{F}(t))}^2 -
\int_{\mathcal{F}(t)} \div \ \sigma(u,p)\cdot \frac{\p v}{\p t} \ \d x \quad = $
\begin{eqnarray}
& & -\int_{\mathcal{F}(t)} [(v\cdot\nabla)v]\cdot\frac{\p v}{\p t} \ \d x
-\int_{\mathcal{F}(t)} [(\overline{w}\cdot\nabla)v]\cdot\frac{\p v}{\p t} \ \d x
 -\int_{\mathcal{F}(t)} [(v\cdot\nabla)\overline{w}]\cdot\frac{\p v}{\p t} \ \d x \nonumber \\
& & -\int_{\mathcal{F}(t)} [(\overline{w}\cdot\nabla)\overline{w}]\cdot\frac{\p v}{\p t} \ \d x  - \int_{\mathcal{F}(t)} \frac{\p \overline{w}}{\p t}\cdot\frac{\p v}{\p t} \ \d x  +\nu \int_{\mathcal{F}(t)} \Delta \overline{w} \cdot\frac{\p v}{\p t} \ \d x
 \label{ch2_zz1}
\end{eqnarray}
almost everywhere in $(0,T_0)$. \\
Up to a density argument - as the one which is detailed in \cite{Cumsille} - and by using the relations $v=h'(t) + \omega (t)\wedge(x-h(t))$ on $\p \mathcal{S}(t)$ and $v=0$ on $\p \mathcal{O}$, we have\\
$- \displaystyle \int_{\mathcal{F}(t)}  \div \left(\sigma(u,p)\right)\cdot
\frac{\p v}{\p t} \quad = $
\begin{eqnarray}
&  & \nu \frac{\d}{\d t}\int_{\mathcal{F}(t)} |D(v)|_{\R^9}^2 + M|h''|_{\R^3}^2 +
\left|\sqrt{I} \omega'\right|_{\R^3}^2 - Mh''\cdot \left( h'\wedge \omega\right)
+\left(\omega \wedge \left(I\omega\right) \right)\cdot \omega' +
{I^{\ast}}'\tilde{\omega}\cdot \tilde{\omega}' \nonumber \\
& & +2\nu \int_{\p \mathcal{S}(t)} D(\overline{w})n\cdot\left(h''+h' \wedge \omega + \omega'
\wedge(x-h) \right) \d \Gamma  . \label{ch2_zz2}
\end{eqnarray}
By combining the equalities \eqref{ch2_zz1} and \eqref{ch2_zz2} we obtain almost
everywhere in $(0,T_0)$ \\
$\displaystyle \left\| \frac{\p v}{\p t}\right\|_{\mathbf{L}^2(\mathcal{F}(t))}^2
+\nu \frac{\d}{\d t} \int_{\mathcal{F}(t)} |D(v)|_{\R^9}^2 \ \d x
+M|h''(t)|_{\R^3}^2 + \left|\left(\sqrt{I} \omega'\right)(t) \right|_{\R^3}^2 $\\
\begin{eqnarray*}
= \int_{\mathcal{F}(t)} \mathbb{F}_1\cdot \frac{\p v}{\p t} \ \d x + \mathbb{F}_2
\cdot h''(t) +\mathbb{F}_3\cdot \omega'(t) + \mathbb{F}_4,
\end{eqnarray*}
where
\begin{eqnarray*}
\mathbb{F}_1 & = & -[(v\cdot \nabla)v]+[(\overline{w}\cdot \nabla)v]+[(v\cdot
\nabla)\overline{w}]+[(\overline{w}\cdot \nabla)\overline{w}]-\frac{\p \overline{w}}{\p t}+\nu\Delta \overline{w}, \\
\mathbb{F}_2 & = & M h'(t) \wedge \omega(t)-2\nu \int_{\p \mathcal{S}(t)} D(\overline{w})n\d \Gamma
, \\
\mathbb{F}_3 & = & \left(I\omega\right)\wedge \omega -
\mathbf{R}{I^{\ast}}'\tilde{\omega} -2\nu \int_{\p \mathcal{S}(t)}
(x-h)\wedge D(\overline{w})n\d \Gamma , \\
\mathbb{F}_4 & = & -2\nu \int_{\p \mathcal{S}(t)} D(\overline{w})n \cdot \left(h' \wedge \omega
\right)\d \Gamma.
\end{eqnarray*}
By using the Cauchy-Schwartz inequality combined with the Young inequality and the
fact that $\|u\|_{\mathbf{L}^2(\mathcal{F}(t))}$, $h'$ and
$\omega$ are bounded in $[0,T_0)$ (by Proposition \ref{ch2_zprop1}), we deduce that
there exists a positive constant $C_3$ such that for almost all $t\in (0,T_0)$
\begin{eqnarray}
\left\| \frac{\p v}{\p t}\right\|_{\mathbf{L}^2(\mathcal{F}(t))}^2 + 2\nu
\frac{\d}{\d t} \int_{\mathcal{F}(t)} |D(v)|_{\R^9}^2 \ \d x
+M|h''(t)|_{\R^3}^2 + \left|\left(\sqrt{I} \omega'\right)(t) \right|_{\R^3}^2 \nonumber \\
\leq C_3 \left( C_0^2 + C_0^2\|\nabla
v\|_{\mathbf{L}^2(\mathcal{F}(t))}^2+\| (v\cdot \nabla)v
\|_{\mathbf{L}^2(\mathcal{F}(t))}^2 \right),
\label{ch2_zzz3}
\end{eqnarray}
by reminding that the constant $C_0$ is the one which appears in Proposition
\ref{ch2_zprop1}.\\
Then, in order to handle the nonlinear term, we first use the H\"{o}lder
inequality
\begin{eqnarray*}
\| (v\cdot \nabla)v \|_{\mathbf{L}^2(\mathcal{F}(t))}^2 \leq \| v
\|_{\mathbf{L}^4(\mathcal{F}(t))}^2 \| \nabla v \|_{\mathbf{L}^4(\mathcal{F}(t))}^2
\end{eqnarray*}
and we remind the continuous embedding of $\mathbf{H}^{3/4}(\mathcal{F}(t))$ in
$\mathbf{L}^4(\mathcal{F}(t))$, so
\begin{eqnarray*}
\| (v\cdot \nabla)v \|_{\mathbf{L}^2(\mathcal{F}(t))}^2 \leq C \| v
\|_{\mathbf{H}^{3/4}(\mathcal{F}(t))}^2 \| \nabla v \|_{\mathbf{H}^{3/4}(\mathcal{F}(t))}^2.
\end{eqnarray*}
Thus, by interpolation we obtain
\begin{eqnarray}
\| (v\cdot \nabla)v \|_{\mathbf{L}^2(\mathcal{F}(t))}^2
& \leq & \tilde{C} \| v \|_{\mathbf{H}^{1}(\mathcal{F}(t))}^{3/2} \| v
\|_{\mathbf{L}^{2}(\mathcal{F}(t))}^{1/2}
 \| \nabla v \|_{\mathbf{H}^{1}(\mathcal{F}(t))}^{3/2} \| \nabla v
\|_{\mathbf{L}^{2}(\mathcal{F}(t))}^{1/2} \nonumber \\
& \leq & C_1 \| v \|_{\mathbf{L}^{2}(\mathcal{F}(t))}^{1/2} \| \nabla v
\|_{\mathbf{L}^{2}(\mathcal{F}(t))}^{1/2}
\left( \| v \|_{\mathbf{L}^{2}(\mathcal{F}(t))} + \| \nabla v \|_{\mathbf{L}^{2}(\mathcal{F}(t))}
\right)^{3/2} \times \nonumber \\
& & \left( \| \nabla v \|_{\mathbf{L}^{2}(\mathcal{F}(t))} + \sum_{i=1}^3 \| \nabla^2 v_i
\|_{\mathbf{L}^{2}(\mathcal{F}(t))} \right)^{3/2}. \label{ch2_zzz4}
\end{eqnarray}
From Proposition \ref{ch2_zprop1}, $\|v \|_{\mathbf{L}^2(\mathcal{F}(t))}$ is bounded, so we have
\begin{eqnarray*}
\| (v\cdot \nabla)v \|_{\mathbf{L}^2(\mathcal{F}(t))}^2 & \leq &
\tilde{C}_1 \| \nabla v \|_{\mathbf{L}^{2}(\mathcal{F}(t))}^{1/2}
\left( 1 + \| \nabla v \|_{\mathbf{L}^{2}(\mathcal{F}(t))} \right)^{3/2} \times \\
& & \left( \| \nabla v \|_{\mathbf{L}^{2}(\mathcal{F}(t))} + \sum_{i=1}^3 \| \nabla^2 v_i
\|_{\mathbf{L}^{2}(\mathcal{F}(t))} \right)^{3/2}.
\end{eqnarray*}
On the other hand, if we consider the following Stokes system at some fixed
time $t>0$
\begin{eqnarray*}
-\nu \Delta v + \nabla p  =  \overline{f} \quad & \textrm{ in } & \mathcal{F}(t), \\
\div \ v  =  0 \quad & \textrm{ in } & \mathcal{F}(t), \\
v  =  h' + \omega\wedge(x-h) \quad & \textrm{ on } & \p \mathcal{S}(t), \\
v  =  0 \quad & \textrm{ on } & \p \mathcal{O},
\end{eqnarray*}
with $\displaystyle \overline{f} : =  -\frac{\p v}{\p t} - (v\cdot \nabla)v - (v\cdot \nabla)\overline{w} - (\overline{w}\cdot \nabla)v$, we have the following classical estimate
\begin{eqnarray*}
\| \nabla v \|_{\mathbf{L}^{2}(\mathcal{F}(t))} +\sum_{i=1}^3 \| \nabla^2 v_i \|_{\mathbf{L}^{2}(\mathcal{F}(t))} & \leq & C_2 \left( \|
\overline{f} \|_{\mathbf{L}^{2}(\mathcal{F}(t))}
 +\| h' \|_{\R^3} + \|\omega \|_{\R^3} \right).
\end{eqnarray*}
Since from Proposition \ref{ch2_zprop1} the quantities $h'$ and $\omega$ are bounded by some constant $K_1>0$, we have
\begin{eqnarray*}
\| \nabla v \|_{\mathbf{L}^{2}(\mathcal{F}(t))} +\sum_{i=1}^3 \| \nabla^2 v_i \|_{\mathbf{L}^{2}(\mathcal{F}(t))} & \leq & C_2 \left( \|
\overline{f} \|_{\mathbf{L}^{2}(\mathcal{F}(t))}
 + K_1 \right).
\end{eqnarray*}
\begin{remark}
Notice that the constant $C_2$ does not depend on time, since we have $\dist(\mathcal{S}(t), \p \mathcal{O}) >0$ for all $t\in[]0,T_0)$.
\end{remark}

Consequently, by considering that
\begin{eqnarray*}
\| \overline{f} \|_{\mathbf{L}^{2}(\mathcal{F}(t))} & \leq &  \left\| \frac{\p v}{\p t} \right\|_{\mathbf{L}^{2}(\mathcal{F}(t))} + \|(v\cdot \nabla)v
\|_{\mathbf{L}^{2}(\mathcal{F}(t))} 
 + K_2 K_1 + K_3\|\nabla v\|_{\mathbf{L}^{2}(\mathcal{F}(t))},
\end{eqnarray*}
we get the following inequality
\begin{eqnarray*}
& & \displaystyle \| (v\cdot \nabla)v \|_{\mathbf{L}^2(\mathcal{F}(t))}^2
 \leq  \hat{C} \| \nabla v \|_{\mathbf{L}^{2}(\mathcal{F}(t))}^{1/2}
\left( 1 + \| \nabla v \|_{\mathbf{L}^{2}(\mathcal{F}(t))} \right)^{3/2} \times \\
& &  \left( K_2 K_1 +K_3 \| \nabla v \|_{\mathbf{L}^2(\mathcal{F}(t))} +
 \left\| \frac{\p v}{\p t} \right\|_{\mathbf{L}^2(\mathcal{F}(t))} +
\|(v\cdot \nabla)v \|_{\mathbf{L}^2(\mathcal{F}(t))} \right)^{3/2}.
\end{eqnarray*}
By a classical convexity inequality we can develop
\begin{eqnarray*}
\left( 1 + \| \nabla v \|_{[\L^{2}(\mathcal{F}(t))]^9} \right)^{3/2}
& \leq & \tilde{C}_1 \left( 1 + \| \nabla v \|_{\mathbf{L}^{2}(\mathcal{F}(t))}^{3/2}
\right)
\end{eqnarray*}
and also
\begin{eqnarray*}
\left( K_2 K_1 + K_3 \| \nabla v \|_{\mathbf{L}^2(\mathcal{F}(t))}  + \left\| \frac{\p v}{\p t}
\right\|_{\mathbf{L}^2(\mathcal{F}(t))} +
\|(v\cdot \nabla)v \|_{\mathbf{L}^2(\mathcal{F}(t))} \right)^{3/2} \leq  \\
 \tilde{C}_2 \left( 1 + \| \nabla v \|_{\mathbf{L}^2(\mathcal{F}(t))}^{3/2} +
 \left\| \frac{\p v}{\p t}
\right\|_{\mathbf{L}^2(\mathcal{F}(t))}^{3/2} +
\|(v\cdot \nabla)v \|_{\mathbf{L}^2(\mathcal{F}(t))}^{3/2} \right). 
\end{eqnarray*}
Thus
\begin{eqnarray*}
& & \displaystyle \| (v\cdot \nabla)v \|_{\mathbf{L}^2(\mathcal{F}(t))}^2
 \leq  \hat{C}  \| \nabla v \|_{\mathbf{L}^{2}(\mathcal{F}(t))}^{1/2}
\left( 1 + \| \nabla v \|_{\mathbf{L}^{2}(\mathcal{F}(t))}^{3/2}\right) \times \\
& & \tilde{C}_2 \left( 1 + \| \nabla v \|_{\mathbf{L}^2(\mathcal{F}(t))}^{3/2} + \left\| \frac{\p v}{\p t}
\right\|_{\mathbf{L}^2(\mathcal{F}(t))}^{3/2} + \|(v\cdot \nabla)v
\|_{\mathbf{L}^2(\mathcal{F}(t))}^{3/2} \right).
\end{eqnarray*}
Then, if we set $\hat{\hat{C}} = \tilde{C}_1 \tilde{C}_2 \hat{C}$ and
\begin{eqnarray*}
\mathbb{A} & = & \hat{\hat{C}} \| \nabla v \|_{\mathbf{L}^{2}(\mathcal{F}(t))}^{1/2}
\left( 1 + \| \nabla v \|_{\mathbf{L}^{2}(\mathcal{F}(t))}^{3/2} \right)  \left( 1 + \| \nabla v \|_{\mathbf{L}^2(\mathcal{F}(t))}^{3/2} +
 \left\| \frac{\p v}{\p t}
\right\|_{\mathbf{L}^2(\mathcal{F}(t))}^{3/2}\right) \\
\mathbb{B} & = & \hat{\hat{C}} \| \nabla v \|_{\mathbf{L}^{2}(\mathcal{F}(t))}^{1/2}
\left( 1 + \| \nabla v \|_{\mathbf{L}^{2}(\mathcal{F}(t))}^{3/2} \right),
\end{eqnarray*}
we have
\begin{eqnarray*}
\|(v\cdot \nabla)v \|_{\mathbf{L}^2(\mathcal{F}(t))}^{2}
& \leq & \mathbb{A} + \mathbb{B}\|(v\cdot \nabla)v
\|_{\mathbf{L}^2(\mathcal{F}(t))}^{3/2},
\end{eqnarray*}
and thus by the Young inequality we get
\begin{eqnarray*}
\|(v\cdot \nabla)v \|_{\mathbf{L}^2(\mathcal{F}(t))}^{2} & \leq & 4\mathbb{A} +
\mathbb{B}^4 
\end{eqnarray*}
with
\begin{eqnarray*}
\mathbb{B}^4 & \leq & \bar{C}  \| \nabla v \|_{\mathbf{L}^{2}(\mathcal{F}(t))}^{2} \left(
1 + \| \nabla v \|_{\mathbf{L}^{2}(\mathcal{F}(t))}^{6} \right).
\end{eqnarray*}
Therefore by returning to \eqref{ch2_zzz3} we obtain for $\tilde{C}_3$ large enough
\begin{eqnarray*}
\left\| \frac{\p v}{\p t}\right\|_{\mathbf{L}^2(\mathcal{F}(t))}^2 + 2\nu
\frac{\d}{\d t} \int_{\mathcal{F}(t)} |D(v)|_{\R^9}^2 \ \d x
+M|h''(t)|_{\R^3}^2 + \left|\left(\sqrt{I} \omega'\right)(t) \right|_{\R^3}^2 \\
\leq \tilde{C}_3 \left( 1+ 4\mathbb{A} + \mathbb{B}^4 \right).
\end{eqnarray*}
This leads us to \\

\noindent $\displaystyle \left\| \frac{\p v}{\p
t}\right\|_{\mathbf{L}^2(\mathcal{F}(t))}^2 + 2\nu \frac{\d}{\d t} \int_{\mathcal{F}(t)}
|D(v)|_{\R^9}^2 \ \d x
+M|h''(t)|_{\R^3}^2 + \left|\left(\sqrt{I} \omega'\right)(t) \right|_{\R^3}^2 $ \\
\begin{eqnarray*}
& \leq & \overline{C}_3\left( C_0^2 + C_0^2\|\nabla
v\|_{\mathbf{L}^2(\mathcal{F}(t))}^2 \right.\\
& & \left.+ \| \nabla v
\|_{\mathbf{L}^{2}(\mathcal{F}(t))}^{1/2}\left( 1+\| \nabla v
\|_{\mathbf{L}^{2}(\mathcal{F}(t))}^{3/2}\right)
\left(1+\| \nabla v \|_{\mathbf{L}^{2}(\mathcal{F}(t))}^{3/2} +
\left\| \frac{\p v}{\p t}\right\|_{\mathbf{L}^2(\mathcal{F}(t))}^{3/2} \right)
\right.\\
& & \left. +\| \nabla v \|_{\mathbf{L}^{2}(\mathcal{F}(t))}^{2}\left(1+ \| \nabla v
\|_{\mathbf{L}^{2}(\mathcal{F}(t))}^{6} \right)\right) \\
& \leq &  \overline{C}_3\left(C_0^2+\left(C_0^2 + 3\right)\| \nabla v
\|_{\mathbf{L}^{2}(\mathcal{F}(t))}^{2}
+ \| \nabla v \|_{\mathbf{L}^{2}(\mathcal{F}(t))}^{1/2} + \| \nabla v \|_{\mathbf{L}^{2}(\mathcal{F}(t))}^{7/2} \right.\\
& & \left.+\| \nabla v \|_{\mathbf{L}^{2}(\mathcal{F}(t))}^{8} +
\left( \| \nabla v \|_{\mathbf{L}^{2}(\mathcal{F}(t))}^{1/2}+\| \nabla v
\|_{\mathbf{L}^{2}(\mathcal{F}(t))}^{2}\right)
\left\| \frac{\p v}{\p t}\right\|_{\mathbf{L}^2(\mathcal{F}(t))}^{3/2}\right).
\end{eqnarray*}
Then, still by the Young inequality, we get \\

\noindent $\displaystyle \left\| \frac{\p v}{\p
t}\right\|_{\mathbf{L}^2(\mathcal{F}(t))}^2 + 2\nu \frac{\d}{\d t} \int_{\mathcal{F}(t)}
|D(v)|_{\R^9}^2 \ \d x
+M|h''(t)|_{\R^3}^2 + \left|\left(\sqrt{I} \omega'\right)(t) \right|_{\R^3}^2 $ \\
\begin{eqnarray*}
& \leq & \overline{C}_4C_0^2+ \overline{C}_5 \left( \| \nabla v
\|_{\mathbf{L}^{2}(\mathcal{F}(t))}^{1/2} +
\| \nabla v\|_{\mathbf{L}^{2}(\mathcal{F}(t))}^{2}
+ \| \nabla v
\|_{\mathbf{L}^{2}(\mathcal{F}(t))}^{7/2} +
\| \nabla v \|_{\mathbf{L}^{2}(\mathcal{F}(t))}^{8} \right). 
\end{eqnarray*}
By integrating this inequality on $[0,t]$, by using \eqref{ineqDG}, and with the Cauchy-Schwartz inequality we get \\

\noindent $\displaystyle \int_0^t\left\| \frac{\p v}{\p
t}(s)\right\|_{\mathbf{L}^2(\mathcal{F}(s))}^2 \d s
+ \nu \| \nabla v(t)
\|_{\mathbf{L}^{2}(\mathcal{F}(t))}^{2} +M\int_0^t|h''(s)|_{\R^3}^2\d s +
\int_0^t\left|\left(\sqrt{I} \omega'\right)(s) \right|_{\R^3}^2 \d
s$ \\
\begin{eqnarray*}
 & \leq & \nu \| \nabla v_0 \|_{\mathbf{L}^{2}(\mathcal{F})}^{2} + T_0\overline{C}_4C_0^2
+ \overline{C}_5T_0^{3/4} 
\| \nabla v \|_{\L^2(0,T_0;\mathbf{L}^{2}(\mathcal{F}(t)))}^{1/2} 
+ \overline{C}_5\| \nabla v \|_{\L^2(0,T_0;\mathbf{L}^{2}(\mathcal{F}(t)))}^{2} \\
& & + \overline{C}_5\int_0^t \| \nabla v (s)\|_{\mathbf{L}^{2}(\mathcal{F}(s))}^{7/2} \d
s  + \overline{C}_5\int_0^t \| \nabla v (s)\|_{\mathbf{L}^{2}(\mathcal{F}(s))}^{8}
\d s .
\end{eqnarray*}
Since $\nabla v = \nabla u - \nabla \overline{w}$, from \eqref{estwwbar1}, \eqref{estwwast1} we have
\begin{eqnarray*}
\| \nabla v \|_{\L^2(0,T_0;\mathbf{L}^{2}(\mathcal{F}(t)))} & \leq & 
\| \nabla u \|_{\L^2(0,T_0;\mathbf{L}^{2}(\mathcal{F}(t)))} + C_0^2
\end{eqnarray*}
by replacing $C_0$ on $\max\left(C_0, \| w^{\ast} \|^{1/2}_{\L^2(0,T_0;\mathbf{H}^{3}(\mathcal{S}^{\ast}(t)))} \right)$, and from Proposition \ref{ch2_zprop1} we deduce 
\begin{eqnarray}
\| \nabla v \|_{\L^2(0,T_0;\mathbf{L}^{2}(\mathcal{F}(t)))} & \leq & 
(K+1)C_0^2 \label{lastcont}
\end{eqnarray}
and so\\

\noindent $\displaystyle \int_0^t\left\| \frac{\p v}{\p
t}(s)\right\|_{\mathbf{L}^2(\mathcal{F}(s))}^2 \d s
+ \nu \| \nabla v(t)
\|_{\mathbf{L}^{2}(\mathcal{F}(t))}^{2} +M\int_0^t|h''(s)|_{\R^3}^2\d s +
\int_0^t\left|\left(\sqrt{I} \omega'\right)(s) \right|_{\R^3}^2 \d
s$ \\
\begin{eqnarray}
 & \leq & \nu \| \nabla v_0 \|_{\mathbf{L}^{2}(\mathcal{F})}^{2} + T_0\overline{C}_4C_0^2
+ \overline{C}_5T_0^{3/4} (K+1)^{1/2}C_0  + \overline{C}_5(K+1)^2C_0^4 \nonumber \\
& & + \overline{C}_5\int_0^t \| \nabla v
(s)\|_{\mathbf{L}^{2}(\mathcal{F}(s))}^{7/2} \d s  +
\overline{C}_5\int_0^t \| \nabla v
(s)\|_{\mathbf{L}^{2}(\mathcal{F}(s))}^{8} \d s .
\label{ch2_ineqsup}
\end{eqnarray}
For small data we can have in particular $C_0$ and $\|\nabla v_0\|_{\mathbf{L}^{2}(\mathcal{F})}^{2}$ small enough to satisfy
\begin{eqnarray}
\nu \| \nabla v_0 \|_{\mathbf{L}^{2}(\mathcal{F})}^{2} + T_0\overline{C}_4C_0^2
+ \overline{C}_5T_0^{3/4} (K+1)^{1/2}C_0  + 3\overline{C}_5(K+1)^2C_0^4 & < & \nu.  \label{ch2_hypglob}
\end{eqnarray}
For such initial data we notice in particular that $\|\nabla
v_0\|_{\mathbf{L}^{2}(\mathcal{F})}^{2} < 1$, and then by continuity there exists a
maximal time $\tilde{T}_0$ such that for all $t\in[0,\tilde{T}_0]$ we have
$\|\nabla
v(t)\|_{[\L^{2}(\mathcal{F}(t))]^9}^{2} \leq 1$. Let us show that $\tilde{T}_0 = T_0$;
By contradiction, let us assume that $\tilde{T}_0 < T_0$. For all
$s\in[0,\tilde{T}_0]$
we have for $\displaystyle r\in \left\{\frac{7}{2},8\right\}$
\begin{eqnarray*}
 \| \nabla v(s)\|_{\mathbf{L}^{2}(\mathcal{F}(s))}^{r} & \leq & \|
\nabla
v(s)\|_{\mathbf{L}^{2}(\mathcal{F}(s))}^{2},
\end{eqnarray*}
and by returning to \eqref{ch2_ineqsup} with these inequalities and \eqref{lastcont} we deduce
\begin{eqnarray*}
\nu \| \nabla v(\tilde{T}_0) \|_{\mathbf{L}^{2}(\mathcal{F}(\tilde{T}_0))}^{2} & \leq
& \nu \| \nabla v_0 \|_{\mathbf{L}^{2}(\mathcal{F})}^{2} + T_0\overline{C}_4C_0^2
+ \overline{C}_5T_0^{3/4} (K+1)^{1/2}C_0  \\
& & + \overline{C}_5(K+1)^2C_0^4 + 2\overline{C}_5(K+1)^2C_0^4 .
\end{eqnarray*}
Then under the hypothesis \eqref{ch2_hypglob} we deduce that $\| \nabla
v(\tilde{T}_0) \|_{\mathbf{L}^{2}(\mathcal{F}(\tilde{T}_0))}^{2} < 1$, and by continuity
we can find $\epsilon > 0$ such that
$\| \nabla v(t) \|_{\mathbf{L}^{2}(\mathcal{F}(t))}^{2} \leq 1$ for $t\in [\tilde{T}_0,
\tilde{T}_0 + \epsilon]$. This belies the definition of $\tilde{T}_0$ as an upper bound, and thus
$\tilde{T}_0 = T_0$.\\
This shows that by assuming the initial data small enough, $\| \nabla v(t)\|_{\mathbf{L}^{2}(\mathcal{F}(t))}$ is bounded (in $\L^{\infty}(0,T_0)$). Since $u = v + \overline{w}$, it follows from the estimate \eqref{estwwbar1} that $\| \nabla u(t)\|_{\mathbf{L}^{2}(\mathcal{F}(t))}$ is also bounded. By adding Proposition \ref{ch2_zprop1}, the function $\displaystyle t \mapsto \|  u \|_{\mathbf{H}^1(\mathcal{F}(t))}$ is bounded on $[0,T_0]$. On the other hand, from Corollary \ref{corodist}, if we assume that
\begin{eqnarray*}
\lim_{t\rightarrow T_0} \dist(\mathcal{S}(t), \p \mathcal{O}) & > & 0,
\end{eqnarray*}
it follows from the continuity of the function $t \mapsto \dist(\mathcal{S}(t), \p \mathcal{O})$ that
\begin{eqnarray*}
\dist(\mathcal{S}(t), \p \mathcal{O}) >  0, & \quad & t \in [0,T_0].
\end{eqnarray*}
Thus the strong solution of Theorem \ref{ch3_thlocex} can be extended to $[0,T_0']$, with $T_0' > T_0$. This belies the fact that $T_0$ could be the maximal time of existence of this solution. Finally we can conclude that if $T_0$ is finite, then necessarily
\begin{eqnarray*}
\lim_{t\rightarrow T_0} \dist(\mathcal{S}(t), \p \mathcal{O}) & = & 0,
\end{eqnarray*}
and the proof is complete.

\newpage
\section{Appendix A: The changes of variables}

\subsection{Preliminary results}
Let us remind a result stated in the Appendix B of \cite{Grubb} (Proposition B.1), which treats of Sobolev regularities for products of functions, and that we state in the particular case of dimension 3 as follows:

\begin{lemma} \label{ch3_lemmaGrubb}
Let $s$, $\mu$ and $\kappa$ in $\R$. If $f\in \H^{s+\mu}(\mathcal{F})$ and $g\in \H^{s+\kappa}(\mathcal{F})$, then there exists a positive constant $C$ such that
\begin{eqnarray*}
\| f g \|_{\H^s(\mathcal{F})} & \leq & C \| f  \|_{\H^{s+\mu}(\mathcal{F})} \| g \|_{\H^{s+\kappa}(\mathcal{F})},
\end{eqnarray*}
(i) when $s + \mu + \kappa \geq 3/2$, \\
(ii) with $\mu \geq 0$, $\kappa \geq 0$, $2s + \mu + \kappa \geq 0$, \\
(iii) except that $s + \mu + \kappa > 3/2$ if equality holds somewhere in (ii).

\end{lemma}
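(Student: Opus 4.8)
The statement is the classical Sobolev multiplication theorem specialised to dimension $d=3$, and in this paper it is simply quoted from \cite{Grubb} (Appendix~B), so the quickest route is to invoke that reference directly. For a self-contained argument the plan is the following. First I would transfer the estimate from the bounded Lipschitz domain $\mathcal{F}$ to the whole space: using for each order $\sigma$ in play a continuous extension operator $\H^\sigma(\mathcal{F})\to\H^\sigma(\R^3)$ (for negative $\sigma$ the adjoint of a restriction, or a single Stein-type total extension), it suffices to prove $\|FG\|_{\H^s(\R^3)}\le C\|F\|_{\H^{s+\mu}(\R^3)}\|G\|_{\H^{s+\kappa}(\R^3)}$ on $\R^3$. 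There I would use a Littlewood--Paley decomposition, with dyadic projectors $\Delta_j$, cut-offs $S_j=\sum_{k<j}\Delta_k$, the norm equivalence $\|u\|_{\H^\sigma}^2\simeq\sum_j 2^{2j\sigma}\|\Delta_j u\|_{\L^2}^2$, and the Bernstein inequality $\|\Delta_j u\|_{\L^\infty}\lesssim 2^{3j/2}\|\Delta_j u\|_{\L^2}$, together with Bony's paraproduct identity $FG=T_F G+T_G F+R(F,G)$, where $T_F G=\sum_j S_{j-1}F\,\Delta_j G$ and $R(F,G)=\sum_{|j-k|\le 1}\Delta_j F\,\Delta_k G$.

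Then I would estimate the three pieces separately. The term of $T_F G$ of index $j$ is localised near frequency $2^j$, so one bounds $\|S_{j-1}F\,\Delta_j G\|_{\L^2}$ by $\|S_{j-1}F\|_{\L^\infty}\|\Delta_j G\|_{\L^2}$ (or by a balanced H\"older pairing when $s+\mu<3/2$, borrowing regularity from $G$) and sums a geometric series whose ratio is governed by the sign of $s+\mu+\kappa-3/2$; this gives $\|T_F G\|_{\H^s}\lesssim\|F\|_{\H^{s+\mu}}\|G\|_{\H^{s+\kappa}}$ once $\kappa\ge 0$ and $s+\mu+\kappa\ge 3/2$, and $T_G F$ is symmetric under $\mu\leftrightarrow\kappa$ using $\mu\ge 0$. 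The remainder $R(F,G)$, whose summand is supported only in $\{|\xi|\lesssim 2^j\}$, forces a sum over all lower output scales: for $s\ge 0$ this is dominated by the top scale, while for $s<0$ one passes to the dual formulation, and convergence then needs $s+\mu+\kappa\ge 3/2$ at the high frequencies and $2s+\mu+\kappa\ge 0$ at the low ones. An alternative to the whole computation is to establish the estimate first for non-negative integer $s$ via the Leibniz rule, H\"older's inequality and Sobolev embeddings, and then recover the general case by complex interpolation and duality.

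The step I expect to be the main obstacle is precisely the sharp endpoint bookkeeping of clause (iii): each of the dyadic sums above is a geometric series in the Littlewood--Paley index that, at the endpoint of one of the inequalities of (ii), is only summable because a crude $\ell^\infty$ bound must be upgraded to a Cauchy--Schwarz or Schur-test argument, and it is exactly this that forces the strict inequality $s+\mu+\kappa>3/2$; carrying this out uniformly, and in a way compatible with the extension/restriction step in negative order, is the only genuinely delicate part. Since all of this is done carefully in \cite{Grubb}, we simply quote that reference.
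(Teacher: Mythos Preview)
Your proposal is correct and matches the paper's approach exactly: the paper does not prove this lemma but simply recalls it from Appendix~B of \cite{Grubb} (Proposition~B.1), specialised to dimension~3. Your additional sketch of a self-contained Littlewood--Paley/paraproduct argument goes beyond what the paper does, but since the paper only quotes the reference, your invocation of \cite{Grubb} is all that is needed.
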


A consequence of this Lemma is the following result.
\textcolor{black}{
\begin{lemma} \label{ch3_lemmecomatrice}
Let be $T>0$ and $\tilde{X}$ in $\mathcal{W}(Q_T^0)$ satisfying $\tilde{X}(\cdot,0) = \Id_{\mathcal{F}}$. Then
\begin{eqnarray}
\com \nabla \tilde{X} & \in & \H^{1}(0,T;\mathbf{H}^{2}(\mathcal{F})) \cap \H^{2}(0,T;\mathbf{L}^2(\mathcal{F})), \label{ch3_resfond1}
\end{eqnarray}
and there exists a positive constant $C$ such that
\begin{eqnarray}
& & \| \com \nabla \tilde{X} - \I_{\R^3} \|_{\H^{1}(\mathbf{H}^{2}) \cap \H^{2}(\mathbf{L}^2)}
 \nonumber \\
& & \leq C \| \nabla \tilde{X} - \I_{\R^3} \|_{\H^{1}(\mathbf{H}^{2}) \cap \H^{2}(\mathbf{L}^2)}
\left(1+ \| \nabla \tilde{X} - \I_{\R^3} \|_{\H^{1}(\mathbf{H}^{2}) \cap \H^{2}(\mathbf{L}^2)}  \right) \nonumber \\
& & \leq C \sqrt{1+T^2} \left\| \frac{\p \tilde{X}}{\p t}\right\|_{\mathcal{H}(Q_T^0)} \left(1+ \sqrt{1+T^2}
\left\| \frac{\p \tilde{X}}{\p t}\right\|_{\mathcal{H}(Q_T^0)} \right).
 \label{ch3_estcof}
\end{eqnarray}
Moreover, if $\tilde{X}_1, \ \tilde{X}_2 \in \mathcal{W}(Q_T^0)$, then
\begin{eqnarray}
& & \| \com \nabla \tilde{X}_2 -\com \nabla \tilde{X}_1 \|_{\H^{1}(\mathbf{H}^{2}) \cap \H^{2}(\mathbf{L}^2)}
  \nonumber \\
& & \leq C \| \nabla \tilde{X}_2 - \nabla \tilde{X}_1 \|_{\H^{1}(\mathbf{H}^{2}) \cap \H^{2}(\mathbf{L}^2)}
\left(1+ \| \nabla \tilde{X}_1\| + \|\nabla \tilde{X}_2 \|_{\H^{1}(\mathbf{H}^{2}) \cap \H^{2}(\mathbf{L}^2)}  \right) \nonumber \\
& & \leq C \sqrt{1+T^2}\left\| \frac{\p (\tilde{X}_2-\tilde{X}_1)}{\p t}\right\|_{\mathcal{H}(Q_T^0)}\left(1+\sqrt{1+T^2} \left(
\left\| \frac{\p \tilde{X}_1}{\p t}\right\|_{\mathcal{H}(Q_T^0)}
+ \left\| \frac{\p \tilde{X}_1}{\p t}\right\|_{\mathcal{H}(Q_T^0)} \right) \right).
\nonumber \\ \label{ch3_estcof12}
\end{eqnarray}
\end{lemma}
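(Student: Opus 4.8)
The plan is to exploit the fact that, in dimension $3$, the cofactor map $A \mapsto \com A$ on $\R^{3\times 3}$ is a \emph{homogeneous polynomial of degree $2$}: each entry of $\com A$ is a $2\times 2$ minor of $A$, hence a sum of products of two entries of $A$, and $\com \I_{\R^3} = \I_{\R^3}$. Write $A = \nabla \tilde{X}$ and $B = A - \I_{\R^3}$. Expanding the quadratic map around the identity gives $\com A - \I_{\R^3} = \mathcal{L}(B) + \com B$, where $\mathcal{L}(B) = 2\,\mathcal{B}(\I_{\R^3},B)$ is linear, $\mathcal{B}$ being the symmetric bilinear form associated with $\com$ (so $\mathcal{B}(C,C) = \com C$). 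For the difference statement the same bilinear structure yields, by polarisation, $\com A_2 - \com A_1 = \mathcal{B}(A_2 - A_1,\, A_1 + A_2) = \mathcal{B}\big(A_2 - A_1,\; 2\I_{\R^3} + (A_1 - \I_{\R^3}) + (A_2 - \I_{\R^3})\big)$. So both statements reduce to controlling bilinear products of functions lying in $\mathcal{E}_T := \H^1(0,T;\mathbf{H}^2(\mathcal{F})) \cap \H^2(0,T;\mathbf{L}^2(\mathcal{F}))$, the space to which $\nabla \tilde{X} - \I_{\R^3}$ belongs since $\tilde{X} - \Id_{\mathcal{F}} \in \mathcal{W}(Q_T^0)$.

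The core of the argument is therefore the claim that $\mathcal{E}_T$ is a Banach algebra, with $\| fg \|_{\mathcal{E}_T} \leq C \| f \|_{\mathcal{E}_T} \| g \|_{\mathcal{E}_T}$ for a constant $C$ independent of $T$ (at worst non-decreasing in $T$). I would prove this by the Leibniz rule, using throughout the three-dimensional embedding $\mathbf{H}^2(\mathcal{F}) \hookrightarrow \mathbf{L}^{\infty}(\mathcal{F})$ and the time-embedding $\H^1(0,T;\mathbf{H}^2(\mathcal{F})) \hookrightarrow C([0,T];\mathbf{H}^2(\mathcal{F}))$. For the $\H^1(0,T;\mathbf{H}^2)$ component one estimates $\p_t(fg) = f_t g + f g_t$ by $\| f_t \|_{\L^2(0,T;\mathbf{H}^2)} \| g \|_{C([0,T];\mathbf{H}^2)}$ plus the symmetric term, using that $\mathbf{H}^2(\mathcal{F})$ is an algebra; $fg$ itself lies in $\L^2(0,T;\mathbf{H}^2)$ the same way. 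For the $\H^2(0,T;\mathbf{L}^2)$ component, $\p_t^2(fg) = f_{tt}g + 2 f_t g_t + f g_{tt}$: the outer terms are bounded by $\| f_{tt} \|_{\L^2(0,T;\mathbf{L}^2)} \| g \|_{C([0,T];\mathbf{L}^{\infty})}$ and its symmetric counterpart, while the cross term is handled by noting that $f_t, g_t \in \L^2(0,T;\mathbf{H}^2) \cap \H^1(0,T;\mathbf{L}^2) \hookrightarrow C([0,T];\mathbf{H}^1(\mathcal{F})) \hookrightarrow C([0,T];\mathbf{L}^6(\mathcal{F}))$, whence $f_t g_t \in C([0,T];\mathbf{L}^3(\mathcal{F})) \hookrightarrow \L^2(0,T;\mathbf{L}^2(\mathcal{F}))$ on the bounded set $\mathcal{F}$. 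Lemma \ref{ch3_lemmaGrubb} can be invoked in the space variables to organise these spatial products cleanly. The $T$-uniformity is tracked exactly as in the basic estimates recalled earlier in the paper: all embedding constants are non-decreasing in $T$.

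With the algebra property established, the remaining steps are mechanical. Applying it to $\com B$ gives $\| \com \nabla \tilde{X} - \I_{\R^3} \|_{\mathcal{E}_T} \leq C \| \nabla \tilde{X} - \I_{\R^3} \|_{\mathcal{E}_T}\big(1 + \| \nabla \tilde{X} - \I_{\R^3} \|_{\mathcal{E}_T}\big)$, which is \eqref{ch3_resfond1} and the first inequality of \eqref{ch3_estcof}; the second inequality then follows from the estimate $\| \nabla \tilde{X} - \I_{\R^3} \|_{\mathcal{E}_T} \leq \| \tilde{X} - \Id_{\mathcal{F}} \|_{\mathcal{W}(Q_T^0)} \leq \sqrt{1+T^2}\, \big\| \tfrac{\p \tilde{X}}{\p t} \big\|_{\mathcal{H}(Q_T^0)}$, the last step being the analogue for $\tilde{X}$ of the basic embedding estimate recalled earlier, legitimate because $\tilde{X}(\cdot,0) = \Id_{\mathcal{F}}$. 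The difference estimate \eqref{ch3_estcof12} is obtained identically: the bilinear bound applied to $\mathcal{B}\big(A_2 - A_1,\, 2\I_{\R^3} + (A_1 - \I_{\R^3}) + (A_2 - \I_{\R^3})\big)$ produces the first two lines (after relabelling constants), and the same substitution applied to $\nabla(\tilde{X}_2 - \tilde{X}_1)$ and to each $\nabla \tilde{X}_i - \I_{\R^3}$ yields the last line.

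I expect the \textbf{main obstacle} to be the cross term $f_t g_t$ in $\p_t^2(fg)$: it is the only place where the naive product rule does not land directly in $\mathbf{L}^2$, so one is forced to combine the interpolation embedding $\L^2(0,T;\mathbf{H}^2) \cap \H^1(0,T;\mathbf{L}^2) \hookrightarrow C([0,T];\mathbf{H}^1)$ with the three-dimensional Sobolev embedding $\mathbf{H}^1 \hookrightarrow \mathbf{L}^6$, and, relatedly, to keep all constants along the way independent of, or non-decreasing in, $T$, since it is precisely this uniformity that the fixed-point and global-existence arguments later rely on.
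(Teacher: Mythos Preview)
Your proof is correct and follows essentially the same route as the paper: show that $\mathcal{E}_T = \H^1(0,T;\mathbf{H}^2) \cap \H^2(0,T;\mathbf{L}^2)$ is an algebra via the Leibniz rule, then exploit that $\com$ is quadratic in dimension $3$. The only noteworthy difference is in the cross term $f_t g_t$: the paper handles it asymmetrically, using $f_t \in \L^2(0,T;\mathbf{L}^\infty)$ (from $\L^2(0,T;\mathbf{H}^2)$ and $\mathbf{H}^2 \hookrightarrow \mathbf{L}^\infty$) together with $g_t \in \L^\infty(0,T;\mathbf{L}^2)$ (since $g \in \H^2(0,T;\mathbf{L}^2)$ gives $g_t \in \H^1(0,T;\mathbf{L}^2) \hookrightarrow C([0,T];\mathbf{L}^2)$), which is slightly more direct than your symmetric $\mathbf{L}^6 \times \mathbf{L}^6 \to \mathbf{L}^3 \hookrightarrow \mathbf{L}^2$ argument and sidesteps the interpolation embedding into $C([0,T];\mathbf{H}^1)$ whose $T$-uniformity you rightly flag as delicate.
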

}

\begin{proof}
For proving \eqref{ch3_resfond1}, it is sufficient to show that the space $\H^{1}(0,T;\H^{2}(\mathcal{F})) \cap \H^{2}(0,T;\L^2(\mathcal{F}))$ is stable by product. For that, let us consider two functions $f$ and $g$ which lie in this space. We write
\begin{eqnarray*}
\frac{\p (fg)}{\p t} & = & \frac{\p f}{\p t}g + f\frac{\p g}{\p t}. \label{ch3_eqtriv}
\end{eqnarray*}
Applying Lemma \ref{ch3_lemmaGrubb} with $s = 2$ and $\mu = \kappa = 0$, we get
\begin{eqnarray*}
\left\|\frac{\p (fg)}{\p t} \right\|_{\L^2(0,T;\H^{2}(\mathcal{F}))} & \leq & C\left(
\left\| \frac{\p f}{\p t} \right\|_{\L^2(0,T;\H^{2}(\mathcal{F}))} \left\| g \right\|_{\L^{\infty}(0,T;\H^{2}(\mathcal{F}))} \right. \\
 & & + \left. \left\| \frac{\p g}{\p t} \right\|_{\L^2(0,T;\H^{2}(\mathcal{F}))} \left\| f \right\|_{\L^{\infty}(0,T;\H^{2}(\mathcal{F}))} \right)
\end{eqnarray*}
and thus $fg \in \H^1(0,T;\H^{2}(\mathcal{F}))$. For the regularity of $fg$ in $\H^{2}(0,T;\L^2(\mathcal{F}))$, we consider
\begin{eqnarray*}
\frac{\p^2 (fg)}{\p t^2} & = & \frac{\p^2 f}{\p t^2}g + f\frac{\p^2 g}{\p t^2} + 2\frac{\p f}{\p t} \frac{\p g}{\p t}
\end{eqnarray*}
with
\begin{eqnarray*}
\begin{array} {lll}
\displaystyle \frac{\p^2 f}{\p t^2}, \ \frac{\p^2 g}{\p t^2} \in \L^2(0,T;\L^{2}(\mathcal{F})), & \quad &
\displaystyle f, \ g \in \L^{\infty}(0,T;\L^{\infty}(\mathcal{F})), \\
 & & \\
\displaystyle \frac{\p f}{\p t} \in \L^2(0,T;\L^{\infty}(\mathcal{F})), & \quad &
\displaystyle \frac{\p g}{\p t} \in \L^{\infty}(0,T;\L^{2}(\mathcal{F})),
\end{array}
\end{eqnarray*}
because of the embedding $\mathbf{H}^{2}(\mathcal{F}) \hookrightarrow \mathbf{L}^{\infty}(\mathcal{F})$, and so we get
\begin{eqnarray*}
\frac{\p^2 (fg)}{\p t^2} & \in & \L^{2}(0,T;\L^2(\mathcal{F}))
\end{eqnarray*}
and the desired regularity. This shows in particular that $\H^1(0,T;\H^{m-1}(\mathcal{F})) \cap \H^{2}(0,T;\L^2(\mathcal{F}))$ is an algebra, and we can show the estimate \eqref{ch3_estcof} by noticing that the cofactor matrix is made of quadratic terms (in dimension 3), and thus we can estimate
\begin{eqnarray*}
& & \| \com \nabla \tilde{X} - \I_{\R^3} \|_{\H^{1}(\mathbf{H}^{2})\cap \H^{2}(\mathbf{L}^2)} \\
& & \leq  \tilde{C}\| \nabla \tilde{X} - \I_{\R^3} \|_{\H^{1}(\mathbf{H}^{2})\cap \H^{2}(\mathbf{L}^2)}
\left(\| \nabla \tilde{X} \|_{\H^{1}(\mathbf{H}^{2})\cap \H^{2}(\mathbf{L}^2)} + 1 \right), \\
& & \leq  C\| \nabla \tilde{X} - \I_{\R^3} \|_{\H^{1}(\mathbf{H}^{2})\cap \H^{2}(\mathbf{L}^2)}
\left(\| \nabla \tilde{X}-\I_{\R^3} \|_{\H^{1}(\mathbf{H}^{2})\cap \H^{2}(\mathbf{L}^2)} +1 \right).
\end{eqnarray*}
The arguments for proving \eqref{ch3_estcof12} is the same.
\end{proof}

\subsection{Existence of a change of variables}
Let be $T_0 \geq T > 0$. Let $h\in \H^2(0,T_0;\R^3)$ be a vector and $\mathbf{R} \in \H^2(0,T_0;\R^9)$ a rotation which provides the angular velocity $\omega \in \H^1(0,T_0;\R^3)$ given by
\begin{eqnarray*}
\mathbb{S}\left( \omega\right) & = & \displaystyle \frac{\d \mathbf{R}}{\d t} \mathbf{R}^T,
\quad \text{with }
\mathbb{S}(\omega) = \left(
\begin{matrix}
0 & -\omega_3 & \omega_2 \\
\omega_3 & 0 & -\omega_1 \\
-\omega_2 & \omega_1 & 0
\end{matrix} \right).
\end{eqnarray*}
We assume that $h_0 = 0,  \ \mathbf{R}(0) = \I_{\R^3}$ and we still use the notation
\begin{eqnarray*}
\tilde{h}'(t)  =  \mathbf{R}(t)^T h(t), & \quad & \tilde{\omega}(t) =  \mathbf{R}(t)^T \omega(t).
\end{eqnarray*}
Let us remind and prove Lemma \ref{ch3_lemmaxtension}:

\begin{lemma} \label{lemmasuper}
Let $X^{\ast}$ be a mapping lying in $\mathcal{W}_0(0,\infty;\mathcal{S})$ and satisfying for all $t\geq 0$ the equality
\begin{eqnarray}
\int_{\p \mathcal{S}} \frac{\p X^{\ast}}{\p t} \cdot \left(\com \nabla X^{\ast} \right)n \d \Gamma &  = & 0. \label{eqvolume}
\end{eqnarray}
Then for $T >0$ small enough, there exists a mapping $\tilde{X} \in \mathcal{W}(Q_T^0) $ satisfying
\textcolor{black}{
\begin{eqnarray}
\left\{ \begin{array} {lcl}
\det \nabla \tilde{X}  = 1 & \quad & \text{in } \mathcal{F} \times (0,T), \\
\tilde{X} = X^{\ast} & \quad & \text{on } \p \mathcal{S} \times (0,T), \\
\tilde{X} = \mathbf{R}^T(\Id-h) & \quad & \text{on } \p \mathcal{O} \times (0,T),\\
\text{\textcolor{black}{$\tilde{X}(\cdot,0) = \Id_{\mathcal{F}}$}} & &
\end{array} \right. \label{superpb}
\end{eqnarray}
}
and the estimate
\begin{eqnarray}
\| \tilde{X} - \Id_{\mathcal{F}} \|_{\mathcal{W}(Q_T^0)} & \leq & C \left(1+
\left\| X^{\ast} - \Id_{\mathcal{S}} \right\|_{\mathcal{W}(S_{T_0}^0)}
+ \|\tilde{h}'\|_{\H^1(0,T_0;\R^3)} + \|\tilde{\omega}\|_{\H^1(0,T_0;\R^3)} \right), \nonumber \label{estlip0} \\
\end{eqnarray}
for some independent positive constant $C$ - which in particular does not depend on $T$. Besides, if $\tilde{X}_1$ and $\tilde{X}_2$ are the solutions - for $T$ small enough - of problem \eqref{superpb} corresponding to the data $(X^{\ast},h_1,\mathbf{R}_1)$ and $(X^{\ast},h_2,\mathbf{R}_2)$ respectively, with
\begin{eqnarray*}
h_1(0) = h_2(0) = 0, \quad \mathbf{R}_1(0) = \mathbf{R}_2(0) = \I_{\R^3}, \quad h'_1(0) = h'_2(0), \quad \omega_1(0) = \omega_2(0),
\end{eqnarray*}
then the difference $\tilde{X}_2 - \tilde{X}_1$ satisfies
\begin{eqnarray}
\| \tilde{X}_2 - \tilde{X}_1 \|_{\mathcal{W}(Q_T^0)} & \leq & \tilde{C}\left(
\|\tilde{h}'_2 -\tilde{h}'_1 \|_{\H^1(0,T_0;\R^3)} + \|\tilde{\omega}_2 - \tilde{\omega}_1 \|_{\H^1(0,T_0;\R^3)} \right), \label{estlip12}
\end{eqnarray}
where the constant $\tilde{C}$ does not depend on $T$.
\end{lemma}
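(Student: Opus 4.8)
The plan is to produce $\tilde X$ in the form $\tilde X = \Lambda + v$, where $\Lambda$ is an explicit reference extension of the prescribed Dirichlet data carrying the right value at $t=0$, and $v$, vanishing on $\p\mathcal F$, is a volume–correcting term obtained by solving the scalar constraint $\det\nabla\tilde X = 1$ through a divergence equation and a contraction argument.

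\emph{Step 1: the reference extension.} Fix once and for all a smooth cut-off $\chi$ equal to $1$ in a collar of $\p\mathcal O$ and vanishing near $\overline{\mathcal S}$, together with a bounded linear extension operator $E$ from traces on $\p\mathcal S$ to functions on $\mathcal F$ supported near $\p\mathcal S$, and set $\Lambda := \Id_{\mathcal F} + \chi\,(\mathbf R^{T}(\Id-h)-\Id) + E\big((X^{\ast}-\Id_{\mathcal S})|_{\p\mathcal S}\big)$. Then $\Lambda = \mathbf R^{T}(\Id-h)$ on $\p\mathcal O$, $\Lambda = X^{\ast}$ on $\p\mathcal S$, $\Lambda(\cdot,0)=\Id_{\mathcal F}$, and $\nabla\Lambda = \mathbf R^{T}$ (hence $\det\nabla\Lambda\equiv 1$) in the collar where $\chi\equiv 1$. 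Since $\Lambda-\Id_{\mathcal F}$ vanishes at $t=0$, the basic embedding estimates of Section~\ref{secdef} together with Remark~\ref{remarkc} (which makes $\mathbf R$, and then $h$, depend on $\tilde h'$ and $\tilde\omega$ with $\H^1$-bounds) give $\|\Lambda-\Id_{\mathcal F}\|_{\mathcal W(Q_T^0)} \le C\big(1+\|X^{\ast}-\Id_{\mathcal S}\|_{\mathcal W(S_{T_0}^0)}+\|\tilde h'\|_{\H^1(0,T_0;\R^3)}+\|\tilde\omega\|_{\H^1(0,T_0;\R^3)}\big)$ with $C$ independent of $T$, while for fixed data the corresponding norms computed over $(0,T)$ tend to $0$ as $T\to 0$ (absolute continuity of the integral for the $\L^2$-in-time pieces). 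By the algebra property and the Lipschitz estimates of Lemma~\ref{ch3_lemmecomatrice}, the same smallness holds for $\|\com\nabla\Lambda-\I_{\R^3}\|_{\H^1(\mathbf H^2)\cap\H^2(\mathbf L^2)}$ and for $\delta(T):=\|\,1-\det\nabla\Lambda\,\|_{\H^1(0,T;\mathbf H^2(\mathcal F))\cap\H^2(0,T;\mathbf L^2(\mathcal F))}$.

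\emph{Step 2: the volume correction.} Looking for $\tilde X=\Lambda+v$ with $v|_{\p\mathcal F}=0$, the Jacobi formula $\tfrac{\d}{\d s}\det\nabla(\Lambda+sv)=\com\nabla(\Lambda+sv):\nabla v$ and the Piola identity $\div\com\nabla(\cdot)=0$ give $\det\nabla(\Lambda+v)-\det\nabla\Lambda=\div\big(A(v)^{T}v\big)$ with $A(v):=\int_0^1\com\nabla(\Lambda+sv)\,\d s$, so the constraint $\det\nabla\tilde X=1$ reads $\div v = 1-\det\nabla\Lambda - N(v)$, where $N(v):=\div\big((A(v)^{T}-\I_{\R^3})v\big)$ collects the terms quadratic and cubic in $\nabla v$. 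The right-hand side has zero spatial mean over $\mathcal F$ for a.e.\ $t$: $\int_{\mathcal F}N(v)=0$ because $(A(v)^{T}-\I_{\R^3})v$ vanishes on $\p\mathcal F$, and $\int_{\mathcal F}(1-\det\nabla\Lambda)=0$ because, for $T$ small, $\Lambda(\cdot,t)$ is a diffeomorphism with $\int_{\mathcal F}\det\nabla\Lambda=\mathrm{vol}\big(\mathbf R^{T}(\mathcal O-h)\big)-|\mathcal S^{\ast}(t)|=|\mathcal O|-|\mathcal S|=|\mathcal F|$, the equality $|\mathcal S^{\ast}(t)|=|\mathcal S|$ being precisely $\tfrac{\d}{\d t}|\mathcal S^{\ast}(t)|=\int_{\p\mathcal S}\tfrac{\p X^{\ast}}{\p t}\cdot\com(\nabla X^{\ast})n\,\d\Gamma=0$ from \eqref{eqvolume}. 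Let $\mathcal B$ be a fixed right inverse of the divergence on $\mathcal F$ of Bogovskii type (bounded $\mathbf H^{k}(\mathcal F)\to\mathbf H^{k+1}(\mathcal F)$ on zero-mean data, commuting with $\p_t$); then $v\mapsto\mathcal B\big(1-\det\nabla\Lambda-N(v)\big)$ maps $\mathcal W(Q_T^0)$-maps vanishing on $\p\mathcal F$ into itself, since $1-\det\nabla\Lambda$ and $N(v)$ lie in the algebra $\H^1(0,T;\mathbf H^2)\cap\H^2(0,T;\mathbf L^2)$ by Lemma~\ref{ch3_lemmecomatrice}. Because $\delta(T)$ is small and $N$ is higher order with coefficients controlled via Lemma~\ref{ch3_lemmecomatrice}, for $T$ small enough this map is a contraction stabilising a ball of radius $\sim\delta(T)$; its fixed point $v$ yields $\tilde X:=\Lambda+v\in\mathcal W(Q_T^0)$ solving \eqref{superpb}, with $\tilde X(\cdot,0)=\Id_{\mathcal F}$ since $\delta$ and the data vanish at $t=0$. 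The estimate \eqref{estlip0} follows from $\|\tilde X-\Id_{\mathcal F}\|\le\|\Lambda-\Id_{\mathcal F}\|+\|v\|$ and Step~1, the constants being uniform in $T\le T_0$ because the Bogovskii norm depends only on $\mathcal F$ and every $T$-power occurring is dominated by its value at $T_0$.

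\emph{Step 3: Lipschitz dependence on the rigid data.} For two data sets $(h_1,\mathbf R_1)$, $(h_2,\mathbf R_2)$ with the stated matching initial conditions, $\tilde h_2'-\tilde h_1'$ and $\tilde\omega_2-\tilde\omega_1$ vanish at $t=0$, and the boundary data differ only on $\p\mathcal O$; by Remark~\ref{remarkc} the maps $\tilde\omega\mapsto\mathbf R$ and $(\tilde h',\tilde\omega)\mapsto h$ are Lipschitz, so $\|\Lambda_2-\Lambda_1\|_{\mathcal W(Q_T^0)}$, $\|\det\nabla\Lambda_2-\det\nabla\Lambda_1\|_{\H^1(\mathbf H^2)\cap\H^2(\mathbf L^2)}$ are bounded by $\tilde C\big(\|\tilde h_2'-\tilde h_1'\|_{\H^1(0,T_0;\R^3)}+\|\tilde\omega_2-\tilde\omega_1\|_{\H^1(0,T_0;\R^3)}\big)$. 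Writing $v_2-v_1=\mathcal B\big((\det\nabla\Lambda_1-\det\nabla\Lambda_2)+(N_1(v_1)-N_2(v_2))\big)$ and using that the two fixed-point maps are contractions with the same (small) constant and are close (again by Lemma~\ref{ch3_lemmecomatrice}), a standard comparison of fixed points gives $\|v_2-v_1\|\lesssim\|\Lambda_2-\Lambda_1\|$, whence $\|\tilde X_2-\tilde X_1\|_{\mathcal W(Q_T^0)}\le\tilde C\big(\|\tilde h_2'-\tilde h_1'\|_{\H^1(0,T_0;\R^3)}+\|\tilde\omega_2-\tilde\omega_1\|_{\H^1(0,T_0;\R^3)}\big)$, i.e.\ \eqref{estlip12}.

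\textbf{Main obstacle.} The delicate point is Step~2: checking that $1-\det\nabla\Lambda$ is admissible for the divergence equation — it must have zero spatial mean for every $t$, which uses hypothesis \eqref{eqvolume} in an essential way (through the volume identity $|\mathcal S^{\ast}(t)|=|\mathcal S|$, equivalently the Piola/Nanson computation) together with the fact that a rotation preserves volume, $\com(\nabla\mathbf R^{T})=\mathbf R^{T}$ — and that its strong norm $\delta(T)$ and the associated cofactor quantities are small for $T$ small; this is exactly where the cofactor algebra and Lipschitz estimates of Lemma~\ref{ch3_lemmecomatrice}, and the careful tracking of $T$-dependent embedding constants so as to keep $C$ and $\tilde C$ nondecreasing and uniform in $T$, are required. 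Everything else reduces to the Banach fixed-point theorem and routine, if lengthy, product estimates in the spaces $\mathcal W(Q_T^0)$ and $\H^1(\mathbf H^2)\cap\H^2(\mathbf L^2)$.
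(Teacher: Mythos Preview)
Your proof is correct and follows a genuinely different route from the paper's. The paper differentiates system~\eqref{superpb} in time and runs the fixed point directly on $\partial_t\tilde X$: one solves the linear divergence problem $\div\,\partial_t\tilde X_2=f(\tilde X_1):=(\I_{\R^3}-\com\nabla\tilde X_1):\partial_t\nabla\tilde X_1$ with the full nonhomogeneous Dirichlet data on $\p\mathcal F$, and the contracting smallness is the explicit factor $\sqrt{T}$ in the bound $\|f(\tilde X)\|\lesssim\sqrt{T}\,\|\I_{\R^3}-\com\nabla\tilde X\|\,\|\partial_t\tilde X\|$. You instead lift the boundary data once by an explicit reference map $\Lambda$ and then seek a correction $v$ with homogeneous boundary conditions via a Bogovskii right inverse; your smallness parameter is $\delta(T)\to 0$ by absolute continuity rather than an explicit $T$-power. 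Your approach makes the compatibility (mean-zero) verification and the Lipschitz comparison of Step~3 slightly cleaner, since the unknown $v$ vanishes on $\p\mathcal F$; the paper's approach avoids constructing $\Lambda$ altogether and works directly in the velocity variable $\partial_t\tilde X$, which is the natural one for the space $\mathcal W_0$ and yields a more quantitative $T$-dependence. Both proofs rest on the same three ingredients: the Piola identity, the cofactor/determinant algebra of Lemma~\ref{ch3_lemmecomatrice}, and hypothesis~\eqref{eqvolume} to secure compatibility of the divergence problem.
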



\begin{proof}
Given the initial data $X^{\ast}(\cdot,0) = \Id_{\mathcal{S}}$, $h_0 = 0$, $\mathbf{R}(0) = \I_{\R^3}$, $h'(0) = h_1$, $\omega(0) = \omega_0$ \textcolor{black}{and the initial condition $\tilde{X}(\cdot,0) = \Id_{\mathcal{F}}$}, let us consider the system \eqref{superpb} derived in time, as
\begin{eqnarray*}
\left\{ \begin{array} {lcl}
\displaystyle \left(\com \nabla \tilde{X} \right) : \frac{\p \nabla \tilde{X} }{\p t} = 0 & \quad & \text{in } \mathcal{F} \times (0,T), \\
\displaystyle \frac{\p \tilde{X}}{\p t} = \frac{\p X^{\ast}}{\p t} & \quad & \text{on } \p \mathcal{S} \times (0,T)
, \\
\displaystyle \frac{\p \tilde{X}}{\p t}(y,t) = -\tilde{h}'(t)- \tilde{\omega}(t)\wedge \text{\textcolor{black}{$\mathbf{R}^T(t)(y-h(t))$}}
& \quad & (y,t) \in \p \mathcal{O} \times (0,T).
\end{array} \right.
\end{eqnarray*}
This system can be seen as a modified nonlinear divergence problem, that we state as
\begin{eqnarray*}
\left\{ \begin{array} {lcl}
\displaystyle \div \ \frac{\p \tilde{X}}{\p t} = f(\tilde{X})  & \quad & \text{in } \mathcal{F} \times (0,T), \\
\displaystyle \frac{\p \tilde{X}}{\p t} = \frac{\p X^{\ast}}{\p t} & \quad & \text{on } \p \mathcal{S} \times (0,T), \\
\displaystyle \frac{\p \tilde{X}}{\p t}(y,t) = -\tilde{h}'(t)- \tilde{\omega}(t)\wedge \text{\textcolor{black}{$\mathbf{R}^T(t)(y-h(t))$}}
 & \quad & (y,t) \in \p \mathcal{O} \times (0,T),
\end{array} \right.
\end{eqnarray*}
with
\begin{eqnarray*}
f(\tilde{X}) & = & \left(\I_{\R^3} - \com \nabla \tilde{X} \right) : \frac{\p \nabla \tilde{X} }{\p t}.
\end{eqnarray*}
\textcolor{black}{Let us notice that if we assume in addition that the condition below is satisfied
\begin{eqnarray*}
\int_{\p \mathcal{S}}  \frac{\p \tilde{X}}{\p t}\cdot \left(\com  \nabla \tilde{X} \right)n \d \Gamma &  = & 0,
\end{eqnarray*}
then from the Piola identity we have
\begin{eqnarray*}
\int_{\mathcal{F}}f(\tilde{X}) = \int_{\mathcal{F}} \div\left(
\left(\I_{\R^3} - \com \nabla \tilde{X}^T\right)\frac{\p \tilde{X}}{\p t}
\right)
 = \int_{\p \mathcal{S}} \frac{\p \tilde{X}}{\p t}\cdot n \d \Gamma = 
 \int_{\p \mathcal{S}} \frac{\p X^{\ast}}{\p t}\cdot n \d \Gamma
\end{eqnarray*}
and thus the compatibility condition for this divergence system is satisfied (the contribution on $\p \mathcal{O}$ vanishes automatically).}
A solution of this system can be seen as a fixed point of the mapping
\begin{eqnarray}
\begin{array} {cccc}
\mathfrak{T} : & \text{\textcolor{black}{$\mathfrak{W}_T$}} & \rightarrow & \text{\textcolor{black}{$\mathfrak{W}_T$}} \\
& \tilde{X}_1 & \mapsto & \tilde{X}_2,
\end{array}
\end{eqnarray}
\textcolor{black}{where\footnote{\textcolor{black}{The set $\mathfrak{W}_T$ is non-trivial; Indeed, it contains extensions of $X^{\ast}$ obtained by the use of {\it plateau fonctions} (see \cite{SMSTT} for instance). The difficulty here is to consider the condition on $\p \mathcal{O}$.}}
\begin{eqnarray*}
\text{\textcolor{black}{$\mathfrak{W}_T$}} & = & \text{\textcolor{black}{
$\displaystyle \left\{\tilde{X} \in \mathcal{W}(Q_T^0) \mid  \tilde{X} = X^{\ast}
\text{ on } \p \mathcal{S}, \ 
\int_{\p \mathcal{S}}  \frac{\p \tilde{X}}{\p t}\cdot \left(\com  \nabla \tilde{X} \right)n \d \Gamma  = 0, \ \tilde{X}(\cdot,0)=\Id_{\mathcal{F}}
\right\}$
}}
\end{eqnarray*}
}
and where $\tilde{X}_2$ is a solution of the classical divergence problem
\begin{eqnarray*}
\left\{ \begin{array} {lcl}
\displaystyle \div \ \frac{\p \tilde{X}_2}{\p t} = f(\tilde{X}_1)  & \quad & \text{in } \mathcal{F} \times (0,T), \\
\displaystyle \frac{\p \tilde{X}_2}{\p t} = \frac{\p X^{\ast}}{\p t} & \quad & \text{on } \p \mathcal{S} \times (0,T), \\
\displaystyle \frac{\p \tilde{X}_2}{\p t} = -\tilde{h}'- \tilde{\omega}\wedge
\text{\textcolor{black}{$\mathbf{R}^T(t)(y-h(t))$}}
& \quad & \text{on } \p \mathcal{O} \times (0,T), \\
\text{\textcolor{black}{$\displaystyle \tilde{X}_2(\cdot,0) = \Id_{\mathcal{F}}$.}} & &
\end{array} \right.
\end{eqnarray*}
A solution of this problem can be obtained by using some results of \cite{Galdi1} for instance: The nonhomogeneous Dirichlet condition can be lifted (see Theorem 3.4, Chapter II) and the resolution made by using Exercise  3.4 and Theorem 3.2 of Chapter III. Then the solution chosen is the one which satisfies the estimates
\begin{eqnarray}
\left\| \frac{\p \tilde{X}_2}{\p t} \right\|_{\L^2(\mathbf{H}^3(\mathcal{F}))} & \leq &
C \left( \| f(\tilde{X}_1) \|_{\L^2(\mathbf{H}^{2}(\mathcal{F}))}
 + \left\| \frac{\p X^{\ast}}{\p t} \right\|_{\L^2(\mathbf{H}^{5/2}(\p \mathcal{S}))} \right. \nonumber \\
& &  \left. + \| \tilde{h}' \|_{\L^2(0,T;\R^3)}
+ \| \tilde{\omega} \|_{\L^2(0,T;\R^3)}
\text{\textcolor{black}{$(1+\| h \|_{\L^{\infty}(0,T;\R^3)})$}} \right) \nonumber \\
&  \leq & \tilde{C} \left( \| f(\tilde{X}_1) \|_{\L^2(\mathbf{H}^{2}(\mathcal{F}))}
 + \left\| \frac{\p X^{\ast}}{\p t} \right\|_{\L^2(\mathbf{H}^{3}(\mathcal{S}))} \right. \nonumber \\
& &  \left. + \| \tilde{h}' \|_{\L^2(0,T;\R^3)}  + \| \tilde{\omega} \|_{\L^2(0,T;\R^3)}
\text{\textcolor{black}{$(1+\sqrt{T}\| \tilde{h}' \|_{\L^{2}(0,T;\R^3)})$}}  \right) \nonumber \\ \label{ch3_estdiver}
\end{eqnarray}
and
\begin{eqnarray}
\text{\textcolor{black}{$\displaystyle \frac{\p^2 \tilde{X}_2}{\p t^2}_{|\p \mathcal{O}}$  }}& = & \text{\textcolor{black}{$\displaystyle -\tilde{h}'' - \tilde{\omega}' \wedge \mathbf{R}^T(\Id - h) - \tilde{\omega} \wedge 
\frac{\p \tilde{X}_2}{\p t}$,}} \nonumber \\
\left\| \frac{\p^2 \tilde{X}_2}{\p t^2} \right\|_{\L^2(\mathbf{H}^1(\mathcal{F}))} & \leq &
C \left( \| f(\tilde{X}_1) \|_{\H^1(\mathbf{L}^{2}(\mathcal{F}))}
 + \left\| \frac{\p X^{\ast}}{\p t} \right\|_{\H^1(\mathbf{H}^{1/2}(\p \mathcal{S}))} \right. \nonumber \\
& &   + \| \tilde{h}' \|_{\H^1(0,T;\R^3)}
+ \| \tilde{\omega} \|_{\H^1(0,T;\R^3)}
\text{\textcolor{black}{$(1+\| h \|_{\L^{\infty}(0,T;\R^3)})$}} \nonumber \\
& &  \text{\textcolor{black}{$ \displaystyle \left.+ \| \tilde{\omega} \|_{\L^{\infty}(0,T;\R^3)}\left\| \frac{\p \tilde{X}_2}{\p t} \right\|_{\L^2(\mathbf{H}^{1/2}(\p \mathcal{O}))}\right)$}} \nonumber \\
&  \leq & \tilde{C} \left( \| f(\tilde{X}_1) \|_{\H^1(\mathbf{L}^{2}(\mathcal{F}))}
 + \left\| \frac{\p X^{\ast}}{\p t} \right\|_{\H^1(\mathbf{H}^{1}(\mathcal{S}))}
  \right. \nonumber \\
& &  \left.  + \| \tilde{h}' \|_{\H^1(0,T;\R^3)} + \| \tilde{\omega} \|_{\H^1(0,T;\R^3)} \text{\textcolor{black}{$(1+\sqrt{T}\| \tilde{h}' \|_{\L^{2}(0,T;\R^3)})$}} \right. \nonumber \\
& & \left. \text{\textcolor{black}{$ \displaystyle
+ \left( |\omega_0 |_{\R^3} + \| \tilde{\omega} \|_{\H^1(0,T;\R^3)} \right)
\left\| \frac{\p \tilde{X}_2}{\p t} \right\|_{\L^2(\mathbf{H}^{3}(\mathcal{F}))} $}}
\right).  \label{ch3_estdiver2}
\end{eqnarray}
Let us verify that for $\tilde{X} \in \mathcal{W}(Q_T^0)$, satisfying $\tilde{X}(\cdot,0) = \Id_{\mathcal{F}}$, we have $f(\tilde{X}) \in \L^2(0,T;\mathbf{H}^{2}(\mathcal{F})) \cap \H^{1}(0,T;\mathbf{L}^2(\mathcal{F}))$. For that, we use the previous lemma by reminding that $\com \nabla \tilde{X} \in \H^1(0,T;\mathbf{H}^{2}(\mathcal{F})) \cap \H^{2}(0,T;\mathbf{L}^2(\mathcal{F}))$, and we first use the result of Lemma \ref{ch3_lemmaGrubb} with $s = 2$ and $\mu = \kappa = 0$ to get
\begin{eqnarray*}
\| f(\tilde{X}) \|_{\L^2(\mathbf{H}^{2}(\mathcal{F}))} & \leq & C \| \I_{\R^3} - \com \nabla \tilde{X}  \|_{\L^{\infty}(\mathbf{H}^{2}(\mathcal{F}))}  \left\|\frac{\p \nabla \tilde{X} }{\p t}\right\|_{\L^2(\mathbf{H}^{2}(\mathcal{F}))}  \nonumber \\
& \leq & C \sqrt{T} \| \I_{\R^3} - \com \nabla \tilde{X} \|_{\H^1(\mathbf{H}^{2}(\mathcal{F}))}\left\|\frac{\p \tilde{X} }{\p t}\right\|_{\L^2(\mathbf{H}^{3}(\mathcal{F}))}. \label{ch3_est001}
\end{eqnarray*}
For the regularity in $\H^{1}(0,T;\mathbf{L}^2(\mathcal{F}))$, let us first notice that we have by interpolation
\begin{eqnarray*}
\L^2(0,T;\mathbf{H}^{2}(\mathcal{F})) \cap \H^1(0,T;\mathbf{L}^{2}(\mathcal{F})) \hookrightarrow \L^{\infty}(0,T;\mathbf{H}^{m/2-1/2}(\mathcal{F})).
\end{eqnarray*}
Then we use Lemma \ref{ch3_lemmaGrubb} with $s=0$ and $\mu = \kappa = 1$, and the continuous embedding $\mathbf{H}^{2}(\mathcal{F}) \hookrightarrow \mathbf{L}^{\infty}(\mathcal{F})$ in order to get
\begin{eqnarray*}
& & \left\| \frac{\p (f(\tilde{X}))}{\p t} \right\|_{\L^2(\mathbf{L}^2)} \\
& \leq & C \left( \left\| \frac{\p \com \nabla \tilde{X}}{\p t} \right\|_{\L^{\infty}(\mathbf{H}^{1})}
\left\| \frac{\p \nabla \tilde{X}}{\p t} \right\|_{\L^{2}(\mathbf{H}^{1})}  +   \| \I_{\R^3} - \com \nabla \tilde{X} \|_{\L^{\infty}(\mathbf{H}^{2})}
\left\| \frac{\p^2 \nabla \tilde{X}}{\p t^2} \right\|_{\L^{2}(\mathbf{L}^{2})}\right) \\
& \leq & C\sqrt{T} \left( \left\| \frac{\p \com \nabla \tilde{X}}{\p t} \right\|_{\L^{\infty}(\mathbf{H}^{1})}
\left\| \frac{\p \nabla \tilde{X}}{\p t} \right\|_{\L^{\infty}(\mathbf{H}^{1})}  +   \| \I_{\R^3} - \com \nabla \tilde{X} \|_{\H^{1}(\mathbf{H}^{2})}
\left\| \frac{\p^2 \nabla \tilde{X}}{\p t^2} \right\|_{\L^{2}(\mathbf{L}^{2})}\right).
\end{eqnarray*}
Thus, we finally have
\begin{eqnarray}
\|f(\tilde{X})\|_{\L^2(\mathbf{H}^{2}) \cap \H^{1}(\mathbf{L}^2)} & \leq &
C\sqrt{T}\|\I_{\R^3} - \com \nabla \tilde{X} \|_{\H^{1}(\mathbf{H}^{2})\cap \H^2(\mathbf{L}^2(\mathcal{F}))}
\left\| \frac{\p \tilde{X}}{\p t} \right\|_{\mathcal{H}(Q_T^0)}.
\nonumber \\ \label{ch3_est002}
\end{eqnarray}
The estimates \eqref{ch3_estdiver} and \eqref{ch3_estdiver2} combined with \eqref{ch3_est002} and \eqref{ch3_estcof} show that the mapping $\mathfrak{T}$ is well-defined.\\
Moreover, the set defined for some $R>0$ by
\begin{eqnarray*}
\mathfrak{B}_{R,T} & = & \left\{\tilde{X} \in \mathfrak{W}_T \mid 
 \ \left\| \frac{\p \tilde{X}}{\p t} \right\|_{\mathcal{H}(Q_T^0)} \leq R \right\},
\end{eqnarray*}
is stable by $\mathfrak{T}$, for $T$ small enough and $R$ large enough. Notice that $\mathfrak{B}_{R, T}$ is a closed subset of $\mathcal{W}(Q_T^0)$. Let us verify that $\mathfrak{T}$ is a contraction in $\mathfrak{B}_{R, T}$.\\
For $\tilde{X}_1$ and $\tilde{X}_2$ in $\mathfrak{B}_{R, T}$, the difference $\tilde{Z} = \mathfrak{T}(\tilde{X}_2) - \mathfrak{T}(\tilde{X}_1)$ satisfies the divergence system
\begin{eqnarray*}
\left\{ \begin{array} {lcl}
\displaystyle \div \ \frac{\p \tilde{Z}}{\p t} = f(\tilde{X}_2) - f(\tilde{X}_1)  & \quad & \text{in } \mathcal{F} \times (0,T), \\
\displaystyle \frac{\p \tilde{Z}}{\p t} = 0 & \quad & \text{on } \p \mathcal{S} \times (0,T), \\
\displaystyle \frac{\p \tilde{Z}}{\p t} = 0 & \quad & \text{on } \p \mathcal{O} \times (0,T),
\end{array} \right.
\end{eqnarray*}
and thus the estimate
\begin{eqnarray*}
\left\| \frac{\p \tilde{Z}}{\p t} \right\|_{\L^2(\mathbf{H}^{m}(\mathcal{F})) \cap \H^{1}(\mathbf{H}^1(\mathcal{F}))} & \leq & C \| f(\tilde{X}_2) - f(\tilde{X}_1) \|_{\L^2(\mathbf{H}^{2}(\mathcal{F})) \cap \H^{1}(\mathbf{L}^2(\mathcal{F}))}.
\end{eqnarray*}
We write
\begin{eqnarray*}
f(\tilde{X}_2) - f(\tilde{X}_1) & = & \left(\com \nabla \tilde{X}_2 - \com \nabla \tilde{X}_1 \right) : \frac{\p \nabla \tilde{X}_2}{\p t}  + \left(\I_{\R^3} - \com \nabla \tilde{X}_1 \right) : \frac{\p \nabla (\tilde{X}_2 - \tilde{X}_1)}{\p t},
\end{eqnarray*}
By reconsidering the steps of the proofs of the estimate \eqref{ch3_est002}, and by using \eqref{ch3_estcof12}, we can verify that for $T$ small enough the mapping $\mathfrak{T}$ is a contraction in $\mathfrak{B}_{R, T}$. Thus $\mathfrak{T}$ admits a unique fixed point in $\mathfrak{B}_{R, T}$.\\
For the estimate \eqref{ch3_contSF21}, let us just notice that the difference $\tilde{X}$ of two mappings $\tilde{X}_1$ and $\tilde{X}_2$ of $\mathfrak{B}_R$ - corresponding to the data $(X^{\ast},h_1,\mathbf{R}_1)$ and $(X^{\ast},h_2,\mathbf{R}_2)$ respectively - satisfies the system
\begin{eqnarray*}
\left\{ \begin{array} {lcl}
\displaystyle \div \ \frac{\p \tilde{X}}{\p t} = f(\tilde{X}_2) - f(\tilde{X}_1)  & \quad & \text{in } \mathcal{F} \times (0,T), \\
\displaystyle \frac{\p \tilde{X}}{\p t} = 0 & \quad & \text{on } \p \mathcal{S} \times (0,T), \\
\displaystyle \frac{\p \tilde{X}}{\p t} = -(\tilde{h}'_2 - \tilde{h}'_1) - (\tilde{\omega}_2 - \tilde{\omega}_1)\wedge \tilde{X}_1
-\tilde{\omega}_2 \wedge \tilde{X}  & \quad & \text{on } \p \mathcal{O} \times (0,T).
\end{array} \right.
\end{eqnarray*}
Then we proceed as previously, and the end of the proof for the announced estimate is left to the reader.
\end{proof}

\subsection{Lipschitz estimates}

\begin{lemma} \label{ch3_lemmeinverse}
Let be $T_0$, and $T>0$ small enough to define $\tilde{X} \in \mathcal{W}_m(Q_T^0)$ solution of problem \eqref{superpb}, for $X^{\ast} \in \mathcal{W}_m(S_{\infty}^0)$, $h\in \H^2(0,T_0;\R^3)$ and $\mathbf{R} \in \H^2(0,T_0;\R^9)$. Let us denote by $\tilde{Y}(\cdot,t)$ the inverse of the mapping $\tilde{X}(\cdot,t)$ - for all $t \in [0,T)$. Then we have
\begin{eqnarray}
& & \| \nabla \tilde{Y}(\tilde{X}) - \I_{\R^3} \|_{\H^{1}(0,T;\mathbf{H}^{m-1}(\mathcal{F})) \cap \H^{2}(0,T;\mathbf{L}^2(\mathcal{F}))} \nonumber \\
& & \leq  C \left(1+ \| X^{\ast} - \Id_{\mathcal{S}} \|_{\mathcal{W}_m(S_{T_0}^0)} + \| \tilde{h}' \|_{\H^1(0,T_0;\R^3)}
+ \| \tilde{\omega} \|_{\H^1(0,T_0;\R^9)} \right). \label{ch3_eqtrick}
\end{eqnarray}
Let $\tilde{X}_1, \ \tilde{X}_2 \in \mathcal{W}_m(Q_T^0)$ be the solutions of problem \eqref{superpb} - for $T$ small enough - with $(X^{\ast},h_1,\mathbf{R}_1)$ and $(X^{\ast},h_2,\mathbf{R}_2)$ as data respectively. Then, if we denote by $\tilde{Y}_1(\cdot,t)$ and $\tilde{Y}_2(\cdot,t)$ the inverses of $\tilde{X}_1(\cdot,t)$ and $\tilde{X}_2(\cdot,t)$ respectively, we have
\begin{eqnarray}
& & \| \nabla \tilde{Y}_2(\tilde{X}_2) - \nabla \tilde{Y}_1(\tilde{X}_1) \|_{\H^{1}(0,T;\mathbf{H}^{m-1}(\mathcal{F})) \cap \H^{2}(0,T;\mathbf{L}^2(\mathcal{F}))} \nonumber \\
& & \leq C
\left(\| \tilde{h}'_2-\tilde{h}'_1 \|_{\H^1(0,T_0;\R^3)} + \| \tilde{\omega}_2-\tilde{\omega}_1 \|_{\H^1(0,T_0;\R^9)} \right). \label{ch3_eqtrickbis}
\end{eqnarray}
\end{lemma}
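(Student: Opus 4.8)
The plan is to reduce both estimates to the cofactor matrix of $\nabla \tilde{X}$ and then quote Lemma~\ref{ch3_lemmecomatrice} together with the extension bounds of Lemma~\ref{lemmasuper}. The starting point is the algebraic identity already used in Section~\ref{sectionequiv}: differentiating $\tilde{Y}(\tilde{X}(\cdot,t),t)=\Id$ gives $\nabla \tilde{Y}(\tilde{X})\,\nabla \tilde{X}=\I_{\R^3}$, and since $\det \nabla \tilde{X}=1$ on $\mathcal{F}\times(0,T)$ this yields
\[
\nabla \tilde{Y}(\tilde{X})^T=\det(\nabla \tilde{X})\,(\nabla \tilde{X})^{-T}=\com(\nabla \tilde{X}),
\qquad\text{hence}\qquad
\nabla \tilde{Y}(\tilde{X})-\I_{\R^3}=\left(\com\nabla\tilde{X}-\I_{\R^3}\right)^T .
\]
As transposition is an isometry for all the norms involved, it suffices to estimate $\com\nabla\tilde{X}-\I_{\R^3}$ in $\H^1(0,T;\mathbf{H}^2(\mathcal{F}))\cap\H^2(0,T;\mathbf{L}^2(\mathcal{F}))$; recall that here $m=3$, so $\mathbf{H}^{m-1}=\mathbf{H}^2$, $\mathcal{W}_m(Q_T^0)=\mathcal{W}(Q_T^0)$ and $\mathcal{H}_m=\mathcal{H}$.

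For \eqref{ch3_eqtrick} I would first recall that $\tilde{X}-\Id_{\mathcal{F}}\in\mathcal{W}(Q_T^0)=\H^1(0,T;\mathbf{H}^3(\mathcal{F}))\cap\H^2(0,T;\mathbf{H}^1(\mathcal{F}))$ by Lemma~\ref{lemmasuper}, with the bound \eqref{estlip0}, and that $\|\frac{\p\tilde{X}}{\p t}\|_{\mathcal{H}(Q_T^0)}\le\|\tilde{X}-\Id_{\mathcal{F}}\|_{\mathcal{W}(Q_T^0)}$ straight from the definitions. Lemma~\ref{ch3_lemmecomatrice} then gives $\com\nabla\tilde{X}\in\H^1(0,T;\mathbf{H}^2(\mathcal{F}))\cap\H^2(0,T;\mathbf{L}^2(\mathcal{F}))$ and, by \eqref{ch3_estcof},
\[
\|\com\nabla\tilde{X}-\I_{\R^3}\|_{\H^1(\mathbf{H}^2)\cap\H^2(\mathbf{L}^2)}\le C\sqrt{1+T^2}\left\|\frac{\p\tilde{X}}{\p t}\right\|_{\mathcal{H}(Q_T^0)}\left(1+\sqrt{1+T^2}\left\|\frac{\p\tilde{X}}{\p t}\right\|_{\mathcal{H}(Q_T^0)}\right).
\]
Since $\tilde{X}$ is taken in the ball $\mathfrak{B}_{R,T}$ of the proof of Lemma~\ref{lemmasuper} (so $\|\frac{\p\tilde{X}}{\p t}\|_{\mathcal{H}(Q_T^0)}\le R$) and $T$ is small, the quadratic factor is bounded by a constant; combining this with \eqref{estlip0} produces \eqref{ch3_eqtrick}, the additive $1$ being inherited from that estimate.

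The Lipschitz estimate \eqref{ch3_eqtrickbis} follows the same scheme from
\[
\nabla \tilde{Y}_2(\tilde{X}_2)-\nabla \tilde{Y}_1(\tilde{X}_1)=\left(\com\nabla\tilde{X}_2-\com\nabla\tilde{X}_1\right)^T :
\]
apply the difference estimate \eqref{ch3_estcof12} of Lemma~\ref{ch3_lemmecomatrice}, whose prefactor $1+\sqrt{1+T^2}\bigl(\|\tfrac{\p\tilde{X}_1}{\p t}\|_{\mathcal{H}(Q_T^0)}+\|\tfrac{\p\tilde{X}_2}{\p t}\|_{\mathcal{H}(Q_T^0)}\bigr)$ is uniformly bounded since $\tilde{X}_1,\tilde{X}_2\in\mathfrak{B}_{R,T}$, then use the Lipschitz bound \eqref{estlip12} of Lemma~\ref{lemmasuper} together with $\|\frac{\p(\tilde{X}_2-\tilde{X}_1)}{\p t}\|_{\mathcal{H}(Q_T^0)}\le\|\tilde{X}_2-\tilde{X}_1\|_{\mathcal{W}(Q_T^0)}$ to recover the right-hand side of \eqref{ch3_eqtrickbis}. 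I do not expect a genuine obstacle here; the one thing to watch is the bookkeeping of the constants' dependence on $T$ — one must check that every constant produced is nondecreasing in $T$, which is automatic from the elementary embedding estimates collected in Section~\ref{secdef}, so that the eventual shrinking of $T$ needed to define $\tilde{X}$ does not spoil the bounds.
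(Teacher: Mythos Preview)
Your proposal is correct and follows essentially the same approach as the paper: use $\det\nabla\tilde{X}=1$ to identify $\nabla\tilde{Y}(\tilde{X})$ with $(\com\nabla\tilde{X})^T$, then invoke Lemma~\ref{ch3_lemmecomatrice} together with the bounds of Lemma~\ref{lemmasuper}. The paper's own proof is in fact a two-line sketch stating exactly this, so your version is simply a more detailed rendering of the same argument.
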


\begin{proof}
By considering the first equality of Problem \eqref{superpb}, we have the equality
\begin{eqnarray*}
\nabla \tilde{Y}(\tilde{X}(\cdot,t),t) = \frac{\com \nabla \tilde{X}(\cdot,t)^T}{\det \nabla \tilde{X}(\cdot,t)} = \com \nabla \tilde{X}(\cdot,t)^T,
\end{eqnarray*}
and so Lemma \ref{ch3_lemmecomatrice} combined with the estimates of Lemma \ref{lemmasuper} can be applied.
\end{proof}

\section{Appendix B : Proof of Lemma \ref{Xwstar}}
Let us use a result given in the Appendix of \cite{Bourguignon} (Lemma A.4), which treats of regularity in Sobolev spaces for composition of functions: There exists a positive constant $C$ such that for all $t\in (0,T)$ we have
\begin{eqnarray*}
\| w^{\ast}(\cdot,t) \|_{\mathbf{H}^3(\mathcal{S}^{\ast}(t))} & \leq &  C
\left\| \frac{\p X^{\ast}}{\p t}(\cdot,t) \right\|_{\mathbf{H}^3(\mathcal{S})}
\frac{\|Y^{\ast}(\cdot,t) \|_{\mathbf{H}^{3}(\mathcal{S}^{\ast}(t))}^3 + 1 }{\displaystyle \inf_{x^{\ast} \in \mathcal{S}^{\ast}(t)}
| \det \nabla Y^{\ast}(x^{\ast},t) |^{1/2}} \\
&  \leq &  C
\left\| \frac{\p X^{\ast}}{\p t}(\cdot,t) \right\|_{\mathbf{H}^3(\mathcal{S})}
\left(\|Y^{\ast}(\cdot,t) \|_{\mathbf{H}^{3}(\mathcal{S}^{\ast}(t))}^3 + 1 \right)
\| \det \nabla X^{\ast}(\cdot,t) \|^{1/2}_{\mathbf{L}^{\infty}(\mathcal{S})}.
\end{eqnarray*}
Let us notice that by using the change of variables induced by $X^{\ast}(\cdot,t)$, we have
\begin{eqnarray*}
\| Y^{\ast}(\cdot,t) \|^2_{\mathbf{L}^2(\mathcal{S}^{\ast}(t))}
& = & \int_{\mathcal{S}^{\ast}(t)}  \left|Y^{\ast}(x^{\ast},t) \right|_{\R^3}^2\d x^{\ast} \\
& = & \int_{\mathcal{S}} |y|^2\det \nabla X^{\ast}(y,t) \d y , \\
\| \nabla Y^{\ast}(\cdot,t) \|_{\mathbf{L}^2(\mathcal{S}^{\ast}(t))} & \leq &
\| \nabla Y^{\ast}(X^{\ast}(y,t),t) \|_{\mathbf{L}^2(\mathcal{S})} \| \det \nabla X^{\ast}(\cdot,t) \|^{1/2}_{\L^{\infty}(\mathcal{S})}.
\end{eqnarray*}
The following equality
\begin{eqnarray}
\nabla^2 Y^{\ast}(X^{\ast}(\cdot,t),t)
& = & \left( \nabla \left( \nabla Y^{\ast}(X^{\ast}(\cdot,t),t) \right)\right) \nabla Y^{\ast}(X^{\ast}(\cdot,t),t) \label{ch3_eq123}
\end{eqnarray}
yields
\begin{eqnarray*}
\| \nabla^2 Y^{\ast}(\cdot,t) \|_{\mathbf{L}^2(\mathcal{S}^{\ast}(t))} & \leq & C\| \nabla Y^{\ast}(X^{\ast}(\cdot,t),t) \|_{\mathbf{H}^1(\mathcal{S})}
\| \nabla Y^{\ast}(X^{\ast}(\cdot,t),t) \|_{\mathbf{L}^{\infty}(\mathcal{S})} \| \det \nabla X^{\ast}(\cdot,t) \|^{1/2}_{\L^{\infty}(\mathcal{S})}.
\end{eqnarray*}
Moreover, by applying Lemma \ref{ch3_lemmaGrubb} with $s=1$, $\mu = 0$ and $\kappa  =1$, the equality \eqref{ch3_eq123} implies
\begin{eqnarray*}
\| \nabla^2 Y^{\ast}(X^{\ast}(\cdot,t),t)  \|_{\mathbf{H}^1(\mathcal{S})} & \leq &
C\| \nabla Y^{\ast}(X^{\ast}(\cdot,t),t) \|^2_{\mathbf{H}^2(\mathcal{S})}.
\end{eqnarray*}
The following equality
\begin{eqnarray*}
\nabla^3 Y^{\ast}(X^{\ast}(\cdot,t),t)
& = & \left( \nabla^2 \left( \nabla Y^{\ast}(X^{\ast}(\cdot,t),t) \right)\right) \left(\nabla Y^{\ast}(X^{\ast}(\cdot,t),t)\right)^2 \\
& & + \left( \nabla \left( \nabla Y^{\ast}(X^{\ast}(\cdot,t),t) \right)\right) \left(\nabla^2 Y^{\ast}(X^{\ast}(\cdot,t),t)\right)
\end{eqnarray*}
combined with the previous estimate enables us to obtain
\begin{eqnarray*}
\| \nabla^3 Y^{\ast}(\cdot,t) \|_{\mathbf{L}^2(\mathcal{S}^{\ast}(t))} & \leq &
C\| \nabla Y^{\ast}(X^{\ast}(\cdot,t),t) \|^3_{\mathbf{H}^2(\mathcal{S})}
 \| \det \nabla X^{\ast}(\cdot,t) \|^{1/2}_{\L^{\infty}(\mathcal{S})}.
\end{eqnarray*}
Finally we get
\begin{eqnarray*}
\| w^{\ast}\|_{\L^2(\mathbf{H}^3(\mathcal{S}^{\ast}(t)))} & \leq &
\tilde{C} \left\| \frac{\p X^{\ast}} {\p t} \right\|_{\L^2(\mathbf{H}^3(\mathcal{F}))}
\| \det \nabla X^{\ast}(\cdot,t) \|^{1/2}_{\L^{\infty}(\mathbf{L}^{\infty}(\mathcal{S}))} \times \\
& &
\left(1+ \left(\| \det \nabla X^{\ast}(\cdot,t) \|_{\L^{\infty}(\mathbf{L}^{\infty}(\mathcal{S}))}
\sum_{k=1}^3 \| \nabla Y^{\ast}(X^{\ast})\|^{2k}_{\L^{\infty}(\mathbf{H}^2(\mathcal{S}))} \right)^{3/2}  \right).
\end{eqnarray*}
For the regularity of $w^{\ast}$ in $\H^1(0,T;\mathbf{H}^1(\mathcal{S}^{\ast}(t)))$, we estimate
\begin{eqnarray*}
\left\| \frac{\p w^{\ast}}{\p t}(\cdot,t) \right\|_{\mathbf{L}^2(\mathcal{S}^{\ast}(t))}
& \leq & C\| \det \nabla X^{\ast}(\cdot,t) \|^{1/2}_{\mathbf{L}^{\infty}(\mathcal{S})}
\left\| \frac{\p w^{\ast}}{\p t}(X^{\ast}(\cdot,t),t) \right\|_{\mathbf{L}^2(\mathcal{S})},
\end{eqnarray*}
and we calculate
\begin{eqnarray*}
\frac{\p w^{\ast}}{\p t}(x^{\ast},t) & = & \frac{\p^2 X^{\ast}}{\p t^2}(Y^{\ast}(x^{\ast},t),t) + \frac{\p \nabla X^{\ast}}{\p t}(Y^{\ast}(x^{\ast},t),t) \frac{\p Y^{\ast}}{\p t}(x^{\ast},t), \quad x^{\ast} \in \mathcal{S}^{\ast}(t), \\
\frac{\p w^{\ast}}{\p t}(X^{\ast}(y,t),t) & = & \frac{\p^2 X^{\ast}}{\p t^2}(y,t)
- \frac{\p \nabla X^{\ast}}{\p t}(y,t) \nabla Y^{\ast}(X^{\ast}(y,t),t) \frac{\p X^{\ast}}{\p t}(y,t) ,
\quad y \in \mathcal{S}.
\end{eqnarray*}
Thus we have
\begin{eqnarray*}
\left\| \frac{\p w^{\ast}}{\p t} \right\|_{\L^2(\mathbf{L}^2(\mathcal{S}^{\ast}(t)))} & \leq &
C \| \det \nabla X^{\ast}(\cdot,t) \|^{1/2}_{\L^{\infty}(\mathbf{L}^{\infty}(\mathcal{S}))} \times \left(
\left\| \frac{\p^2 X^{\ast}}{\p t^2} \right\|_{\L^2(\mathbf{L}^2(\mathcal{S}))} \right. \\
& & \left. +
\left\| \frac{\p \nabla X^{\ast}}{\p t} \right\|_{\L^2(\mathbf{L}^2(\mathcal{S}))} \| \nabla Y^{\ast}(X^{\ast})\|_{\L^{\infty}(\mathbf{H}^2(\mathcal{S}))}
\left\| \frac{\p X^{\ast}}{\p t} \right\|_{\L^{\infty}(\mathbf{H}^{2}(\mathcal{S}))}
\right).
\end{eqnarray*}
Finally we control $\frac{\p w^{\ast}}{\p t}$ in $\L^2(0,T;\mathbf{H}^1(\mathcal{S}^{\ast}(t)))$ by writing
\begin{eqnarray*}
\frac{\p \nabla w^{\ast}}{\p t}(x^{\ast},t) & = & \frac{\p^2 \nabla X^{\ast}}{\p t^2}(Y^{\ast}(x^{\ast},t),t) \nabla Y^{\ast}(x^{\ast},t)
+ \frac{\p \nabla^2 X^{\ast}}{\p t}(Y^{\ast}(x^{\ast},t),t)\nabla Y^{\ast}(x^{\ast},t)\frac{\p Y^{\ast}}{\p t}(x^{\ast},t) \\
& & + \frac{\p \nabla X^{\ast}}{\p t}(Y^{\ast}(x^{\ast},t),t) \frac{\p \nabla Y^{\ast}}{\p t}(x^{\ast},t), \quad x^{\ast} \in \mathcal{S}^{\ast}(t), \\
\frac{\p \nabla w^{\ast}}{\p t}(X^{\ast}(y,t),t) & = & \frac{\p^2 \nabla X^{\ast}}{\p t^2}(y,t),t) \nabla Y^{\ast}(X^{\ast}(y,t),t)
- \frac{\p \nabla^2 X^{\ast}}{\p t}(y,t)(\nabla Y^{\ast}(X^{\ast}(y,t),t))^2\frac{\p X^{\ast}}{\p t}(y,t) \\
& & + \frac{\p \nabla X^{\ast}}{\p t}(y,t)\left( \frac{\p }{\p t}\left(\nabla Y^{\ast}(X^{\ast}(y,t),t) \right)
- \nabla (\nabla Y^{\ast}(X^{\ast}(y,t),t) ) \right), \quad y \in \mathcal{S},
\end{eqnarray*}
and by estimating
\begin{eqnarray*}
\left\| \frac{\p \nabla w^{\ast}}{\p t}\right\|_{\L^2(\mathbf{L}^2(\mathcal{S}^{\ast}(t)))} & \leq &
C \| \det \nabla X^{\ast}(\cdot,t) \|^{1/2}_{\L^{\infty}(\mathbf{L}^{\infty}(\mathcal{S}))} \times
\left( \left\| \frac{\p^2 \nabla X^{\ast}}{\p t^2} \right\|_{\L^2(\mathbf{L}^2(\mathcal{S}))}
\| \nabla Y^{\ast}(X^{\ast}) \|_{\L^{\infty}(\mathbf{H}^2(\mathcal{S}))} + \right. \\
& & \left. + \left\| \frac{\p \nabla^2 X^{\ast}}{\p t} \right\|_{\L^2(\mathbf{L}^2(\mathcal{S}))}
\| \nabla Y^{\ast}(X^{\ast})\|^{2}_{\L^{\infty}(\mathbf{H}^2(\mathcal{S}))}
\left\| \frac{\p X^{\ast}}{\p t} \right\|_{\L^{\infty}(\mathbf{H}^2(\mathcal{S}))} \right. \\
& &  \left. \left\| \frac{\p \nabla X^{\ast}}{\p t} \right\|_{\L^{\infty}(\mathbf{L}^2(\mathcal{S}))}
\| \nabla Y^{\ast}(X^{\ast})\|_{\H^{1}(\mathbf{H}^2(\mathcal{S}))}
\right).
\end{eqnarray*}


\begin{thebibliography}{10}

  \bibitem{Boulakia}
  {\sc M.~Boulakia}, {\em Existence of weak solutions for the motion of an elastic structure in an incompressible viscous fluid}, C. R. Math. Acad. Sci. Paris, 336 (2003), pp.~985-–990.

  \bibitem{BoulST}
  {\sc M.~Boulakia, E.~L. Schwindt, T.~Takahashi}, {\em Existence of Strong Solutions for the Motion of an Elastic Structure in an Incompressible Viscous Fluid}, Interfaces and Free Boundaries, 14 (2012), pp.~273--306.

  \bibitem{Bourguignon}
  {\sc J.~P. Bourguignon, H.~Brezis}, {\em Remarks on the Euler Equation}, J. Funct. Analysis, 15 (1974), pp.~341--363.

  \bibitem{Chambolle}
  {\sc A.~Chambolle, B.~Desjardins, M.~J.~Esteban and C.~Grandmont}, {\em Existence of weak solutions for the unsteady interaction of a viscous fluid with an elastic plate}, J. Math. Fluid Mech., 7 (2005), pp.~368--404.
  
  \bibitem{Ciarlet}
  {\sc P.~G. Ciarlet}, {\em Mathematical elasticity. {V}ol. {I}: Three-dimensional elasticity}, North-Holland, Amsterdam, 1988.

  \bibitem{Coutand1}
  {\sc D.~Coutand and S.~Shkoller}, {\em Motion of an elastic solid inside an incompressible viscous fluid}, Arch. Ration. Mech. Anal., 176 (2005), pp.~25-–102.

  \bibitem{Coutand2}
  {\sc D.~Coutand and S.~Shkoller}, {\em The interaction between quasilinear elastodynamics and the Navier–Stokes equations}, Arch. Ration. Mech. Anal., 179 (2006), pp.~303-–352.

  \bibitem{Cumsille}
  {\sc P.~Cumsille and T.~Takahashi}, {\em Wellposedness for the system modeling the motion of a rigid body of arbitrary form in an incompressible viscous fluid}, Czechoslovak Math. J., 58 (133) (2008), pp.~961--992.

  \bibitem{Desjardins1}
  {\sc B.~Desjardins, M.~J. Esteban, C.~Grandmont and P.~Le Tallec}, {\em Weak solutions for a fluid-elastic structure interaction model}, Rev. Math. Comput., 14 (2001), pp.~523–-538.

  \bibitem{Galdi1}
  {\sc G.~P. Galdi}, {\em An Introduction to the Mathematical Theory of the Navier-Stokes Equations}, Volume 1, Springer-Verlag, 1994.


  \bibitem{Grubb}
  {\sc G.~Grubb, V.~A. Solonnikov}, {\em Boundary value problems for nonstationary Navier-Stokes equations treated by pseudo-differential methods}, Math. Scand., 69 (1991), pp.~217--290.

  \bibitem{Gurtin}
  {\sc M.~E. Gurtin}, {\em An Introduction to Continuum Mechanics}, Academic Press Inc., Harcourt Brace Jovanovich Publishers, New York, 1981.

  \bibitem{IW}
  {\sc A.~Inoue and M.~Wakimoto}, {\em On existence of the Navier-Stokes equation in a time dependent domain,} J. Fac. Sci. Univ. Tokyo Sect. IA Math., 24 (1977), pp.~303--319.

  \bibitem{Lions}
  {\sc J.-L.~Lions and E.~Magenes}, {\em Non-homogeneous boundary value problems and applications}, Vol. I, Springer , Berlin-Heidelberg-New York, 1972.

  \bibitem{Necasova}
  {\sc \v{S}.~Ne\v{c}asov\'a, T.~Takahashi, M.~Tucsnak}, {\em Weak Solutions for the Motion of a Self-propelled Deformable Structure in a Viscous Incompressible Fluid}, Acta Appl. Math., 116 (2011), no.~3, pp.~329--352.

  \bibitem{JPR1}
  {\sc J.-P.~Raymond}, {\em Stokes and Navier-Stokes equations with nonhomogeneous boundary conditions}, Ann. I. H. Poincar\'e - AN 24 (2007), pp.~921--951.


  \bibitem{SMSTT}
  {\sc J.~San Mart\'{\i}n, J.-F.~Scheid, T.~Takahashi and M.~Tucsnak}, {\em An initial and boundary value problem modeling of fish-like swimming,} Arch. Rational Mech. Anal., 188 (2008), pp.~429--455.

  \bibitem{TT}
  {\sc T.~Takahashi}, {\em Analysis of strong solutions for the equations modeling the motion of a rigid-fluid system in a bounded domain,} Adv. Differ. Equ., 8 (2003), pp.~1499--1532.

  \bibitem{Tucsnak}
  {\sc T.~Takahashi, M.~Tucsnak}, {\em Global strong solutions for the two-dimensional motion of an infinite cylinder in a viscous fluid}, J. Math. Fluid Mech., 6 (2004), pp.~53--77.

  \bibitem{Temam}
  {\sc R.~Temam}, {\em Probl\`emes math\'ematiques en plasticit\'e}, Gauthier-Villars, 1983.

\end{thebibliography}
\end{document}